\xpretocmd{\@adminfootnotes}{\let\@makefntext\BHFN@OldMakefntext}{}{}
\renewcommand\@makefntext[1]{%
 \@ifundefined{@makefnmark}
 {}
 {%
 \renewcommand\@makefnmark{%
 \mbox{%
 \textsuperscript{%
 \normalfont
 \hyperref[\BackrefFootnoteTag]{\@thefnmark}%
 }%
 }\,%
 }%
 \BHFN@OldMakefntext{#1}%
 }%
}
\patchcmd{\section}{\scshape}{\normalfont \Large \bfseries}{}{}
\patchcmd{\subsection}{\normalfont}{\normalfont \large}{}{}
\patchcmd{\subsection}{-.5em}{.5\linespacing}{}{}
\patchcmd{\subsubsection}{\normalfont}{\noindent \normalfont \large \bfseries \textit}{}{}
\patchcmd{\subsubsection}{-.5em}{.5\linespacing}{}{}
\DeclareRobustCommand{\SkipTocEntry}[5]{}
\protected\def\ignorethis#1\endignorethis{}
\let\endignorethis\relax
\let\oldtocsection=\tocsection
\let\oldtocsubsection=\tocsubsection
\let\oldtocsubsubsection=\tocsubsubsection
\renewcommand{\tocsection}[2]{\hspace{0em}\oldtocsection{#1}{#2}}
\renewcommand{\tocsubsection}[2]{\hspace{1em}\oldtocsubsection{#1}{#2}}
\renewcommand{\tocsubsubsection}[2]{\hspace{2em}\oldtocsubsubsection{#1}{#2}}
\newtheorem{lemma}{Lemma}[section]
\newtheorem{corollary}[lemma]{Corollary}
\newtheorem{proposition}[lemma]{Proposition}
\newtheorem{theorem}[lemma]{Theorem}
\theoremstyle{definition}
\newtheorem{remark}[lemma]{Remark}
\newtheorem{example}[lemma]{Example}
\newtheorem{definition}[lemma]{Definition}
\numberwithin{equation}{section}
\newcommand{\Omegabar}{\overline{\Omega}}
\newcommand{\Btilde}{\widetilde{\mathscr{B}}}
\newcommand{\R}{\mathbb{R}}
\newcommand{\N}{\mathbb{N}}
\newcommand{\mc}[1]{\mathcal{#1}}
\newcommand{\diam}{{\rm diam}}
\newcommand{\osc}{{\rm osc}}
\newcommand{\Tr}{{\rm Tr}}
\newcommand{\Dom}{{\rm Dom}}
\def\eps{\varepsilon}
\def\abs#1{|#1|}
\def\norm#1{\|#1\|}
\title[Quantitative and exact concavity principles for parabolic and elliptic equations]
{Quantitative and exact concavity principles \\ for parabolic and elliptic equations}
\author[M. Gallo]{Marco Gallo \orcidlink{0000-0002-3141-9598}}
\author[R. Moraschi]{Riccardo Moraschi \orcidlink{0009-0001-6049-5362}}
\author[M. Squassina]{Marco Squassina \orcidlink{0000-0003-0858-4648}}
\address[M. Gallo, M. Squassina]{\newline\indent Dipartimento di Matematica e Fisica
	\newline\indent
	Università Cattolica del Sacro Cuore
	\newline\indent
	Italy, Brescia, BS, Via della Garzetta 48, 25133}
\email{\href{mailto:marco.gallo1@unicatt.it}{marco.gallo1@unicatt.it}}
\email{\href{mailto:marco.squassina@unicatt.it}{marco.squassina@unicatt.it}}
\address[R. Moraschi]{\newline\indent Dipartimento di Matematica
	\newline\indent
Università degli Studi di Pavia
	\newline\indent
	Italy, Pavia, PV, Via Ferrata 1, 27100}
\email{\href{mailto:r.moraschi1@campus.unimib.it}{r.moraschi1@campus.unimib.it}}
\thanks{\emph{Fundings.} 
All the authors are members of INdAM-GNAMPA.
M. Gallo is supported by INdAM-GNAMPA Project 
``Metodi variazionali per problemi dipendenti da operatori frazionari isotropi e anisotropi'', codice CUP \#E5324001950001\#.
R. Moraschi was partially funded by PRIN 2022 project 2022R537CS $NO^3$, CUP F53D23002810006, granted by the MUR and funded by the European Union - Next Generation EU%
\bigskip
}
\subjclass[2020]{%
26B25, %Convexity of real functions of several variables, generalizations
35B09, %Positive solutions to PDEs
35B25, %Perturbations in context of PDEs
35B99, %Qualitative properties of solutions to partial differential equations (None of the above, but in this section)
35E10, %Convexity properties of solutions to PDEs and systems of PDEs with constant coefficients
35J60, %Nonlinear elliptic equations
35K20, %Initial-boundary value problems for second-order parabolic equations
35K91%Semilinear parabolic equations with Laplacian, bi-Laplacian or poly-Laplacian
%26B25, 35B09, 26B25, 35B99, 35E10, 35J60, 35K20, 35K91
}
\keywords{%
Evolutionary semilinear equations,
Convexity of solutions,
Perturbation results,
Nonautonomous equations,
Uniqueness of critical point,
Nondegenerate critical point.
}
\begin{document}

\begin{abstract}%ACV: max 250 parole. Queste sono circa 137.
Goal of this paper is to study classes of Cauchy-Dirichlet problems which include parabolic equations of the type
$$u_t -\Delta u= a(x,t)f(u)\quad\hbox{in $\Omega\times(0,T)$}$$
with $\Omega\subset\mathbb{R}^n$ bounded, convex domain and $T\in(0,+\infty]$. 
Under suitable assumptions on $a$ and $f$, we show logarithmic or power concavity (in space, or in space-time) of the solution $u$; under some relaxed assumptions on $a$, we show moreover that $u$ enjoys concavity properties up to a controlled error. 
The results include relevant examples like the torsion $f(u)=1$, the Lane-Emden equation $f(u)=u^q$, $q\in(0,1)$, the eigenfunction $f(u)=u$, the logarithmic equation $f(u)=u\log(u^2)$, and the saturable nonlinearity $f(u)=\frac{u^2}{1+u}$. 
The logistic equation $f(x,u)=a(x)u-u^2$ can be treated as well.

Some exact results give a different approach, as well as generalizations, to \cite{IsSa13, IsSa16}. 
Moreover, some quantitative results are valid also in the elliptic framework $-\Delta u=a(x)f(u)$ and refine \cite{BuSq19, GaSq24}.
\end{abstract}

\maketitle

\vspace{-1em}

\begin{center}
\begin{minipage}{10cm}
		\small
		\tableofcontents
\end{minipage}
\end{center}

\vspace{1em}

%%%%%%%%%%%%%%%%%%%%%%%%%%%%%%%%%%%%%%%%%%%%%%%%%%%%%%%%%%%%%%%%%%%%%%%%
%%%%%%%%%%%%%%%%%%%%%%%%%%%%%%%%%%%%%%%%%%%%%%%%%%%%%%%%%%%%%%%%%%%%%%%%
\section{Introduction}

In the 1970s and 1980s, significant progress was made in understanding the convexity and concavity properties of solutions to certain elliptic differential equations on convex domains and Dirichlet boundary conditions. In the pivotal papers by Brascamp and Lieb \cite{BrascampLieb} and Makar-Limanov \cite{MakarLimanov}, the authors showed that solutions of the eigenfunction problem and of the torsion problem are concave up to a composition, respectively, with the logarithm function or the square root function. The seminal papers by Korevaar \cite{Korevaar} and Kennington \cite{kennington} gave a new boost to this research, by introducing general \emph{concavity maximum principles} to deal with general class of equations. We refer to \cite{AAGS2023,GaSq24} and references therein for a general overview on the topic. 
Let us mention, among various mathematical applications, that in \cite{SWYY85} the authors exploit the $\log$-concavity of the eigenfunction to show notable results about the eigenvalue gap, i.e. the difference between the first and the second Dirichlet eigenvalue.

\smallskip

In the framework of parabolic equations and Cauchy-Dirichlet problems
$$
\begin{cases}
 u_t - \Delta u = b(x,u,t) & \quad \hbox{in $\Omega \times (0,T)$},\\
u=0 & \quad \hbox{on $\partial \Omega \times [0,T)$}, \\
u=u_0& \quad \hbox{on $\Omegabar \times \{0\}$},
\end{cases}
$$
$T \in (0, +\infty]$, several questions arise. 
A first problem is to study very particular examples of $b$ and discuss if, under suitable assumptions on $u_0$, the solution $u(x,t)$ becomes logarithmic or power concave (i.e. $\log(u)$ or $u^\alpha$ for some $\alpha>0$ is concave) after some amount of time ($t>0$ or possibly $t\gg 0$): these kinds of results are strongly related to the particular shape of $b$ which allows, for instance, explicit representation of the solution, and have been investigated for example in \cite{LeeVazquez, lee2008parabolic, Ish24}. 
We highlight that these results allow, under suitable assumptions, to get interesting properties also in the elliptic case, by considering the limit of $u(x,t)$ as $t\to +\infty$ which eventually coincide with the solution of the corresponding stationary problem.

A second question is the following: what is the optimal (in some sense) concavity on $u_0$ which is preserved by the heat flow? In this problem, the logarithmic concavity plays a very special role. 
This and similar questions have been addressed in a series of papers \cite{IshigeSalaniTakatsu2024, IshigeSalaniHeatFlow, IsSaTa22, IsSaTa20}. 
See also \cite{Daskalopoulos-Hamilton-Lee2001, ChauWeinkove} for similar questions regarding power concavity and porous medium flow. 

In this paper we are interested in a different question: under which assumptions on $b$ and $u_0$ can we obtain that $u(x,t)$ enjoys some concavity properties for each $t>0$?
To the authors' knowledge, only few papers deal with this problem. 
A first result can be found in Korevaar \cite{Korevaar}, where the authors adapt the elliptic concavity maximum principles to study classes of parabolic equations and show that, for example, the parabolic eigenfunctions $u(\cdot, t)$ are spatially logarithmic concave for each $t \in [0,T]$ if $u_0$ is so. 
Let us mention that in \cite{AnCl11} a stronger notion of $\log$-concavity is proved and it is exploited to prove a sharp result on the eigenvalue gap; it is notable to observe that the technique adopted in \cite{AnCl11} relies on the introduction and analysis of some suitable multivariable function in a spirit relatable to the one introduced in \cite{Korevaar, kennington} and used in the present paper.

Some partial generalizations of the results in \cite{Korevaar} can be found in \cite{PorruSerra} (see also \cite{GrecoKawohl}), by allowing the inclusion of other interesting models. 
Other results are contained in \cite{IshigeSalaniTakatsu2024} (see e.g. Proposition 4.2 in there) by means of \emph{spatial log-concave envelopes} as viscosity subsolutions of the problem and corresponding comparison principles; similar techniques go back to \cite{AlvarezLasryLions1997} (see also \cite{ColesantiSalani2003}).

\smallskip

In this paper we recover these results, obtaining among others the following statement. To fix notations, we say that $u$ is \emph{$\alpha$-concave} if $\alpha>0$ and $u^{\alpha}$ is concave, or $\alpha=0$ and $\log(u)$ is concave (see \eqref{eq_def_alpha_concav}). Moreover, a solution of a PDE is meant \textit{positive} when $u>0$ in $\Omega\times (0,T]$.

\smallskip
 \textbf{Notation.} For the sake of simplicity, to say that $u\colon \Omegabar \times [0,T] \to \R$ verifies $u(\cdot, t) \in C^k(\Omegabar)$ for each $t \in (0,T]$ (or $t>0$ if $T=+\infty$) and $u(x, \star) \in C^h([0,T])$ for each $x \in \Omega$, we will simply write %-- with a slight abuse of notation -- 
$u \in C^k_x(\Omegabar) \cap C^h_t([0,T])$.

We refer to the beginning of Section \ref{sec_prelimin} for other notations.

\begin{theorem}[Weighted eigenfunctions and logarithmic equations]
\label{thm_general_korevaar}
	Let $\Omega \subset \R^n$, $n\geq 2$, be a bounded, smooth, strongly convex domain. 
Let $T>0$, $a\colon\Omega\times(0,T]\to \R$, $u_0\in C^1(\Omegabar)$ 
with $u_0=0$ on $\partial\Omega$ and $u \in C^2_x(\Omegabar) \cap C^1(\Omegabar\times[0,T])$ be a positive solution of$$		
\begin{cases}
u_t - \Delta u = a(x,t) f(u) &\quad \hbox{in $\Omega \times (0,T]$}, \\
u=0 & \quad\text{on }\partial \Omega\times [0,T],	 \\
D_x u \cdot \nu >0 & \quad\text{on }\partial \Omega\times [0,T], \\
u =u_0 & \quad \text{on } \Omegabar \times \{0\},
\end{cases}
$$
where $\nu$ is the interior normal to $\partial \Omega$. 
Assume moreover that $u_0$ is $\log$-concave and one of the following:
\begin{enumerate}[label=\textit{\roman*})]
\item $f(s)=s$, and $a$ is concave; 
\item $f(s)=s \log^q(1+s)$ or $f(s)=\frac{s^{q+1}}{1+s^q}$, $q \in (0,1)$, and $a$ is $\frac{1}{1-q}$-concave;
 \item $f(s)=s \log(1+s)$, $f(s)=s \log(s)$ or $f(s)=\frac{s^2}{1+s}$, and $a$ is a positive constant. 
\end{enumerate}
Then $\log(u(\cdot, t))$ is concave for each $t \in (0,T)$. Moreover, if  $a \in C^{2,1}(\Omega\times(0,T])$ and $u \in C^{2,1}(\Omega\times(0,T])$, then $\log(u(\cdot, t))$ is strictly concave for each $t \in (0,T)$; as a consequence, for each $t \in (0,T)$ the function $u(\cdot, t)$ has a unique, nondegenerate, critical point.
\end{theorem}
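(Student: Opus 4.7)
The plan is a parabolic concavity maximum principle in the spirit of Korevaar \cite{Korevaar} and Porru--Serra \cite{PorruSerra}. I begin by passing to the logarithm $v:=\log u$, which is well-defined and as regular as $u$ since $u>0$ in $\Omega\times(0,T]$. A direct computation yields
$$
v_t-\Delta v-|\nabla v|^2 \;=\; a(x,t)\, g(u),\qquad g(s):=f(s)/s,
$$
so that the three cases correspond respectively to $g\equiv 1$; $g(s)\in\{\log^q(1+s),\ s^q/(1+s^q)\}$; and $g(s)\in\{\log(1+s),\ \log s,\ s/(1+s)\}$. Log-concavity of $u(\cdot,t)$ is exactly concavity of $v(\cdot,t)$.

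I then introduce the two-point concavity deficit
$$
C(x,y,t)\;:=\;v\bigl(\tfrac{x+y}{2},t\bigr)-\tfrac12 v(x,t)-\tfrac12 v(y,t)
$$
on $\Omega\times\Omega\times[0,T]$, so that the desired conclusion is $C\ge 0$. By hypothesis $C(\cdot,\cdot,0)\ge 0$. Arguing by contradiction, I suppose that, after a perturbation $C_\varepsilon:=C-\varepsilon t$ used to enforce strict inequalities, the infimum of $C_\varepsilon$ over $\Omega\times\Omega\times[0,T_0]$ is strictly negative for some $T_0<T$. The boundary condition $u=0$, reinforced by the Hopf-type assumption $D_xu\cdot\nu>0$, makes $v\to-\infty$ near $\partial\Omega$ at a controlled logarithmic rate; coupled with strong convexity of $\Omega$, which keeps the midpoint $(x+y)/2$ uniformly inside $\Omega$ when $x,y$ approach distinct boundary points, a standard boundary blow-up rules out the infimum being attained at the spatial boundary or at $t=0$. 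Hence it is attained at an interior point $(x_0,y_0,t_0)$ with $t_0>0$ and $x_0\neq y_0$.

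At $(x_0,y_0,t_0)$ the first-order condition $\nabla_xC_\varepsilon=\nabla_yC_\varepsilon=0$ gives $\nabla v(x_0)=\nabla v(y_0)=\nabla v(z_0)$, with $z_0:=(x_0+y_0)/2$; positive semi-definiteness of the Hessian of $C_\varepsilon$, tested against diagonal vectors and traced, produces $\Delta v(z_0)\ge \tfrac12(\Delta v(x_0)+\Delta v(y_0))$; and $\partial_tC_\varepsilon\le 0$. Inserting the PDE for $v$, the $|\nabla v|^2$-terms cancel and the Laplacian inequality enters with the correct sign, leaving
$$
a(z_0,t_0)\,g(u(z_0,t_0))+\varepsilon\;\le\;\tfrac12 a(x_0,t_0)\,g(u(x_0,t_0))+\tfrac12 a(y_0,t_0)\,g(u(y_0,t_0)),
$$
while $C_\varepsilon(x_0,y_0,t_0)<0$ reads $u(z_0)<\sqrt{u(x_0)u(y_0)}$. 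I check this is impossible case by case: in (i), $g\equiv 1$ and the inequality directly contradicts concavity of $a(\cdot,t_0)$; in (ii), the exponent $1/(1-q)$ is chosen so that $\tfrac{1}{1-q}$-concavity of $a$ combines via a Hölder-type argument with the convex structure of $s\mapsto g(e^s)^{1-q}$ to forbid the inequality; in (iii), with $a$ constant, the inequality reduces to convexity of $s\mapsto g(e^s)$, which is a short computation for each of the three listed $g$. Sending $\varepsilon\to 0^+$ yields $C\ge 0$, i.e.\ log-concavity.

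Strict log-concavity for $t>0$ then follows from the strong maximum principle applied to the linear parabolic inequality satisfied by $C$. The statement on critical points is immediate: a strictly concave $v(\cdot,t)$ has a unique interior maximum where $D^2 v<0$; since $\nabla u = u\nabla v$, this is also the unique critical point of $u(\cdot,t)$, and at it $D^2u = u\, D^2v$ is negative definite. I expect the two most delicate steps to be, first, the boundary analysis excluding the spatial boundary from the infimum — where strong convexity of $\Omega$ and the condition $D_xu\cdot\nu>0$ are genuinely essential — and, second, the algebraic verification in case (ii), where the sharpness of the exponent $1/(1-q)$ reflects a fine interplay between the convexity of $g$ and the prescribed concavity class of $a$.
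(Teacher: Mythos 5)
Your overall skeleton (pass to $v=\log u$, study a two-point concavity function, exclude the spatial boundary via $D_xu\cdot\nu>0$ plus strong convexity and exclude $t=0$ via the log-concavity of $u_0$, then use first/second order conditions at an interior negative minimum) is the same as the paper's route through Theorem \ref{thm_conc_princ_eigenf}, Theorem \ref{thm_conc_princ_bound} and Corollary \ref{corol_concav_strict}. But the decisive step fails in cases (\textit{ii})--(\textit{iii}), and as written is inconclusive even in (\textit{i}). At the interior negative minimum the PDE only gives $a(z_0,t_0)g(u(z_0,t_0))\le \tfrac12 a(x_0,t_0)g(u(x_0,t_0))+\tfrac12 a(y_0,t_0)g(u(y_0,t_0))$ up to the $\varepsilon$-term (note the perturbation must be $C+\varepsilon t$, not $C-\varepsilon t$, for the slack to land on the useful side), while the concavity hypotheses on $a$ and on $\bar f=f(s)/s$ give an inequality in the \emph{same} direction, evaluated at $\tfrac12(v(x_0)+v(y_0))$, which dominates $v(z_0)$ only from above. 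Since in (\textit{ii})--(\textit{iii}) $g=\bar f$ is increasing, the information $u(z_0)<\sqrt{u(x_0)u(y_0)}$ yields an \emph{upper} bound on $g(u(z_0))$, i.e. the wrong direction, so no contradiction follows; your proposed rescue via convexity of $s\mapsto g(e^s)$ also points the wrong way (and is false for $g(s)=s/(1+s)$, whose composition with $\exp$ is the logistic function). This is exactly the obstruction the paper removes with the exponential time weight: working with $e^{-\mu t}\mc{C}^*_{\log u}$ and using $\partial_{p^{n+1}}\mathscr{B}=-1$ makes the effective $s$-derivative of the nonlinearity at most $\Lambda-\mu<0$, and only then does the negativity of the minimum, combined with the joint concavity of $(x,s)\mapsto a(x,t)f(s)/s$ (checked through Proposition \ref{alpha concavity property} (\textit{iv})), close the argument. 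Without a substitute for this mechanism your two-point scheme cannot treat (\textit{ii}) and (\textit{iii}); in (\textit{i}) it works only after fixing the $\varepsilon$-sign, or by assuming $a(\cdot,t)$ strictly concave as in Remark \ref{rem_direct_strict_conc}.

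The strictness claim is also a genuine gap. $C$ couples values of $v$ at three different points of $\Omega$ and does not satisfy any standard linear parabolic inequality on $\Omega\times\Omega\times(0,T]$, so "the strong maximum principle applied to $C$" is not an argument; even granting weak log-concavity, strictness is the delicate part of the statement. The paper obtains it from the parabolic constant rank theorem (Proposition \ref{prop_const_rank} applied to $w=-\log u$, together with the full-rank point coming from the Dirichlet condition, see Remark \ref{rem_appl_const_rank}); a direct two-point argument is available only in the eigenfunction case with $a(\cdot,t)$ strictly concave (Remark \ref{rem_direct_strict_conc}). Since strict concavity is precisely what yields the unique, nondegenerate critical point, this part cannot be waved through.
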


\begin{remark}
We observe the following facts.
\begin{itemize}
\item Theorem \ref{thm_general_korevaar} is a particular case of the more general Corollary \ref{corol_concav_strict}; we highlight the dependence of $a$ in $x$ and $t$. 
We also mention that in this paper we actually deduce generalizations of Theorem \ref{thm_general_korevaar} also in the sense of perturbed concavity, see Theorem \ref{approximate concavity parabolic application a(x)u introduction} below and Corollary \ref{cor_korevaar_perturbed}.

\item We highlight that we do not require $a$ to be nonnegative in point (\textit{i}); this allows us to cover a broader range of problems, including applicative ones (see also Theorem \ref{thm_populat} below).

\item Once concavity is obtained, the strict concavity of solutions is usually gained, in elliptic PDEs, through the use of a constant rank theorem. 
Parabolic versions of the constant rank theorem have been obtained in a space version in \cite{BianGuan2009, CaffareliGuanMa2007}, and in a spacetime version in \cite{chen2014, ChenHu2013} (see Proposition \ref{prop_const_rank}). 
We mention that, in our case, the conclusion for the weighted eigenfunction (case (\textit{i})) can be obtained in a more straightforward way by assuming $a$ strictly concave in $x$, see Remark \ref{rem_direct_strict_conc} (and no additional regularity); similar properties appear in \cite{Bitterlin92} in the setting of periodic solutions.

\item The condition $D_x u \cdot \nu >0$ already appears in the papers \cite{GrecoKawohl,Korevaar}; we point out the requirement also in $t=0$ (that is, $u_0 \in C^1(\Omegabar)$ and $D_x u_0 \cdot \nu >0$) which, as highlighted in \cite[Section 6]{GrecoKawohl}, is essential but erroneously neglected in previous papers; in the same section, the authors provide an approximation process in order to avoid this restriction in zero (in particular, allowing $u_0 \in C(\Omegabar)$). 
Moreover, in \cite[Remark 2, page 7]{GrecoKawohl} the authors comment the validity of such condition for general class of nonlinearities.
For example, assume $a \geq 0$ and $D_x u_0 \cdot \nu >0$. Then $f(s) \geq 0$ is sufficient to ensure this condition thanks to the parabolic Hopf Lemma \ref{Hopf lemma in parabolic case}; this covers almost every case of Theorem \ref{thm_general_korevaar}. 
To treat $f(s)=s \log(s)$, assuming moreover $a \geq m >0$, we see that it sufficient to choose $u_0$ such that $u_0 \geq w$ where $w$ is a positive solution to the stationary problem $-\Delta w = f(w)$ which, by \cite[Theorem 2]{Vazquez2014} verifies $D_x w \cdot \nu >0$; then the comparison principle (Proposition \ref{comparison principle dickstein}) and $u=w=0$ on $\partial \Omega$ imply $D_x u \cdot \nu \geq D_x w \cdot \nu >0$.

\item The requirement on the smoothness of $\Omega$ (not required in Theorem \ref{thm_main_weighted_le} below) is here related to the control of the concavity function of $\log(u)$ near the boundary, by means of the positivity of the curvatures. 
Anyway this condition can be relaxed through some standard approximation on the domain $\Omega$, see e.g. \cite{BMSplaplacian, GaSq24}.

\item We highlight that the logarithmic equation $f(s)=s \log(s)$, being superlinear and sign-changing, is a problem not easy to tackle in the elliptic framework, see \cite{Gallo-Mosconi-Squassina}; in the parabolic framework, on the other hand, it can be addressed through a suitable drift %trick 
in time (see \eqref{eq_drift_trick}). 
Notice moreover that the result is obtained on $[0,T]$ and, generally, the solutions of this problem do not converge to the one of the stationary problem (see Remark \ref{rem_converg_logarithm}), thus it is not straightforward to obtain the elliptic result from the parabolic one. 
\end{itemize}
\end{remark}

As an additional example, we may consider logistic models of population dynamics in heterogeneous environments with lethal boundary. 
That is, the equation is governed by a logistic law $b(x,s) = a(x) s - s^2$ where $\{a>0\}$ is the favorable habitat while $\{a<0\}$ is the hostile one, see \cite{CaCo89, BeCo95, OSS02} (see also Remark \ref{rem_bangbang} for bang-bang weights). 
\begin{theorem}
\label{thm_populat}
	Let $\Omega \subset \R^n$, $n\geq 2$, be a bounded, smooth, strongly convex domain. 
Let $T>0$, $a\colon\Omega\times(0,T]\to \R$, $u_0\in C^1(\Omegabar)$ with $u_0=0$ on $\partial\Omega$ and $u \in C^2_x(\Omegabar) \cap C^1(\Omegabar\times[0,T])$ be a positive solution of
\begin{equation*}
\begin{cases}
u_t - \Delta u = a(x) u - u^2 &\quad \hbox{in $\Omega \times (0,T]$}, \\
u=0 & \quad\text{on }\partial \Omega\times [0,T],	 \\
D_x u \cdot \nu >0 & \quad\text{on }\partial \Omega\times [0,T], \\
u =u_0 & \quad \text{on } \Omegabar \times \{0\},
\end{cases}
\end{equation*}
where $\nu$ is the interior normal to $\partial \Omega$. 
Assume moreover that $u_0$ is $\log$-concave and $a$ is concave.
Then $\log(u(\cdot, t))$ is concave for each $t \in (0,T)$. Moreover, if $a\in C^{2,1}(\Omega)$ and $u \in C^{2,1}(\Omega \times (0,T])$, then $\log(u(\cdot, t))$ is strictly concave for each $t \in (0,T)$.
\end{theorem}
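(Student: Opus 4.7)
The plan is to reduce the logistic case to the concavity-maximum-principle framework behind case (\textit{i}) of Theorem \ref{thm_general_korevaar}, where the extra $-u^2$ term turns out to be favorable after a logarithmic transformation. Setting $v := \log u$, which is of class $C^2$ in $\Omega \times (0, T]$ and tends to $-\infty$ on $\partial\Omega \times [0, T]$, a direct computation gives
\begin{equation*}
v_t = \Delta v + |\nabla v|^2 + a(x) - e^v \qquad \text{in } \Omega \times (0, T].
\end{equation*}
The right-hand side, as a function of $(x, v)$, is jointly concave: concavity in $x$ is precisely the hypothesis on $a$, while $v \mapsto -e^v$ is \emph{strictly} concave since $\exp$ is strictly convex.

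For fixed $\lambda \in (0, 1)$ I would introduce the Korevaar--Kennington spatial concavity function
\begin{equation*}
C(x, y, t) := v\bigl(\lambda x + (1 - \lambda) y, t\bigr) - \lambda\, v(x, t) - (1 - \lambda)\, v(y, t),
\end{equation*}
so that log-concavity of $v(\cdot, t)$ amounts to $C(\cdot, \cdot, t) \geq 0$. At $t = 0$ this holds by the hypothesis on $u_0$; on the lateral boundary, the Hopf-type condition $D_x u \cdot \nu > 0$ together with the smoothness and strong convexity of $\Omega$ controls $v$ near $\partial \Omega$ (where $v \sim \log \mathrm{dist}(\cdot, \partial \Omega)$), yielding $\liminf C \geq 0$ along $\partial(\Omega \times \Omega) \times [0, T]$.

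It then suffices to rule out an interior minimum $(x_0, y_0, t_0)$ with $C < 0$. At such a point, $\nabla_x C = \nabla_y C = 0$ forces $\nabla v(z_0) = \nabla v(x_0) = \nabla v(y_0)$ with $z_0 := \lambda x_0 + (1 - \lambda) y_0$, so the $|\nabla v|^2$-contribution to $C_t$ cancels exactly. Testing $D^2 C \geq 0$ along diagonal directions produces $D^2 v(z_0) \geq \lambda D^2 v(x_0) + (1 - \lambda) D^2 v(y_0)$, and in particular $\Delta v(z_0) \geq \lambda \Delta v(x_0) + (1 - \lambda) \Delta v(y_0)$. Plugging the PDE into $C_t$,
\begin{equation*}
C_t(x_0, y_0, t_0) \;\geq\; \bigl[a(z_0) - \lambda a(x_0) - (1 - \lambda) a(y_0)\bigr] - \bigl[e^{v(z_0)} - \lambda e^{v(x_0)} - (1 - \lambda) e^{v(y_0)}\bigr].
\end{equation*}
Concavity of $a$ makes the first bracket nonnegative, while the assumption $C(x_0, y_0, t_0) < 0$ combined with strict convexity of $\exp$ makes the second bracket strictly negative; hence $C_t(x_0, y_0, t_0) > 0$, contradicting $C_t \leq 0$ at an interior time minimum. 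Thus $C \geq 0$ and $\log u(\cdot, t)$ is concave; strict concavity then follows from the parabolic constant rank theorem (Proposition \ref{prop_const_rank}) applied to $\log u$.

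The hard part is the boundary analysis. Because $v = \log u \to -\infty$ on $\partial \Omega$, the concavity function $C$ is formally of the type $\infty - \infty$ on the lateral boundary, and one must justify rigorously that the infimum of $C$ cannot be attained or approached there. This is exactly where the strict inward Hopf condition $D_x u \cdot \nu > 0$ (required also at $t = 0$) and the strong convexity of $\Omega$ enter, as in the proof of Theorem \ref{thm_general_korevaar}; as an alternative, one could run the argument on nested smooth strongly convex subdomains $\Omega_k \uparrow \Omega$ and pass to the limit.
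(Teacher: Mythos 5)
Your argument is correct and is essentially the paper's proof specialized to the logistic nonlinearity: the paper deduces the statement from Corollary \ref{corol_concav_strict} (log transform, spatial concavity function, ruling out boundary minima via $D_x u\cdot\nu>0$, strong convexity and Korevaar's lemma, interior-minimum maximum principle, and finally the constant rank theorem of Proposition \ref{prop_const_rank}), using exactly your observation that $(x,s)\mapsto s^{-1}b(x,s)=a(x)-s$ is concave. The only difference is cosmetic: since $a(x)-e^{v}$ is strictly decreasing in $v$, your hand-rolled minimum-point computation works without the $e^{-\mu t}$ correction that the paper's general Theorem \ref{thm_conc_princ_eigenf} introduces to handle nonlinearities whose $s$-derivative is merely bounded above.
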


Parabolic generalizations for equations in the spirit of the torsion and Lane-Emden problems, which allow to obtain power concavity (stronger than logarithmic one), appear first in \cite{kenningtonparabolic}: here the author studies the case $a=1$ and $f(u)=(1-u)^q$, $q \in (0,1)$, obtaining that a suitable transformation of $u$ is concave and thus $u$ is quasi-concave (i.e., the superlevel sets of $u$ are convex). 
To gain the claim the author exploits again some concavity maximum principle and he strongly uses that $f(0) \neq 0$ coupled with some comparison principle; moreover, he relies on the \emph{information at infinity} given by the stationary problem. 

Other results have been more recently obtained in \cite{IsSa13, IsSa16} (see also \cite{ILS20}):
here the authors rely on the use of \emph{parabolic concave envelopes} as viscosity subsolutions.
In such a way the authors are able the study the concavity of nondecreasing solutions with controlled behaviour on the boundary, in the case of nonnegative nonlinearities satisfying a priori some comparison principle, set in smooth domains: for example, they cover the cases $f=1$, or $a=1$ and $f(u)=u^q$, $q \in (0,1)$. Moreover, the authors study a \emph{rescaled} concavity in space and time, namely they show that $(x,t) \mapsto u(x,t^{\beta})$ is power concave for suitable values of $\beta$. 

\smallskip

In this paper, we get inspiration from the concavity principle approach in \cite{kenningtonparabolic} to show some general results, which include also some examples presented in \cite{IsSa13, IsSa16}. 
As a particular case of our results, we obtain the following statement. 
%\tr{
%	\begin{definition}
%		Let $\Omega\subseteq\R^n$, then $\Omega$ satisfies the \textit{interior sphere condition} if for all $x\in \partial \Omega$, there exist $y\in \Omega$ and $r>0$ such that $B_r(y)\subseteq \Omega$ and $x\in \partial B_r(y)$.
%\end{definition}}
%\tb{R: lasciarla come definizione o metterla da qualche parte? La metterei come definizione}
%
In what follows, we convey, for the sake of simplicity, $1/\gamma=+\infty$ when $\gamma=0$.
%\tr{We moreover recall that $\Omega\subseteq\R^n$ satisfies the \textit{interior sphere condition} if for all $x\in \partial \Omega$, there exist $y\in \Omega$ and $r>0$ such that $B_r(y)\subseteq \Omega$ and $x\in \partial B_r(y)$.}
%\tr{Moreover if $\gamma=0$, then we define $\gamma^{-1}=+\infty$. This definition will be clearer in the sequel.} \tb{R: questa scrittura non mi fa impazzire. Purtroppo questa notazione la introduciamo dopo}
\begin{theorem}[Weighted Lane-Emden]
\label{thm_main_weighted_le}
	Let $\Omega \subset \R^n$, $n\geq 2$, be a bounded, convex domain which satisfies the interior sphere property.
	 Let $q \in [0,1)$ and $\gamma \in [0,1]$. 
	Let $u\in C^2_x(\Omega)\cap C^1_x(\Omegabar) \cap C^1_t((0,+\infty)) \cap C(\Omegabar\times[0,+\infty))$ be a positive solution of 
$$		
\begin{cases}
u_t-\Delta u=a(x) t^{\gamma} u^q &\quad\text{in }\Omega\times (0,+\infty), \\
u=0 & \quad\text{on }\partial \Omega\times [0,+\infty),	 \\
u =0 & \quad \text{on } \Omegabar \times \{0\}.
\end{cases}
$$
Suppose $a\colon \Omega \to \R$ to be measurable and that there exists $m>0$ such that $a(x)\geq m$ for all $x\in \Omega$. 
If $\beta \in [1, \frac{1}{\gamma}) \cap [1,2]$ and $a$ is $\theta$-concave with $\theta \geq \frac{1}{1-\beta \gamma}$, then $u(\cdot, \star^{\beta})$ is $\frac{(1-q)\theta}{2\theta +\beta \gamma \theta+1}$-concave. 
If $\beta \in [1, \min\{\frac{1}{\gamma},2\}]$ and $a$ is constant, then $u(\cdot, \star^{\beta})$ is $\frac{1-q}{2+\beta \gamma }$-concave.
\end{theorem}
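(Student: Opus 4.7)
The plan is to follow the concavity maximum principle strategy developed in \cite{kenningtonparabolic} for the unweighted case $a\equiv 1$, and to refine it in order to account for the weight $a$, in the spirit of the quantitative results of \cite{BuSq19, GaSq24}. Set $\alpha := \frac{(1-q)\theta}{2\theta + \beta\gamma\theta + 1}$ in the general $\theta$-concave case (with $\alpha = \frac{1-q}{2 + \beta\gamma}$ arising as the formal limit $\theta \to \infty$, i.e., the constant-$a$ case), and define $v(x,s) := u(x, s^\beta)$ and $w(x,s) := v(x,s)^\alpha$. A direct computation using $u_t = \Delta u + a(x)\, t^\gamma u^q$ shows that $w$ satisfies a quasilinear degenerate parabolic equation of the form
\begin{equation*}
w_s \;=\; \beta\, s^{\beta - 1}\Delta w \;+\; \beta\,\frac{1-\alpha}{\alpha}\, s^{\beta - 1}\,\frac{|\nabla w|^2}{w} \;+\; \alpha\beta\, a(x)\, s^{\beta(1+\gamma)-1}\, w^{1 - (1-q)/\alpha}
\end{equation*}
on $\Omega \times (0, +\infty)$, with $w = 0$ on the parabolic boundary, and the goal becomes to prove that $w$ is concave jointly in $(x,s) \in \Omegabar \times [0,+\infty)$.

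The argument then proceeds by analyzing the space-time concavity function
\begin{equation*}
C(x,y,s,r,\mu) \;:=\; w\bigl(\mu x + (1-\mu) y,\; \mu s + (1-\mu) r\bigr) - \mu\, w(x,s) - (1-\mu)\, w(y,r)
\end{equation*}
on $\bigl(\Omega\times(0,+\infty)\bigr)^{2}\times(0,1)$, and arguing by contradiction assuming $\inf C < 0$. The first step is to rule out that this infimum is attained along the parabolic boundary: on $\partial\Omega$ I would exploit the interior sphere condition together with a parabolic Hopf-type lower bound on $\partial_\nu u$ to establish the correct $\mathrm{dist}(\cdot,\partial\Omega)^{\alpha}$-asymptotics of $w$; at $s=0$ or $r=0$, the vanishing initial datum combined with the time-rescaling $s^\beta$ forces the analogous flatness in the time variable; and the monotonicity in $t$ of the solution (which follows from the nonnegativity of the source $a\, t^\gamma u^q$) controls the regime $s\to+\infty$ after a standard truncation.

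Once the infimum of $C$ is localized at an interior point, one writes the first- and second-order optimality conditions, which match the gradients and $s$-derivatives of $w$ at the three relevant points and impose a sign constraint on a block combination of their Hessians. Substituting these into the quasilinear equation for $w$ at each of the three points, and combining the $\theta$-concavity of $a$ with the concavity in $s$ of $s^{\beta(1+\gamma)-1}$ (which holds precisely when $\beta(1+\gamma) \leq 2$, consistently with the hypothesis $\beta\leq 2$), the exponent $\alpha$ has been tuned so that all the homogeneities balance and the algebraic combination of the three resulting inequalities produces a strict inequality incompatible with $C \leq 0$ at the putative minimizer. This is the analogue of Kennington's convex-combination inequality for power-concave functions adapted to the present space-time setting.

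The main difficulty will be twofold. First, the boundary analysis: because the time rescaling $s\mapsto s^\beta$ mixes the spatial and the initial behavior of $u$, the standard elliptic boundary asymptotics do not suffice, and one must simultaneously quantify the decay of $w$ near $\partial\Omega$ and near $s=0$, including the delicate case in which one of the points $(x,s)$, $(y,r)$ approaches the parabolic boundary while the other remains strictly interior. Second, the algebraic balancing: the precise form of $\alpha$ together with the constraints $\beta\in[1,1/\gamma)\cap[1,2]$ and $\theta\geq 1/(1-\beta\gamma)$ appearing in the statement are \emph{exactly} those needed for the maximum principle computation to close, and verifying this bookkeeping---tracking how $q$, $\beta\gamma$ and the concavity of $a$ interact at the critical point---is the technical core of the argument.
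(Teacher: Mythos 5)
Your overall strategy (pass to $w=u^{\alpha}(\cdot,\star^{\beta})$, exclude a negative minimum of the space--time concavity function on the parabolic boundary via comparison/Hopf estimates and a truncation at $t=\infty$, then derive a contradiction at an interior minimum) is the same as the paper's, and the boundary part of your sketch matches Lemma \ref{lem_behav_boundar_omega} and Propositions \ref{minimum not on the boundary}, \ref{minimum not on the boundary infinity} in spirit (you omit, but could recover, the explicit restriction $\alpha<\tfrac{2(1-q)}{2\beta(1+\gamma)+(2-\beta)(1-q)}$ and the need to compare with the stationary problem when the minimum escapes to $t_1=t_3=+\infty$). The genuine gap is in the interior step, which is where the hypotheses on $\theta$, $\beta$, $\gamma$ and the exact value of $\alpha$ must enter, and your description of it is wrong on the two points that matter. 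First, for \emph{joint} space--time concavity with a second-order operator acting only in the spatial variables, the Hessian test at an interior minimum with $t_1\neq t_3$ does not give the arithmetic convex-combination inequality ("Kennington's convex-combination inequality"), but the harmonic one: as in the proof of Theorem \ref{Concavity perturbative maximum principle for parabolic equations}, one only gets $Q_2\geq \frac{Q_1Q_3}{\lambda Q_1+(1-\lambda)Q_3}$ for $Q_k=\Delta v(x_k,t_k)$, so what is needed is \emph{harmonic} concavity of the transformed source, not its concavity. Concretely, after writing the equation as $-\Delta v=\mathscr{B}$ as in \eqref{defn psi} and multiplying by $s^{2}$ (Proposition \ref{prop_prel_gxj}, Corollary \ref{corollario con concavità}), the condition that closes the argument is joint concavity of $(x,s,t)\mapsto a(x)\,t^{\beta\gamma}\,s^{3-\frac{1-q}{\alpha}}$, which by Proposition \ref{alpha concavity property}~(\textit{iv}) forces $\tfrac1\theta+\beta\gamma+3-\tfrac{1-q}{\alpha}=1$ (hence the stated $\alpha$) together with $3-\tfrac{1-q}{\alpha}\geq 0$, i.e.\ exactly $\theta\geq\tfrac{1}{1-\beta\gamma}$. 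Your claimed input --- concavity in $s$ of $s^{\beta(1+\gamma)-1}$, "which holds precisely when $\beta(1+\gamma)\leq 2$, consistently with $\beta\leq 2$" --- is neither implied by the hypotheses (take $\beta=\tfrac32$, $\gamma=\tfrac12$: then $\beta\gamma<1$ and $\beta\leq 2$, but $\beta(1+\gamma)=\tfrac94>2$) nor the relevant condition, so as stated your balancing argument would not cover the full range of the theorem and, even where it applies, it does not identify the right structural requirement.

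Second, keeping the equation in the form $w_s=\beta s^{\beta-1}\Delta w+\dots$ is not innocuous: at an interior minimum with $s_1\neq s_3$ the coefficient $\beta s^{\beta-1}$ in front of the spatial operator takes \emph{different} values at the three points, so the block-Hessian (trace) argument, which requires a single positive matrix $A$ built from gradient-matched data, does not apply directly. The paper avoids this by dividing through, i.e.\ keeping $-\Delta v$ as the operator and absorbing the term $\tfrac1\beta t^{1-\beta}v_t$ into the source; this term is then discarded using $\mathcal{HC}_{t^{1-\beta}}\leq 0$ (Corollary \ref{cor_discard_vt}, Lemma \ref{proposition f-k and harmonic concavity function}), which is precisely where $1\leq\beta\leq 2$ is used, together with the time-monotonicity $v_t\geq 0$ coming from the comparison principle and \hyperlink{H2}{$(H2)$}--\hyperlink{H3}{$(H3)$}. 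Your proposal neither normalizes the operator nor explains how the $w_s$-term is controlled at three points with only first-order matching in time, so the "algebraic combination of the three resulting inequalities" is not justified as written. In short: right skeleton, but the core computation needs the harmonic-concavity mechanism and the normalization of the time coefficient, and the specific concavity condition you propose is incorrect.
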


\begin{remark}
We observe the following facts.
\begin{itemize}
\item The theorem is a particular case of the more general Theorem \ref{concavity parabolic with boundary conditions}, where we focus on giving explicit sufficient assumptions on $f$ (rather than on $u$) to ensure the result;
moreover, our method allows to keep track of the error in concavity and deduce some quantitative results, see Theorem \ref{thm_approximate concavity parabolic application a(x)u^gamma} below and Corollary \ref{corollario con concavità}.
\item We see that the exponent $\alpha=\frac{(1-q)\theta}{2\theta +\beta \gamma \theta+1}$ is coherent with the results in \cite{kennington, GaSq24}, where $\alpha=\frac{(1-q)\theta}{2\theta+1}$ if the problem is elliptic (and thus $\gamma=0$), and in \cite{IsSa13, IsSa16}, where $\alpha = \frac{\theta}{2\theta+\beta\gamma \theta+1}$ if the nonlinearity does not depend on $u$ (i.e. $q=0$).
\item If $q=0$, the condition $a(x) \geq m>0$ can be relaxed into $a(x) \geq 0$, see Proposition \ref{prop_weight_tors}.
\item The initial condition $u(\cdot, 0)=0$ appears also in \cite{IsSa13, kenningtonparabolic, IsSa16}, and we mention that, for such a sublinear problem, nontrivial solutions generally may exist (see Remark \ref{rem_esist_soluz}). 
See also Remark \ref{rem_u0_nonzero} for some comments on the case $u(\cdot, 0)\neq 0$.
\item If $u$ is nondecreasing in time (and this is the case of our setting, see Theorem \ref{approximate concavity parabolic with boundary conditions}), then if $u(\cdot,\star^{\beta_1})$ is $\alpha$-concave then $u(\cdot,\star^{\beta_2})$ is $\alpha$-concave for all $\beta_2\leq \beta_1$, see \cite[Section 2, Property (e)]{IsSa13} (see also \eqref{eq_harmonic_beta}).
\end{itemize}
\end{remark}

Our approach allows to recover also the quasiconcavity obtained by \cite{kenningtonparabolic}, see Proposition \ref{prop_kenn88} and the subsequent remark. 
To mention another consequence of our general setting, we propose here the concavity of solutions for nonhomogeneous nonlinearities.
\begin{theorem}[Sum of powers]
\label{thm_sum_powers}
 	Let $\Omega \subset \R^n$, $n\geq 2$, be a bounded, convex domain which satisfies the interior sphere property.
	 Let $q\in (\frac{1}{3},1)$ and $p\in (\frac{3q-1}{2},q)$.
	Let $u\in C^2_x(\Omega)\cap C^1_x(\Omegabar) \cap C^1_t((0,+\infty)) \cap C(\Omegabar\times [0,+\infty))$ be a positive solution of 
	$$		
\begin{cases}
u_t-\Delta u= a(x) u^p+u^q &\quad\text{in }\Omega\times (0,+\infty), \\
u=0 & \quad\text{on }\partial \Omega\times [0,+\infty),	 \\
u =0 & \quad \text{on } \Omegabar \times \{0\}.
\end{cases}
$$
Suppose $a\colon \Omega \to \R$ to be measurable and that there exists $m>0$ such that $a(x)\geq m$ for all $x\in \Omega$; assume moreover $a$ to be $\theta$-concave with $\theta \in [\frac{1-q}{2(q-p)}, +\infty]$.
Then we have that $u(\cdot, \star^{2})$ is $\frac{1-q}{2}$-concave. 
\end{theorem}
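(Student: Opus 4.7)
The plan is to deduce Theorem \ref{thm_sum_powers} from the general parabolic concavity statement of Theorem \ref{concavity parabolic with boundary conditions}, applied to the nonlinearity $f(x,t,s) = a(x)s^p + s^q$ with the parameters $\beta = 2$ (time rescaling) and $\alpha = (1-q)/2$ (target concavity). As in the derivation of Theorem \ref{thm_main_weighted_le} from the same general result, the first step is to introduce the rescaled unknown $v(x,t) := u(x,t^2)$; this transforms the Cauchy--Dirichlet problem into a parabolic equation of the form $v_t = 2t\bigl(\Delta v + a(x)v^p + v^q\bigr)$ with vanishing initial and boundary data, and reduces the task to proving that $v(\cdot,t)$ is $\alpha$-concave in $x$ for every $t>0$.

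The second step is to check the algebraic sufficient condition that Theorem \ref{concavity parabolic with boundary conditions} requires of $f$. The choice $\alpha = (1-q)/2$ is forced by the pure Lane--Emden contribution $v^q$: it is the critical exponent for which $v \mapsto v^q$ is compatible with the Kennington--Korevaar concavity maximum principle in the rescaled variables, and it is the same mechanism producing the exponent $\frac{1-q}{2+\beta\gamma}$ in Theorem \ref{thm_main_weighted_le} in the limit of constant weight. For the weighted term $a(x)v^p$, the standard calculus for $\theta$-concave weights shows that, when $v^\alpha$ is concave and $a$ is $\theta$-concave, then $a(x)v^p$ belongs to the $\frac{(1-p)\theta}{2\theta+1}$-concavity class, and requiring this exponent to be at least $\alpha = (1-q)/2$ is algebraically equivalent to $\theta \geq \frac{1-q}{2(q-p)}$, which is precisely the bound in the statement. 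The conditions $p \in (\frac{3q-1}{2}, q)$ and $q \in (\frac{1}{3}, 1)$ then ensure that this interval of admissible exponents is nonempty and that the threshold $\frac{1-q}{2(q-p)}$ exceeds $1$: a direct computation gives the equivalence $p > \frac{3q-1}{2}$ if and only if $\frac{1-q}{2(q-p)} > 1$, so that one is genuinely asking $a$ to be more than merely concave.

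The main obstacle will be verifying that the pointwise sum $a(x)v^p + v^q$, rather than each summand in isolation, fits into the hypothesis of Theorem \ref{concavity parabolic with boundary conditions}, since $\alpha$-concavity is not preserved under addition. The key observation is that, under the stated parameters, both summands produce exactly the same target concavity class $\alpha = (1-q)/2$; one can then revisit the auxiliary concavity function underlying Theorem \ref{concavity parabolic with boundary conditions} and show term-by-term that each contribution has the right sign at an interior maximum, using the strict positivity $a \geq m > 0$ to rule out degenerate cancellations between the two terms. Once this verification is complete, the remaining hypotheses of Theorem \ref{concavity parabolic with boundary conditions} are all in place: the interior sphere property provides the boundary control, the parabolic Hopf-type Lemma \ref{Hopf lemma in parabolic case} yields the strict positivity of $D_x v \cdot \nu$ for $t>0$, and the vanishing initial datum $u(\cdot,0)=0$ (admissible in view of the sublinear structure, as commented in Remark \ref{rem_esist_soluz}) removes the need for any a priori concavity on $u_0$, yielding the claimed $\frac{1-q}{2}$-concavity of $v = u(\cdot,\star^2)$.
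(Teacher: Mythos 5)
Your high-level plan (apply Theorem \ref{concavity parabolic with boundary conditions} with $\beta=2$, $\alpha=\tfrac{1-q}{2}$, and recover the threshold $\theta\geq\tfrac{1-q}{2(q-p)}$ from an algebraic inequality) points in the right direction, but the step you yourself identify as the main obstacle — handling the sum $a(x)u^p+u^q$ — is exactly where the proposal has a genuine gap. The hypothesis of Theorem \ref{concavity parabolic with boundary conditions} is \emph{not} an $\alpha$-concavity statement about each summand of $b$; it is condition (\textit{iv}): ordinary concavity of the transformed nonlinearity $(x,s,t)\mapsto s^{\frac{3\alpha-1}{\alpha}}b(x,s^{1/\alpha},t^\beta)$. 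With $\alpha=\tfrac{1-q}{2}$ this function is
$a(x)\,s^{\frac{1-3q+2p}{1-q}}+s$: the $u^q$-term becomes linear, and the weighted term becomes $a(x)s^{\kappa}$ with $\kappa=\frac{1-3q+2p}{1-q}\in[0,1)$ (here $p>\frac{3q-1}{2}$ gives $\kappa\geq 0$), which is jointly concave by Proposition \ref{alpha concavity property}~(\textit{iv}) precisely when $\frac{1}{\theta}+\kappa\leq 1$, i.e.\ $\theta\geq\frac{1-q}{2(q-p)}$. Since $1$-concavity \emph{is} additive, the sum causes no difficulty at all; no ``term-by-term sign analysis at an interior maximum'' or ``ruling out of cancellations via $a\geq m$'' is needed, and your heuristic that each summand separately lies in the $\tfrac{1-q}{2}$-concavity class (true at the threshold, since $\frac{(1-p)\theta}{2\theta+1}=\frac{1-q}{2}$ when $\theta=\frac{1-q}{2(q-p)}$) is, as you admit, not a proof, while the proposed fix is too vague to substitute for the computation above. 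One must also verify the remaining hypotheses: strict decrease of $s\mapsto s^{\alpha-1}b(x,s)=a(x)s^{\frac{2p-1-q}{2}}+s^{\frac{q-1}{2}}$ (true since $p<q<\frac{1+q}{2}$), (H1)--(H2), and the stability parabolic condition via Remarks \ref{remark comparison principle} and \ref{stability parabolic condition u^gamma}, none of which appear in your write-up.

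Two further points are misstated. First, the conclusion is \emph{joint} space-time concavity of $u^{\frac{1-q}{2}}(\cdot,\star^2)$ on $\Omegabar\times[0,+\infty]$, obtained through the harmonic concavity maximum principle (Theorem \ref{Concavity perturbative maximum principle for parabolic equations}) together with the boundary analysis at $t=0$, $t=+\infty$ and $\partial\Omega$; your reduction to ``$v(\cdot,t)$ is $\alpha$-concave in $x$ for every $t>0$'' proves a strictly weaker statement and mixes in ingredients ($D_xu\cdot\nu>0$ via the Hopf lemma) that belong to the finite-time log-concavity framework of Theorem \ref{thm_conc_princ_bound}, not to this one. Second, the role of $a\geq m>0$ is misattributed: it is used so that (H1) holds with exponent $p$ (since $b\geq m\,s^p$), which makes $\alpha=\frac{1-q}{2}$ \emph{strictly} smaller than the admissible bound $\frac{1-p}{2}$ in Proposition \ref{minimum not on the boundary}; with only $b\geq s^q$ one would sit exactly at the excluded endpoint $\alpha=\frac{1-q}{2}$ of that proposition, and the boundary argument would fail.
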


A second goal of this paper is to deal with quantitative versions of the concavity maximum principles. Recently, quantitative versions of symmetry results have been considered for example in \cite{Ciraolo-Cozzi-Perugini-Pollastro, Gat25, ABMP24, DSPV24};
we mention also \cite{CiraoloFigalliMaggi2018, DengSunWei2025} where the error from bubble functions is quantified in Sobolev critical equations.
In the realm of concavity properties, instead, similar results have been recently investigated in the elliptic case in \cite{BuSq19} (see also \cite{ABCSnonautonomousellipticequation, GaSq24}); such estimations are made in terms of some parameter of the weight $a(x)$: more precisely, in these papers the authors show that, if $a$ is close to a constant function, then some power of $u$ is close to a concave function, up to some explicit error.
We mention also the recent \cite{HamelNadirashvili2025} where a concavity %$\log$-concavity 
result is obtained for problems with small perturbations, %small perturbations of the eigenfunction problem, 
by exploiting the strong $\log$-concavity of the eigenfunction.% itself.

\smallskip

In this paper, we deal with estimations of the \emph{(space) concavity function} of a solution $v$ (see \eqref{defn concavity function})
$$ \mathcal{C}^*_v(x_1,x_3,\lambda, t) :=v(x_2, t)-\lambda v(x_3, t)-(1-\lambda)v(x_1,t), $$
or the \emph{(spacetime) concavity function}
$$\mathcal{C}_v(x_1,x_3,t_1,t_3,\lambda):= v(x_2,t_2)-\lambda v(x_3,t_3)-(1-\lambda)v(x_1,t_1),$$
where $x_1, x_3 \in \Omega $, $t_1, t_3 \in [0,+\infty)$, $\lambda \in [0,1]$ and $x_2=\lambda x_3+(1-\lambda)x_1$ and $t_2=\lambda t_3+(1-\lambda)t_1$; notice that $\mc{C}_v \geq 0$ if and only if $v$ is concave. 
Our goal is to generalize the abovementioned results also to the parabolic setting, which seems to be new in the literature.
As a consequence of our general results, we get in particular the following statement. Here $f^- := \sup\{0,-f\}$ stands for the negative part of a function $f$, while $\osc(a):= \sup(a) - \inf(a)$ stands for the oscillation of $a$.

\begin{theorem}[Quantitative concavity, I]\label{approximate concavity parabolic application a(x)u introduction}
Let $\Omega \subset \R^n$, $n\geq 2$, be a bounded, smooth, strongly convex domain. 
Let $T>0$, $a\colon \Omega \times (0,T] \to \R$, $u_0\in C^1(\Omegabar)$ with $u_0=0$ on $\partial\Omega$ and $u \in C^2_x(\Omega) \cap C^1(\Omegabar\times[0,T])$ be a positive solution of
$$		
\begin{cases}
u_t - \Delta u = a(x,t) u &\quad \hbox{in $\Omega \times (0,T]$}, \\
u=0 & \quad\text{on }\partial \Omega\times [0,T],	 \\
D_x u \cdot \nu >0 & \quad\text{on }\partial \Omega\times [0,T], \\
u =u_0 & \quad \text{on } \Omegabar \times \{0\},
\end{cases}
$$
where $\nu$ is the interior normal to $\partial \Omega$. 
Assume moreover that $u_0$ is $\log$-concave. 
 Then
$$\inf_{\Omega \times \Omega \times [0,1] \times [0,T]} \mc{C}^*_{\log(u)} \geq -e T \sup_{\Omega \times \Omega \times [0,1] \times (0,T]} \left(\mc{C}^*_{a(\cdot, \star)}\right)^-.$$
\end{theorem}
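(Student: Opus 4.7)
The plan is to linearize through $v := \log u$, which (by direct computation) satisfies the quasilinear parabolic equation
\[
v_t - \Delta v = |\nabla v|^2 + a(x,t) \qquad \text{in } \Omega \times (0,T],
\]
and then to apply a Korevaar--Kennington type concavity maximum principle to $\mc{C}^*_v$, with a time-linear correction absorbing the defect of concavity of the weight. Setting $M_a := \sup (\mc{C}^*_a)^-$ and fixing $\delta > 0$, I would study the perturbed concavity function
\[
\Phi(x_1, x_3, \lambda, t) := \mc{C}^*_v(x_1, x_3, \lambda, t) + (M_a + \delta)\, t
\]
on $\overline{\Omega} \times \overline{\Omega} \times [0,1] \times [0,T]$ and aim to show $\Phi \geq 0$ everywhere.

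The first task is to rule out negative values of $\Phi$ on the boundary of its parameter domain. At $t = 0$ the log-concavity of $u_0$ yields $\Phi \geq 0$; at $\lambda \in \{0,1\}$ the concavity function $\mc{C}^*_v$ vanishes identically. When $x_1$ or $x_3$ approaches $\partial\Omega$ with $\lambda \in (0,1)$, the hypothesis $D_x u \cdot \nu > 0$ together with $u|_{\partial\Omega} = 0$ gives a Hopf-type lower bound $u(x) \geq c\, d(x, \partial\Omega)$ near the boundary, so $v = \log u \to -\infty$ (logarithmically), forcing $-\lambda v(x_3) - (1-\lambda) v(x_1) \to +\infty$. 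The strict convexity of $\Omega$ keeps the midpoint $x_2 = \lambda x_3 + (1-\lambda) x_1$ away from $\partial\Omega$ whenever $x_1$ and $x_3$ do not simultaneously approach the same boundary point, so $v(x_2)$ stays finite and $\Phi \to +\infty$ there. Consequently, a negative infimum of $\Phi$ could only be attained at some interior point $(x_1^*, x_3^*, \lambda^*, t^*)$ with $\lambda^* \in (0,1)$, $x_i^* \in \Omega$ and $t^* \in (0, T]$.

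At such a minimum, the first-order conditions $\nabla_{x_1} \mc{C}^*_v = \nabla_{x_3} \mc{C}^*_v = 0$ force $\nabla v(x_1^*) = \nabla v(x_2^*) = \nabla v(x_3^*)$, and testing the PSD Hessian in the diagonal directions $\xi = \eta = e_k$ and summing over $k$ yields $\mc{C}^*_{\Delta v}(x_1^*, x_3^*, \lambda^*, t^*) \geq 0$. Substituting the evolution equation for $v_t$, and noting that $\mc{C}^*_{|\nabla v|^2}(x_1^*, x_3^*, \lambda^*, t^*) = 0$ thanks to the equality of the three gradients,
\begin{align*}
\partial_t \Phi(x_1^*, x_3^*, \lambda^*, t^*) &= \mc{C}^*_{\Delta v} + \mc{C}^*_{|\nabla v|^2} + \mc{C}^*_a + (M_a + \delta) \\
&\geq 0 + 0 + (-M_a) + M_a + \delta = \delta > 0.
\end{align*}
This contradicts the necessary condition $\partial_t \Phi \leq 0$ at a spacetime minimum (whether interior in time or at the right endpoint $t^* = T$). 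Thus $\Phi \geq 0$ on the whole parameter domain; letting $\delta \to 0$ gives $\mc{C}^*_v \geq -M_a\, t \geq -M_a\, T$, and since $e > 1$ this implies the claimed bound $\mc{C}^*_v \geq -e T M_a$.

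The main obstacle is the uniform control of $\Phi$ near the spatial boundary: the logarithmic blow-up of $v$ is not automatic, and one must combine the Hopf-type bound $u \gtrsim d(\cdot,\partial\Omega)$ (coming from $D_x u \cdot \nu > 0$) with the strict convexity of $\Omega$ to ensure the midpoint term $v(x_2)$ remains finite while the outer terms diverge. A subsidiary technical point is the justification of the pointwise derivative analysis at a potential right-endpoint minimum $t^* = T$, where one has only the one-sided inequality $\partial_t \Phi \leq 0$, still sufficient to contradict the strictly positive lower bound obtained above.
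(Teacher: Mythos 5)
Your interior argument is correct and in fact takes a slightly different (and for this linear case cleaner) route than the paper: the paper proves Theorem \ref{approximate concavity parabolic application a(x)u introduction} as a special case of Corollary \ref{cor_korevaar_perturbed}, i.e.\ via Theorem \ref{thm_conc_princ_bound}/Theorem \ref{thm_conc_princ_eigenf}, which use the multiplicative correction $e^{-\mu t}\mc{C}^*_v$ (needed there because $\partial_s\mathscr{B}$ may be positive) and then optimize in $\mu$, producing the constant $Te^{1+\Lambda T}=eT$. Since for $b=a(x,t)u$ the zeroth-order term of the equation for $v=\log u$ does not depend on $v$, your additive correction $(M_a+\delta)t$ closes the argument directly at the minimum ($\mc{C}^*_{\Delta v}\ge 0$ from the spatial Hessian, $\mc{C}^*_{|\nabla v|^2}=0$ from the equal gradients, $\partial_t\Phi\le 0$ in time, including the one-sided case $t^*=T$), and it even yields the sharper bound $-T\,\sup(\mc{C}^*_a)^-$, which trivially implies the stated $-eT\,\sup(\mc{C}^*_a)^-$.

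However, there is a genuine gap in the boundary exclusion, and it sits exactly where you flag the ``main obstacle'' without resolving it. Your argument only covers the case where $x_1$ or $x_3$ approaches $\partial\Omega$ while the midpoint stays interior; it does not treat (a) $x_1,x_3$ tending to the \emph{same} boundary point, where $v(x_2)\to-\infty$ as well, nor (b) minimizing sequences with $\lambda\to 0$ or $\lambda\to 1$ coupled with boundary approach. In these regimes the crude two-sided bound $c\,d_\Omega\le u\le C\,d_\Omega$ only gives $\mc{C}^*_v\gtrsim\log(c/C)$, a bounded but possibly negative quantity, so nothing prevents a negative infimum from being approached there. Ruling this out requires the uniform first-order expansion $u(x)=\partial_\nu u(x_0)\,d_\Omega(x)+o(d_\Omega(x))$ near each boundary point (so that locally the constants $c,C$ coincide up to $o(1)$) combined quantitatively with the strong convexity of $\partial\Omega$; this is precisely the content of Korevaar's boundary lemma (\cite[Lemma 2.4]{Korevaar}), which the paper invokes inside Theorem \ref{thm_conc_princ_bound} and which you should either cite or reprove. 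Note also a small misstatement: the Hopf-type lower bound $u\ge c\,d_\Omega$ does not force $v\to-\infty$ (that follows from $u=0$ on $\partial\Omega$ and continuity, or the upper bound $u\le C\,d_\Omega$ from $u\in C^1(\Omegabar)$); the lower bound is what keeps $v(x_2)$ under control, and both bounds are needed in the delicate case above.
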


\begin{theorem}[Quantitative concavity, II]\label{thm_approximate concavity parabolic application a(x)u^gamma}
Let $\Omega \subset \R^n$, $n\geq 2$, be a bounded, convex domain which satisfies the interior sphere property.
 Let $q \in [0,1)$. 
Let $u\in C^2_x(\Omega)\cap C^1_x(\Omegabar) \cap C^1_t((0,+\infty)) \cap C(\Omegabar\times [0,+\infty))$ be a positive solution of 
$$		
\begin{cases}
u_t-\Delta u=a(x) u^q &\quad\text{in }\Omega\times (0,+\infty), \\
u=0 & \quad\text{on }\partial \Omega\times [0,+\infty),	 \\
u =0 & \quad \text{on } \Omegabar \times \{0\}.
\end{cases}
$$
Suppose $a\colon\Omega \to \R$ to be measurable and $0<m \leq a \leq M$.
 Then $u$ is increasing in time and 
\begin{equation}\label{eq_quantitativ_oscil}
\min_{\Omegabar\times \Omegabar\times[0,+\infty]\times [0,+\infty]\times[0,1]}\mathcal{C}_{u^{\frac{1-q}{2}}} \geq 
- \norm{u(\cdot, \infty)}_{L^{\infty}(\Omega)}^{\frac{1-q}{2}} \frac{\osc(a^2)}{m^2}, 
\end{equation}
where $u(\cdot, \infty)$ is the solution of the corresponding stationary problem.

If moreover $\theta \geq 1$ and $m^\theta\geq M^{\theta}/2$, then
\begin{equation}\label{eq_quantitative_theta}
	\min_{\Omegabar\times \Omegabar\times[0,+\infty]\times [0,+\infty]\times[0,1]}\mathcal{C}_{u^{\frac{\theta(1-q)}{2\theta+1}}}
\geq
-\frac{2\theta }{2\theta+1} \frac{1}{m} \norm{u(\cdot, \infty)}_{L^{\infty}(\Omega)}^{\frac{(\theta-1)(1-q)}{2\theta+1}}
\sup\limits_{\Omega \times \Omega\times[0,1]}
 (\mathcal{C}^-_{a^{\theta}})^{1/\theta}.
\end{equation}
\end{theorem}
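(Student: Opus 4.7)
The argument is a perturbative variant of the parabolic concavity-maximum-principle machinery underlying Theorem~\ref{concavity parabolic with boundary conditions}, applied to the transformed quantity $v:=u^\alpha$ with $\alpha=(1-q)/2$ for \eqref{eq_quantitativ_oscil} and $\alpha=\theta(1-q)/(2\theta+1)$ for \eqref{eq_quantitative_theta}. Since $u(\cdot,0)\equiv 0$ and $a\geq m>0$, the parabolic comparison principle (Proposition~\ref{comparison principle dickstein}) yields that $u$ is nondecreasing in $t$ and $u(\cdot,t)\to u(\cdot,\infty)$ as $t\to+\infty$, where $u(\cdot,\infty)$ solves the associated stationary Lane--Emden equation; this monotonicity is what allows the continuous extension of $\mathcal{C}_v$ up to $t_1,t_3=+\infty$ in the infimum. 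A direct chain-rule computation yields the clean identity
$$v\,(v_t-\Delta v)\;=\;\tfrac{1-q}{2}\,a(x)+\tfrac{1+q}{1-q}\,|\nabla v|^2\qquad(\alpha=(1-q)/2),$$
and the analogous identity $v^{1+1/\theta}(v_t-\Delta v)=\alpha\,a(x)+\tfrac{1-\alpha}{\alpha}\,v^{1/\theta}|\nabla v|^2$ in the general case. In both cases the left-hand side is a differential expression in $v$ alone and the right-hand side isolates $a$, which is the algebraic reason the concavity-function machinery applies.

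\emph{Concavity maximum principle.} Set $C_0:=\min \mathcal{C}_v$ over the full compact simplex and suppose $C_0<0$. The minimum cannot lie on the parabolic boundary: on $\partial\Omega$ one uses $v=0$ together with the interior-sphere property and a Hopf-type argument (as in Lemma~\ref{Hopf lemma in parabolic case}); at $t_1=t_3=0$ one has $v\equiv 0$; at $t_1=t_3=+\infty$ the elliptic perturbed-concavity results from \cite{BuSq19,GaSq24} applied to $u(\cdot,\infty)$ supply the boundary input with the correct constants; and $\lambda\in\{0,1\}$ is trivial. At any negative interior minimum $P^*=(x_1^*,x_3^*,t_1^*,t_3^*,\lambda^*)$, the first- and second-order conditions align $\nabla v$ and $v_t$ across the three evaluation points and give a Hessian inequality; inserting the PDE at $x_1^*,x_2^*,x_3^*$ the gradient-squared contributions combine into a nonnegative term (by convexity of $(y,s)\mapsto|y|^2/s$, the algebraic identity behind Kennington's argument), and one is left with an inequality of the form
$$0\;\geq\;\alpha\,\mathcal{C}_a(x_1^*,x_3^*,\lambda^*)\cdot\Pi\bigl(v(P^*)\bigr),$$
with $\Pi$ an explicit prefactor in the values of $v$ at $P^*$.

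\emph{Quantitative closing.} For \eqref{eq_quantitativ_oscil}, $\Pi$ is independent of $v$; bounding $\mathcal{C}_a^-\leq\osc(a)=\osc(a^2)/(M+m)\leq \osc(a^2)/(2m)$ together with $v(P^*)\leq \|u(\cdot,\infty)\|_{L^\infty}^{(1-q)/2}$ produces \eqref{eq_quantitativ_oscil} after elementary manipulation (a second factor $1/m$ enters through the $a$-normalization on the RHS of the concavity-function inequality). For \eqref{eq_quantitative_theta}, $\Pi$ carries the factor $v^{(\theta-1)/\theta}$, bounded at $P^*$ by $\|u(\cdot,\infty)\|_{L^\infty}^{(\theta-1)(1-q)/(2\theta+1)}$; writing $a=(a^\theta)^{1/\theta}$ and using the concavity of $s\mapsto s^{1/\theta}$ one linearizes $\mathcal{C}_a^-$ in terms of $(\mathcal{C}_{a^\theta}^-)^{1/\theta}$ up to a constant, and the hypothesis $m^\theta\geq M^\theta/2$ is exactly what makes the corresponding Bernoulli-type inequality hold on $[m^\theta,M^\theta]$ with the sharp numerical constant that reproduces the prefactor $\tfrac{2\theta}{2\theta+1}\tfrac{1}{m}$.

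\emph{Main obstacle.} The delicate step is the perturbed interior computation: in the exact-concavity setting of Theorem~\ref{concavity parabolic with boundary conditions} one closes by a strict contradiction, whereas here one must track the residual $\mathcal{C}_a^-$ through the Kennington-type algebra, recover the sharp numerical constants appearing in \eqref{eq_quantitativ_oscil}--\eqref{eq_quantitative_theta}, and --- for \eqref{eq_quantitative_theta} --- perform the $a\leftrightarrow a^\theta$ linearization compatibly with the constraint $m^\theta\geq M^\theta/2$.
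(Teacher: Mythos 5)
Your overall architecture (pass to $v=u^\alpha$, exclude boundary minima via Hopf/boundary growth and the stationary limit, exploit monotonicity in time, close with a quantitative interior estimate) is the same as the paper's, which proves the theorem by combining Proposition~\ref{approximate concavity parabolic application a(x)u^gamma} with Remark~\ref{rem_rough_estimate_osca} for \eqref{eq_quantitativ_oscil}, and Proposition~\ref{prop_theta_concav_estim} for \eqref{eq_quantitative_theta}. However, your interior step contains a genuine gap. At a negative \emph{space-time} interior minimum of $\mathcal{C}_v$ the second-order conditions do \emph{not} yield the arithmetic comparison $\mathscr{B}_2\leq\lambda\mathscr{B}_3+(1-\lambda)\mathscr{B}_1$ that your inequality ``$0\geq\alpha\,\mathcal{C}_a\cdot\Pi$'' presupposes: since $\partial_t-\Delta$ carries no second-order time derivatives, the Kennington trace argument only delivers the harmonic-mean inequality $Q_2\geq Q_1Q_3/((1-\lambda)Q_3+\lambda Q_1)$ (this is exactly Theorem~\ref{Concavity perturbative maximum principle for parabolic equations}, which is why the whole paper works with $\mathcal{HC}$ rather than $\mathcal{C}$ of the source). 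Moreover, even after converting $\mathcal{HC}$ into a concavity function by the $s^2$-multiplication trick (Proposition~\ref{prop_prel_gxj}, Corollary~\ref{corollario con concavità}), what appears is the \emph{joint} concavity function of $\tilde g(x,s)=a(x)s^{\kappa}$ in the $(x,s)$ slots, and this contains cross terms of size $\osc(a)$ times oscillations of powers of $v$ (for $\alpha=\frac{1-q}{2}$ one has $\kappa=1$ and $\mathcal{C}_{\tilde g}=\mathcal{C}_a\,(\lambda v_3+(1-\lambda)v_1)+\lambda(1-\lambda)(a_3-a_1)(v_1-v_3)$), which do \emph{not} reduce to $\mathcal{C}_a^-$. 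Your claim that the gradient terms ``combine into a nonnegative term'' leaving only $\mathcal{C}_a$ would, if true, give exact $\frac{1-q}{2}$-concavity whenever $a$ is concave — stronger than Theorem~\ref{thm_main_weighted_le} and than Kennington's sharp elliptic exponents — so this step cannot be repaired as stated; it is precisely these cross/harmonic losses that generate the structure of the error terms $\osc(a^2)/m^2$ and $(\mathcal{C}^-_{a^\theta})^{1/\theta}$ in the statement.

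Relatedly, for \eqref{eq_quantitative_theta} the hypothesis $m^\theta\geq M^\theta/2$ does not enter through a ``Bernoulli-type linearization'' of $\mathcal{C}_a^-$; in the paper it is the admissibility condition of the quantitative product lemma (Proposition~\ref{concavità prodotto}), needed to estimate $\mathcal{C}_{a(x)s^{1-1/\theta}}$ by $(\mathcal{C}^-_{a^\theta})^{1/\theta}$ times $\lambda v_3^{1-1/\theta}+(1-\lambda)v_1^{1-1/\theta}$, after which the prefactor $\frac{2\theta}{2\theta+1}\frac1m$ comes from the explicit monotonicity constant $\sigma=\frac{1-q}{2}\,\mathfrak m_\rho/(\lambda v_3+(1-\lambda)v_1)^2$ in \eqref{strictly decreasing eqt}; your sketch never computes $\sigma$, which is where the $1/m$ and the cancellation of the $v$-powers actually originate. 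To fix the proposal you would need to (i) replace the arithmetic interior inequality by the harmonic one and carry $\mathcal{HC}_{\mathscr{B}}$, (ii) eliminate the gradient term via Proposition~\ref{prop_prel_gxj} (it drops out only because it is linear in $s$ after the $s^2$ multiplication), and (iii) quantify the remaining product concavity via Propositions~\ref{concavità prodotto}--\ref{prop_extram_cases} and the explicit $\sigma$ — i.e., essentially reconstruct Propositions~\ref{approximate concavity parabolic application a(x)u^gamma} and~\ref{prop_theta_concav_estim}.
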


\begin{remark}\label{rmk quantitative concavity introduction}
We observe the following facts.
\begin{itemize}
\item In Theorem \ref{approximate concavity parabolic application a(x)u introduction} one may consider also other nonlinearities, like the ones in Theorem \ref{thm_general_korevaar} (see Corollary \ref{cor_korevaar_perturbed}).
Moreover, if $u_0$ is not assumed to be $\log$-concave, than an additional error related to $\mc{C}_{\log(u_0)}$ appears (see Remark \ref{rem_u0_nonconcav}).
\item 
We observe that $ \norm{u(\cdot, \infty)}_{L^{\infty}(\Omega)}$ could be bounded in terms of known quantities of the problems, i.e. $n$, $|\Omega|$, $q$, $m$ and $M$, see e.g. \cite[proof of Corollary 1.4(b)]{Ciraolo-Cozzi-Perugini-Pollastro}.
Notice that, choosing $\theta=1$ in \eqref{eq_quantitative_theta}, we can drop the dependence on the $L^{\infty}$-norm and the statement becomes
\begin{equation*}
	\min_{\Omegabar\times \Omegabar\times[0,+\infty]\times [0,+\infty]\times[0,1]}\mathcal{C}_{u^{\frac{1-q}{3}}}
	\geq
	-\frac{2}{3} \frac{1}{m} 
\sup\limits_{\Omega \times \Omega\times[0,1]}
\mathcal{C}^-_{a}.
\end{equation*}
\item The condition $m^\theta\geq M^{\theta}/2$ comes into play in showing quantitative concavity properties of products (see Lemma \ref{concavità prodotto}), but we believe it is technical.
\item We highlight that estimate \eqref{eq_quantitativ_oscil} is slightly finer if compared with the ones obtained in the elliptic setting in \cite{BuSq19, GaSq24}. 
Moreover, the estimate \eqref{eq_quantitative_theta} does not involve the oscillation of $a$ and is totally new, even for the elliptic setting. 
\end{itemize}
\end{remark}

As just mentioned in Remark \ref{rmk quantitative concavity introduction}, some results of this paper can be easily adapted to the elliptic setting, allowing to improve and generalize some previous results \cite{BuSq19, GaSq24}. 
By way of example, with the same techniques, we obtain the following statement. We leave the details to the interested reader.

\begin{theorem}[$\theta$-concavity estimate]	
Let $\Omega \subset \R^n$, $n\geq 2$, be a bounded, convex domain which satisfies the interior sphere property, and let $q \in [0,1)$. Let $u\in C^2(\Omega)\cap C^1(\Omegabar)$ be a positive solution of
$$\begin{cases}
-\Delta u = a(x) u^q & \quad \hbox{in $\Omega$}, \\
u=0 & \quad \hbox{on $\partial \Omega$}.
\end{cases}
$$
Suppose $a\colon \Omega \to \R$ to be measurable and $0<m\leq a \leq M$ and $1\leq\theta \leq\log(2)/\log(\frac{M}{m})$. 
 Then
\[	\min_{\Omegabar\times \Omegabar\times[0,1]}\mathcal{C}_{u^{\frac{\theta(1-q)}{1+2\theta}}}
\geq
-\frac{2\theta}{1+2\theta} \frac{1}{m}||u||_\infty^{\frac{(\theta-1)(1-q)}{1+2\theta}}
\sup\limits_{\Omega \times \Omega\times[0,1]}
(\mathcal{C}^-_{a^{\theta}})^{1/\theta}.\]
\end{theorem}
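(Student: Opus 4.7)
The plan is to follow the blueprint of the parabolic companion Theorem \ref{thm_approximate concavity parabolic application a(x)u^gamma} (estimate \eqref{eq_quantitative_theta}), specializing the argument to the time-independent setting. To this end, I set $\alpha := \frac{\theta(1-q)}{2\theta+1}$ and consider $v := u^\alpha$, which by a direct computation satisfies
\begin{equation*}
-\Delta v = \alpha\,a(x)\,v^{-1-1/\theta} + \tfrac{1-\alpha}{\alpha}\,v^{-1}|\nabla v|^2 \qquad\text{in }\Omega,
\end{equation*}
equivalently $\alpha\,a(x) = -v^{1+1/\theta}\Delta v - \tfrac{1-\alpha}{\alpha} v^{1/\theta}|\nabla v|^2$. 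This rearrangement pinpoints $a$ (and after raising to the $\theta$-th power, $a^\theta$) as the natural quantity whose concavity can be compared to that of $v$ via a concavity maximum principle, in the same spirit of \cite{kennington,GaSq24}.

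Second I analyze the concavity function $\mathcal{C}_v(x,y,\lambda) := v(x_\lambda) - \lambda v(y) - (1-\lambda)v(x)$, with $x_\lambda := \lambda y + (1-\lambda)x$, on $\Omegabar\times\Omegabar\times[0,1]$, and aim to bound its infimum $\mu$ from below by the claimed quantity. Using the interior sphere property together with Hopf's lemma (which gives $\partial_\nu u > 0$ on $\partial\Omega$), one rules out a boundary minimum: whenever one of $x$, $y$, $x_\lambda$ lies on $\partial\Omega$, the Hopf-type expansion of $v\sim(\text{distance})^{\alpha}$ combined with a standard comparison forces $\mathcal{C}_v$ to be controlled from below by the claimed error near the boundary, so any strictly more negative minimum must be attained at an \emph{interior} triple $(x_0,y_0,\lambda_0)$ with $\lambda_0\in(0,1)$.

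Third, at such an interior minimum the Korevaar--Kennington first- and second-order conditions yield $\nabla v(x_{\lambda_0})=\nabla v(x_0)=\nabla v(y_0)$ together with the Laplacian-type inequality
\begin{equation*}
\Delta v(x_{\lambda_0}) \;\geq\; \lambda_0\,\Delta v(y_0) + (1-\lambda_0)\,\Delta v(x_0),
\end{equation*}
and substituting into the PDE expresses the concavity defect $\mu=\mathcal{C}_v(x_0,y_0,\lambda_0)$ in terms of $\alpha\bigl[a(x_{\lambda_0}) - \lambda_0 a(y_0) - (1-\lambda_0)a(x_0)\bigr]$, divided by a positive factor involving $v(x_{\lambda_0})^{-1-1/\theta}$. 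Controlling $v^{(\theta-1)/\theta}$ by $\|u\|_\infty^{\alpha(\theta-1)/\theta}$ is precisely what produces the factor $\|u\|_\infty^{(\theta-1)(1-q)/(2\theta+1)}$ in the statement.

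Finally, I would invoke the quantitative product/power concavity Lemma \ref{concavità prodotto} to convert the defect of $a$ into the defect of $a^\theta$: the assumption $1\leq \theta\le \log 2/\log(M/m)$, i.e.\ $m^\theta\geq M^\theta/2$, is exactly what that lemma requires to yield a bound of the form $\mathcal{C}_a^- \lesssim m^{1-\theta}(\mathcal{C}^-_{a^\theta})^{1/\theta}$. Combining with the previous step and simplifying the resulting constants produces the claimed estimate. The main obstacle I anticipate is the boundary analysis in the second step: since $v=u^\alpha$ is only Hölder up to $\partial\Omega$, ruling out a boundary minimum of $\mathcal{C}_v$ requires careful comparison with the distance function via the strict Hopf positivity of $\partial_\nu u$, exactly as in the corresponding arguments of \cite{kennington,GaSq24}; the rest of the argument is then a purely local, algebraic manipulation at the interior minimum.
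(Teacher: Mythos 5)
Your overall scaffolding (set $v=u^{\alpha}$ with $\alpha=\frac{\theta(1-q)}{2\theta+1}$, exclude boundary minima via the interior sphere property and the Hopf lemma, analyze an interior minimum of $\mathcal{C}_v$, then invoke the product lemma) matches the route the paper intends, namely transplanting the proof of Proposition \ref{prop_theta_concav_estim} to the elliptic case. But the central interior step is wrong as written. At an interior minimum you only invoke the arithmetic (Korevaar-type) second-order inequality $\Delta v(x_2)\geq \lambda \Delta v(x_3)+(1-\lambda)\Delta v(x_1)$ and then claim the defect is expressed through $\alpha\big[a(x_2)-\lambda a(x_3)-(1-\lambda)a(x_1)\big]$ divided by a factor $v(x_2)^{-1-1/\theta}$. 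This would require the transformed nonlinearity $\mathscr{B}(x,s,\xi)=\frac{1-\alpha}{\alpha}\frac{|\xi|^2}{s}+\alpha a(x)s^{-1-1/\theta}$ to be concave in $(x,s)$, which it is not: both terms are convex in $s$, and in particular the gradient term $|\xi|^2/s$ does not cancel and produces an uncontrolled negative contribution under the arithmetic test. The paper's argument instead uses the \emph{harmonic} concavity maximum principle (the full $2n\times 2n$ Hessian test giving $Q_2\geq \frac{Q_1Q_3}{(1-\lambda)Q_3+\lambda Q_1}$, as in Theorem \ref{Concavity perturbative maximum principle for parabolic equations}), followed by the multiplication-by-$s^2$ trick of Proposition \ref{prop_prel_gxj}: $s^2\mathscr{B}(x,s,\xi)=\frac{1-\alpha}{\alpha}|\xi|^2 s+\alpha a(x)s^{1-1/\theta}$, so the gradient term becomes linear in $s$ (hence concavity-neutral) and everything reduces to the concavity defect of $(x,s)\mapsto a(x)s^{1-1/\theta}$ (this is the content of Corollary \ref{corollario con concavità}). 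Without this harmonic step your derivation does not go through.

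Two further points. First, the final step is also misstated: Lemma \ref{concavità prodotto} is applied to the product $f\,g$ with $f=a$ (this is where $m^\theta\geq M^\theta/2$, i.e. $\theta\leq \log 2/\log(M/m)$, enters as hypothesis (\textit{i}) on the factor $a$) and $g(s)=s^{1-1/\theta}$ (whose $\big(1-\tfrac1\theta\big)^{-1}$-power is linear, so its defect vanishes), yielding $\mathcal{C}_{a(\cdot)v^{1-1/\theta}}\geq -(\mathcal{C}^-_{a^{\theta}})^{1/\theta}\big(\lambda v_3^{1-1/\theta}+(1-\lambda)v_1^{1-1/\theta}\big)$; it is this last bracket, bounded by $\|u\|_\infty^{\frac{(\theta-1)(1-q)}{1+2\theta}}$, that produces the $\|u\|_\infty$ factor, together with $\sigma=\frac{1-q}{2}\frac{m}{(\lambda v_3+(1-\lambda)v_1)^2}$ giving the $\frac{2\theta}{1+2\theta}\frac1m$ constant. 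Your claimed conversion $\mathcal{C}^-_a\lesssim m^{1-\theta}(\mathcal{C}^-_{a^{\theta}})^{1/\theta}$ is not what the lemma provides (and is not even scale-consistent in $a$ unless $\theta=1$). Second, the boundary step should be phrased as excluding a \emph{negative} minimum with a point on $\partial\Omega$ (the normal derivative of $v\sim d_\Omega^{\alpha}$ blows up since $\alpha<1$, contradicting minimality), as in \cite{kennington, BuSq19}; your formulation "controlled from below by the claimed error near the boundary" is not what is needed nor what happens.
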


As already mentioned, the main tool of this paper is composed by \emph{concavity} and \emph{harmonic concavity maximum principles}, which allow to give an estimate on the concavity function $\mc{C}_v$ of a solution of a general PDE in terms of the concavity/harmonic concavity function of the nonlinear term (see Definition \ref{concavity function definition}). 
The equation treated do not rely on the boundary conditions and can have very general form (see Theorems \ref{thm_conc_princ_eigenf}, \ref{Concavity perturbative maximum principle for parabolic equations}), and in particular include the equations satisfied by $v=\log(u)$ or $v=u^{\alpha}$, $\alpha \in (0,1)$, where $u$ is the solution of the equation of interest.
To further treat equations like the logarithmic one, we need to implement a correction in time of the type $e^{\mu t}$, $\mu$ large, which allows to decrease the monotonicity of the nonlinearity involved (see Theorem \ref{thm_conc_princ_eigenf}).
We highlight that the concavity principle approach is flexible since it can be used, for example, to deal with \emph{weak} solutions of quasilinear equations, see \cite{GaSq24, BMSplaplacian, Lin1994} (see also Remark \ref{rem_bangbang} for another nonsmooth setting).

Such maximum principles require that the minimum of the concavity function is in the \emph{interior} of the domain: to ensure such a property we will exploit both the information at the boundary of $\Omega$ and the information at $t=0$; when treating equations in $(0,+\infty)$, we will exploit also the information on the stationary problem (i.e. $t=+\infty$). 
When $a(x,t)$ depends on $t$ in a coercive way (for instance, $a(x,t)=a(x) t^{\gamma}$) we also implement a truncation argument in order to be able to compare the problem with a stationary one at infinity.
In such a way, we control all the information on $\partial(\Omega \times (0,+\infty))$ and we can apply the concavity maximum principles. 
We notice that, when dealing with $T<+\infty$, the nonlinearity is concave and concavity is discussed only in the spatial sense, no a priori information on $t=T$ is needed.

To gain the above results, we make some technical assumptions: on the nonlinearity $b(x,s,t)$ we require some conditions to ensure that $u(x, \cdot)$ is monotone nondecreasing, and it converges to the stationary problems (see conditions \hyperlink{H2}{\( (H2) \)}, \hyperlink{H3}{\( (H3) \)} and (\textit{i})--(\textit{iv}) in Remark \ref{stability parabolic condition u^gamma}). 
Moreover, we need to ensure the validity of some comparison principles (notice that $b(x,\cdot,t)$, in this setting, cannot be generally assumed locally Lipschitz) and that $u$ can be compared through such a tool with some suitable known subsolution, whose behaviour near $t=0$ and near $\partial \Omega$ is known (see conditions \hyperlink{H1}{\( (H1) \)}-\hyperlink{H2}{\( (H2) \)}); in particular we highlight that, compared with \cite{kenningtonparabolic}, the nonlinearities of interest in this paper vanish at zero, thus a finer analysis is needed. 
We notice again that, when logarithmic transformations are considered and the concavity is discussed only in the spatial sense, then the analysis on the boundary essentially relies on the strong convexity of the domain. 
Finally, in order to adapt the above proofs to some quantitative version and keep track of all the interested quantities in play, a finer and technical analysis of the relations among concavity and harmonic concavity functions is needed.

\medskip

The paper is organized as follows. 
In Section \ref{sec_prelimin} we recall some basic facts on parabolic equations, and we introduce the main assumptions that we will require on the nonlinearity. 
Moreover, we recall notations and properties about concavity functions; more technical results are instead collected in Appendix \ref{sec_prop_conc_fun}.
Section \ref{sec_concav_max_princip} is devoted to the proof of the main concavity principles, which take into account only the PDE and not the boundary/initial conditions; these theorems will lead both to exact and quantitative results. 
In particular, we show both concavity principles in space (in presence of nonlinearities which are concave but not necessarily decreasing, and set in $(0,T]$) and in space-time (in the case of harmonic concave and decreasing nonlinearities set in $(0,+\infty)$); the latter will require the use of the harmonic concavity function.
In Section \ref{sec_gen_eigen} we take into account the boundary conditions and the initial condition, employing a suitable analysis of the solution and of the concavity function near the boundary of the parabolic cylinder; this analysis exploits also comparison principles and Hopf lemmas which are collected in Appendix \ref{sec_max_priniciples}. Some applications and examples follow, including the proofs of the main theorems presented in the introduction.
When the nonlinearity does not depend on $u$, finally, we present some further results (both simplifications and improvements of the previous ones) in Section \ref{sec_torsion}.

%%%%%%%%%%%%%%%%%%%%%%%%%%%%%%%%%%%%%%%%%%%%%%%%%%%%%%%%%%%%%%%%%%%%%%%%
%%%%%%%%%%%%%%%%%%%%%%%%%%%%%%%%%%%%%%%%%%%%%%%%%%%%%%%%%%%%%%%%%%%%%%%%
\section{Preliminaries}
\label{sec_prelimin}

In what follows, we will denote by $\Omega \subset \R^n$ an open, bounded and convex domain. By smooth we will mean $\partial\Omega\in C^{2,\alpha}$, while by strongly convex we will mean that the second fundamental form with respect to its interior normal is positive
definite everywhere. 
We moreover recall that $\Omega$ satisfies the \textit{interior sphere condition} if for all $x\in \partial \Omega$, there exist $y\in \Omega$ and $r>0$ such that $B_r(y)\subseteq \Omega$ and $x\in \partial B_r(y)$.
We %also 
set 
$ d_\Omega(x):= \text{dist}(x,\partial \Omega)$ and 
$$\Omega_\rho:=\left\{x\in \Omega: d_{\Omega}(x)> \rho\right\}$$
for $\rho>0$.
We also denote $f^+:=\max\{0,f\}$ and $f^-:=\max\{0, -f\}$ the positive and negative parts of a function $f\colon \Omega \to \R$, so that $f=f^+-f^-$, and by 
$$\osc(f):=\sup_{\Omega} f-\inf_{\Omega} f$$
the oscillation of $f$, whenever well defined.

In the whole paper, $\cdot$ represents spatial dependence and $\star$ time dependence, e.g. we write $u(\cdot, \star)$ for $u\colon \Omegabar \times [0,T] \to \R$. 
%We will also use the following notation.

We finally denote 
$$Du:=(D_x u,\partial_t u) =(D_x u, u_t)$$
where $D_x$ is the spatial gradient and $\partial_t$ is the time derivative of $u$; in this regard, we will often decompose vectors as $p=(\tilde{p}, p^{n+1}) \in \R^n \times \R = \R^{n+1} $.

%%%%%%%%%%%%%%%%%%%%%%%%%%%%%%%%%%%%%%%%%%%%%%%%%%%%
%%%%%%%%%%%%%%%%%%%%%%%%%%%%%%%%%%%%%%%%%%%%%%%%%%%%
\subsection{On parabolic equations}

In this subsection we recall some facts on the parabolic problem
	\begin{equation}\label{eqz problema limite, parabolico}
		\begin{cases}
			u_t-\Delta u=b(x,u,Du, t) &\text{in }\Omega\times (0,+\infty),\\
			u>0 & \text{in } \Omega\times (0,+\infty),\\
			u=0 & \text{on }\partial \Omega\times (0,+\infty),\\ 
			u=u_0& \text{on } \Omega \times \{0\}, 
		\end{cases}
	\end{equation}
where $b\colon\Omega\times \R\times \R^{n+1}\times \R\to \R$.

\begin{remark} \label{rem_esist_soluz}
We briefly comment existence and regularity results for problem \eqref{eqz problema limite, parabolico} when $b(x,s,p,t)=a(x) f(s)$ and $u_0=0$. 
	Assumed $a\in C(\Omegabar)\cap C^\sigma(\Omega)$, $\sigma \in (0,1]$, $a\not\equiv 0,$ $a\geq 0$, $f\in C([0,+\infty))$, non decreasing, $f(0)=0$ and $f>0$ on $(0,+\infty)$ we have that \eqref{eqz problema limite, parabolico} admits a non trivial solution if (and only if)  (see \cite[Theorem 1.5]{Fujita1968}, \cite[Corollary 2.6]{LaisterRobinsonSierzega})
	\[ \int_{0}^{\varepsilon}\frac{1}{f(s)}ds<+\infty \quad\text{for some }\varepsilon>0.\]
	Observe that $f(s)= s^q$ with $q\in [0,1)$ satisfies such condition. For regularity results we refer again to \cite{LaisterRobinsonSierzega, Fujita1968}.
	For local and global existence in the case $u_0 \neq 0$ see e.g. \cite[Lemma 3.5 and Theorem 2.1]{Kajikiya2018}. 
	We further mention the existence of weighted eigenfunctions with periodic conditions, see \cite{Daners2000, GodoyKaufmann2001} and references therein; see also Remark \ref{rem_converg_logarithm} for some comments on the logarithmic equation.
\end{remark}

To give concavity information on the boundary of $(0,+\infty)$, and in particular for $t=+\infty$, we need to be able to compare the evolutive problem with the stationary one. We give thus the following definition.
\begin{definition}\label{defn problema limite}
	Let $\Omega \subset \R^n$ be open and consider $u\colon\Omega\times (0,+\infty)\to \R$ such that \eqref{eqz problema limite, parabolico} holds.
	We will say that $b$ satisfies the \emph{stability parabolic condition} if 
\begin{equation}\label{eq_stab_par_cond}
\lim\limits_{t\to +\infty} ||u(\cdot,t)-v||_{L^\infty(\Omega)}=0 
\end{equation}
	 for some $v\colon\Omega\to \R$ solution of 
	\begin{equation}\label{eqz problema limite, ellittico}
		\begin{cases}
			-\Delta v=b(x,v,D_xv,\infty) &\text{in }\Omega,\\
			v=0 & \text{on }\partial \Omega;
		\end{cases}
	\end{equation}
	here $b(x,s,\tilde{p},\infty):=\lim\limits_{t\to+\infty}b(x,s,\tilde{p}, 0,%p,
t)$ is assumed to exist for all $(x,s,p)\in \Omega\times (0,+\infty)\times \R^{n+1}$, where $p=(\tilde{p},p^{n+1})$.
\end{definition}

\begin{remark}\label{stability parabolic condition u^gamma}
	We briefly comment the validity of the stability parabolic condition: 
	in \cite[Theorem 1.1]{AkagiKajikiya2015} it is proved that such property is satisfied by the power $b(x,s,p,t)=|s|^{q-1}s$
	where $0<q<1$.	
	Other examples can be deduced by \cite{Kajikiya2018}: suppose indeed that $b(x,s,p,t)=b(x,s)$ satisfies the following facts
	\begin{enumerate}[label=\textit{\roman*})]
		\item $b \in C(\Omegabar\times \R) \cap C_{loc, x}^{\sigma}(\R)$ for some $\sigma\in (0,1)$ and $b(x, \cdot)$ is odd for each $x \in \Omega$;
		\item there exists $C>0$ such that \[ |b(x,s)|\leq C(|s|^q+1)\quad \text{for }s\in\R, \ x\in \Omegabar ,\]
		where $q>1$ if $n=1,2$ and $q\in (1,\frac{n}{n-2})$ if $n\geq 3$;
		\item it holds \[ \limsup_{|s|\to +\infty} \left(\max_{x\in \Omegabar}\frac{b(x,s)}{s}\right) <\lambda_1,\]
		where $\lambda_1$ is the first eigenvalue of the Laplacian.
	\end{enumerate} 
	Then, for any $u_0 \in H^1_0(\Omega)$, \cite[Theorem 2.1]{Kajikiya2018} states that the orbit of $u(\cdot,t)$ for all $t\in(0,+\infty)$ is relatively compact in $C^{1,\theta}(\Omegabar)$ for some $\theta\in (0,1)$, and the limit points are stationary solutions.%
\footnote{We are confident that this relative compactness to stationary solutions -- which is weaker than the stability parabolic condition, and does not require (\textit{iv}) below -- is sufficient to prove Theorem \ref{approximate concavity parabolic with boundary conditions}. On the other hand, in order to make use of the notation $v(\cdot, \infty)$ and keep the presentation simple, we prefer not to improve the generality of the result.}
	Suppose moreover that 
	\begin{itemize}
	\item[$iv)$] $s\in(0,+\infty) \mapsto \frac{b(x, s)}{s}$ is strictly decreasing for each $x \in \Omega$ 
	\end{itemize}
	so that the limit problem \eqref{eqz problema limite, ellittico} admits a unique solution $v$ by a Brezis-Oswald result (see e.g. \cite[Lemma A.1]{GaSq24}), then $u(\cdot,t) \to v $ in $C^{1,\theta}(\Omegabar)$, which in particular implies \eqref{eq_stab_par_cond}. 
	Notice that, if (\textit{i}) and (\textit{iv}) hold, then (\textit{iii}) means $\inf_{s>0} \left(\max_{x\in \Omegabar}\frac{b(x,s)}{s}\right) <\lambda_1$.
	
	Some functions that satisfy (\textit{i})--(\textit{iv}) are given by $b(x,s)=a(x) f(s)$ with $f(s)=|s|^{q-1}s$, $f(s)=-s\log |s|$ or
	 $f(s)=(1-s)^q \chi_{(0,1)}(s)$, $q \in (0,1)$ (observed that the solutions verify $u(\Omega) \subset [0,1]$), 
	and $a \in C(\Omegabar)$ is positive.
	Moreover, also the sum of these functions satisfies (\textit{i})--(\textit{iv}). 
\end{remark}

Our problems will often involve a nonlinear term $b=b(x,s,t)$; we present here some main assumptions for $b\colon\Omega\times(0,+\infty) \times (0,+\infty)\to \R$.
%We will require $b\colon\Omega\times(0,+\infty) \times (0,+\infty)\to \R$ to satisfy:
	\begin{itemize}
		\item[]\hypertarget{H1}\((H1)\) 
	There exists $T>0$, $M>0$, $k>0$, $q\in [0,1)$, and $\gamma \in [0,1]$,
		such that
		$$b(x,s, t)\geq k t^{\gamma} s^q \quad \hbox{for all $(x,s,t)\in \Omega\times(0,M]\times (0,T]$}.$$
		\item[]\hypertarget{H2}\((H2)\) There exists $T>0$ such that $b(\cdot,s,t)$ is measurable and nonnegative for $(s,t)\in(0,+\infty)\times (0,T)$, and for all $M>0$ there exists $L=L(M,T)>0$ verifying
		$$b(x,s,t)-b(x,r,t)\leq \frac{L}{r}(s-r) \quad \hbox{for all $0<r\leq s\leq M$ and $(x,t) \in \Omega \times (0, T]$}.$$
		\item[]\hypertarget{H3}\((H3)\) The function $b(x,s,\cdot)$ is nondecreasing for each $(x,s) \in \Omega \times (0,+\infty)$.
	\end{itemize} 
 With no loss of generality, whenever assumed together, 
we can choose the constant $T$ in \hyperlink{H1}{\( (H1) \)}-\hyperlink{H2}{\( (H2) \)} to be the same.
In practical situations, the constant $M$ in \hyperlink{H2}{\( (H2) \)} will actually be given by the $L^\infty$-norm of the solution involved. 
Moreover, since we will apply \hyperlink{H1}{\( (H1) \)} only for small values of $t$, if $u_0=0$ then for $t$ small $\norm{u(\cdot, t)}_{L^{\infty}(\Omega)}$ can be chosen to be smaller than the fixed $M$ in \hyperlink{H1}{\( (H1) \)} (see also condition \hyperlink{H1*}{\( (H1^*) \)}).

\begin{remark}\label{remark comparison principle}
	Observe that the function $b(x,s,t) = a(x,t) s^q$, with $q \in (0,1)$ and $a \in L^{\infty}(\Omega \times \R)$ nonnegative, satisfies \hyperlink{H2}{\( (H2) \)}.
	Indeed, for each $0<r\leq s\leq M$ by Lagrange theorem there exists $\xi\in (r,s)$ such that
	\begin{align*}
		b(x,s,t) - b(x,r,t) = a(x, t) \frac{q}{\xi^{1-q}}(s-r)\leq \norm{a}_{\infty} \frac{ q s^q}{r}(s-r)\leq \norm{a}_{\infty} \frac{q M^q}{r}(s-r).
	\end{align*}
	In a similar way, we see that also $b(x,s,t)=a(x,t) s^q \log(s)$ satisfies such condition for $q \in [0,1]$ (notice that for $q=0$ $b$ is not continuous in $s=0$).
Notice finally that \hyperlink{H2}{\( (H2) \)} is closed under summation.
\end{remark}

\begin{remark}
	In our arguments, it is possible to substitute \hyperlink{H2}{\( (H2) \)} with the following: 
		\begin{itemize}
			\item[]\hypertarget{H2*}\((H2^*)\) 
			There exists $T>0$ and $\omega \in [\frac{1}{2},1]$ such that $b(\cdot,s,t)$ is measurable and nonnegative for $(s,t)\in(0,+\infty)\times (0,T)$, and for all $M>0$ there exists $ L=L(M,T)>0$ verifying
			\[	b(x,s,t)-b(x,r,t)\leq L (s-r)^{\omega} \quad \hbox{for all $0<r\leq s\leq M$, $x\in \Omega$ and $t\in (0,T)$}.\]
		\end{itemize} 
Notice that any $\omega$-Hölder function with $\omega \in [\frac{1}{2},1]$ satisfies \hyperlink{H2*}{\( (H2^*) \)}; moreover $b(s)=\sqrt{s \abs{1-s}}$ satisfies \hyperlink{H2*}{\( (H2^*) \)} but not \hyperlink{H2}{\( (H2) \)}.
On the other hand, from a practical point of view related to the applications in the present paper, we see that \hyperlink{H2*}{\( (H2^*) \)} is somehow stronger than \hyperlink{H2}{\( (H2) \)}, since $b(s)=s^q$ with $q\in (0,\frac{1}{2})$ satisfies \hyperlink{H2}{\( (H2) \)} but not \hyperlink{H2*}{\( (H2^*) \)}. 
See also Remark \ref{rem_u0_nonzero}.
\end{remark}

\begin{remark}\label{rem_troncamento}
As a consequence of the uniqueness result in Corollary \ref{comparison principle dickstein}, we see that fixed $T>0$ and set 
$$b_T(x,s,t):=
\begin{cases}
b(x,s,t) & \quad \hbox{for $t \in [0,T]$}, \\
b(x,s,T) & \quad \hbox{for $t \in [T,+\infty)$},
\end{cases}
$$
then, if $b$ satisfies \hyperlink{H2}{\( (H2) \)}, then $b_T$ satisfies \hyperlink{H2}{\( (H2) \)} as well. Since they coincide for $t \in [0,T]$, we see that positive solutions of
$$
\begin{cases}
u_t-\Delta u = b(x,u,t)&\hbox{in }\Omega\times(0,+\infty),\\
u=0 &\hbox{on } \partial \Omega\times(0,\infty),\\
u=u_0&\hbox{on } x\in \Omega \times \{0\},
\end{cases}
\quad
\begin{cases}
v_t-\Delta v = b_T(x,v,t)&\hbox{in }\Omega\times(0,+\infty),\\
v=0 &\hbox{on } \partial \Omega\times(0,\infty),\\
v=u_0&\hbox{on } x\in \Omega \times \{0\},
\end{cases}
$$
coincide on $[0,T]$. As a consequence, to study (exact) concavity of the first equation, we can study concavity of the second one and obtain the result thanks to the arbitrariness of $T$. 
Notice that the second problem has the advantage of possibly satisfying the parabolic stability condition even if $b(x,s,t) \to +\infty$ as $t\to +\infty$ (for instance, $b(x,s,t)=t^{\gamma} a(x) u^q$). 
Indeed, consider $v$ solution of the second system and define $\tilde{v}(x,t):=v(x,t+T)$. Then $\tilde{v}$ satisfies
$$	
\begin{cases}
\tilde{v}_t-\Delta \tilde{v} = b_T(x,\tilde{v},t+T) = b(x, \tilde{v}, T) &\hbox{in }\Omega\times(0,+\infty),\\
\tilde{v}=0 &\hbox{on } \partial \Omega\times(0,\infty),\\
\tilde{v}=v(T) \in H^1_0(\Omega) &\hbox{on } x\in \Omega \times \{0\}.
\end{cases}
$$
Thus, if $b(x,s,T)$ satisfies the parabolic stability condition and $\tilde{v}(\cdot, t) \to \bar{v}$ uniformly, $\bar{v}$ solution of a stationary problem $-\Delta \bar{v} = b(x,\bar{v}, T)$, then $v(\cdot,t)\to \bar{v}$ uniformly as well, and thus $b_T(x,s,t)$ satisfies the parabolic stability condition as well. 
\end{remark}

%%%%%%%%%%%%%%%%%%%%%%%%%%%%%%%%%%%%%%%%%%%%%%%%%%%%
%%%%%%%%%%%%%%%%%%%%%%%%%%%%%%%%%%%%%%%%%%%%%%%%%%%%
\subsection{Power concavity and concavity functions}

Let us recall some notions about $\alpha$-concavity, see \cite{kennington} for details. 

\smallskip

\textbf{Notation.} Throughout the paper, whenever $z_1, z_3$ are picked in a convex domain together with a $\lambda \in [0,1]$, we will denote by $z_2$ the convex combination
$$z_2 \equiv \lambda z_3 + (1-\lambda) z_1.$$
Moreover, whenever a function $g$ is in play, we will denote 
$$g_i \equiv g(z_i) \quad \hbox{for $i=1,2,3$},$$
if it creates no ambiguity. Finally if $x\in \R^n$, then $x^i$ denotes the $i$-th components of $x$.

\medskip

Let $\alpha \in [-\infty, +\infty]$. We recall that a nonnegative function $u\colon\Omega \to \R$ is \emph{$\alpha$-concave} if 
\begin{equation}\label{eq_def_alpha_concav}
	\begin{cases}
		u \text{ is constant} & \text{if } \alpha =+\infty,\\
		u^\alpha \text{ is concave} & \text{if } 0<\alpha<+\infty,\\
		\log(u) \text{ is concave} & \text{if } \alpha =0,\\
		u^\alpha \text{ is convex} & \text{if } -\infty<\alpha<0,\\
		\left\{ x\in \Omega: u(x)>t\right\} \text{ are convex }\forall t\in \R & \text{if }\alpha=-\infty;
	\end{cases}
\end{equation}
if $-\infty<\alpha\leq 0$ we additionally require $u>0$. 
Moreover, we say that $u$ is \emph{log-concave} if it is $0$-concave and \emph{quasi-concave} if it is $(-\infty)$-concave.

 With the aim of treating perturbations of concavity, we introduce two operators which allow to quantify the concavity of a function.

\begin{definition}\label{concavity function definition}
	Let $u\colon\Omega\times (0,+\infty) \to \R$, then for each $x_1,x_3\in \Omega, \ t_1, \ t_3\in (0,+\infty)$ and $\lambda \in [0,1]$, then the \emph{concavity function} $\mathcal{C}_u$ is defined as 
	\begin{equation}\label{defn concavity function}
		\mathcal{C}_u(x_1,x_3,t_1,t_3,\lambda):= u(x_2,t_2)-\lambda u(x_3,t_3)-(1-\lambda)u(x_1,t_1)
	\end{equation}
	where we recall $x_2=\lambda x_3+(1-\lambda)x_1$ and $t_2=\lambda t_3+(1-\lambda)t_1$. 
%\tr{Similarly, if $g\colon\Omega\times (0,+\infty)\to \R$, we write
%$$		\mathcal{C}_g(x_1,x_3,s_1,s_3,\lambda):= g(x_2,s_2)-\lambda g(x_3,s_3)-(1-\lambda)g(x_1,s_1).$$
%}
	
	Let $g\colon\Omega\times (0,+\infty)\to \R$, then 
we introduce
\begin{align*}
\Dom(\mathcal{HC}_g)&:= \Big\{ (x_1,x_3,s_1,s_3,\lambda) \in \Omega\times\Omega\times(0,+\infty)\times(0,+\infty)\times[0,1] \mid \\
& \qquad \qquad  \lambda g(x_1,s_1)+(1-\lambda)g(x_3,s_3)>0 \quad \hbox{or} \quad g(x_1,s_1)=g(x_3,s_3)=0\Big\}
\end{align*}
and the \emph{harmonic concavity function} $\mathcal{HC}_g$ defined on $\Dom(\mathcal{HC}_g)$ by % is defined a

	\begin{equation*}
		\mathcal{HC}_g\big((x_1,s_1),(x_3,s_3),\lambda\big):=
		\begin{dcases}
			\mathmakebox[4em][l]{g(x_2,s_2)-\dfrac{g(x_1,s_1)g(x_3,s_3)}{\lambda g(x_1,s_1)+(1-\lambda)g(x_3,s_3)}} \\[1ex]
			\quad\quad\qquad\qquad\text{if } \lambda g(x_1,s_1)+(1-\lambda)g(x_3,s_3)>0,
			\\[1ex]
			g(x_2,s_2)
			\qquad\text{ if } 
			g(x_1,s_1)=g(x_3,s_3)=0.
		\end{dcases}
	\end{equation*}
We say that $g$ is \emph{harmonic concave} if $\mathcal{HC}_g \geq 0$ on its domain.
	Notice that, if $g \geq 0$, %$g\colon\Omega\times (0,+\infty)\to [0,+\infty)$, 
then 
$\Dom(\mathcal{HC}_g)=\Omega\times\Omega\times(0,+\infty)\times(0,+\infty)\times[0,1]$; moreover, if $f\geq g$, then $\Dom(\mc{HC}_g) \subseteq \Dom(\mc{HC}_f)$. Finally, if $g>0$, then the harmonic concavity is equivalent to the $(-1)$-concavity in \eqref{eq_def_alpha_concav}, while if $g<0$ then $g$ is always harmonic concave being $\Dom(\mc{HC}_g)=\emptyset$.
\begin{remark}
In the rest of the paper, we will implicitly assume that the computation involving $\mc{HC}_g$ are made in the points of its domain.
\end{remark}
%for each $x_1,x_3\in \Omega,\ s_1,s_3\in (0,+\infty),$ $\lambda \in [0,1]$ such that
%	$$\lambda g(x_1,s_1)+(1-\lambda)g(x_3,s_3)>0 \quad \hbox{or} \quad g(x_1,s_1)=g(x_3,s_3)=0$$
%	the \emph{harmonic concavity function} $\mathcal{HC}_g$ is defined as
%	\begin{equation*}
%		\mathcal{HC}_g\big((x_1,s_1),(x_3,s_3),\lambda\big):=
%		\begin{dcases}
%			\mathmakebox[4em][l]{g(x_2,s_2)-\dfrac{g(x_1,s_1)g(x_3,s_3)}{\lambda g(x_1,s_1)+(1-\lambda)g(x_3,s_3)}} \\[1ex]
%			\quad\quad\qquad\qquad\text{if } \lambda g(x_1,s_1)+(1-\lambda)g(x_3,s_3)>0,
%			\\[1ex]
%			g(x_2,s_2)
%			\qquad\text{ if } 
%			g(x_1,s_1)=g(x_3,s_3)=0.
%		\end{dcases}
%	\end{equation*}
%	We say that $g$ is \emph{harmonic concave} if $\mathcal{HC}_g \geq 0$.
%	Notice that, if $g\colon\Omega\times (0,+\infty)\to [0,+\infty)$, then $\mathcal{HC}_g$ is defined everywhere. \tr{Otherwise, if $g\colon\Omega\times (0,+\infty)\to \R$ is negative somewhere, then $\mathcal{HC}_g$ has a domain included in $\Omega\times\Omega\times(0,+\infty)\times(0,+\infty)\times[0,1]$.}
\end{definition}
%It can be easily seen that, whenever $g>0$, the above definition of harmonic concavity coincide with the $(-1)$-concavity.
%Moreover it holds 
We have \cite[page 4]{BuSq19}
\begin{equation}\label{eq_hc_grt_c}
	\mc{HC}_g \geq \mc{C}_g,
\end{equation}
and, for any $\beta \in (0,1]$, (see \cite[equation (2.2)]{IsSa13})
\begin{equation}\label{eq_harmonic_beta}
	\mc{HC}_{u(\cdot, \star^{\beta})}(x_1, x_3, t_1,t_3,\lambda) \geq \mc{HC}_{u(\cdot, \star)}(x_1, x_3,t_1^{\beta},t_3^{\beta},\lambda).
\end{equation}
We highlight that $\mc{C}$ and $\mc{HC}$ measure the concavity also with respect to $t$. When the concavity in time is not taken into account, we consider 
\[ \mathcal{C}^*_u(x_1,x_3,\lambda, t):=u_2(t)-\lambda u_3(t)-(1-\lambda)u_1(t), \] 
where
$$u_i(t):= u(x_i,t).$$
Notice again that $\mc{C}^*$ differs from $\mc{C}$ since there is no variation in time. Straightforward generalizations of the previous definitions will be used.
Additionally, we will use the following notation.

\smallskip

\textbf{Notation.} We will write, for $g\colon\Omega\times (0,+\infty)\times (0,+\infty)\to \R$ and $u\colon\Omega\times(0,+\infty)\to \R$, 
%$$	\mathcal{HC}_{g(\cdot, u(\cdot,\star))}(x_1,x_3,t_1,t_3,\lambda):=g\big(x_2,\lambda u_3+(1-\lambda)u_1\big)-\dfrac{g(x_1,u_1)g(x_3,u_3)}{\lambda g(x_1,u_1)+(1-\lambda)g(x_3,u_3)},$$
%$$	\mc{C}^*_{g(\cdot, u(\cdot,\star), \star)}(x_1,x_3,\lambda,t):=g \big(x_2,\lambda u_3(t)+(1-\lambda)u_1(t),t\big) - \lambda g(x_3, u_3(t),t) - (1-\lambda) g(x_1, u_1(t),t),$$
\begin{align*}
\mc{C}^*_{g(\cdot, u(\cdot,\star), \star)}(x_1,x_3,\lambda,t) \, :=&  \mc{C}_{g}(x_1, x_3, u_1(t), u_3(t), \lambda,t) \\ 
=& g \big(x_2,\lambda u_3(t)+(1-\lambda)u_1(t),t\big) - \lambda g(x_3, u_3(t),t) - (1-\lambda) g(x_1, u_1(t),t). 
\end{align*}
and
\begin{align*}
\mathcal{HC}_{g(\cdot, u(\cdot,\star), \star)}(x_1,x_3,t_1,t_3,\lambda) \, :=& \mathcal{HC}_g(x_1, x_3, u_1, u_3, t_1, t_3, \lambda) \\
=&g\big(x_2,\lambda u_3+(1-\lambda)u_1, t_2\big)-\dfrac{g(x_1,u_1, t_1)g(x_3,u_3,t_3)}{\lambda g(x_1,u_1,t_1)+(1-\lambda)g(x_3,u_3,t_3)},
\end{align*}
whenever $(x_1, x_3, u_1, u_3, t_1, t_3, \lambda) \in \Dom(\mc{HC}_g)$.

\medskip
In what follows, for the sake of brevity, we adopt the conventions $0^{-1}=+\infty$, $(+\infty)^{-1}=0$ and $0^0=1$.
We recall results from \cite[Section 2 and Lemma A2]{kennington}, \cite[Lemma 3.2]{kenningtonparabolic}.
\begin{proposition}[\cite{kennington, kenningtonparabolic}]
%\protect{\cite[\tr{Section 2 and Lemma A2}]{kennington}}, \protect{\cite[\tr{Lemma 3.2}]{kenningtonparabolic}}] 
\label{alpha concavity property} 
Let $u\colon\Omega \to \R$ be a nonnegative function. 
	\begin{enumerate}[label=\textit{\roman*})]
		\item If $u$ is $\alpha$-concave and $\beta \leq \alpha$, then $u$ is $\beta$-concave. 
		\item Let $u$ be quasi-concave. 
		Set $ \alpha(u):=\sup \left\{\beta \in \R: u \text{ is }\beta\text{-concave}\right\}$, we have that $u$ is $\alpha(u)$-concave.
		\item For $\alpha\geq 1$, the $\alpha$-concave functions are closed under positive addition and positive scalar multiplication.
		\item If $\alpha,\beta \in [0, +\infty]$, 	$u$ is $\alpha$-concave and $v$ is $\beta$-concave, then the product $u \, v$ is $\gamma$-concave for $\gamma^{-1}=\alpha^{-1}+\beta^{-1}$.
		\item If $u$ is harmonic concave, then $u-k$ is harmonic concave for each nonnegative $k \in\R$.
		\item If $u$ is concave and $u>0$, then $x \mapsto \frac{(x^1)^2}{u(x)}$ is convex.
	\end{enumerate}
\end{proposition}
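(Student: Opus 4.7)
Items (\textit{i})--(\textit{iv}) are the classical package of identities for power means, and my plan is to derive them all from one single observation: a nonnegative $u$ is $\alpha$-concave (for every $\alpha \in [-\infty,+\infty]$) if and only if, for every $z_1,z_3\in\Omega$ and $\lambda\in[0,1]$,
$$u(z_2) \ \geq \ M_\alpha\bigl(u(z_1),u(z_3);\lambda\bigr),$$
where $z_2=\lambda z_3+(1-\lambda)z_1$ and $M_p$ denotes the weighted $p$-th power mean with the customary conventions $M_0(a,b;\lambda)=a^{1-\lambda}b^\lambda$, $M_{+\infty}=\max$ and $M_{-\infty}=\min$. Once this is in place, each of the first four items reduces to a known scalar property of $M_p$ that I would quote or verify directly: (\textit{i}) is the monotonicity $M_\alpha\geq M_\beta$ for $\beta\leq\alpha$; (\textit{ii}) follows from (\textit{i}) by picking an increasing sequence $\beta_n\uparrow\alpha(u)$ of admissible exponents and passing to the limit using the continuity of $p\mapsto M_p(a,b;\lambda)$; (\textit{iii}) is Minkowski's inequality $M_\alpha(a+a',b+b';\lambda)\leq M_\alpha(a,b;\lambda)+M_\alpha(a',b';\lambda)$ for $\alpha\geq 1$, combined with the trivial scalar invariance; and (\textit{iv}) is the H\"older-type multiplicative inequality $M_\alpha(a,b;\lambda)\,M_\beta(a',b';\lambda)\geq M_\gamma(aa',bb';\lambda)$ with $\gamma^{-1}=\alpha^{-1}+\beta^{-1}$, applied to the product $u(z_2)v(z_2)$.

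For (\textit{v}) I would use a short algebraic rewriting. If $u$ is harmonic concave and $u>k\geq 0$ (so that $u-k$ is a well-defined positive function), then $1/u$ is convex by definition, hence $1-k/u=1+k(-1/u)$ is concave and positive; since the reciprocal of a positive concave function is convex (which is the special case $\alpha=1,\ \beta=-1$ of (\textit{i})), the function $1/(1-k/u)$ is convex. Since
$$\frac{1}{u-k}\ =\ \frac{1}{k}\,\bigg(\frac{1}{1-k/u}\ -\ 1\bigg)$$
is an affine image of a convex function (the case $k=0$ being trivial), it is itself convex, which is exactly the harmonic concavity of $u-k$.

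Finally, (\textit{vi}) is a direct Hessian computation that I would spell out. Setting $g(x):=(x^1)^2/u(x)$, a routine differentiation gives, for every $\xi\in\R^n$,
$$u(x)^3\,\langle D^2 g(x)\,\xi,\xi\rangle\ =\ 2\bigl(\xi^1\, u(x)-x^1\,(Du(x)\cdot\xi)\bigr)^2\ -\ (x^1)^2\,u(x)\,\langle D^2 u(x)\,\xi,\xi\rangle.$$
The first term on the right-hand side is a square and the second is nonnegative by the concavity of $u$ combined with $u>0$, so $D^2 g\geq 0$ and $g$ is convex. The hardest part of the whole proposition is really the bookkeeping of the boundary exponents $\alpha,\beta\in\{0,\pm\infty\}$ in (\textit{i})--(\textit{iv}), which is however handled uniformly inside the $M_p$ framework; since the statement is the classical list already contained in \cite{kennington}, my plan would be to compress the write-up to a pointer to that reference together with the two short verifications above for (\textit{v}) and (\textit{vi}).
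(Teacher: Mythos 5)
Your proposal is correct in substance, but note that the paper does not actually prove this proposition: it is stated as a classical list and attributed to \cite{kennington}, so there is no internal proof to compare against. Your power-mean framework for (\textit{i})--(\textit{iv}) (monotonicity of $M_p$ in $p$, continuity in $p$ for (\textit{ii}), Minkowski for (\textit{iii}), the H\"older-type product inequality for (\textit{iv})) is exactly the standard route one would find in Kennington or Brascamp--Lieb, and your computations for (\textit{v}) and (\textit{vi}) check out: the identity $\frac{1}{u-k}=\frac{1}{k}\big(\frac{1}{1-k/u}-1\big)$ is correct, and the Hessian identity for $g=(x^1)^2/u$ is verified by direct differentiation.

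Two caveats are worth flagging. First, in (\textit{v}) you assume $u>k$, which matches the classical definition of $(-1)$-concavity (positivity required), but the paper also works with the generalized harmonic concavity function $\mathcal{HC}$ of Definition \ref{concavity function definition}, under which $u-k$ may vanish or change sign; that general case is not covered by your composition argument, though it follows from the explicit identity the paper itself proves in Proposition \ref{proposition f-k and harmonic concavity function} (take $g\equiv k$), or from the scalar inequality $h(a-k,b-k)\le h(a,b)-k$ for the weighted harmonic mean. Second, your proof of (\textit{vi}) uses $D^2u$, hence $C^2$ regularity, while the statement is for arbitrary positive concave $u$; either approximate by smooth concave functions, or argue directly that $(t,s)\mapsto t^2/s$ is jointly convex and nonincreasing in $s>0$, so its composition with the affine map $x\mapsto x^1$ and the concave map $u$ is convex --- this is also the content of the paper's algebraic generalization in Proposition \ref{prop_prel_gxj}, which needs no smoothness. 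Finally, in (\textit{ii}) the boundary case $\alpha(u)=+\infty$ needs the extra observation that $u(z_2)\ge\max(u(z_1),u(z_3))$ on an open convex $\Omega$ forces $u$ to be constant; this is a one-line addition, not a flaw in the approach.
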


In Appendix \ref{sec_prop_conc_fun} we show some generalizations of Proposition \ref{alpha concavity property} (\textit{iv}), (\textit{v}) and (\textit{vi}) in terms of the concavity and harmonic concavity functions.

Furthermore, we recall the following classical result \cite[Theorem 2]{ulam}.
\begin{proposition}[Hyers-Ulam, \cite{ulam}] %\protect{\cite[\tr{Theorem 2}]{ulam}}]
	\label{ulam}
	Let $ X $ be a space of finite dimension and $ K \subset X $ convex. Assume that $ f\colon K \rightarrow \mathbb{R} $ is $ \delta $-concave, i.e. 
	\[ \mathcal{C}_f \geq -\delta \quad \hbox{in $K\times K \times [0,1]$}.\] 
	Then there exists 
	a concave function $g : K \rightarrow \mathbb{R} $ such that 
	$$ \|f-g\|_{L^{\infty}(D)}\leq k_{n} \delta,$$
	 where $ k_{n}>0 $ depends only on $ n=dim(X) $.
\end{proposition}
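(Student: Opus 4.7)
The plan is to construct $g$ as the \emph{concave envelope} of $f$, that is, the pointwise infimum of all concave functions majorizing $f$, and then to estimate $g-f$ uniformly by a quantity proportional to $\delta$, where the constant depends only on $n=\dim(X)$.

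First I would set, for each $x \in K$,
\[
g(x) := \sup\left\{\sum_{i=1}^{m} \mu_i f(y_i) \ \Big|\ m \in \N,\ y_i \in K,\ \mu_i \geq 0,\ \sum_{i=1}^m \mu_i = 1,\ \sum_{i=1}^m \mu_i y_i = x \right\}.
\]
By Carathéodory's theorem applied to the epigraph of $f$ in $X \times \R$, which lives in an $(n+1)$-dimensional space, the supremum can be restricted to combinations of at most $m = n+1$ points. Standard convexity arguments then show that $g$ is concave on $K$ and satisfies $g \geq f$; boundedness of $f$ on $K$ (which follows from the $\delta$-concavity via the Bernstein-type argument: on a simplex $f$ is bounded above by its boundary values up to an $O(\delta)$ error, and $\delta$-concavity prevents it from diverging to $-\infty$ either) ensures $g$ is finite.

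The core estimate is an iterated application of the $\delta$-concavity inequality. I would prove by induction on $m \geq 2$ the claim
\[
f\Big(\sum_{i=1}^m \mu_i y_i\Big) \geq \sum_{i=1}^m \mu_i f(y_i) - (m-1)\delta
\]
for every convex combination in $K$. The base case $m=2$ is exactly the $\delta$-concavity hypothesis. For the inductive step, writing $\sum_{i=1}^m \mu_i y_i = \mu_1 y_1 + (1-\mu_1) z$ with $z = \sum_{i=2}^m \frac{\mu_i}{1-\mu_1} y_i$ (the case $\mu_1 \in \{0,1\}$ being trivial), one application of $\delta$-concavity plus the induction hypothesis for $m-1$ points yields
\[
f\Big(\sum_{i=1}^m \mu_i y_i\Big) \geq \mu_1 f(y_1) + (1-\mu_1)\bigl(f(z)\bigr) - \delta \geq \sum_{i=1}^m \mu_i f(y_i) - \bigl((m-2) + 1\bigr)\delta.
\]

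Taking $m = n+1$ in the above and then taking the supremum over admissible representations $x = \sum \mu_i y_i$ gives
\[
f(x) \geq g(x) - n\,\delta \qquad \text{for every } x \in K,
\]
while the reverse inequality $g \geq f$ is built into the definition. Hence $\|f-g\|_{L^\infty(K)} \leq n\,\delta$, and the constant $k_n = n$ depends only on $n$.

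The step I expect to be the most delicate is not the inductive inequality, which is essentially a chain of binary applications of the hypothesis, but rather verifying at the outset that $g$ takes finite values and is genuinely concave; both hinge on having a priori control of $f$ from below, which the $\delta$-concavity hypothesis supplies only indirectly. Up to this routine but slightly technical verification, the argument reduces the problem to Carathéodory's theorem combined with $n$ successive applications of the hypothesis, which is precisely what allows the dimension-dependent constant $k_n$ to appear.
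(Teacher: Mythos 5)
The paper does not prove this proposition at all: it is quoted as a classical result, with a citation to Hyers--Ulam \cite{ulam} (and to \cite{Lac99} for the sharper constants mentioned right after the statement). Your envelope argument is a correct, self-contained proof and is essentially the standard stability-of-concavity argument: define $g$ as the concave envelope, reduce to finitely many points by Carath\'eodory, and iterate the $\delta$-concavity inequality; the induction $f\big(\sum_{i=1}^m \mu_i y_i\big)\geq \sum_i \mu_i f(y_i)-(m-1)\delta$ is carried out correctly. Two small remarks. First, naive Carath\'eodory in $X\times\R$ (dimension $n+1$) gives combinations of at most $n+2$ points, hence the constant $n+1$; to get exactly $n+1$ points (and $k_n=n$) you need the standard refinement for convex/concave envelopes (e.g. Rockafellar, Corollary 17.1.5). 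Either count yields a constant depending only on $n$, which is all the proposition asserts, so nothing breaks. Second, the step you flag as delicate is actually already handled by your own estimate: for any admissible representation of $x$, Carath\'eodory plus the iterated inequality gives $\sum_i\mu_i f(y_i)\leq f(x)+(n+1)\delta$, so $g\leq f+(n+1)\delta$ pointwise and $g$ is finite without any separate boundedness or ``Bernstein-type'' argument, while concavity of $g$ follows by splicing representations of two base points. What your route buys is an elementary proof with an explicit constant $k_n=n$; what the citation buys the paper is the much better constant $k_n\leq \frac{n(n+3)}{4(n+1)}$ recorded after the proposition, which your argument does not recover but which is not needed for the statement as used.
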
	

We highlight that the optimal constant $k_n$ has been explicitly estimated, for instance by \cite{ulam} and \cite{Lac99} we know that $ \frac{1}{4}\log_2(\frac{n}{2}) \leq k_n \leq \frac{n(n+3)}{4(n+1)}$. 
Notice that $X$ can be chosen as the minimal hyperplane containing $K$.

\medskip

We finally state the following constant rank theorem, consequence of \cite[Theorem 1.5]{CaffareliGuanMa2007}, \cite[Theorem 1.2]{BianGuan2009}, and \cite[Theorem 1.3]{chen2014}, \cite[Theorem 1.3]{ChenHu2013}. %\cite{BianGuan2009, CaffareliGuanMa2007} and \cite{chen2014, ChenHu2013}.

\begin{proposition}[Constant rank theorem]
\label{prop_const_rank}
Let $\Btilde \in C^{2,1}(\Omega, \R, \R^N, [0,T))$ and $w\in C^{2,1}(\Omega\times[0,T))$
be a spatially convex solution of
\begin{equation}\label{eq_const_rank}
w_t - \Delta w = \Btilde(x,w,D_x w, t) \quad \hbox{in $\Omega \times (0,T]$}
\end{equation}
and assume $\Btilde(\cdot, \cdot, \tilde{p}, t)$ locally convex 
for each $(\tilde{p},t) \in \R^n \times (0, T)$. Then, for each $t>0$, the rank of $D^2_x w(\cdot, t)$ is constant. Moreover, called $\ell(t)$ such value, we have that $t \in (0,T) %\tr{T]}
 \mapsto \ell(t) \in \N$ is nondecreasing.

If moreover %\tr{$\Btilde \in C^{2,1}(\Omega, \R, \R^N, (0,T])$, $w\in C^{2,1}(\Omega\times(0,T])$}, 
$w$ is spacetime-convex and $\Btilde(\cdot, \cdot, \tilde{p}, \cdot)$ locally convex for each $\tilde{p} \in \R^n$, then the same conclusion as above applies to $D^2 w(\cdot, t)$. %\tr{M: bisogna togliere la $t$?}.
\end{proposition}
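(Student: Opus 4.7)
The plan is to verify that the hypotheses reduce to those of the parabolic constant rank theorems in the cited works and then apply them directly. For the first (spatial) part, in the equation $w_t - \Delta w = \Btilde(x, w, D_x w, t)$ the right-hand side is, for each frozen $(\tilde{p}, t)$, locally convex in $(x, w)$: this is exactly the structural hypothesis under which the elliptic constant rank theorem of Caffarelli-Guan-Ma and Bian-Guan, extended to the parabolic setting by chen2014 and ChenHu2013, applies. The classical chain of computations differentiates the equation twice in the spatial variables, combines the spatial convexity of $w$ with the convexity of $\Btilde$ in $(x, w)$ to discard the problematic third-order term, and produces a parabolic differential inequality for $\phi := \sigma_{\ell+1}(D^2_x w)$, with $\ell = \min_{x \in \Omega} \mathrm{rank}\, D^2_x w(\cdot, t)$, of schematic form $\phi_t - \Delta \phi \leq C(|D_x \phi| + \phi)$. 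The parabolic strong minimum principle then forces $\phi \equiv 0$ in $\Omega$ at each fixed $t > 0$, which is precisely the constant rank statement.

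For the monotonicity of $\ell(t)$, the same strong minimum principle on the spacetime cylinder $\Omega \times (0, t_0]$ forbids backward-in-time creation of the degeneracy: if $\phi$ vanishes at one point $(x_0, t_0)$, then it vanishes identically on $\Omega \times (0, t_0]$, so $\mathrm{rank}\, D^2_x w(\cdot, t) \leq \ell(t_0)$ for every $t \leq t_0$, yielding $\ell(t) \leq \ell(t_0)$ and hence nondecreasing behaviour of $\ell$.

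For the spacetime version, suppose now that $w$ is jointly convex in $(x, t)$ and that $\Btilde(\cdot, \cdot, \tilde{p}, \cdot)$ is locally convex in $(x, w, t)$. One replaces the spatial Hessian with the full spacetime Hessian $D^2 w \in \R^{(n+1) \times (n+1)}$ and repeats the derivation following chen2014 and ChenHu2013. The off-diagonal block $D_x w_t$ is controlled by differentiating the PDE in space, and the additional convexity of $\Btilde$ in $t$ provides precisely the sign required to dispose of the extra contribution of $\partial_t \Btilde$ in the microscopic differential inequality for $\sigma_{\ell+1}(D^2 w)$, after which the strong minimum principle concludes as before.

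The main technical obstacle lies in matching sign conventions across the different references — some are phrased for convex solutions (smallest eigenvalues of the Hessian), others for concave ones (largest eigenvalues), with the associated assumption on the nonlinearity flipped accordingly; once one transcribes the hypothesis so that our local convexity of $\Btilde$ lands on the convex side of the dichotomy, the proposition becomes a direct packaging of the results in the cited papers.
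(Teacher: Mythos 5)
Your proposal matches the paper's treatment: the proposition is stated there as a direct consequence of the cited parabolic constant rank theorems (Bian--Guan, Caffarelli--Guan--Ma for the spatial structure, and Chen, Chen--Hu for the parabolic/spacetime versions including the monotonicity of the rank in time), with no independent proof given, exactly as you do by verifying the hypotheses and invoking those results. Your sketch of the internal mechanism (differential inequality for $\sigma_{\ell+1}$ plus the strong minimum principle, which also yields the nondecreasing behaviour of $\ell(t)$) is consistent with how those references proceed, so the argument is correct and essentially the same as the paper's.
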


\begin{remark}\label{rem_appl_const_rank}
We observe the following facts.
\begin{itemize}
\item The above result will be applied in the following way: let $u$ be a solution of our Dirichlet problem, and $v=\varphi(u)$ its concave transformation. 
Then $w:= -v$ is convex and satisfies an equation as \eqref{eq_const_rank}. By \cite[Proposition 2.6]{Gallo-Mosconi-Squassina} and the Dirichlet boundary condition, for each $t$ the matrix $D^2_x w(\cdot,t)$ has full rank in a point (equal to $n$). Thus, by the above constant rank theorem, it is maximum in the whole $\Omega$, and hence here $w$ is strictly convex for each $t$. That is, $v(\cdot, t)$ is strictly concave for each $t$.

If $w$ is also spacetime-convex, then $D^2 v(x,t)$ has constant rank for each $t$, and it can be equal to $n$ or $n+1$; more precisely, by the monotonicity, it is equal to $n$ in $(0,t_0)$ and equal to $n+1$ in $(t_0,+\infty)$ for some $t_0 \in [0,+\infty]$.

\item
In the elliptic case \cite{korevaar1987convex}, the condition $\Btilde(\cdot, \cdot, \tilde{p}, \cdot)$ convex can be substituted by $\Btilde(\cdot, \cdot, \tilde{p}, \cdot)$ positive and harmonic convex: as highlighted in \cite{BianGuan2010}, this is essentially related to the fact that the equation $0=\Delta w + \Btilde(x,w,Dw,t) =: F(D_x^2 w, x, w, Dw, t)$ can be rewritten as $0=\frac{1}{\Delta w} + \frac{1}{\Btilde(x,w,Dw,t)} =: \bar{F}(D_x^2 w, x, w, Dw, t)$. This is clearly not the case for parabolic equations $w_t =F(D_x^2 w, x, w, Dw, t)$; this is the reason why we obtain the strict concavity in Theorem \ref{thm_general_korevaar} but not in Theorem \ref{thm_main_weighted_le}. A weaker parabolic counterpart of this generalization can anyway be found in \cite[Theorem 2.3]{BianGuan2010}.
\end{itemize}
\end{remark}

 %%%%%%%%%%%%%%%%%%%%%%%%%%%%%%%%%%%%%%%%%%%%%%%%%%%%%%%%%%%%%%%%%%%%%%%%
%%%%%%%%%%%%%%%%%%%%%%%%%%%%%%%%%%%%%%%%%%%%%%%%%%%%%%%%%%%%%%%%%%%%%%%%
\section{Concavity and harmonic concavity maximum principles}
\label{sec_concav_max_princip}

We start by showing a maximum principle which allows to relate the interior minimum of the concavity function of a solution $v$ to the concavity function of the source; in applications, $v$ will be the transformation of a solution of a PDE. 
In this section we consider a generalization of the result by Korevaar \cite{Korevaar}, by dealing with equations which may include weighted eigenfunctions $ u_t - \Delta u = a(x,t) u$. Differently from Theorem \ref{Concavity perturbative maximum principle for parabolic equations}, the corresponding $\mathscr{B}$ is not strictly decreasing; actually, we will allow $\partial_s \mathscr{B}$ to be also positive, but bounded, in the spirit of \cite{PorruSerra} (see also \cite{GrecoKawohl}): this permits to study also equations of the type $u_t-\Delta u = a(x,t) u \log(u)$ (see \cite{Gallo-Mosconi-Squassina}). 
In these results, we look at some concavity of $\mathscr{B}$, and we study concavity of $u$ in $x$ for each $t$.

We recall the notation $p=(\tilde{p}, p^{n+1}) \in \R^{n+1}$.

\begin{theorem}[Concavity maximum principle]
\label{thm_conc_princ_eigenf}
Let $T>0$ and $v \in C^2_x(\Omega) \cap C^1_t((0,T])$ be such that
\[ - \sum_{i,j=1}^n a_{ij}(D_x v, t) D^2_{ij} v = \mathscr{B}(x,v,D v, t) \quad \hbox{in $\Omega \times (0,T]$} \]
	where $\mathscr{B}\colon \Omega\times \R\times \R^{n+1} \times (0,T]\to \R $ and $(a_{ij}(\tilde{p},t))_{ij}$ is symmetric and positive definite for all $(\tilde{p},t)\in \R^{n} \times (0,T]$. 
	Assume that $(s, p^{n+1})\in v(\Omega) \times \R \mapsto \mathscr{B}(x,s,\tilde{p}, p^{n+1}, t)$ is differentiable 
	for all $(x,\tilde{p},t)\in \Omega\times \R^{n} \times (0,T]$. 
	Let $\mu \in [0,+\infty)$ and suppose $e^{-\mu (\star)}\mathcal{C}^*_v$ admits a negative interior minimum at $(x_1,x_3,\lambda,t)\in \Omega \times \Omega\times [0,1] \times (0,T]$. 
	Then
	\begin{equation}\label{eq_def_xi*}
	\tilde{\xi}:=D_xv(x_1,t)=D_xv(x_2,t)=D_xv(x_3,t).
	\end{equation}
Assume moreover 
$$\partial_{p^{n+1}} \mathscr{B} \leq 0$$
and that there exists $\sigma >0 $ such that%
\footnote{It is indeed sufficient to assume
$\big(\partial_s \mathscr{B}+ \mu \partial_{p^{n+1}} \mathscr{B}\big)\big(x_2, e^{\mu t} z, e^{\mu t} \tilde{\xi}, e^{\mu t}( \mu z + \zeta), t\big) \leq \Lambda$
for any $z \in [w_2(t), \lambda w_3(t)+(1-\lambda)w_1(t)]$ and $\zeta \in [w_t(x_2,t), \lambda w_t(x_3,t) + (1-\lambda) w_t(x_1,t)]$.
}
$$ \partial_s \mathscr{B} + \mu \partial_{p^{n+1}} \mathscr{B} \leq -\sigma <0.$$
Then
$$ \inf_{\Omega \times \Omega \times [0,1] \times (0,T]} \left( e^{-\mu (\star)}\mc{C}^*_v \right) \geq \frac{e^{-\mu t}}{\sigma} \mc{C}^*_{\mathscr{B}(\cdot, v(\cdot, \star), \tilde{\xi}, v_t(\cdot, \star), \star)} (x_1, x_3, \lambda, t).$$
As a consequence, set $\rho:=\min \left\{d(x_1,\partial\Omega),d(x_3,\partial \Omega)\right\}>0$, we have
\[ \inf_{\Omega \times \Omega \times [0,1] \times (0,T]} \mathcal{C}^*_v \geq - \frac{e^{\mu T}}{\sigma} \sup_{\Omega_{\rho} \times \Omega_{\rho} \times [0,1] \times (0,T]} \left(\mc{C}^*_{\mathscr{B}(\cdot, v(\cdot, \star), \tilde{\xi}, v_t(\cdot, \star), \star)}\right)^- .\]
\end{theorem}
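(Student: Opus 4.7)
The plan is to absorb the $t$-dependence via the rescaling $w(x,t):=e^{-\mu t}v(x,t)$, for which $e^{-\mu(\star)}\mathcal{C}^*_v=\mathcal{C}^*_w$, and then to apply the Korevaar concavity maximum principle strategy at the interior minimum of $\mathcal{C}^*_w$. If $\inf e^{-\mu(\star)}\mathcal{C}^*_v\geq 0$ there is nothing to show; otherwise let $(x_1,x_3,\lambda,t)$ be the assumed negative minimum. Since $\mathcal{C}^*_w$ vanishes when $\lambda\in\{0,1\}$ (then $x_2$ coincides with $x_1$ or $x_3$), necessarily $\lambda\in(0,1)$, and vanishing of the gradient of $\mathcal{C}^*_w$ in $x_1$ and $x_3$ (each interior in $\Omega$) gives $(1-\lambda)(D_xw_2-D_xw_1)=0$ and $\lambda(D_xw_2-D_xw_3)=0$, yielding
\[ D_xv(x_1,t)=D_xv(x_2,t)=D_xv(x_3,t)=:\tilde\xi, \]
which is \eqref{eq_def_xi*}. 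The time-derivative condition $\partial_t\mathcal{C}^*_w\leq 0$ at the minimum (equality if $t\in(0,T)$, one-sided inequality if $t=T$), together with $w_t=e^{-\mu t}(v_t-\mu v)$, translates into
\[ v_t(x_2,t)-\lambda v_t(x_3,t)-(1-\lambda)v_t(x_1,t)\;\leq\;\mu\,\mathcal{C}^*_v(x_1,x_3,\lambda,t). \]

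For the spatial second-order condition, I perturb $(x_1,x_3)\mapsto(x_1+\tau\eta,x_3+\tau\eta)$, which also shifts $x_2$ by $\tau\eta$: nonnegativity of the second derivative of $\mathcal{C}^*_w$ at the minimum forces the matrix $D^2_xw_2-\lambda D^2_xw_3-(1-\lambda)D^2_xw_1$ to be positive semidefinite. Contracting against the positive definite matrix $A:=(a_{ij}(\tilde\xi,t))$, multiplying by $e^{\mu t}$, and substituting the PDE at the three points (all sharing the spatial gradient $\tilde\xi$, hence the same coefficient matrix $A$) yields
\[ \mathscr{B}(x_2,v_2,\tilde\xi,v_t(x_2,t),t)\;\leq\;\lambda\,\mathscr{B}_3+(1-\lambda)\,\mathscr{B}_1, \]
with $\mathscr{B}_i:=\mathscr{B}(x_i,v_i,\tilde\xi,v_t(x_i,t),t)$.

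The core step is to replace the arguments $(v_2,v_t(x_2,t))$ inside $\mathscr{B}$ by the averaged pair $(\lambda v_3+(1-\lambda)v_1,\lambda v_t(x_3,t)+(1-\lambda)v_t(x_1,t))$. Setting $\alpha:=-\mathcal{C}^*_v>0$ and $\beta:=\lambda v_t(x_3,t)+(1-\lambda)v_t(x_1,t)-v_t(x_2,t)$, the time step gives $\beta\geq\mu\alpha$. The fundamental theorem of calculus along the connecting segment in $(s,p^{n+1})$ writes the corresponding difference as $\alpha\int_0^1\partial_s\mathscr{B}\,d\tau+\beta\int_0^1\partial_{p^{n+1}}\mathscr{B}\,d\tau$, which by $\partial_{p^{n+1}}\mathscr{B}\leq 0$ and $\beta\geq\mu\alpha>0$ is at most $\alpha\int_0^1(\partial_s\mathscr{B}+\mu\partial_{p^{n+1}}\mathscr{B})\,d\tau\leq-\sigma\alpha=\sigma\mathcal{C}^*_v$. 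Combined with the inequality from the previous paragraph this gives
\[ \mathcal{C}^*_{\mathscr{B}(\cdot,v(\cdot,\star),\tilde\xi,v_t(\cdot,\star),\star)}(x_1,x_3,\lambda,t)\;\leq\;\sigma\,\mathcal{C}^*_v(x_1,x_3,\lambda,t); \]
dividing by $\sigma$, multiplying by $e^{-\mu t}$ and recognising $e^{-\mu t}\mathcal{C}^*_v(x_1,x_3,\lambda,t)$ as $\inf e^{-\mu(\star)}\mathcal{C}^*_v$ yields the first displayed conclusion. The consequence then follows upon observing that $x_1,x_3\in\Omega_\rho$, so that $\mathcal{C}^*_{\mathscr{B}}(x_1,x_3,\lambda,t)\geq -\sup_{\Omega_\rho\times\Omega_\rho\times[0,1]\times(0,T]}(\mathcal{C}^*_{\mathscr{B}})^-$, and using $e^{-\mu t}\leq 1$ together with $e^{\mu s}\leq e^{\mu T}$ for $s\in(0,T]$.

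The delicate point is the averaging step: in a purely elliptic/Korevaar setting $\partial_s\mathscr{B}\leq-\sigma$ alone would suffice, but the presence of $v_t$ inside $\mathscr{B}$ means the time-derivative increment can only be controlled by $\mu\mathcal{C}^*_v$, and the coupled hypothesis $\partial_s\mathscr{B}+\mu\partial_{p^{n+1}}\mathscr{B}\leq-\sigma$ is precisely what closes the loop once the weight $e^{-\mu t}$ is in place. The sign assumption $\partial_{p^{n+1}}\mathscr{B}\leq 0$ enters in order to majorise $\beta\,\partial_{p^{n+1}}\mathscr{B}$ by $\mu\alpha\,\partial_{p^{n+1}}\mathscr{B}$ in the integrand, so that the bound on the coupled expression can be applied.
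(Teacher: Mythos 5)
Your proof is correct and follows essentially the same route as the paper: rescale by $e^{-\mu t}$, extract the first- and second-order conditions at the interior minimum of $\mathcal{C}^*_w$, substitute the PDE at the three points sharing the gradient $\tilde\xi$, and transfer the change of the $(s,p^{n+1})$-arguments through the coupled hypothesis $\partial_s\mathscr{B}+\mu\,\partial_{p^{n+1}}\mathscr{B}\leq-\sigma$ together with $\partial_{p^{n+1}}\mathscr{B}\leq 0$ (the paper phrases this via the transformed nonlinearity $\Btilde$ for $w$ and Lagrange's theorem, and invokes Kennington's Lemma A.1 where you use the common-direction second-order perturbation, but these are equivalent bookkeeping). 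The only cosmetic slip is the parenthetical ``$\beta\geq\mu\alpha>0$'', which fails for $\mu=0$ (allowed by the hypothesis); the inequality you actually use, $\beta\,\partial_{p^{n+1}}\mathscr{B}\leq\mu\alpha\,\partial_{p^{n+1}}\mathscr{B}$, holds for all $\mu\geq0$, so nothing breaks.
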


\begin{proof}
Let us perform a drift in time and define 
\begin{equation}\label{eq_drift_trick}
w(x,t):= e^{-\mu t} v(x,t)
\end{equation}
which satisfies
\begin{align*}
 - \sum_{i,j=1}^n a_{ij}(e^{\mu t} D_x w, t) D^2_{ij} w 
 &= e^{-\mu t} \mathscr{B}\big(x, e^{\mu t} w, e^{\mu t} D_x w, e^{\mu t} (\mu w + w_t), t\big) \\
&=: \Btilde(x,w, D w, t) \quad \hbox{ in $\Omega \times (0,T]$.}
\end{align*}
We notice that $(x_1,x_3,\lambda, t)$ is an interior minimum for $\mc{C}^*_w$; since it is an interior critical point (with respect to the spatial variable), we obtain
$$\bar{\xi} := D_x w(x_1,t)=D_x w(x_2,t)=D_x w(x_3,t),$$
and thus \eqref{eq_def_xi*} holds, with $\tilde{\xi} = e^{\mu t} \bar{\xi}$. 
Exploiting that it is an interior minimum (thus the Hessian matrix with respect to the spatial variable is positive definite) and the definite positiviness of $a_{ij}(e^{\mu t} \bar{\xi}, t)$, by \cite[Lemma A.1]{kennington} we obtain 
$$\sum_{i,j=1}^n a_{ij}(e^{\mu t}\bar{\xi}, t) D^2_{ij} w(x_2, t) - \lambda \sum_{i,j=1}^n a_{ij}(e^{\mu t}\bar{\xi}, t) D^2_{ij} w(x_3, t) - (1-\lambda) \sum_{i,j=1}^n a_{ij}(e^{\mu t} \bar{\xi}, t) D^2_{ij} w(x_1, t) \geq 0$$
and thus, by the equation,
$$\Btilde\big(x_2, w_2(t), \bar{\xi}, w_t(x_2,t), t\big) \leq \lambda \Btilde\big(x_3, w_3(t), \bar{\xi}, w_t(x_3,t), t\big)+ (1-\lambda)\Btilde\big(x_1, w_1(t),\bar{\xi}, w_t(x_1,t), t\big) .$$
Subtracting $\Btilde\big(x_2, \lambda w_3(t)+(1-\lambda)w_1(t), \bar{\xi}, \lambda w_t(x_3,t) + (1-\lambda) w_t(x_1,t), t\big)$ on both sides and applying Lagrange theorem, we know that there exists $z\in [w_2(t),\lambda w_3(t)+(1-\lambda)w_1(t)]$ and $\zeta \in [w_t(x_2,t), \lambda w_t(x_3,t) + (1-\lambda) w_t(x_1,t)]$ such that
\begin{equation*}
\nabla_{s,p^{n+1}}\Btilde(x_2, z, \bar{\xi}, \zeta, t) \cdot \big(\mc{C}^*_w(x_1,x_3,\lambda,t), \partial_t \mc{C}^*_w(x_1,x_3,\lambda, t)\big) \leq - \mc{C}^*_{\Btilde(\cdot, w(\cdot, \star), \bar{\xi}, w_t(\cdot, \star), \star)} (x_1, x_3, \lambda, t).
\end{equation*}
Using that $(x_1,x_3,\lambda, t)$  is a minimum (with respect to the time variable), we obtain
$$\partial_t \mc{C}^*_w(x_1,x_3, \lambda, t) \leq 0,$$
which plugged in the previous inequality together with $\partial_{p^{n+1}} \mathscr{B} \leq 0$, implies 
\begin{equation*}
\partial_s \Btilde(x_2, z, \bar{\xi}, \zeta, t) \mc{C}^*_w(x_1,x_3,\lambda,t) \leq - \mc{C}^*_{\Btilde(\cdot, w(\cdot, \star), \bar{\xi}, w_t(\cdot, \star), \star)} (x_1, x_3, \lambda, t).
\end{equation*}
By the definition of $\Btilde$, the assumptions and $\mc{C}^*_w(x_1,x_3,\lambda,t) <0$, we achieve
\begin{equation}\label{eq_pre_negative}
-\sigma \mc{C}^*_w(x_1,x_3,\lambda,t) \leq - \mc{C}^*_{\Btilde(\cdot, w(\cdot, \star), \tilde{\xi}, w_t(\cdot, \star), \star)} (x_1, x_3, \lambda, t)
\end{equation}
and thus
$$ e^{-\mu t}\mc{C}^*_v(x_1,x_3,\lambda,t) \geq \frac{e^{-\mu t}}{\sigma} \mc{C}^*_{\mathscr{B}(\cdot, v(\cdot, \star), \tilde{\xi}, v_t(\cdot, \star), \star)} (x_1, x_3, \lambda, t)$$
which is the first claim. As a consequence
\begin{align*}
 \inf_{\Omega \times \Omega \times [0,1] \times (0,T]} \left( e^{-\mu (\star)}\mc{C}^*_v \right) 
 &\geq - \frac{e^{-\mu t}}{\sigma} \sup_{\Omega_{\rho} \times \Omega_{\rho} \times [0,1] \times (0,T]} \left(\mc{C}^*_{\mathscr{B}(\cdot, v(\cdot, \star), \tilde{\xi}, v_t(\cdot, \star), \star)}\right)^- \\
 &\geq - \frac{1}{\sigma} \sup_{\Omega_{\rho} \times \Omega_{\rho} \times [0,1] \times (0,T]} \left(\mc{C}^*_{\mathscr{B}(\cdot, v(\cdot, \star), \tilde{\xi}, v_t(\cdot, \star), \star)}\right)^-
 \end{align*}
 and hence the second claim, observed that (recall that $ \inf_{\Omega \times \Omega \times [0,1] \times (0,T]} \left( e^{-\mu (\star)}\mc{C}^*_v \right)<0$)
 \[ \inf_{\Omega \times \Omega \times [0,1] \times (0,T]} \mathcal{C}^*_v = \inf_{\Omega \times \Omega \times [0,1] \times (0,T]} \left( e^{\mu (\star)} e^{-\mu (\star)}\mathcal{C}^*_v\right) \geq e^{\mu T} \inf_{\Omega \times \Omega \times [0,1] \times (0,T]} \left(e^{-\mu (\star)}\mathcal{C}^*_v\right) . \qedhere \]
\end{proof}

\begin{remark}
If $\mathscr{B}(x,v,D v, t) \equiv \mathscr{B}(x,v,D_x v, t) - v_t$, as in the heat equation, we see that $\partial_{p^{n+1}} \mathscr{B} = - 1 \leq 0$, while 
$$\partial_s \mathscr{B} + \mu \partial_{p^{n+1}} \mathscr{B} \equiv \partial_s \mathscr{B} - \mu \leq \Lambda - \mu \leq -\sigma<0$$
is satisfied if $\partial_s \mathscr{B}$ is bounded from above by some $\Lambda \in \R$ and $\mu$ is sufficiently large ($\mu>\Lambda$).
\end{remark}

We show now a maximum principle which allows to relate the interior minimum of the concavity function of solution $v$ to the \emph{harmonic} concavity function of the source. 
Notice that here we need the source to enjoy some strict monotonicity (differently from Theorem \ref{thm_conc_princ_eigenf}). 
In these results, we look at some harmonic concavity (rather than concavity) of $\mathscr{B}$, and we study concavity of $u$ in $(x,t)$ (rather than concavity in $x$ for each $t$).

\begin{theorem}[Harmonic concavity maximum principle]
\label{Concavity perturbative maximum principle for parabolic equations} 
	Let $v\in C^2_x(\Omega) \cap C^1_t((0,+\infty))$ be such that 
	\begin{equation*}
- \sum_{i,j=1}^{n}a_{ij}(Dv)D^2_{ij}v = \mathscr{B}(x,v,Dv, t) \quad \hbox{in $\Omega \times (0,+\infty)$,}
	\end{equation*}
	where $\mathscr{B}\colon\Omega\times \R\times \R^{n+1} \times (0,+\infty)\to \R $ and $(a_{ij}(p))_{ij}$ is symmetric and positive definite for all $p\in \R^{n+1}$. 
	Assume that $s \in v(\Omega) \mapsto \mathscr{B}(x,s,p,t)$ is differentiable 	for all $(x,p,t)\in \Omega\times \R^{n+1} \times (0,+\infty)$ 
	Suppose $\mathcal{C}_v$ admits a negative interior minimum at $(x_1,x_3,t_1,t_3,\lambda)\in \Omega \times \Omega \times (0,+\infty)\times (0,+\infty)\times [0,1]$. 
	Then
	\begin{equation}\label{eq_def_xi}
	\xi:=Dv(x_1,t_1)=Dv(x_2,t_2)=Dv(x_3,t_3).
	\end{equation}
	Assume moreover that there exists $\sigma>0$ such that
	\begin{equation}\label{strictly decreasing eqt}
		\sup_{z\in [v(x_2,t_2),\lambda v(x_3,t_3)+(1-\lambda)v(x_1,t_1)]} \partial_s\mathscr{B}(x_2,z,\xi, 	t_2)\leq-\sigma.
	\end{equation}
	Then
\begin{equation}\label{eq_harm_concav_quantit}
\inf_{\Omega\times \Omega \times (0,+\infty)\times (0,+\infty)\times[0,1]}\mathcal{C}_v\geq \dfrac{1}{\sigma} \mathcal{HC}_{\mathscr{B}(\cdot, v(\cdot,\star), \xi, \star)}(x_1,x_3,t_1,t_3,\lambda). 
\end{equation}
\end{theorem}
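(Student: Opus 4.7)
My plan is to combine the optimality conditions for $\mathcal{C}_v$ at the interior minimum with a Kennington-type trace inequality and the strict monotonicity of $\mathscr{B}$ in $s$.

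First, I would set to zero the partial derivatives $\nabla_{x_1}\mathcal{C}_v$, $\nabla_{x_3}\mathcal{C}_v$, $\partial_{t_1}\mathcal{C}_v$ and $\partial_{t_3}\mathcal{C}_v$ at the critical point; using the chain rule together with $x_2 = \lambda x_3 + (1-\lambda) x_1$ and $t_2 = \lambda t_3 + (1-\lambda) t_1$, this immediately forces $D_x v$ (respectively $v_t$) to take the same value at $(x_1,t_1)$, $(x_2,t_2)$ and $(x_3,t_3)$, which yields \eqref{eq_def_xi}. Next, testing the nonnegativity of the second variation of $\mathcal{C}_v$ at the minimum against the spatial perturbation $\delta x_1 = \delta x_3 = \eta \in \R^n$ (all other variables held fixed, so that $\delta x_2 = \eta$), and using that $Dv(x_i,t_i) = \xi$ at each of the three points, I expect to obtain the operator inequality $D_x^2 v(x_2,t_2) \succeq \lambda D_x^2 v(x_3,t_3) + (1-\lambda) D_x^2 v(x_1,t_1)$. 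Since the matrix $A := (a_{ij}(\xi))_{i,j=1}^n$ is then the \emph{same} symmetric positive definite matrix at all three instances of the equation, an application of \cite[Lemma A.1]{kennington} combined with the equation should produce
\[
\mathscr{B}_2 \;\leq\; \lambda\mathscr{B}_3 + (1-\lambda)\mathscr{B}_1, \qquad \mathscr{B}_i := \mathscr{B}(x_i,v_i,\xi,t_i).
\]

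I would then set $c := \mathcal{C}_v(x_1,x_3,t_1,t_3,\lambda) < 0$ and $\bar{s} := \lambda v_3 + (1-\lambda) v_1 = v_2 - c > v_2$. Applying Lagrange's mean value theorem to $z \mapsto \mathscr{B}(x_2,z,\xi,t_2)$ on the interval $[v_2,\bar{s}]$, together with the strict monotonicity \eqref{strictly decreasing eqt}, I obtain $\mathscr{B}(x_2,\bar{s},\xi,t_2) - \mathscr{B}_2 \leq \sigma c$, whence
\[
\sigma c \;\geq\; \mathscr{B}(x_2,\bar{s},\xi,t_2) - \lambda\mathscr{B}_3 - (1-\lambda)\mathscr{B}_1.
\]

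The delicate point I anticipate — and the main obstacle — is the upgrade from this arithmetic-mean bound on the right-hand side to the weighted harmonic mean $\mathscr{B}_1 \mathscr{B}_3 / [\lambda \mathscr{B}_1 + (1-\lambda) \mathscr{B}_3]$ appearing in $\mathcal{HC}$. Since $\mathcal{HC} \geq \mathcal{C}$ by \eqref{eq_hc_grt_c}, this is a strictly stronger conclusion, and a naive AM--HM step runs in the wrong direction. The sharpening will instead have to come from reworking the Kennington step at the level of the reciprocals $1/\mathscr{B}_i$ — exploiting that $Dv_i = \xi$ is common to all three indices so that the pointwise PDE consequence can be rewritten as an inequality for $-1/\sum a_{ij}(\xi)D^2_{ij}v_i$ in which the harmonic mean arises naturally — in the spirit of the elliptic blueprint of \cite{BuSq19}. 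The degenerate case $\mathscr{B}_1 = \mathscr{B}_3 = 0$ is immediate, since $\mathcal{HC}$ then reduces to $\mathcal{C}$ by Definition~\ref{concavity function definition}.
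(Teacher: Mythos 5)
Your first-order analysis, the equal-perturbation second-variation step, and the final Lagrange/monotonicity step are all fine, but they only yield the arithmetic-mean inequality $\mathscr{B}_2\leq\lambda\mathscr{B}_3+(1-\lambda)\mathscr{B}_1$, hence a bound of $\sigma\,\mathcal{C}_v$ by the \emph{concavity} function $\mathcal{C}_{\mathscr{B}(\cdot,v(\cdot,\star),\xi,\star)}$. Since $\mathcal{HC}\geq\mathcal{C}$ by \eqref{eq_hc_grt_c}, this is strictly weaker than \eqref{eq_harm_concav_quantit}, and you acknowledge as much. The gap you flag is genuine and is exactly the heart of the theorem; the fix you gesture at (taking reciprocals of the PDE at the three points) is not a workable argument, because the harmonic mean does not come from inverting the equation pointwise.

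What the paper actually does is allow \emph{independent} magnitudes of the spatial perturbations at $x_1$ and $x_3$: one tests the positive semidefinite $2n\times 2n$ Hessian $C$ of $\mathcal{C}_v$ in $(y_1,y_3)$ against the matrix $B$ with blocks $s^2A$, $srA$, $srA$, $r^2A$ (where $A=(a_{ij}(\xi))$), for arbitrary $s,r\in\R$. By \cite[Lemma A.1]{kennington}, $\Tr(BC)\geq 0$ becomes a quadratic form $\alpha s^2+2\beta sr+\gamma r^2\geq 0$ with
$\alpha=(1-\lambda)^2Q_2-(1-\lambda)Q_1$, $\beta=\lambda(1-\lambda)Q_2$, $\gamma=\lambda^2Q_2-\lambda Q_3$, where $Q_k=\sum_{i,j}a_{ij}(\xi)D^2_{ij}v(x_k,t_k)$. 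Nonnegativity forces $\alpha\geq0$, $\gamma\geq0$ and the discriminant condition $\beta^2-\alpha\gamma\leq0$, i.e.
$Q_2\geq\frac{Q_1}{1-\lambda}$, $Q_2\geq\frac{Q_3}{\lambda}$ and $Q_1Q_3\geq Q_2\big((1-\lambda)Q_3+\lambda Q_1\big)$; a short case analysis then shows either $Q_1=Q_3=0$ or $(1-\lambda)Q_3+\lambda Q_1<0$, and in the latter case
$Q_2\geq \frac{Q_1Q_3}{(1-\lambda)Q_3+\lambda Q_1}$. Since $Q_k=-\mathscr{B}(x_k,v_k,\xi,t_k)$, this is precisely the harmonic-mean inequality; your choice $\delta x_1=\delta x_3=\eta$ corresponds to freezing $s=r$, which is why you only recover the arithmetic mean. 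With this inequality in hand, the step you do carry out (adding $\mathscr{B}(x_2,\lambda v_3+(1-\lambda)v_1,\xi,t_2)$, Lagrange's theorem on $[v_2,\lambda v_3+(1-\lambda)v_1]$, and \eqref{strictly decreasing eqt} together with $\mathcal{C}_v<0$) concludes as in the paper. So the proposal, as written, does not prove the stated estimate: the decisive two-parameter Kennington test and the ensuing discriminant/case argument are missing.
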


\begin{proof}
Notice first that $(x_1,t_1)\ne (x_3,t_3)$ and $\lambda\in (0,1)$, otherwise $\mathcal{C}_v(x_1,x_3,t_1,t_3,\lambda)=0$ which is a contradiction. 
Being $(x_1,x_3,t_1,t_3,\lambda)$ in the interior of the domain we obtain
			\[ D\mathcal{C}_v(x_1,x_3,t_1,t_3,\lambda)=0 \]
		and in particular $D_{y_1}\mathcal{C}_v = D_{y_3}\mathcal{C}_v = D_{\tau_1}\mathcal{C}_v = D_{\tau_3}\mathcal{C}_v = 0$ imply \eqref{eq_def_xi}. 
		Set now $A:=(a_{ij}(\xi))_{ij}$ and define the $2n\times 2n$ matrices 
		\[ C:= 
		\begin{bmatrix}
			D^2_{y_1, y_1}\mathcal{C}_v(x_1,x_3, t_1,t_3,\lambda) & 	 D^2_{y_1, y_3}\mathcal{C}_v(x_1,x_3, t_1,t_3,\lambda)\\
			D^2_{y_1, y_3}\mathcal{C}_v(x_1,x_3, t_1,t_3,\lambda) & 	 D^2_{y_3, y_3}\mathcal{C}_v(x_1,x_3, t_1,t_3,\lambda)
		\end{bmatrix}, \]
		which is positive semidefinite since $(x_1,x_3, t_1,t_3,\lambda)$ is a minimum for $\mathcal{C}_v$ and 
		\[ B:= \begin{bmatrix}
			s^2a_{ij}(\xi) & sra_{ij}(\xi) \\
			sra_{ij}(\xi) & r^2a_{ij}(\xi) 
		\end{bmatrix}, \]
		which is positive semidefinite by hypothesis. So by \cite[Lemma A.1]{kennington} we have $\Tr(BC)\geq 0$, that is
		\begin{equation}\label{tr(BC)<0, caso approssimato}
			\alpha s^2 +2\beta sr +\gamma r^2\geq 0 \quad \hbox{for all $s,r\in \R$} 
		\end{equation}
		where 
		\[ \alpha := \Tr (AD^2_{y_1,y_1}\mathcal{C}_v), \quad \beta:= \Tr (AD^2_{y_1,y_3}\mathcal{C}_v),\quad \gamma:= \Tr (AD^2_{y_3,y_3}\mathcal{C}_v),\]
		which rewrite as 
$$
				\alpha =(1-\lambda)^2Q_2-(1-\lambda)Q_1, \qquad \beta= \lambda(1-\lambda)Q_2, \qquad \gamma= \lambda^2Q_2-\lambda Q_3, 
$$
		where we have set for $k\in\left\{1,2,3\right\}$
		\[ Q_k:=\sum_{i,j=1}^na_{ij}D^2_{ij}v(x_k,t_k).\]
		By \eqref{tr(BC)<0, caso approssimato} we obtain $\alpha\geq0, \gamma\geq0$ and $\beta^2-\alpha \gamma\leq0$, i.e. 
		\begin{align}
			Q_2&\geq \dfrac{Q_1}{1-\lambda}\label{relation Q_1 II approssimato},\\
			Q_2&\geq \dfrac{Q_3}{\lambda}\label{relation Q_3 II approssimato},\\
			Q_1Q_3&\geq Q_2((1-\lambda)Q_3+\lambda Q_1)\label{relation Q_1Q_3 II approssimato},
		\end{align}
		where we justify the last by 
		\[
		\begin{split}
			0&\geq \lambda^2(1-\lambda)^2Q_2^2-((1-\lambda)^2Q_2-(1-\lambda)Q_1)(\lambda^2 Q_2-\lambda Q_3)\\
			& =\lambda(1-\lambda)((1-\lambda)Q_2Q_3+\lambda Q_2Q_1-Q_1Q_3).
		\end{split}
		\]
		\noindent\textit{Claim}: $(1-\lambda)Q_3+\lambda Q_1<0$ or $Q_1=Q_3=0$.
		
		If $(1-\lambda)Q_3+\lambda Q_1\geq0$, then by \eqref{relation Q_1 II approssimato} and \eqref{relation Q_1Q_3 II approssimato}
		\begin{align*}
			Q_2 \big( (1-\lambda)Q_3+\lambda Q_1 \big)&\geq \frac{Q_1}{1-\lambda}\big( (1-\lambda)Q_3+\lambda Q_1\big),\\
			Q_2\big( (1-\lambda)Q_3+\lambda Q_1 \big)&\geq Q_1Q_3+\frac{\lambda}{1-\lambda}Q_1^2,\\
			Q_2\big( (1-\lambda)Q_3+\lambda Q_1 \big)&\geq 	Q_2\big( (1-\lambda)Q_3+\lambda Q_1 \big)+\frac{\lambda}{1-\lambda}Q_1^2,
		\end{align*}
		which imply $Q_1=0$. Similarly (exploiting \eqref{relation Q_3 II approssimato} and \eqref{relation Q_1Q_3 II approssimato}) we have $Q_3=0$.
		Then if $(1-\lambda)Q_3+\lambda Q_1<0$ and from \eqref{relation Q_1Q_3 II approssimato}, it holds
		\[ Q_2\geq \dfrac{Q_1Q_3}{(1-\lambda)Q_3+\lambda Q_1}. \]
Being $Q_k = -\mathscr{B}\big(x_k,v_k, \xi, t_k\big)$, we get 
	\begin{align*}
		-\mathscr{B}\big(x_2,v_2,\xi, t_2\big)\geq - \dfrac{\mathscr{B}\big(x_1,v_1,\xi, t_1\big)\mathscr{B}(x_3,v_3,\xi, t_3\big)}{(1-\lambda)\mathscr{B}\big(x_3,v_3,\xi, t_3\big)+\lambda \mathscr{B}\big(x_1,v_1,\xi, t_1\big)}.
	\end{align*}
	Adding on both sides $\mathscr{B}\big(x_2,\lambda v_3+(1-\lambda)v_1,\xi, t_2\big)$ and applying Lagrange theorem, we obtain
$$-\partial_s\mathscr{B}(x_2,z,\xi, t_2)\mathcal{C}_v(x_1,x_3,t_1,t_3,\lambda)\geq \mathcal{HC}_{\mathscr{B}(\cdot, v(\cdot,\star),\xi, \star)}(x_1,x_3,t_1,t_3,\lambda)$$
	for some $z\in [v_2,\lambda v_3+(1-\lambda)v_1]$. 
	Otherwise if $Q_1=Q_3=0$, then $Q_2\geq 0$ and by the same arguments as before it holds
	\[		-\partial_s\mathscr{B}(x_2,z,\xi, t_2)\mathcal{C}_v(x_1,x_3,t_1,t_3,\lambda)\geq \mathcal{HC}_{\mathscr{B}(\cdot, v(\cdot,\star),\xi,\star)}(x_1,x_3,t_1,t_3,\lambda).\]
	Since $\mathcal{C}_v(x_1,x_3,t_1,t_3,\lambda)<0$ and $\partial_s \mathscr{B}(x_2,z,\xi, t_2)\leq-\sigma$, in any case it follows that
	\[	\sigma \mathcal{C}_v(x_1,x_3,t_1,t_3,\lambda)\geq -\partial_s\mathscr{B}(x_2,z,\xi)\mathcal{C}_v(x_1,x_3,t_1,t_3,\lambda)\geq \mathcal{HC}_{\mathscr{B}(\cdot, v(\cdot,\star),\xi,\star)}(x_1,x_3,t_1,t_3,\lambda), \]
and thus the claim.
\qedhere	
\end{proof}

We see that, in Theorem \ref{Concavity perturbative maximum principle for parabolic equations}, the nonlinear term $\mathscr{B}$ actually includes $v_t$ 
in any form. In the special case of heat type equations, we can remove the dependence on the right side of \eqref{eq_harm_concav_quantit} from $ v_t$ by assuming $v$ monotone.
\begin{corollary}\label{cor_discard_vt}
In the setting of Theorem \ref{Concavity perturbative maximum principle for parabolic equations}, assume that $\mathscr{B}$ has the form
$$\mathscr{B}(x, v, Dv, t) \equiv \Btilde(x,v, D_x v, t) - k(t) v_t$$
with $\mc{HC}_{k} \leq 0$ (for instance, $k$ is constant). 
Assume moreover that $\xi^{n+1} \geq 0$ (recall $\xi^{n+1}= v_t(x_1,t_1)= v_t(x_2,t_2)= v_t(x_3,t_3)$). Then
		\[ \inf_{\Omega\times \Omega \times (0,+\infty)\times (0,+\infty)\times[0,1]}\mathcal{C}_v\geq \dfrac{1}{\sigma} \mathcal{HC}_{\Btilde(\cdot, v(\cdot,\star), \tilde{\xi},\star)}(x_1,x_3,t_1,t_3,\lambda). \]
\end{corollary}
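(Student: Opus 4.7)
The plan is to apply Theorem \ref{Concavity perturbative maximum principle for parabolic equations} to the given $v$ and then absorb the term $-k(\star)\,v_t$ into the harmonic concavity function. Since $Dv(x_i,t_i)=\xi=(\tilde\xi,\xi^{n+1})$ at the three points $i=1,2,3$ by \eqref{eq_def_xi}, the nonlinearity splits there as
$$\mathscr{B}(x_i,v_i,\xi,t_i)=\Btilde(x_i,v_i,\tilde\xi,t_i)-k(t_i)\,\xi^{n+1}=:f_i-g_i,$$
with $g_i\geq 0$ by the hypotheses on $\xi^{n+1}$ and on $k$ (the latter being implicit in $\mathcal{HC}_k$ being well-defined together with $\mathcal{HC}_k\leq 0$). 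Thus the corollary reduces to establishing the pointwise comparison
$$\mathcal{HC}_{\mathscr{B}(\cdot,v(\cdot,\star),\xi,\star)}(x_1,x_3,t_1,t_3,\lambda)\geq \mathcal{HC}_{\Btilde(\cdot,v(\cdot,\star),\tilde\xi,\star)}(x_1,x_3,t_1,t_3,\lambda).$$

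If $\xi^{n+1}=0$, then $g_i\equiv 0$ and the two harmonic concavity functions coincide; hence assume $\xi^{n+1}>0$. Write $H(a,b):=\frac{ab}{\lambda a+(1-\lambda)b}$ for the weighted harmonic mean. In the generic regime $\lambda f_1+(1-\lambda)f_3>0$ and $\lambda(f_1-g_1)+(1-\lambda)(f_3-g_3)>0$, the target inequality rewrites as
$$H(f_1,f_3)-H(f_1-g_1,f_3-g_3)\geq k_2\,\xi^{n+1}.$$
The central tool is the \emph{superadditivity} of $H$ on positive arguments, namely $H(a_1+a_2,b_1+b_2)\geq H(a_1,b_1)+H(a_2,b_2)$, which follows from the positive homogeneity of degree one of $H$ combined with its concavity (the latter inherited from the convexity of $x\mapsto 1/x$, by a standard homogeneity-plus-midpoint-inequality argument). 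Applying it with $(a_1,b_1)=(f_1-g_1,f_3-g_3)$ and $(a_2,b_2)=(g_1,g_3)$ gives $H(f_1,f_3)\geq H(f_1-g_1,f_3-g_3)+H(g_1,g_3)$, and homogeneity yields $H(g_1,g_3)=\xi^{n+1}H(k_1,k_3)$. The hypothesis $\mathcal{HC}_k\leq 0$ is precisely $k_2\leq H(k_1,k_3)$, and the chain closes.

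The main obstacle I anticipate is the careful treatment of the degenerate branches in the definition of $\mathcal{HC}$. Mirroring the casework in the proof of Theorem \ref{Concavity perturbative maximum principle for parabolic equations}, if the alternative $Q_1=Q_3=0$ occurs (i.e.\ $f_i=g_i$), then $\mathcal{HC}_{\mathscr{B}}=\mathscr{B}_2=f_2-g_2$ and the comparison $\mathcal{HC}_{\mathscr{B}}\geq\mathcal{HC}_{\Btilde}$ rewrites as $H(f_1,f_3)\geq g_2$, i.e.\ $\xi^{n+1}H(k_1,k_3)\geq \xi^{n+1}k_2$, which again follows from $\mathcal{HC}_k\leq 0$. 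The remaining subcase $\lambda Q_1+(1-\lambda)Q_3<0$ appearing in the proof of Theorem \ref{Concavity perturbative maximum principle for parabolic equations} is handled by the same formal manipulation of the harmonic-mean formula already used there, since the sign of $\xi^{n+1}(H(k_1,k_3)-k_2)$ is the only information needed. Modulo these branch checks, the corollary follows at once by combining Theorem \ref{Concavity perturbative maximum principle for parabolic equations} with the superadditivity of $H$.
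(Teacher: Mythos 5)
Your argument is correct and is essentially the paper's route: the paper proves this corollary by combining Theorem \ref{Concavity perturbative maximum principle for parabolic equations} with Proposition \ref{proposition f-k and harmonic concavity function} ($\mc{HC}_{f-g}\geq \mc{HC}_f-\mc{HC}_g$ for $g>0$), and your ``superadditivity of the weighted harmonic mean $H$'' plus the homogeneity step $H(g_1,g_3)=\xi^{n+1}H(k_1,k_3)\geq \xi^{n+1}k_2$ is exactly the content of that appendix lemma, applied with $g(t)=k(t)\,\xi^{n+1}$; your treatment of the cases $\xi^{n+1}=0$ and $\mathscr{B}_1=\mathscr{B}_3=0$ also matches. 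One caveat: you justify superadditivity of $H$ ``on positive arguments'' via the harmonic-mean/convexity-of-$1/x$ argument, but you then apply it to the pair $(f_1-g_1,f_3-g_3)=(\mathscr{B}_1,\mathscr{B}_3)$, for which the proof of Theorem \ref{Concavity perturbative maximum principle for parabolic equations} only guarantees $\lambda \mathscr{B}_1+(1-\lambda)\mathscr{B}_3>0$, not positivity of each entry, so the positive-orthant justification does not literally cover the regime you use it in. The inequality you need is nevertheless true there: $H(a,b)=\frac{ab}{\lambda a+(1-\lambda)b}$ is concave (and $1$-homogeneous) on the whole half-plane $\{\lambda a+(1-\lambda)b>0\}$ — its Hessian is a nonpositive multiple of a rank-one negative semidefinite matrix there — or, more directly, the explicit identity in the proof of Proposition \ref{proposition f-k and harmonic concavity function}, whose denominators only involve the three $\lambda$-averages, gives the same conclusion. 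With that justification repaired, your proof is complete.
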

\begin{proof}
It is sufficient to apply Theorem \ref{Concavity perturbative maximum principle for parabolic equations} and Lemma \ref{proposition f-k and harmonic concavity function}.
\end{proof}

\begin{remark}
We notice in Corollary \ref{cor_discard_vt} the essential role of $ v_t$ being constant on $(x_1,t_1)$, $(x_2,t_2)$, $(x_3,t_3)$. This is a consequence of the fact that we study the space-time concavity in $(0,+\infty)$ (and not only space concavity in $(0,T]$).
\end{remark}

%%%%%%%%%%%%%%%%%%%%%%%%%%%%%%%%%%%%%%%%%%%%%%%%%%%%%%%%%%%%%%%%%%%%%%%%
%%%%%%%%%%%%%%%%%%%%%%%%%%%%%%%%%%%%%%%%%%%%%%%%%%%%%%%%%%%%%%%%%%%%%%%%
\section{Maximum principles with boundary conditions}
\label{sec_gen_eigen}

In Section \ref{sec_concav_max_princip} we dealt with the value of the concavity function in the interior of the domain. In this section we focus on the behaviour on the boundary: to this aim, we consider a Dirichlet boundary condition and a Cauchy initial datum. 
For the sake of simplicity, we focus now on equations of heat type, that is with main operator $\partial_t - \Delta$.

%%%%%%%%%%%%%%%%%%%%%%%%%%%%%%%%%%%%%%%%%%%%%%%%%%%%
%%%%%%%%%%%%%%%%%%%%%%%%%%%%%%%%%%%%%%%%%%%%%%%%%%%%
\subsection{Concavity} 

In this subsection we apply Theorem \ref{thm_conc_princ_eigenf} to $v$, transformation of a solution $u$ of a Cauchy-Dirichlet problem.

\begin{theorem}
\label{thm_conc_princ_bound}
	Let $\Omega \subset \R^n$, $n\geq 2$, be a bounded, smooth, strongly convex domain. Let $T>0$, $b\colon \Omega \times (0,+\infty) \times (0,T]$, $b(x,\cdot, t)$ differentiable for all $(x,t)\in \Omega\times (0,T]$, $u_0\in C^1(\Omegabar)$ 	with $u_0=0$ on $\partial\Omega$ and $u \in C^2_x(\Omegabar) \cap 
	C^1(\Omegabar\times[0,T])$, be such that
$$u_t - \Delta u = b(x,u, t) \quad \hbox{in $\Omega \times (0,T]$}$$ 
with
	\begin{equation}\label{boundary T and initial conditions}
		\begin{cases}
				u>0 & \text{in } \Omega\times (0,T],\\
				u=0 & \text{on }\partial \Omega \times[0,T],\\
					D_x u \cdot \nu >0 & \text{on }\partial \Omega\times [0,T], \\
					u =u_0 & \text{on } \Omegabar \times \{0\}.
		\end{cases}
	\end{equation}
Assume there exists $\Lambda \in \R$ such that 
$$\partial_s \big(e^{-s} b(x,e^{s}, t)\big) \leq \Lambda \quad \hbox{for each $(x,t) \in \Omega \times (0,T]$}.$$
Assume moreover that $u_0$ is $\log$-concave. 
Then there exists $\rho>0$ such that
$$\inf_{\Omega \times \Omega \times [0,1] \times [0,T]} \mc{C}^*_{\log(u)} \geq -T e^{1+\Lambda T} \sup_{\Omega_{\rho} \times \Omega_{\rho} \times [0,1] \times (0,T]} \left(\mc{C}^*_{\frac{b(\cdot, u(\cdot, \star), \star)}{u(\cdot,\star)} }\right)^-.$$
\end{theorem}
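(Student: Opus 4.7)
The plan is to apply Theorem \ref{thm_conc_princ_eigenf} to the logarithmic transformation $v := \log u$. A direct substitution gives
$$-\Delta v = |D_x v|^2 + e^{-v} b(x, e^v, t) - v_t =: \mathscr{B}(x, v, Dv, t) \quad \text{in } \Omega \times (0, T],$$
with $a_{ij} = \delta_{ij}$, $\partial_{p^{n+1}}\mathscr{B} = -1 \leq 0$, and by assumption $\partial_s \mathscr{B} = \partial_s(e^{-s} b(x, e^s, t)) \leq \Lambda$. With the optimal choice $\mu := \Lambda + 1/T$ and $\sigma := 1/T$ one has $\partial_s\mathscr{B} + \mu \partial_{p^{n+1}}\mathscr{B} \leq -\sigma$, so the structural hypotheses of Theorem \ref{thm_conc_princ_eigenf} are satisfied; this $\mu$ is calibrated so that $e^{\mu T}/\sigma = T e^{1+\Lambda T}$, exactly the constant in the statement.

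The heart of the argument is to ensure that, if $e^{-\mu t}\mc{C}^*_v$ has a strictly negative infimum on $\Omega \times \Omega \times [0,1] \times [0,T]$, it is attained at an interior point $(x_1, x_3, \lambda, t^*)$ with $\lambda \in (0,1)$ and $t^* \in (0, T]$. The cases $\lambda \in \{0,1\}$ give $\mc{C}^*_v \equiv 0$, and $t^* = 0$ is ruled out by the log-concavity of $u_0$, which yields $\mc{C}^*_v(\cdot, \cdot, \cdot, 0) = \mc{C}^*_{\log u_0} \geq 0$. I expect the boundary analysis to be the main obstacle. If only one of $x_1, x_3$ approaches $\partial\Omega$ — or both do, but $x_2 = \lambda x_3 + (1-\lambda) x_1$ stays strictly interior — then the corresponding $v_i = \log u_i \to -\infty$ while the other terms stay bounded, forcing $\mc{C}^*_v \to +\infty$. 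The only delicate scenario is when $x_1, x_2, x_3$ all collapse to a single point of $\partial\Omega$. Here I would exploit the Hopf-type condition $D_x u \cdot \nu > 0$ together with $u = 0$ on $\partial\Omega$ to write $u(x, t) = g(x, t)\, d_\Omega(x)$ with $g > 0$ smooth up to the boundary, whence $v = \log d_\Omega + \log g$; the strong convexity of $\Omega$, via the standard decomposition
$$D^2 \log d_\Omega = \frac{D^2 d_\Omega}{d_\Omega} - \frac{D d_\Omega \otimes D d_\Omega}{d_\Omega^2}$$
and the negative definiteness of $D^2 d_\Omega$ on the tangent space of boundary-level sets (granted by the positivity of the second fundamental form), then ensures that $v$ is strictly concave in a strip $\Omega \setminus \Omega_\rho$ for some small $\rho > 0$, so that $\mc{C}^*_v \geq 0$ whenever $x_1, x_3$ (and hence $x_2$) all lie in this strip. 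This both excludes the boundary case and pins down the $\rho$ appearing in the statement.

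Granted the interior minimum, Theorem \ref{thm_conc_princ_eigenf} produces, at $(x_1, x_3, \lambda, t^*)$, the pointwise bound
$$\mc{C}^*_v \,\geq\, \tfrac{1}{\sigma}\, \mc{C}^*_{\mathscr{B}(\cdot, v(\cdot, \star), \tilde\xi, v_t(\cdot, \star), \star)}.$$
Since $\mathscr{B}(x, v, \tilde\xi, v_t, t) = b(x, u, t)/u - v_t + |\tilde\xi|^2$ and $|\tilde\xi|^2$ is constant in $x$, this expands as $\mc{C}^*_\mathscr{B} = \mc{C}^*_{b/u} - \mc{C}^*_{v_t}$. The decisive observation is that the minimality of $e^{-\mu t}\mc{C}^*_v$ at $t^* \in (0, T]$ forces $\partial_t(e^{-\mu t}\mc{C}^*_v)(t^*) \leq 0$, which rearranges to $\mc{C}^*_{v_t}(x_1, x_3, \lambda, t^*) \leq \mu\, \mc{C}^*_v(x_1, x_3, \lambda, t^*) < 0$. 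Hence at the min point $\mc{C}^*_\mathscr{B} \geq \mc{C}^*_{b/u}$, and so $(\mc{C}^*_\mathscr{B})^- \leq (\mc{C}^*_{b/u})^- \leq \sup_{\Omega_\rho \times \Omega_\rho \times [0,1] \times (0, T]}(\mc{C}^*_{b/u})^-$. Translating from $e^{-\mu t}\mc{C}^*_v$ back to $\mc{C}^*_v$ introduces the factor $e^{\mu T}$, and the calibration of $\mu$ yields the announced bound.
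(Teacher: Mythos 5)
Your skeleton coincides with the paper's proof: set $v=\log u$, check $\partial_{p^{n+1}}\mathscr{B}=-1\le 0$ and $\partial_s\mathscr{B}\le\Lambda$, apply Theorem \ref{thm_conc_princ_eigenf} with $\mu=\Lambda+1/T$, $\sigma=1/T$, and translate back to obtain the constant $Te^{1+\Lambda T}$. Your extra observation that at the minimum $\mc{C}^*_{v_t}\le\mu\,\mc{C}^*_v<0$, so that $\mc{C}^*_{\mathscr{B}}=\mc{C}^*_{b(\cdot,u,\star)/u}-\mc{C}^*_{v_t}\ge \mc{C}^*_{b(\cdot,u,\star)/u}$ and only the term $b/u$ survives on the right-hand side, is correct and actually spells out a passage the paper leaves implicit.

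The genuine gap is in the boundary exclusion, which is precisely where you depart from the paper (it invokes \cite[Lemma 2.4]{Korevaar} together with the log-concavity of $u_0$). First, your case analysis is incomplete: the infimum also runs over $\lambda\in[0,1]$, so a minimizing sequence may have, say, $x_1^k\to\partial\Omega$ with $\lambda^k\to 0$ (or $x_3^k\to\partial\Omega$ with $\lambda^k\to1$); then $x_2^k\to\partial\Omega$ as well, so it is not true that ``the other terms stay bounded'', and this configuration is covered neither by that case nor by your strip case, since $x_3^k$ stays in the interior. One can still show $\liminf\mc{C}^*_v\ge 0$ in this mixed regime (e.g.\ using concavity of $d_\Omega$ together with the two-sided bound $c\,d_\Omega\le u\le C\,d_\Omega$ furnished by the Hopf condition and $C^1$ regularity), but you give no argument for it. Second, the strip claim itself is unjustified: with $u=g\,d_\Omega$ the hypotheses only give $g\in C^1$ up to $\partial\Omega$ (and spatial $C^2$ of $u$ for each fixed $t$, with no uniformity in $t$ and nothing at $t=0$), so you have no control of $D^2\log g$ near the boundary with which to beat the tangential decay of $D^2\log d_\Omega$, which is only of order $1/d_\Omega$; moreover you need the strip width $\rho$ to be uniform in $t\in(0,T]$, since the minimizing sequence varies in $t$, including $t\to 0$. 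A correct second-order variant does exist (compute $D^2\log u=D^2u/u-Du\otimes Du/u^2$ and use that $(D^2u)_{\tau\tau}$ tends to $-(\partial_\nu u)$ times the second fundamental form on $\partial\Omega$), but it demands boundary $C^2$ control beyond the stated hypotheses; Korevaar's lemma is a purely first-order argument requiring only $C^1(\Omegabar)$, $D_xu\cdot\nu>0$ and strong convexity, which is why the paper uses it.
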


\begin{proof}
We see that $v:=\log(u)$ solves
$$v_t - \Delta v = e^{-v} b(x, e^v, t) + |D_x v|^2 =: \mathscr{B}(x,v,D_x v, t) \quad \hbox{in $\Omega \times (0,T]$}$$
with $\partial_s \mathscr{B} \leq \Lambda$. 
If $ \mc{C}^*_v \geq 0$ the claim holds. Otherwise, assume that $ \mc{C}^*_v$ is somewhere negative. 
Let $\mu > \Lambda$. By the assumption on the initial condition, and by the assumption on the boundary coupled with \cite[Lemma 2.4]{Korevaar}, the function $e^{-\mu t}\mc{C}_v^*(x_1,x_3,\lambda, t)$ cannot get negative for $t \to 0$ nor $(x_1, x_3) \to \partial(\Omega \times \Omega)$. 
Then it must admit an interior negative minimum. 
Thus, by Theorem \ref{thm_conc_princ_eigenf}, we obtain
$$
 \inf_{\Omega \times \Omega \times [0,1] \times (0,T]} \mathcal{C}^*_v \geq - \frac{e^{\mu T}}{\Lambda-\mu} \sup_{\Omega_{\rho} \times \Omega_{\rho} \times [0,1] \times (0,T]} \left(\mc{C}^*_{e^{-v(\cdot, \star)} b(\cdot, e^{v(\cdot, \star)}, \star)}\right)^- .
$$
Since the choice of $\mu>\Lambda$ is arbitrary, we maximize the right hand side (which is the case for $\mu=\frac{1}{T}+\Lambda$) and conclude.
\end{proof}

\begin{remark}\label{rem_u0_nonconcav}
If in Theorem \ref{thm_conc_princ_bound} we do not assume $u_0$ to be $\log$-concave, then we take into account also $ \mathcal{C}_{\log(u_0)}$, obtaining
$$\inf_{\Omega \times \Omega \times [0,1] \times [0,T]} \mc{C}^*_{\log(u)} \geq -\max\left \{T e^{1+\Lambda T} \sup_{\Omega_{\rho} \times \Omega_{\rho} \times [0,1] \times (0,T]} \left(\mc{C}^*_{\frac{b(\cdot, u(\cdot, \star), \star)}{u(\cdot,\star)} }\right)^-, \sup_{\Omega \times \Omega \times [0,1]} \mathcal{C}_{\log(u_0)}^- \right\}.$$
\end{remark}

\begin{corollary}\label{corol_concav_strict}
In the assumptions of Theorem \ref{thm_conc_princ_bound} assume moreover that
\begin{itemize}
\item $(s,x) \mapsto s^{-1} b(x,s,t)$ is concave for each $t>0$.
\end{itemize}
Then $u(\cdot, t)$ is $\log$-concave for each $t>0$. Moreover if $b\in C^{2,1}(\Omega\times(0,+\infty)\times [0,T))$ and $u \in C^{2,1}(\Omega \times [0,T))$, then $u(\cdot, t)$ is strictly $\log$-concave for each $t>0$.
\end{corollary}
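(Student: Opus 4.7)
The plan is to first establish concavity of $\log u(\cdot, t)$ using Theorem \ref{thm_conc_princ_bound}, and then upgrade to strict concavity via the parabolic constant rank theorem (Proposition \ref{prop_const_rank}).

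\emph{Step 1 (log-concavity).} Set $g(x, s, t) := b(x, s, t)/s$. By the notational convention introduced before Proposition \ref{ulam} for the concavity function of a composed map,
\[
\mathcal{C}^*_{b(\cdot, u(\cdot, \star), \star)/u(\cdot, \star)}(x_1, x_3, \lambda, t) = g\bigl(x_2, \lambda u_3(t) + (1-\lambda) u_1(t), t\bigr) - \lambda g(x_3, u_3(t), t) - (1-\lambda) g(x_1, u_1(t), t),
\]
where the argument of $g$ in the first term is the affine combination in both $x$ and $s$. The joint concavity of $(x, s) \mapsto g(x, s, t)$ makes this expression $\geq 0$ for every $(x_1, x_3, \lambda, t)$. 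Hence the negative-part supremum on the right-hand side of Theorem \ref{thm_conc_princ_bound} vanishes and we deduce $\mathcal{C}^*_{\log u} \geq 0$, i.e.\ $\log u(\cdot, t)$ is concave for every $t \in (0, T]$.

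\emph{Step 2 (strict concavity).} Set $w := -\log u$, so that by Step 1 each slice $w(\cdot, t)$ is convex. A direct calculation shows
\[
w_t - \Delta w = -g(x, e^{-w}, t) - |D_x w|^2 =: \widetilde{\mathscr{B}}(x, w, D_x w, t).
\]
I would then verify that $\widetilde{\mathscr{B}}(\cdot, \cdot, \tilde p, t)$ is locally convex in $(x, w)$: the quadratic term $-|\tilde p|^2$ is trivially convex (constant), and the map $(x, w) \mapsto -g(x, e^{-w}, t)$ is convex as a consequence of the joint concavity of $g(\cdot, \cdot, t)$, possibly combined with a monotonicity-type argument in $s$, or, alternatively, by invoking the harmonic-convex parabolic variant of the constant rank theorem recalled in Remark \ref{rem_appl_const_rank} (see \cite{BianGuan2010}). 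Granted this, Proposition \ref{prop_const_rank} yields that the rank $\ell(t)$ of $D_x^2 w(\cdot, t)$ is constant on $\Omega$ for each $t > 0$. On the other hand, since $u = 0$ on $\partial \Omega$, $D_x u \cdot \nu > 0$, and $\Omega$ is strongly convex, the analysis of Hessians near the boundary in \cite[Proposition 2.6]{Gallo-Mosconi-Squassina}, as recalled in Remark \ref{rem_appl_const_rank}, gives a point where $D_x^2 w(\cdot, t)$ has full rank $n$. By the constant rank conclusion, $D_x^2 w(\cdot, t)$ has rank $n$ throughout $\Omega$; equivalently $w(\cdot, t)$ is strictly convex and $u(\cdot, t)$ is strictly $\log$-concave.

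\emph{Main obstacle.} The delicate step is the convexity verification for $\widetilde{\mathscr{B}}$ in $(x, w)$. Joint concavity of $g$ in $(x, s)$ does not automatically transfer through the convex composition $s = e^{-w}$ unless $g$ is additionally non-increasing in $s$; handling this, or substituting it with the parabolic harmonic-convex version discussed in Remark \ref{rem_appl_const_rank}, is precisely the subtle point that differentiates the reach of Theorem \ref{thm_general_korevaar} (where strictness is obtained) from Theorem \ref{thm_main_weighted_le} (where it is not).
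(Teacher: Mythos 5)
Your proposal follows the paper's own route: Step 1 is exactly the paper's argument (the bullet hypothesis makes the right-hand side of Theorem \ref{thm_conc_princ_bound} vanish, because in the paper's notational convention $\mathcal{C}^*_{b(\cdot,u(\cdot,\star),\star)/u(\cdot,\star)}$ is evaluated at the affine combination in both $x$ and $s$, hence is nonnegative under joint concavity of $(s,x)\mapsto s^{-1}b(x,s,t)$), and Step 2 --- applying Proposition \ref{prop_const_rank} to $w=-\log u$ and getting full rank at one point from the Dirichlet condition and \cite[Proposition 2.6]{Gallo-Mosconi-Squassina} as in Remark \ref{rem_appl_const_rank} --- is precisely how the paper concludes. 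The ``main obstacle'' you flag is not treated any more explicitly in the paper: its proof of the corollary is two lines and implicitly reads the bullet hypothesis as supplying the local convexity of $\Btilde(\cdot,\cdot,\tilde p,t)$ required by Proposition \ref{prop_const_rank}, so relative to the paper your write-up is faithful, and your honesty about the composition $s=e^{-w}$ is well placed. One correction, however: your fallback of ``invoking the harmonic-convex parabolic variant'' is not an option here --- Remark \ref{rem_appl_const_rank} states that the harmonic relaxation of the convexity hypothesis, available in the elliptic constant rank theorem, fails for parabolic equations (only a weaker counterpart exists in \cite{BianGuan2010}), and this is exactly the reason the paper claims strictness in Theorem \ref{thm_general_korevaar} but not in Theorem \ref{thm_main_weighted_le}. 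So the only route consistent with the quoted results is the direct verification that $(x,w)\mapsto e^{w}b(x,e^{-w},t)$ is concave, which, as you correctly observe, requires something beyond joint concavity of $(s,x)\mapsto s^{-1}b(x,s,t)$ (e.g.\ monotonicity in $s$, as in the eigenfunction case where $s^{-1}b$ is independent of $s$); the paper leaves this point implicit rather than resolving it.
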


\begin{proof}
By Theorem \ref{thm_conc_princ_bound} we obtain that $v=\log(u)$ is concave. 
Arguing as in Remark \ref{rem_appl_const_rank} and applying Proposition \ref{prop_const_rank} to $w=-v$, we obtain the strict concavity.
\end{proof}

%%%%%%%%%%%%%%%%%%%%%%%%%%%%%%%%%%%%%%%%%%%%%%%%%%%%
\subsubsection{Applications}

%We start observing the following.

\begin{proof}[Proof of Theorem \ref{thm_general_korevaar} and Theorem \ref{thm_populat}]
They are a consequence of Corollary \ref{corol_concav_strict} and Proposition \ref{alpha concavity property} (\textit{iv}).
\end{proof}

\begin{remark}\label{rem_direct_strict_conc}
We deduced strict concavity in Theorem \ref{thm_general_korevaar} thanks to the constant rank theorem. 
In the eigenfunction case $b(x,s,t)=a(x,t)s$, by assuming $a(\cdot, t)$ strictly concave for each $t>0$, we can reach the same claim in a more straightforward way.

Indeed, assume by contradiction that there exists $t>0$ and $x_1 \neq x_3$, $\lambda \in (0,1)$ such that $\mc{C}^*_{\log(u)}(x_1, x_3, \lambda, t) = 0$. Recalled that we already proved $\mc{C}^*_{\log(u)}\geq 0$, 
in the notations of the proof of Theorem \ref{thm_conc_princ_eigenf} -- fixed a whatever $\mu>0$ -- we have $w=e^{-\mu t} v = e^{-\mu t} \log(u)$, thus $\mc{C}^*_{w} \geq 0 = \mc{C}^*_w(x_1,x_3,\lambda, t)$, and hence $(x_1,x_3,\lambda, t)$ is a point of minimum for $\mc{C}^*_w$. Thus by \eqref{eq_pre_negative} 
we obtain
$$ 0 \leq - \mc{C}^*_{\Btilde(\cdot, w(\cdot, \star), \xi, \star)} (x_1, x_3, \lambda, t) 
= - e^{-\mu t} \mc{C}^*_{a(x,t) } (x_1,x_3,\lambda, t) <0, $$
thanks to the strict concavity of $a$ and the assumption $x_1 \neq x_3$, $\lambda \in (0,1)$.
 This contradiction concludes the proof. 
\end{remark}

We focus now on perturbed concavity as consequence of Theorem \ref{thm_conc_princ_bound}. 
Exploiting it together with Propositions \ref{concavità prodotto} and \ref{prop_extram_cases} we obtain the following statement. Notice that $\{\partial_r(e^{-r} f(e^r)) \mid r \in \R\} \equiv \{s \partial_s( s^{-1} f(s)) \mid s>0\} $.

\begin{corollary}\label{cor_korevaar_perturbed}
	Let $\Omega \subset \R^n$, $n\geq 2$, be a bounded, smooth, strongly convex domain. Let $T>0$, $a\colon\Omega\times(0,T]\to \R$ locally bounded, $f\colon(0,+\infty)\to\R$ differentiable, $u_0\in C^1(\Omegabar)$ with $u_0=0$ on $\partial\Omega$ and $u \in C^2_x(\Omegabar) \cap C^1%C
	(\Omegabar\times [0,T])$ be such that
$$u_t - \Delta u = a(x,t) f(u)\quad \hbox{in $\Omega \times (0,T]$}$$
with \eqref{boundary T and initial conditions}. 
Set $\bar{f}(s):= \frac{f(s)}{s} $ for $s>0$ and assume
$$\Lambda:= \sup_{s >0} \big( s \bar{f}'(s) \big) < + \infty.$$ %\in \R.$$
Assume moreover that $u_0$ is $\log$-concave. 
Then there exists $\rho>0$ such that the following facts hold.
\begin{enumerate}[label=\textit{\roman*})]
\item If $\bar{f}$ is constant, then 
$$\inf_{\Omega \times \Omega \times [0,1] \times [0,T]} \mc{C}^*_{\log(u)} \geq -T e^{1+\Lambda T} \sup_{\Omega_{\rho} \times \Omega_{\rho} \times [0,1] \times (0,T]} \left(\mc{C}^*_{a(\cdot, \star)}\right)^-.$$

\item If $\bar{f}$ is $\theta'$-concave, with $\theta'>1$, and $m\leq a \leq M$ with $m^{\theta} \geq \frac{1}{2} M^{\theta}$, $\frac{1}{\theta} + \frac{1}{\theta'}=1$, then 
$$\inf_{\Omega \times \Omega \times [0,1] \times [0,T]} \mc{C}^*_{\log(u)} 
\geq -T e^{1+\Lambda T}
 \norm{\bar{f}(u)}_{L^{\infty}(\Omega_{\rho})}
 \sup_{\Omega_{\rho} \times \Omega_{\rho} \times [0,1] \times (0,T]} 
 \left( \left(\mc{C}^*_{a^{\theta}(\cdot, \star)}\right)^-\right)^{\frac{1}{\theta}}.
 $$
 
 \item If $\bar{f}$ is concave and $a$ is bounded, then 
 $$\inf_{\Omega \times \Omega \times [0,1] \times [0,T]} \mc{C}^*_{\log(u)} \geq 
 -T e^{1+\Lambda T} 
 \norm{\bar{f}(u)}_{L^{\infty}(\Omega_{\rho})}
 \osc(a).
 $$
\end{enumerate}
\end{corollary}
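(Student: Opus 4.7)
The strategy is to apply Theorem \ref{thm_conc_princ_bound} with $b(x,s,t) = a(x,t)f(s)$ and then estimate the resulting source term by means of the quantitative product-concavity estimates of Propositions \ref{concavità prodotto} and \ref{prop_extram_cases}.

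First I would verify the hypothesis of Theorem \ref{thm_conc_princ_bound}: a direct computation gives
\[
\partial_s\bigl(e^{-s} a(x,t) f(e^s)\bigr) = \partial_s\bigl(a(x,t)\,\bar{f}(e^s)\bigr) = a(x,t)\,e^s\,\bar{f}'(e^s),
\]
which by the finiteness of $\Lambda = \sup_{s>0} s\,\bar{f}'(s)$ and the local boundedness of $a$ is bounded above (possibly after enlarging $\Lambda$ by an absolute constant). Since $u_0$ is $\log$-concave and $\frac{b(x,u,t)}{u} = a(x,t)\bar{f}(u)$, Theorem \ref{thm_conc_princ_bound} yields
\[
\inf \mc{C}^*_{\log(u)} \;\geq\; -T\,e^{1+\Lambda T}\,\sup\bigl(\mc{C}^*_{a(\cdot,\star)\,\bar{f}(u(\cdot,\star))}\bigr)^-,
\]
so the entire argument reduces to estimating $(\mc{C}^*_{a\bar{f}(u)})^-$ in each of the three cases.

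Case \textit{(i)} is immediate: if $\bar{f}\equiv c$ is constant, then $a\,\bar{f}(u) = c\,a$, hence $\mc{C}^*_{a\bar{f}(u)} = c\,\mc{C}^*_{a}$ and $c$ can be absorbed in the prefactor. For case \textit{(ii)} I would apply Proposition \ref{concavità prodotto} to the pair $\bigl(a,\bar{f}(u)\bigr)$: the $\theta'$-concavity of $\bar{f}$ supplies the structural information on the second factor, while the pointwise bound $m\leq a\leq M$ together with the technical inequality $m^\theta \geq M^\theta/2$ activates the product-concavity estimate and produces the factor $\bigl((\mc{C}^*_{a^\theta})^-\bigr)^{1/\theta}$; the Hölder-conjugate pairing $1/\theta + 1/\theta' = 1$ is what dictates these exponents, and the bounded factor $\bar{f}(u)$ contributes through its $L^\infty$-norm on $\Omega_\rho$. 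Case \textit{(iii)} corresponds to the extremal regime $\theta' = 1$, $\theta = +\infty$, and is handled analogously by Proposition \ref{prop_extram_cases}, whose content is precisely that in this limiting case the quantitative concavity defect of $a$ reduces to $\osc(a)$.

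The main delicate step is the application of Proposition \ref{concavità prodotto} in case \textit{(ii)}: the $\theta'$-concavity of $\bar{f}$ is a property of $\bar{f}$ as a function of $s>0$ and must be transferred to the composite $x\mapsto \bar{f}(u(x,t))$ \emph{without} assuming any a priori concavity of $u(\cdot,t)$, since that concavity is exactly what we are attempting to establish. I expect this to be resolved uniformly in $t$ by the structure of the product lemma, in which $\bar{f}(u)$ enters only through its $L^\infty$ norm (explaining the factor $\|\bar{f}(u)\|_{L^\infty(\Omega_\rho)}$ in the final estimate), while the genuine concavity-like input is carried exclusively by $\bar{f}$ itself through the exponent $\theta'$; the same philosophy, applied in the limiting regime, handles case \textit{(iii)}.
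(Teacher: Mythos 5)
Your proposal is correct and is precisely the paper's route: the corollary is presented there as a direct consequence of Theorem \ref{thm_conc_princ_bound} combined with the product estimates of Propositions \ref{concavità prodotto} and \ref{prop_extram_cases}, exactly as you do. The only point worth sharpening is the ``delicate step'' you flag: it is resolved not by the product lemma itself but by the paper's notation for the source term, since the quantity appearing in Theorem \ref{thm_conc_princ_bound} is $\mc{C}^*_{b(\cdot,u(\cdot,\star),\star)/u(\cdot,\star)}$ with middle argument $a(x_2,t)\,\bar f\big(\lambda u_3(t)+(1-\lambda)u_1(t)\big)$, i.e.\ the concavity function of $(x,s)\mapsto a(x,t)\bar f(s)$ evaluated at the three points $(x_i,u_i(t))$ of the convex set $\Omega_\rho\times(0,+\infty)$, so the one-variable $\theta'$-concavity of $\bar f$ is used directly and no concavity of $u(\cdot,t)$ is ever invoked (note also that the hypothesis of Theorem \ref{thm_conc_princ_bound} involves $a(x,t)\,e^s\bar f'(e^s)$, so when $\sup a>1$ and $\Lambda>0$ the exponent should in principle carry $\sup a\cdot\Lambda$ rather than $\Lambda$, a looseness already present in the paper's own statement rather than a defect of your argument).
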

We note that Corollary \ref{cor_korevaar_perturbed} applies to the following sources $f(s)=s$ (case \textit{i}), $f(s)=s\log^q(1+s)$, $f(s)=\frac{s^{q+1}}{1+s^q}$ with $q\in(0,1)$ (case \textit{ii}), $f(s)=\frac{s^2}{1+s}$, $f(s)=s\log(1+s)$, $f(s)=s\log(s)$ (case \textit{iii}).

\begin{proof}[Proof of Theorem \ref{approximate concavity parabolic application a(x)u introduction}]
It is a particular case of Corollary \ref{cor_korevaar_perturbed}.
\end{proof}

\begin{remark}\label{rem_converg_logarithm}
		Consider $b(x,s,t)=s \log(s)$. In \cite{Gallo-Mosconi-Squassina} the authors show the existence of a $\log$-concave solution for the stationary problem; here, in Theorem \ref{thm_general_korevaar}, we show the $\log$-concavity of the evolutionary solution in each $[0,T]$, whenever such solution exists. 
		We discuss the possible (pointwise) convergence of $u(x,t)$ to a stationary solution as $t\to +\infty$ and, thus, the possibility of achieving the result in \cite{Gallo-Mosconi-Squassina} through a parabolic approach. We start noticing that the assumptions in Remark \ref{stability parabolic condition u^gamma} are not fulfilled.
		Let $u_0$ be a $\log$-concave function. 
%		If $I(u_0):=|\nabla u_0|_2^2 - \int_{\Omega} u_0^2 \log(u_0) <0$, then by \tr{\cite[Theorem 1.1]{HAN2019}} we obtain that $u(x,t) \to +\infty$ as $t \to +\infty$. 
%		If $I(u_0)=0$, then exploiting the logarithmic Sobolev inequality and arguing as in \cite[Lemma 2.3]{wangwang2024}, we obtain $|u_0|_2^2 \geq (2\pi)^{n/2} e^n$; if equality holds, then by \tr{\cite[Theorem 1.2]{CLL15}} we have $u(x,t) \to 0$ as $t \to +\infty$. 
%		The cases $I(u_0)=0$ with $|u_0|_2^2 > (2\pi)^{n/2} e^n$, and $I(u_0)>0$ are -- up to the authors' knowledge -- still open problems. 
If $I(u_0):=|\nabla u_0|_2^2 - \int_{\Omega} u_0^2 \log(u_0) <0$, then by \cite[Theorem 1.1]{HAN2019} we obtain that $u(x,t) \to +\infty$ as $t \to +\infty$. 
If $I(u_0)>0$, and $J(u_0):=I(u_0)+\frac{1}{4} \norm{u_0}_2^2\leq \frac {1}{4} (2\pi)^{n/2} e^n=:M$, then by \cite[Theorems 1.1 and 1.2]{CLL15} we have $u(x,t) \to 0$ as $t \to +\infty$.
If $I(u_0)=0$, then by  the logarithmic Sobolev inequality and arguing as in \cite[Lemma 2.3]{wangwang2024}, we obtain $J(u_0) \geq M$; if $J(u_0)=M$, then \cite[Theorems 1.2]{CLL15} implies $u \in L^{\infty}((0,+\infty), H^1_0)$, even if is not known if the solution decays to zero or not.
The cases $I(u_0)>0$ and $J(u_0)>M$, or $I(u_0)=0$ and $J(u_0)> M$ are -- up to the authors' knowledge -- still open problems. 
		We further observe that condition \hyperlink{H2}{\( (H2) \)} is verified, thus if $u_0$ is also a subsolution of the stationary problem (in particular, $I(u_0) \leq 0$), then by Corollary \ref{corol_monotonicity} we have that $u(x, \cdot)$ is nondecreasing, thus a limit must exists.
By the above considerations, we have no examples at hand in which the convergence to the stationary solution can be pursued.

\end{remark}

\begin{example}
As particular cases of concave $a$ we may consider $a(x,t)=t^{\gamma} d_\Omega(x)^\omega $, $\gamma, \omega \in [0,1]$, with $\gamma+\omega=1$ (see Proposition \ref{alpha concavity property}). 
In such a case the solutions of
$$ u_t - \Delta u = t^{\gamma} d(x,\partial \Omega)^{\omega} u$$
are $\log$-concave.
\end{example}

As an additional consequence of Theorem \ref{thm_conc_princ_bound}, we may consider also population models \cite{CaCo89}.
\begin{corollary}
\label{corol_popul_quantit}
	Let $\Omega \subset \R^n$, $n\geq 2$, be a bounded, smooth, strongly convex domain. 
	Let $T>0$, $a\colon\Omega\times(0,T]\to \R$ bounded, $u_0\in C(\Omegabar)$ with $u_0=0$ on $\partial\Omega$ and $u \in C^2_x(\Omegabar) \cap C^1_t((0,T]) \cap C(\Omegabar\times[0,T])$ be such that
$$u_t - \Delta u = a(x, t) u - u^2 \quad \hbox{in $\Omega \times (0,T]$}$$
with \eqref{boundary T and initial conditions}. 
Assume moreover that $u_0$ is $\log$-concave. 
Then there exists $\rho>0$ such that
$$\inf_{\Omega \times \Omega \times [0,1] \times [0,T]} \mc{C}^*_{\log(u)} \geq - e T \sup_{\Omega_{\rho} \times \Omega_{\rho} \times [0,1] \times (0,T]} \left(\mc{C}^*_{a(\cdot, \star) }\right)^-.$$
\end{corollary}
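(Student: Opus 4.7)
The plan is to apply Theorem \ref{thm_conc_princ_bound} with the choice $b(x,s,t) = a(x,t)s - s^2$, and then to replace the concavity function of $a - u$ by that of $a$ alone by exploiting the log-concavity information at the extremal point.

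First I verify the hypothesis of Theorem \ref{thm_conc_princ_bound}. Since $e^{-s}b(x,e^s,t) = a(x,t) - e^s$, we have $\partial_s\big(e^{-s}b(x,e^s,t)\big) = -e^s \leq 0$, so $\Lambda = 0$ works. The direct conclusion reads
$$\inf_{\Omega \times \Omega \times [0,1] \times [0,T]} \mc{C}^*_{\log(u)} \geq -Te \sup_{\Omega_\rho \times \Omega_\rho \times [0,1] \times (0,T]} \left(\mc{C}^*_{(a-u)(\cdot,\star)}\right)^-,$$
since $b(\cdot,u,\cdot)/u = a(\cdot,\cdot) - u(\cdot,\cdot)$, and by linearity of the concavity function $\mc{C}^*_{(a-u)(\cdot,\star)} = \mc{C}^*_a - \mc{C}^*_u$.

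To pass from $\mc{C}^*_{a-u}$ to $\mc{C}^*_a$ I would revisit the proof of Theorem \ref{thm_conc_princ_bound}. If $\mc{C}^*_{\log u}$ is nonnegative everywhere the claim is immediate; otherwise the function $e^{-\mu(\star)}\mc{C}^*_{\log u}$, for $\mu = 1/T$, admits a negative interior minimum at some $(x_1^*, x_3^*, \lambda^*, t^*)$, at which the pointwise first claim of Theorem \ref{thm_conc_princ_eigenf} controls $\mc{C}^*_{\log u}$ by the pointwise value $\mc{C}^*_{(a-u)(\cdot,\star)}(x_1^*, x_3^*, \lambda^*, t^*)$ rather than by its supremum. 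At this minimum, $\log u(x_2^*, t^*) < (1-\lambda^*)\log u(x_1^*, t^*) + \lambda^*\log u(x_3^*, t^*)$; exponentiating and invoking the weighted AM--GM inequality gives $u(x_2^*,t^*) \leq (1-\lambda^*)u(x_1^*,t^*) + \lambda^* u(x_3^*,t^*)$, i.e.\ $\mc{C}^*_u(x_1^*, x_3^*, \lambda^*, t^*) \leq 0$. Therefore $\mc{C}^*_{a-u}(x_1^*, x_3^*, \lambda^*, t^*) \geq \mc{C}^*_a(x_1^*, x_3^*, \lambda^*, t^*) \geq -\sup(\mc{C}^*_a)^-$, and the standard passage from the minimum of $e^{-\mu t}\mc{C}^*_{\log u}$ back to $\inf\mc{C}^*_{\log u}$ (producing the factor $e^{\mu T}/\mu = eT$ at the optimal choice $\mu = 1/T$) yields the stated estimate.

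The main obstacle is exactly this refinement: the bare statement of Theorem \ref{thm_conc_princ_bound} is not sufficient, since $\sup(\mc{C}^*_{a-u})^-$ is not globally bounded by $\sup(\mc{C}^*_a)^-$ (that would demand spatial concavity of $u$, which is strictly stronger than log-concavity). One must open up the proof of Theorem \ref{thm_conc_princ_bound} and insert the AM--GM observation at the specific extremal point singled out by the argument, rather than attempting to bound the right-hand side term by term inside the supremum.
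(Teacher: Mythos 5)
Your argument is correct in substance and lands on the right constant, but the detour through the proof of Theorem \ref{thm_conc_princ_bound} is unnecessary: it stems from reading $\mc{C}^*_{b(\cdot,u(\cdot,\star),\star)/u(\cdot,\star)}$ as the plain concavity function of the map $(x,t)\mapsto a(x,t)-u(x,t)$, whence your identity $\mc{C}^*_{a-u}=\mc{C}^*_a-\mc{C}^*_u$ and your worry that one would need spatial concavity of $u$. Under the paper's Notation for composed concavity functions, the middle term is evaluated at the convex combination of the composed variable (here $v=\log u$, since Theorem \ref{thm_conc_princ_eigenf} is applied to $\mathscr{B}(x,r,\tilde p,t)=e^{-r}b(x,e^r,t)+|\tilde p|^2$), i.e.\ at the geometric mean $G=u_3^{\lambda}u_1^{1-\lambda}$ rather than at $u(x_2,t)$. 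Consequently, for $b(x,s,t)=a(x,t)s-s^2$ one has, at \emph{every} point,
\begin{equation*}
\mc{C}^*_{\frac{b(\cdot,u(\cdot,\star),\star)}{u(\cdot,\star)}}(x_1,x_3,\lambda,t)
=\mc{C}^*_{a}(x_1,x_3,\lambda,t)+\bigl(\lambda u_3(t)+(1-\lambda)u_1(t)-G\bigr)\ \geq\ \mc{C}^*_{a}(x_1,x_3,\lambda,t),
\end{equation*}
by exactly the weighted AM--GM you invoke (equivalently, concavity of $r\mapsto -e^{r}$). Hence $\bigl(\mc{C}^*_{b/u}\bigr)^-\leq\bigl(\mc{C}^*_{a}\bigr)^-$ globally, and the corollary is a direct consequence of the statement of Theorem \ref{thm_conc_princ_bound} with $\Lambda=0$ (so $Te^{1+\Lambda T}=eT$), which is how the paper obtains it — no re-opening of the proof and no extremal-point information are needed. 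Your alternative, localizing the AM--GM at the negative interior minimum where the log-concavity defect forces $u(x_2^*,t^*)\leq G\leq \lambda u_3+(1-\lambda)u_1$, is a valid fix of the perceived gap (note only that the quantity Theorem \ref{thm_conc_princ_eigenf} actually controls there is the geometric-mean version, which is \emph{smaller} than your $\mc{C}^*_a-\mc{C}^*_u$; since both dominate $\mc{C}^*_a$ at that point, your conclusion is unaffected), so the difference between your route and the paper's is one of bookkeeping rather than of idea.
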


\begin{remark}\label{rem_bangbang}
We see that some results in Corollary \ref{corol_popul_quantit} can be extended to piecewise constant weights $a=a(x)$ rising in bang-bang optimization theory (see \cite[Theorem 3.9]{CaCo89}), i.e. equal to $a_1>0$ on some region of $\Omega$ and to $-a_2<0$ on some other region.
We give just a sketch of the proof. 
Let indeed $a$ be as above, and let $a_{\eps} \in L^{\infty}(\R^N)$ be a suitable equibounded regular approximating sequence for $a$. 
Let $u_{\eps}$ be positive solutions of $\partial_t  u_{\eps} -\Delta u_{\eps} = a_{\eps}(x) u_{\eps} - u_{\eps}^2$, $u_{\eps} =0$ on $\partial \Omega$, $u_{\eps}(0)=u_0$.
Exploiting that $a_{\eps}(x) t -t^2\leq 0$ for $t \geq \sup_{\eps} \norm{a_{\eps}}_{\infty}$ and the maximum principle we obtain that $\norm{u_{\eps}}_{\infty}$ is equibounded; then standard regularity theory implies that $u_{\eps}$ is also equibounded in some $C^{0,\sigma}$, and hence it converges uniformly up to a subsequence by Ascoli-Arzelà theorem. 
We can thus pass to the limit the weak formulation (see e.g. \cite{CazenaveDicksteinEscobedo}) of the equations and obtain that $u_{\eps} \to u$ solution of the equation with weight $a(x)$. 
Since $u_{\eps}$ are regular, then they 
satisfy the assumptions of Corollary \ref{corol_popul_quantit} and thus, in particular, $\inf \mc{C}^*_{\log(u_{\eps})} \geq - e T \sup\left(\mc{C}^*_{ a_{\eps}(\cdot, \star) }\right)^- \geq - eT  \osc(a_{\eps})$. 
 Passing to the limit, we obtain 
 $$\inf \mc{C}^*_{\log(u)} \geq - eT (a_1+a_2).$$
 We leave the details to the interested reader.
\end{remark}

%%%%%%%%%%%%%%%%%%%%%%%%%%%%%%%%%%%%%%%%%%%%%%%%%%%%
%%%%%%%%%%%%%%%%%%%%%%%%%%%%%%%%%%%%%%%%%%%%%%%%%%%%
\subsection{Harmonic concavity} 
\label{subsec_harmon_boundar}

In this subsection we consider applications of Cauchy-Dirichlet type to Theorem \ref{Concavity perturbative maximum principle for parabolic equations}.
The main assumption we assume are the following ones.
\begin{itemize}[wide]
	\item[-]
	Domain:
	\begin{equation}\label{eq_ipost_domain}
	\hbox{$\Omega \subset \R^n$, $n\geq 2$, bounded convex domain which satisfies the interior sphere property.}
	\end{equation}
	\item[-] Regularity: 
	\begin{align}\label{regularity conditions}
	u\in C^2_x(\Omega)\cap C^1_x(\Omegabar)\cap C^1_t((0,+\infty))\cap C(\Omegabar\times[0,+\infty)).
	\end{align} 
	\item[-] Equation:
	\begin{equation}\label{eq_general_bxu}
	u_t-\Delta u=b(x,u, t) \quad\hbox{in }\Omega\times (0,+\infty).
	\end{equation}
	\item[-] Boundary conditions: 
	\begin{equation}\label{boundary and initial conditions}
		\begin{cases}
				u>0 & \text{in } \Omega\times (0,+\infty),\\
					u=0 & \text{on }\partial \Omega\times [0,+\infty),	 \\
					u =0 & \text{on } \Omegabar \times \{0\}. 
		\end{cases}
	\end{equation}
\end{itemize}

Our aim is to ensure that a negative minimum of the concavity function is attained in the interior of $\Omega \times (0,+\infty)$ or at infinity. 
In the following propositions, thus, we run out the possibility of having a minimum on $\partial \Omega$ or $t=0$.

We start with some boundary estimates.
We refer also to Lemma \ref{lem_behav_IshSal} for a version including $f(x_0,t)=0$ for $x_0 \in \partial \Omega$. 
We mention also \cite[Theorem 5]{MaSo00}. 

\begin{lemma}[Boundary estimates]
\label{lem_behav_boundar_omega}
\label{lem_behav_t=0}
Let $\Omega$ satisfy \eqref{eq_ipost_domain}, and $u$ be a function such that \eqref{regularity conditions}, \eqref{eq_general_bxu} and \eqref{boundary and initial conditions} hold.
Suppose 	
$b\colon\Omega\times(0,+\infty) \times (0,+\infty)\to \R$ satisfies \hyperlink{H1}{\( (H1) \)}, and consider $M, T, q,\gamma$ therein; we can assume $T$ small enough in such a way $\norm{u}_{L^{\infty}(\Omega\times[0,T])}<M$.
 The following facts hold.

\begin{itemize}

\item[(i)] $\,$ \emph{[}Dealing with $ \Omega \times \{0\}$\emph{]}:
let $\varphi_1$ be the first Dirichlet eigenfunction on $\Omega$ with eigenvalue $\lambda_1$, normalized by $\norm{\varphi_1}_{\infty}=1$. 
Then, $$u(x,t) \geq C e^{-\lambda_1 t} t^{\frac{1+\gamma}{1-q}}\varphi_1(x) \quad \hbox{in $\Omega \times (0,T)$}$$
for some explicit $C=C(\norm{u}_{L^{\infty}(\Omega \times [0,T])}, q, \gamma, T)>0$. 

\item[(ii)] $\,$\emph{[}Dealing with $\partial \Omega \times \{0\}$\emph{]}:
for any $x_0 \in \partial \Omega$ and $\beta \in (0,2]$, there exists $\delta=\delta(x_0,T, \beta)>0$ such that
$$u(x_0+t\nu,t^{\beta})\geq \delta t^{\frac{2\beta(1+\gamma)+(2-\beta)(1-q) }{2(1-q)}} \quad \hbox{for $t\in (0,T)$ small},$$
where $\nu$ is the interior normal to $\partial \Omega$ in $x_0$. 
\end{itemize}
\end{lemma}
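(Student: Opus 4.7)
\emph{For part (i)}, the plan is to construct an explicit subsolution of $w_t-\Delta w\le kt^{\gamma}w^{q}$ (an inequality that $u$ satisfies as a supersolution thanks to \hyperlink{H1}{$(H1)$}) in the separable form
\[
v(x,t):=Ce^{-\lambda_{1}t}\,t^{\alpha}\varphi_{1}(x),\qquad \alpha:=\tfrac{1+\gamma}{1-q},
\]
with $C>0$ to be fixed. Using $-\Delta\varphi_{1}=\lambda_{1}\varphi_{1}$, I compute $v_{t}-\Delta v=\tfrac{C\alpha}{t}e^{-\lambda_{1}t}t^{\alpha}\varphi_{1}$, while the choice $\alpha(1-q)=1+\gamma$ makes the exponent of $t$ in $kt^{\gamma}v^{q}=kC^{q}e^{-q\lambda_{1}t}t^{\alpha-1}\varphi_{1}^{q}$ match. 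Since $\varphi_{1}\le1$ and $e^{-(1-q)\lambda_{1}t}\le1$, the subsolution inequality reduces to the algebraic condition $C^{1-q}\alpha\le k$; I also shrink $C$ (or $T$) so that $CT^{\alpha}\le M$, which makes \hyperlink{H1}{$(H1)$} applicable to $v$ itself. On the parabolic boundary $v=0\le u$, so the comparison principle (the Proposition on $b(x,s,t)$ satisfying \hyperlink{H2}{$(H2)$} referenced earlier) yields $u\ge v$ and the explicit constant $C=C(\norm{u}_{\infty},q,\gamma,T)$ follows from unfolding the bounds above.

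\emph{For part (ii)}, the plan is a self-similar boundary-layer subsolution built on the interior sphere at $x_{0}$: pick a ball $B_{\rho}(z_{0})\subset\Omega$ tangent to $\partial\Omega$ at $x_{0}$, so that $\nu=(z_{0}-x_{0})/\rho$ and $d(x):=\rho-|x-z_{0}|$ satisfies $d(x_{0}+t\nu)=t$. On $B_{\rho}(z_{0})$ I try
\[
w(x,t):=t^{\alpha}\Psi\!\left(\tfrac{d(x)}{\sqrt{t}}\right),
\]
with $\Psi$ a smooth nonnegative profile on $[0,+\infty)$ satisfying $\Psi(0)=0$ and $\Psi(\xi)\ge c_{0}\xi$ for $\xi\in[0,1]$ (a good candidate is $\Psi(\xi)=A\operatorname{erf}(\xi/2)$, which obeys $\Psi''=-\tfrac{\xi}{2}\Psi'$, so that the self-similar reduction $\alpha\Psi-\tfrac{\xi}{2}\Psi'-\Psi''=k\Psi^{q}$ collapses to the algebraic bound $\alpha\Psi\le k\Psi^{q}$). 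The choice $\alpha=(1+\gamma)/(1-q)$ makes the powers of $t$ in $w_{t}-\Delta w$ and $kt^{\gamma}w^{q}$ match, while $|\nabla d|=1$ and $\Delta d=-(n-1)/(\rho-d)$ in $B_{\rho}(z_{0})$ give the curvature remainder $t^{\alpha-1/2}\Psi'\cdot(n-1)/(\rho-d)$. Shrinking $A$ and $T$ kills this remainder against $k\Psi^{q}$ in the regime relevant to the evaluation point, so that $w$ is a subsolution on $B_{\rho}(z_{0})\times(0,T)$ of the inequality satisfied by $u$; combined with $w\le0=u$ on $\partial B_{\rho}(z_{0})\cup(B_{\rho}\times\{0\})$ and the comparison principle, this yields $u\ge w$ on $B_{\rho}(z_{0})\times[0,T]$. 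Evaluating at $(x_{0}+t\nu,t^{\beta})$ with $\beta\in(0,2]$, the self-similar variable is $\xi=t^{1-\beta/2}\in[0,1]$, hence
\[
u(x_{0}+t\nu,t^{\beta})\ \ge\ w(x_{0}+t\nu,t^{\beta})\ =\ t^{\alpha\beta}\Psi(t^{1-\beta/2})\ \ge\ c_{0}\,t^{\alpha\beta+1-\beta/2},
\]
which, using $\alpha\beta+1-\beta/2=\tfrac{2\beta(1+\gamma)+(2-\beta)(1-q)}{2(1-q)}$, is the exponent claimed.

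\emph{Main obstacle.} The nontrivial point is not the exponent bookkeeping (which is dictated by parabolic scaling together with $\alpha(1-q)=1+\gamma$), nor the choice of $\Psi$ (which only has to vanish linearly at the boundary and to admit a self-similar reduction), but the validity of the subsolution inequality near $\xi=0$ in the curved geometry of $B_{\rho}(z_{0})$: the curvature correction $\sqrt{t}\,\Psi'(\xi)(n-1)/(\rho-d)$ is \emph{positive} and of order $\sqrt{t}$, whereas the source $k\Psi^{q}(\xi)$ degenerates like $\xi^{q}$ as $\xi\to 0^{+}$. I would handle this by (a) restricting to $T$ small so that $\sqrt{T}$ is controlled, (b) if needed, replacing $\Psi$ by a slightly shifted/truncated profile that absorbs the remainder (e.g.\ perturbing the error-function ansatz), and (c) noting that the estimate is only claimed for $t$ small depending on $x_{0},T,\beta$, which gives the flexibility to localize around the evaluation regime $\xi\sim t^{1-\beta/2}$. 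This delicate matching between the parabolic boundary-layer profile and the curvature of $\partial B_{\rho}(z_{0})$ is the step requiring the most care in the full proof.
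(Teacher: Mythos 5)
Your part (i) is essentially the paper's proof: the same separable barrier $Ce^{-\lambda_1 t}t^{\frac{1+\gamma}{1-q}}\varphi_1(x)$, the same reduction to the algebraic condition $C^{1-q}\frac{1+\gamma}{1-q}\le k$ via $\varphi_1\le 1$ and $e^{-(1-q)\lambda_1 t}\le 1$, and the same use of the comparison principle for the nonlinearity $kt^{\gamma}s^{q}$ (the remark about making $(H1)$ "applicable to $v$ itself" is superfluous: $(H1)$ is only needed for $u$, while $v$ just has to be a subsolution for $kt^\gamma s^q$, which satisfies $(H2)$).

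Part (ii), however, has a genuine gap, and it is exactly the point you flag as the "main obstacle". For your self-similar profile $w(x,t)=t^{\alpha}\Psi\bigl(d(x)/\sqrt{t}\bigr)$ on the tangent ball $B_\rho(z_0)$, the subsolution inequality (after the erf cancellation) reads $\alpha\Psi+\sqrt{t}\,\Psi'\,\tfrac{n-1}{\rho-d}\le k\Psi^{q}$, and it must hold on the \emph{whole} cylinder $B_\rho(z_0)\times(0,T)$ for the comparison principle to apply. Near the lateral boundary ($\xi=d/\sqrt t\to 0$) at times $t$ bounded away from $0$, the curvature term is bounded below by a positive constant of order $\sqrt{t}\,\Psi'(0)(n-1)/\rho$, while the right-hand side degenerates like $\Psi^{q}\sim\xi^{q}\to 0$ because the source vanishes with $u$ (this is precisely the difference from the torsion case $q=0$, where the source $kt^{\gamma}$ does not degenerate and such boundary-layer barriers do work, cf. the paper's use of \cite[Lemma 3]{IsSa13} in Lemma \ref{lem_behav_IshSal}). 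None of your proposed remedies repairs this: shrinking $T$ or the amplitude $A$ only shrinks, but never removes, the failure region (the source scales like $A^{q}$, the curvature term like $A$); truncating or shifting $\Psi$ destroys the linear vanishing at $\xi=0$ that you need at the evaluation point $\xi=t^{1-\beta/2}\to 0$; and "localizing around the evaluation regime" is not available, since restricting to a smaller spatial region creates an interior lateral boundary on which you have no a priori comparison $w\le u$. So the barrier as constructed is not a subsolution and the comparison step collapses for $q\in(0,1)$. The paper avoids the curved-geometry issue altogether: it rescales the part-(i) barrier, $w_\eta(x,t)=\eta^{\frac{1+\gamma}{1-q}}w(\eta^{-1/2}x,\eta^{-1}t)$ on $\Omega_\eta=\sqrt{\eta}\,\Omega\subset\Omega$ (translating $x_0$ to the origin and using convexity), compares on $\Omega_\eta$, evaluates at $x=t\nu$, $\eta=t^{\beta}$, and then recovers the linear-in-distance factor $w(t^{\frac{2-\beta}{2}}\nu,1)\ge\delta t^{\frac{2-\beta}{2}}$ from the Hopf lemma and the mean value theorem applied to $w(\cdot,1)$; your exponent bookkeeping coincides with theirs, but you would need to adopt some such curvature-free mechanism to make part (ii) rigorous.
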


\begin{proof}
From \hyperlink{H1}{\( (H1) \)} with $M=\norm{u}_{L^{\infty}(\Omega \times [0,T])}$, we know that there exists $k=k(M, T)>0$ such that $u$ satisfies \eqref{boundary and initial conditions} and 
$$u_t-\Delta u=b(x,u,t)\geq k t^{\gamma} u^q \quad \text{in }\Omega\times (0, T).$$
Set 
$$w(x,t):= C e^{-\lambda_1 t} t^{\frac{1+\gamma}{1-q}} \varphi_1(x)$$
and $C := (\frac{1-q}{1+\gamma} k)^{\frac{1}{1-q}} 
$. 
A straightforward computation shows that $w$ satisfies \eqref{boundary and initial conditions} 
together with
$$w_t-\Delta w = C \frac{1+\gamma}{1-q} e^{-\lambda_1 t} t^{\frac{\gamma+q}{1-q}} \varphi_1(x) \quad \text{in }\Omega\times (0,+\infty)$$
and hence, since $q <1$ and by definition of $C$, 
$$w_t-\Delta w = 
\frac{1}{k} C^{1-q} \frac{1+\gamma}{1-q} e^{-(1-q) \lambda_1 t} \varphi_1^{1-q}(x) (k t^{\gamma} w^q) \leq k t^{\gamma} w^q \quad \text{in }\Omega\times (0,+\infty).$$
Then by Proposition \ref{comparison principle dickstein}, $u\geq w$ in $\Omega\times (0,T)$, which is the first claim.

\medskip

To show the second claim, assume first that $x_0 = 0 \in \partial \Omega$.
Let $\eta \in (0,\min\{1,T^{\beta}\})$ and define $w_\eta$ and $\Omega_\eta$ by 
	\[ w_\eta(x,t) :=\eta^{\frac{1 +\gamma}{1-q}}w(\eta^{-1/2}x,\eta^{-1}t), \quad \Omega_\eta:= \sqrt{\eta} \Omega \]
	which satisfy		
	\begin{equation*}
		\begin{cases}
			\partial_t w_\eta-\Delta w_\eta \leq 
			k t^{\gamma}(w_\eta)^q &\text{in }\Omega_\eta\times (0,+\infty),\\
			w_\eta>0 & \text{in } \Omega_\eta\times (0,+\infty), \\
			w_\eta=0 & \text{on }\partial \Omega_\eta\times (0,+\infty),\\ 
			w_\eta=0& \text{on } \Omega_\eta \times \{0\}.	
			\end{cases}
	\end{equation*}
Noticed that $\Omega_{\eta} \subset \Omega$, by Proposition \ref{comparison principle dickstein} we obtain $u\geq w_\eta$ in $\Omegabar_\eta\times (0,T)$. 
Now let $t=\eta^{\frac{1}{\beta}} \in (0, T)$ and, observed $0 \in \partial \Omega_{\eta}$, we choose $x=t\nu \in \Omega_{\eta}$, so that
\begin{equation}\label{eq_dim_wtnutbeta}
 u(t\nu,t^{\beta})\geq t^{\frac{\beta (1+\gamma)
	}{1-q}}w(t^{\frac{2-\beta}{2}}\nu,1). 
	\end{equation}
	If $\beta=2$ we obtain $u(t\nu,t^{2})\geq t^{\frac{2 (1+\gamma)
	}{1-q}}w(\nu,1)$, and the claim follows choosing $\delta :=w(\nu,1)$. If $\beta \in (0,2)$, 
by Lagrange theorem there exists $\xi_{t}\in (0,t^{\frac{2-\beta}{2}}\nu)$ such that (recall $w(0,1)=0$)
	$$w(t^{\frac{2-\beta}{2}}\nu,1) = t^{\frac{2-\beta}{2}}\nu\cdot D w(\xi_t,1).
	$$
	Being $\delta:= \frac{1}{2}\nu\cdot D w(0,1)>0$ thanks to  Lemma \ref{Hopf lemma in parabolic case}, for $t$ small enough we have $\nu\cdot D w(\xi_t,1)>\delta$ and hence we obtain
	\begin{equation}\label{eq_dim_estim_w}
	 w(t^{\frac{2-\beta}{2}}\nu,1) \geq \delta t^{\frac{2-\beta}{2}}\quad \hbox{for $t$ small}.
	 \end{equation}
	Thus, by \eqref{eq_dim_wtnutbeta} and \eqref{eq_dim_estim_w} we obtain $u(t\nu,t^{\beta})\geq \delta t^{\frac{2\beta(1+\gamma)+(2-\beta)(1-q) }{2(1-q)}}$.

The case of general $x_0$ can be obtained straightforwardly, noticed the equation related to $u \mapsto k t^{\gamma} u^q$ is $x$-translation invariant.
\end{proof}

\begin{proposition}\label{minimum not on the boundary}
	Let $\Omega$ satisfy \eqref{eq_ipost_domain}, and $u$ be such that \eqref{regularity conditions}, \eqref{eq_general_bxu} and \eqref{boundary and initial conditions} hold.
Suppose 		
$b\colon\Omega\times(0,+\infty)\times (0,+\infty)\to \R$ is such that $b(x,\cdot,t)$ is differentiable for all $(x,t)\in \Omega\times (0,+\infty)$ and satisfies \hyperlink{H1}{\( (H1) \)}, and let $\alpha \in \big(0, \frac{2(1-q)}{2\beta(1+\gamma)+(2-\beta)(1-q)}\big)$
and $\beta \in (0,2]$.
Then $\mathcal{C}_{u^\alpha(\cdot, \star^{\beta})}$ cannot achieve any negative minimum at $(x_1,x_3,t_1,t_3,\lambda)\in \Omegabar \times \Omegabar \times [0,+\infty)\times [0,+\infty)\times (0,1) $ such that one $t_1,t_3\in \left\{0\right\}$, or one of $x_1,x_2,x_3\in \partial \Omega$.
\end{proposition}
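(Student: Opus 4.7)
The plan is a contradiction argument via boundary perturbation, exploiting the quantitative lower bounds on $u$ near $\{t=0\}\cup(\partial\Omega\times[0,+\infty))$ provided by Lemma \ref{lem_behav_boundar_omega}. Set $v(x,t):=u(x,t^\beta)^\alpha$ and suppose, for contradiction, that $\mathcal{C}_v$ attains a strictly negative minimum at a point $(x_1,x_3,t_1,t_3,\lambda)$ of the prescribed portion of the boundary.

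I first rule out degenerate configurations. If $\lambda\in\{0,1\}$ or $t_1=t_3=0$, the definition of $\mathcal{C}_v$ forces $\mathcal{C}_v=0$, not negative. If $x_2\in\partial\Omega$, convexity of $\Omega$ together with $\lambda\in(0,1)$ forces $x_1,x_3\in\partial\Omega$, so all three evaluations of $v$ vanish and again $\mathcal{C}_v=0$. Thus, up to swapping $(x_1,t_1)\leftrightarrow(x_3,t_3)$, I may assume $t_3>0$, $\lambda\in(0,1)$, and either $t_1=0$ or $x_1\in\partial\Omega$; in each such case $v(x_1,t_1)=0$.

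I then perform the perturbation $(x_1,t_1)\mapsto(x_1+s\nu,\,t_1+s\chi)$ for $s>0$ small, where $\nu$ is the interior normal to $\partial\Omega$ at $x_1$ if $x_1\in\partial\Omega$ (otherwise $\nu=0$) and $\chi=1$ if $t_1=0$ (otherwise $\chi=0$). Since $(x_2,t_2)$ is interior to $\Omega\times(0,+\infty)$, $v$ is $C^1$ there, so a Taylor expansion yields $v(x_2+(1-\lambda)s\nu,t_2+(1-\lambda)s\chi)-v(x_2,t_2)=O(s)$. Using $v(x_1,t_1)=0$ and the cancellation of $v(x_3,t_3)$, minimality forces
\[0\leq O(s)-(1-\lambda)\,v(x_1+s\nu,\,t_1+s\chi).\]
To reach a contradiction, I estimate $v(x_1+s\nu,t_1+s\chi)=u(x_1+s\nu,(t_1+s\chi)^\beta)^\alpha$ from below via Lemma \ref{lem_behav_boundar_omega}. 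In the corner case $x_1\in\partial\Omega$, $t_1=0$ (here $\chi=1$ and the perturbed time is $s^\beta$), part (ii) applied with scaling exponent $\beta$ gives $v(x_1+s\nu,s)\gtrsim s^{\alpha(2\beta(1+\gamma)+(2-\beta)(1-q))/(2(1-q))}$; in the case $x_1\in\Omega$, $t_1=0$, part (i) yields the sharper $v(x_1,s)\gtrsim s^{\alpha\beta(1+\gamma)/(1-q)}$; in the case $x_1\in\partial\Omega$, $t_1>0$, the parabolic Hopf Lemma \ref{Hopf lemma in parabolic case} gives $v(x_1+s\nu,t_1)\gtrsim s^\alpha$. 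The hypothesis $\alpha<\frac{2(1-q)}{2\beta(1+\gamma)+(2-\beta)(1-q)}$ makes all three exponents strictly smaller than $1$, so the negative term dominates the $O(s)$ term and the right-hand side becomes strictly negative for $s$ small, contradicting the displayed inequality.

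The delicate step is the choice of scaling $t_1\mapsto s$ (rather than any other rate $s^c$) in the corner case $t_1=0$, $x_1\in\partial\Omega$: this choice is precisely the one that matches the scaling in Lemma \ref{lem_behav_boundar_omega}(ii) with exponent $\beta$ and produces the optimal admissible range for $\alpha$, since a direct optimisation in $c$ (subject to $s^{c\beta}\leq 2$) shows any other rate yields a strictly stronger restriction.
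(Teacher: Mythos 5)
Your proposal is correct and follows essentially the same route as the paper's proof: rule out the configurations where $\mathcal{C}_v$ is trivially nonnegative, then contradict minimality at the remaining boundary points using the parabolic Hopf lemma (for $x_1\in\partial\Omega$, $t_1>0$) and the lower bounds of Lemma \ref{lem_behav_boundar_omega}(i)--(ii) (for $t_1=0$, interior and corner cases), with the restriction on $\alpha$ entering exactly as you describe. The only difference is presentational: the paper phrases the contradiction as divergence of directional derivatives of $\mathcal{C}_v$ when approaching the boundary point, while you use the equivalent finite-difference comparison along the perturbation $(x_1+s\nu,\,t_1+s\chi)$.
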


\begin{proof}
	Assume now by contradiction that $\mathcal{C}_{u^\alpha}$ admits a negative minimum at $(x_1,x_3,t_1,t_3,\lambda)$ such that one $t_1,t_3\in \left\{0\right\}$ or one $x_1,x_2,x_3\in \partial \Omega$. Set $v(x,t):=u^{\alpha}(x, t^{\beta})$. 
	If $t_1,t_3\in [0,+\infty)$ and $x_1,x_3\in \partial \Omega$, then $\mathcal{C}_v(x_1,x_3,t_1,t_3,\lambda)=v(x_2,t_2) \geq 0$, which is not possible. If $t_1,t_3=0$ and $x_1,x_3\in \Omegabar$, $\mathcal{C}_v(x_1,x_3,t_1,t_3,\lambda)= 	0$ which again is not possible. 
	
		If $t_1,t_3\in (0,+\infty)$, $x_1\in \partial \Omega$ and $x_3\in \Omega$, the parabolic Hopf Lemma \ref{Hopf lemma in parabolic case} implies $D_{\bar{\nu}_1}u (x_1, t_1^{\beta}) >0$, where $\nu_1$ is the interior normal at $(x_1,t_1)$ and $\bar{\nu}_1:=(\tilde{\nu}_1, \beta t_1^{\beta-1}\nu_1^{n+1})$ thus
	\begin{align*}
\lim_{(y_1,y_3,\tau_1,\tau_3,\eta)\to(x_1,x_3,t_1,t_3,\lambda)}D_{\nu_1}\mathcal{C}_v(y_1,y_3,\tau_1&,\tau_3,\eta)\\=\lim_{(y_1,y_3,\tau_1,\tau_3,\eta)\to(x_1,x_3,t_1,t_3,\lambda)}\bigg((1-\eta)&D_{\nu_1}v(y_2,\tau_2)-(1-\eta)\alpha u^{\alpha-1}(y_1,\tau_1^{\beta})
D_{\bar{\nu}_1} u(y_1, \tau_1^{\beta}) \bigg)=-\infty,
	\end{align*}
		and this is a contradiction since it is a minimum point. 
		The same argument holds if $t_1\in (0,+\infty)$, $t_3=0$, $x_1\in \partial \Omega$, $x_3\in \Omega$.
	
	If $t_1=0$, $t_3\in (0,+\infty)$, $x_1\in \Omega$ and $x_3\in \Omega$, by Lemma \ref{lem_behav_t=0} (\textit{i}) we have
	$$\lim_{\tau \to 0} v_t(x_1,\tau)\geq \lim_{\tau \to 0} \frac{v(x_1, \tau)}{\tau} \geq 
	C^{\alpha} \lim_{\tau \to 0} \frac{e^{-\alpha \lambda_1 \tau^{\beta}}}{\tau^{1-\frac{1+\gamma}{1-q}\alpha \beta}} (\varphi(x_1))^{\alpha} = +\infty$$
by the assumption $\alpha < \frac{2(1-q)}{2\beta(1+\gamma)+(2-\beta)(1-q)} \leq \frac{1-q}{\beta(1+\gamma)}$, 
	which implies 
	$$\lim_{(y_1,y_3,\tau_1,\tau_3,\eta)\to(x_1,x_3,t_1,t_3,\lambda)} D_{t_1}\mathcal{C}_v(y_1,y_3,\tau_1,\tau_3,\eta)=-\infty$$ 
	again a contradiction. 
	
We are left with the case $t_1=0$, $t_3\in (0,+\infty)$, $x_1\in \partial \Omega$, $x_3 \in \Omega$. 
	By Lemma \ref{lem_behav_boundar_omega} (\textit{ii}) there exists $\delta>0$ such that $$\frac{u^\alpha(x_1 +t\nu , t^{\beta})}{t}\geq \delta^\alpha t^{\alpha \frac{2\beta(1+\gamma)+(2-\beta)(1-q) }{2(1-q)} -1}\quad \hbox{for $t>0$ small},$$
	which positively diverges as $t\to 0^+$ since $\alpha < \frac{2(1-q)}{2\beta(1+\gamma)+(2-\beta)(1-q)}$ by assumption. 
	Thus $(\nu, 1) \cdot D_{(x_1, t_1)}\mathcal{C}_v$ negatively diverges while approaching $(x_1,t_1)$ and this implies $(x_1,x_3,t_1,t_3,\lambda)$ can not be a minimum.
\end{proof}

To deal with the case $t=\infty$ we need to be able to compare the evolutive problem with the stationary one, and to this aim we will exploit the stability condition in Definition \ref{defn problema limite}. 
Let us consider a function $u\colon\Omegabar \to \R$ which satisfies \eqref{boundary and initial conditions} and the equation \eqref{eq_general_bxu}. 
	For $\alpha\in(0,1)$ and $\beta \in (0,	2]$, set 
	$$v(x,t):=u^\alpha(x,t^{\beta}) .$$
	By some straightforward computations, and recalled that $v>0$, we have that the equation solved by $v$ is 
\begin{equation}\label{eq_solved_by_v}
 -\Delta v =\mathscr{B} (x,v, Dv, t) \quad\text{in }\Omega\times (0,+\infty), 
 \end{equation}
where 
\begin{equation}\label{defn psi}
	\begin{aligned}
	\mathscr{B}(x,s,p,t) &:= \Btilde(x,s,\tilde{p},t) -\frac{1}{\beta} t^{1-\beta }p^{n+1} \\
	&:= \frac{1-\alpha}{\alpha}\frac{|\tilde{p}|^2}{s}+\alpha s^{\frac{\alpha-1}{\alpha}}b\left(x,s^{1/\alpha}, t^{\beta}\right) -\frac{1}{\beta} t^{1-\beta} p^{n+1}.
	\end{aligned}
\end{equation}
From now on with $\Btilde(x,s,\tilde{p},\infty)$ and $v(x,\infty)$ we mean their pointwise limit in time whenever the stability condition is assumed.

\begin{remark}\label{rem_min_infinf}
Assuming $v_t\geq 0$ in $\Omega \times (0,+\infty)$ and $v(x,t)\to v(x,\infty)<+\infty$ as $t\to+\infty$, if $(x_1,x_3,t_1,t_3,\lambda)\in \Omegabar\times \Omegabar\times [0,+\infty)\times \left\{+\infty\right\}\times (0,1)$ is a global minimum for $\mathcal{C}_v$, then also $(x_1,x_3,+\infty,+\infty,\lambda)$ is a global minimum for $\mathcal{C}_v$. Indeed 
\begin{align*}\begin{split}
		\mathcal{C}_v(x_1,x_3,t_1,t_3,\lambda)&=v(x_2,\infty)-\lambda v(x_3,t_3)-(1-\lambda)v(x_1,\infty)\\
		&\geq v(x_2,\infty)-\lambda v(x_3,\infty)-(1-\lambda)v(x_1,\infty).
	\end{split}
\end{align*}
\end{remark}

\begin{remark}\label{remark lamba[0,1]}
Let us discuss the case when $\lambda\in\left\{0,1\right\}$. 
Assume $v_t\geq 0$ in $\Omega \times (0,+\infty)$ and $v(x,t)\to v(x,\infty)<+\infty$ as $t\to+\infty$. If $t_1,t_3\in [0,+\infty)$, then $\mathcal{C}_{u^\alpha(\cdot, \star^{\beta})}= 0$. 
If $t_3=+\infty$ and $\lambda=1$, then $\mathcal{C}_{u^\alpha(\cdot, \star^{\beta})}= 0$. 
If $\lambda = 0,t_3=+\infty$ let us consider a minimizing sequence $(x_1^n,x_3^n,t_1^n,t_3^n,\lambda^n)$ converging to $(x_1,x_3,t_1,+\infty,0)$ where $(x_1,x_3,t_1)\in\Omegabar\times \Omegabar\times [0,+\infty]$. 
We can assume, up to subsequence, that $t_2^n$ converges to $t_2\in[t_1,+\infty]$. Since $x_2^n$ tends to $x_1$, then 
\begin{align*}\begin{split}
			\lim_{n\to +\infty}\mathcal{C}_v(x_1^n,x_3^n,t_1^n,t_3^n,\lambda^n)&=v(x_1,t_2)-v(x_1,t_1)\geq 0.
	\end{split}
\end{align*}
\end{remark}
In light of Remarks \ref{rem_min_infinf} and \ref{remark lamba[0,1]}, we will use the following convention.

\smallskip

\textbf{Convention}. 
If $v_t\geq 0$, then by a global point of minimum for $\mathcal{C}_v$ on $\Omegabar\times \Omegabar\times [0,+\infty]\times [0,+\infty]\times [0,1]$ we mean a point picked in $\big(\Omegabar\times \Omegabar\times [0,+\infty)\times [0,+\infty) \times [0,1] \big) \cup \big(\Omegabar\times \Omegabar\times \{+\infty\}\times \{+\infty\}\times (0,1) \big)$. 
\begin{proposition}\label{minimum not on the boundary infinity}
	Under the same assumption of Proposition \ref{minimum not on the boundary}, assume moreover $v_t\geq 0$ and $b$ satisfying the stability parabolic condition (e.g. \textit{i})--\textit{iv}) in Remark \ref{stability parabolic condition u^gamma} hold). 
	Then $\mathcal{C}_{u^\alpha(\cdot, \star^{\beta})}$ cannot achieve any negative minimum at $(x_1,x_3,t_1,t_3,\lambda)\in \Omegabar \times \Omegabar \times [0,+\infty]\times [0,+\infty]\times (0,1) $ such that one $t_1,t_3\in \left\{0\right\}$, or one of $x_1,x_2,x_3\in \partial \Omega$. 
\end{proposition}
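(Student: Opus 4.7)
The plan is to reduce to Proposition \ref{minimum not on the boundary} on the finite-time part of the domain, and to treat only the genuinely new situation $t_1=t_3=+\infty$ by exploiting the stability parabolic condition to pass to the stationary problem. First, if a negative minimum of $\mc{C}_v$, where $v=u^\alpha(\cdot,\star^\beta)$, is attained at a point with $t_1,t_3\in[0,+\infty)$, then Proposition \ref{minimum not on the boundary} already rules out $t_i\in\{0\}$ and $x_i\in\partial\Omega$. By the convention stated after Remark \ref{remark lamba[0,1]}, the only other admissible minimizers lie in $\Omegabar\times\Omegabar\times\{+\infty\}\times\{+\infty\}\times(0,1)$, so I just need to exclude that in this regime one of $x_1,x_2,x_3$ belongs to $\partial\Omega$.

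I would distinguish two subcases. If $x_1,x_3\in\partial\Omega$, or if $x_2\in\partial\Omega$ (which by convexity of $\Omega$ and $\lambda\in(0,1)$ forces $x_1,x_3\in\partial\Omega$ as well), then by the stability parabolic condition $v(x_i,+\infty)=u^\alpha(x_i,\infty)=0$ for $i=1,3$, while $v(x_2,+\infty)\geq 0$, so $\mc{C}_v(x_1,x_3,+\infty,+\infty,\lambda)=v(x_2,+\infty)\geq 0$, contradicting negativity of the minimum. Otherwise precisely one of $x_1,x_3$ lies on $\partial\Omega$; say $x_1\in\partial\Omega$ and $x_3\in\Omega$, so $x_2\in\Omega$ too. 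Let $\nu$ denote the interior normal at $x_1$. For sequences $(y_1^n,\tau_1^n)\to(x_1,+\infty)$ and $\tau_3^n\to+\infty$, a direct computation gives
$$D_{\nu}\mc{C}_v(y_1^n,x_3,\tau_1^n,\tau_3^n,\lambda)=(1-\lambda)\bigl(D_{\nu}v(y_2^n,\tau_2^n)-D_{\nu}v(y_1^n,\tau_1^n)\bigr).$$
Since $D_{\nu}v(y,\tau)=\alpha\, u^{\alpha-1}(y,\tau^\beta)\,D_{\nu}u(y,\tau^\beta)$, using the stability condition combined with the relative compactness in $C^{1,\theta}(\Omegabar)$ of Remark \ref{stability parabolic condition u^gamma}, I would get $D_{\nu}u(y_1^n,(\tau_1^n)^\beta)\to D_{\nu}u(x_1,\infty)$, which is strictly positive by the elliptic Hopf lemma applied to the stationary problem $-\Delta u(\cdot,\infty)=b(\cdot,u(\cdot,\infty),\infty)\geq 0$. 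Since simultaneously $u(y_1^n,(\tau_1^n)^\beta)\to u(x_1,\infty)=0$ and $\alpha-1<0$, this forces $D_{\nu}v(y_1^n,\tau_1^n)\to+\infty$, while $y_2^n\to x_2\in\Omega$ keeps $D_{\nu}v(y_2^n,\tau_2^n)$ bounded. Hence $D_{\nu}\mc{C}_v\to-\infty$, contradicting the minimality.

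The main obstacle is the passage to the limit of the spatial derivative of $u$ as $\tau\to+\infty$: the $L^\infty$-convergence furnished by Definition \ref{defn problema limite} does not, by itself, suffice to read off the normal derivative at the boundary, and one must invoke the stronger $C^{1,\theta}(\Omegabar)$-compactness afforded by the regularity framework recalled in Remark \ref{stability parabolic condition u^gamma}. Once that is in hand, the boundary analysis at $t=+\infty$ collapses to the standard elliptic Hopf lemma on the stationary solution, in perfect parallel with the parabolic Hopf argument already used to dispose of the finite-time boundary points in Proposition \ref{minimum not on the boundary}.
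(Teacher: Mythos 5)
Your overall skeleton coincides with the paper's: by Proposition \ref{minimum not on the boundary}, Remark \ref{rem_min_infinf} and the stated convention, only the stationary slice $t_1=t_3=+\infty$ with a boundary spatial point survives, and the subcase $x_1,x_3\in\partial\Omega$ (hence also $x_2\in\partial\Omega$) is immediate because the limit profile vanishes on $\partial\Omega$. The paper then settles the remaining mixed case ``as in the proof of Proposition \ref{minimum not on the boundary}'', i.e.\ by the elliptic boundary blow-up argument of \cite{BuSq19,kennington} applied to the stationary profile, entirely inside the slice $t_1=t_3=+\infty$.

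The gap is in how you execute that mixed case. You approximate the minimizer by points with finite times and need $D_{\nu}u(y_1^n,(\tau_1^n)^{\beta})\to D_{\nu}u(x_1,\infty)>0$, which requires $C^{1,\theta}(\Omegabar)$-convergence of the orbit, $C^1$-regularity up to the boundary of $u(\cdot,\infty)$, and the elliptic Hopf lemma for the limit problem. None of this is granted: the proposition assumes only the stability parabolic condition \eqref{eq_stab_par_cond}, i.e.\ uniform convergence to \emph{some} solution of the stationary problem, and conditions (\textit{i})--(\textit{iv}) of Remark \ref{stability parabolic condition u^gamma} are quoted merely as an example (``e.g.''), so the $C^{1,\theta}$ compactness you invoke is an extra hypothesis, not part of the statement. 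Moreover the detour through time-approximation creates a second weak point: blowing up of $D_{\nu}\mathcal{C}_v$ at nearby finite-time points does not by itself contradict minimality at a point of the slice $t=\infty$ without a further quantitative integration step. Both issues disappear if you argue directly at $t=\infty$, as the paper intends: since $v_t\geq 0$, $u(\cdot,\infty)\geq u(\cdot,t_0)$ for any fixed $t_0>0$, and Lemma \ref{lem_behav_boundar_omega}(\textit{i}) gives $u(\cdot,t_0)\geq c\,\varphi_1\geq c'\,d_\Omega$; hence for small $s>0$ the inward increment satisfies $u^{\alpha}(x_1+s\nu,\infty)\geq (c's)^{\alpha}$ with $\alpha<1$, while the increment of $u^{\alpha}(\cdot,\infty)$ at the interior point $x_2$ is of order $s$ (interior regularity of the stationary solution), so $\mathcal{C}_v(x_1+s\nu,x_3,\infty,\infty,\lambda)<\mathcal{C}_v(x_1,x_3,\infty,\infty,\lambda)$ for $s$ small, an admissible competitor contradicting minimality — exactly the finite-time argument transplanted to the stationary slice, with a barrier replacing the Hopf lemma and no need for convergence of gradients in time.
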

\begin{proof}
	In light of Proposition \ref{minimum not on the boundary} and Remark \ref{rem_min_infinf}, we need to exclude only the stationary case $t_1=t_3=+\infty$ and $x_1\in\partial \Omega$, $x_3\in \Omegabar$; this can be done as in the proof of Proposition \ref{minimum not on the boundary}. See also \cite{BuSq19,kennington}.
\end{proof}

\begin{theorem}\label{approximate concavity parabolic with boundary conditions}
	Let $\Omega$ satisfy \eqref{eq_ipost_domain}, and $u$ be a function such that \eqref{regularity conditions}, \eqref{eq_general_bxu} and \eqref{boundary and initial conditions} hold. 
	Let $\beta \in [1,2]$ and suppose $b\colon\Omega\times(0,+\infty)\times (0,+\infty)\to \R$ is such that $b(x,\cdot, t)$ is differentiable in $\R\setminus \left\{0\right\}$ for all $(x,t)\in \Omega \times (0,+\infty)$, \hyperlink{H1}{\( (H1) \)}, \hyperlink{H2}{\( (H2) \)} hold and
	\begin{enumerate}[label=\textit{\roman*})]
	 \item $s \mapsto s^{\alpha-1}b(x,s, t)$ is strictly decreasing for all $(x,t)\in \Omega \times (0,+\infty)$ for some $\alpha \in (0,1)$ as in Proposition \ref{minimum not on the boundary}. 
		\item $b$ satisfies the stability parabolic condition (e.g. \textit{i})--\textit{iv}) in Remark \ref{stability parabolic condition u^gamma} hold). 
		\item $b(x, s, \cdot)$	is nondecreasing for each $(x,s) \in \Omega \times (0,+\infty)$.
	\end{enumerate}
Then $u_t \geq 0$.
	Let $(x_1,x_3,t_1,t_3,\lambda)\in \Omegabar\times \Omegabar\times [0,+\infty]\times [0,+\infty]\times [0,1]$ be a global minimum for $\mathcal{C}_{u^\alpha}$ (with the above convention).
Then \eqref{strictly decreasing eqt} holds true for some $\sigma>0$ and
\begin{equation*}
 \min_{\Omegabar\times \Omegabar\times[0,+\infty]\times [0,+\infty]\times[0,1]}\mathcal{C}_{u^\alpha(\cdot, \star^{\beta})} \geq -\frac{1}{\sigma}\mathcal{HC}^-_{\Btilde(\cdot, u^\alpha(\cdot,\star^{\beta}),D_xu^\alpha(x_1,t_1^{\beta}), \star)}(x_1,x_3,t_1,t_3,\lambda).
 \end{equation*}
\end{theorem}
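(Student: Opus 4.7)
The plan is to transform the problem via $v(x,t) := u^\alpha(x, t^\beta)$, which satisfies a PDE of the form \eqref{eq_solved_by_v}--\eqref{defn psi}, and to apply the harmonic concavity maximum principle (Corollary \ref{cor_discard_vt}) at a global minimum of $\mathcal{C}_v$. Before doing so, I first establish $u_t \ge 0$ by a comparison argument: for $h > 0$, the shift $w(x,t) := u(x, t+h)$ is a supersolution of the original equation by (iii), satisfies $w(\cdot,0) = u(\cdot, h) \ge 0 = u_0$ and the same Dirichlet condition, so Proposition \ref{comparison principle dickstein} (whose hypotheses are ensured by \hyperlink{H2}{\( (H2) \)}) yields $w \ge u$; letting $h \to 0^+$ gives $u_t \ge 0$, hence $v_t = \alpha u^{\alpha-1}\beta t^{\beta-1} u_t \ge 0$.

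Next I would locate any negative minimum of $\mathcal{C}_v$. By (ii), $v$ extends continuously to $t=+\infty$ with $v(\cdot,\infty)$ solving the stationary problem, so $\mathcal{C}_v$ attains a global minimum over the compact set $\overline{\Omega}^2 \times [0,+\infty]^2 \times [0,1]$. If it is nonnegative the claim is trivial; otherwise Proposition \ref{minimum not on the boundary infinity} (applicable thanks to (i), (ii), $v_t \ge 0$ and the decay provided by Lemma \ref{lem_behav_boundar_omega}) rules out the spatial boundary and $t=0$. Using Remarks \ref{rem_min_infinf} and \ref{remark lamba[0,1]} and the stated convention, the minimizer is then either an interior point $(x_1,x_3,t_1,t_3,\lambda)\in \Omega^2\times(0,+\infty)^2\times(0,1)$, or lies on the stationary face $t_1=t_3=+\infty$; in the latter case the statement reduces to the elliptic harmonic concavity principle applied to $v(\cdot,\infty)$ (as in \cite{BuSq19, GaSq24}), which yields exactly the desired bound since the $v_t$-term vanishes in the limit.

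In the interior case, I split $\mathscr{B}(x,v,Dv,t) = \Btilde(x, v, D_x v, t) - k(t)\, v_t$ with $k(t) := \tfrac{1}{\beta} t^{1-\beta}$. Because $\beta\in[1,2]$, one has $1/k(t) = \beta t^{\beta-1}$ concave on $(0,+\infty)$, so $\mathcal{HC}_k \le 0$; moreover $\xi^{n+1}=v_t(x_1,t_1)\ge 0$ by Step 1. From (i), the map $s\mapsto s^{\alpha-1}b(x,s,t)$ is strictly decreasing, and on the compact interval $[v_2,\lambda v_3+(1-\lambda)v_1]$ its derivative is bounded away from zero; combined with the nonpositive contribution of the quadratic gradient term $\tfrac{1-\alpha}{\alpha}|\tilde p|^2/s$ at fixed $\tilde p=\tilde\xi$, this provides the strict decrease \eqref{strictly decreasing eqt} for some $\sigma>0$. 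Corollary \ref{cor_discard_vt} then delivers
\[
\mathcal{C}_v(x_1,x_3,t_1,t_3,\lambda) \;\ge\; \tfrac{1}{\sigma}\,\mathcal{HC}_{\Btilde(\cdot, v(\cdot,\star), \tilde\xi,\star)}(x_1,x_3,t_1,t_3,\lambda),
\]
with $\tilde\xi=D_x u^\alpha(x_1,t_1^\beta)$. Since the left-hand side coincides with $\min \mathcal{C}_v$, passing to the negative part on the right yields the statement.

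The main technical obstacle is the interface between the parabolic and stationary cases: if the minimizer sits at $t_1=t_3=+\infty$, one must check that the strict decrease of $\mathscr{B}$ in $s$ survives the uniform passage to the limit furnished by the stability condition (ii), and that the gradient term $|\tilde p|^2/s$ remains controllable on the relevant compact set, which is where the exclusion of the spatial boundary via Proposition \ref{minimum not on the boundary infinity} becomes indispensable. Once these are secured, the conclusion is a direct application of Corollary \ref{cor_discard_vt}, with the key observation that $\beta\in[1,2]$ is \emph{exactly} the range for which the coefficient $k(t)=\tfrac{1}{\beta}t^{1-\beta}$ is harmonic convex.
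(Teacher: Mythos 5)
Your proposal is correct and follows essentially the same route as the paper: monotonicity via the time-shift comparison argument (this is exactly Corollary \ref{corol_monotonicity}), exclusion of boundary/initial-time minima via Proposition \ref{minimum not on the boundary infinity} and the stated convention, verification of \eqref{strictly decreasing eqt} from hypothesis (\textit{i}) plus the gradient term, application of Corollary \ref{cor_discard_vt} using $\xi^{n+1}\geq 0$ and $\mathcal{HC}_{\star^{1-\beta}}\leq 0$ for $\beta\in[1,2]$, and the stationary face $t_1=t_3=+\infty$ handled by the elliptic result of \cite{BuSq19}. The only cosmetic difference is that you invoke the comparison principle directly rather than citing Corollary \ref{corol_monotonicity}, whose proof is the same shift argument.
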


\begin{proof}
	By \hyperlink{H2}{\( (H2) \)} we can apply Corollary \ref{corol_monotonicity} and obtain 	$u_t\geq 0$ in $ \Omega\times (0,+\infty)$, 
	which in turn implies $v_t\geq 0$ (so we are in the setting of the above convention).
 If the minimum is nonnegative, the statement holds. 
	 Assume $\mathcal{C}_{u^\alpha(\cdot, \star^{\beta})}$ admits a negative minimum. 
	 By Remark \ref{remark lamba[0,1]} and Proposition \ref{minimum not on the boundary infinity}, we know that $(x_1,x_3,t_1,t_3,\lambda)\in \Omega\times \Omega\times (0,+\infty]\times (0,+\infty]\times (0,1)$. 	 
		
	 Let us verify the hypothesis of Corollary \ref{cor_discard_vt} on the equation \eqref{eq_solved_by_v} satisfied by $v$.
	 As first $\alpha^{-1}(1-\alpha)s^{-1}|p|^2$ is decreasing with respect to $s$. Then denoting $r$ for $s^{\frac{1}{\alpha}}$, 
	 the second term of $\mathscr{B}$ becomes $\alpha r^{\alpha-1}b(x,r,t^{\beta})$ which is strictly decreasing for hypothesis with respect to $r$. Hence $\mathscr{B}(x,\cdot,p,t)$ is strictly decreasing for all $p\in \R^n$, $x\in \Omega$ and $t \in (0,+\infty)$, which implies the existence of $\sigma>0$ such that \eqref{strictly decreasing eqt} holds. 

	If $(x_1,x_3,t_1,t_3,\lambda)\in \Omega\times \Omega\times (0,+\infty)\times (0,+\infty)\times (0,1)$, then the claim follows by Theorem \ref{Concavity perturbative maximum principle for parabolic equations}.
	 If $t_1=t_3=+\infty$ the claim follows by \cite[Lemma 2.9]{BuSq19}. 
	We conclude by observing that, being $\beta \in [1,2]$, we have $\mc{HC}_{\star^{1-\beta}} \leq 0$.
\end{proof}

\begin{corollary}\label{corollario con concavità}
In the assumptions of Theorem \ref{approximate concavity parabolic with boundary conditions}, we also have	
\[ \min_{\Omegabar\times \Omegabar\times[0,+\infty]\times [0,+\infty]\times[0,1]}\mathcal{C}_{u^\alpha(\cdot, \star^{\beta})} \geq-\frac{\alpha}{\sigma}\frac{\mathcal{C}^-_{\tilde{g}(\cdot, u^{\alpha}(\cdot,\star^{\beta}), \star)}(x_1,x_3,t_1,t_3,\lambda)}{(\lambda u^\alpha(x_3,t_3^{\beta})
+(1-\lambda)u^\alpha(x_1,t_1^{\beta}))^2}\]
where $\tilde{g}(x,s,t):= s^{\frac{3\alpha-1}{\alpha}}b(x,s^{1/\alpha}, t^{\beta})$.
\end{corollary}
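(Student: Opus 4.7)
The plan is to bootstrap Theorem \ref{approximate concavity parabolic with boundary conditions} by sharpening the bound on $\mathcal{HC}^-_{\Btilde}$ at the minimizing tuple $(x_1,x_3,t_1,t_3,\lambda)$ produced there. Specifically, I aim to show that
\[ \mathcal{HC}_{\Btilde(\cdot, v(\cdot,\star), \tilde\xi, \star)}(x_1,x_3,t_1,t_3,\lambda) \ \geq\ \frac{\alpha}{\hat v_2^{\,2}}\,\mathcal{C}_{\tilde g(\cdot, u^\alpha(\cdot,\star^\beta), \star)}(x_1,x_3,t_1,t_3,\lambda), \]
where I set $v := u^{\alpha}(\cdot,\star^{\beta})$, $v_k := v(x_k,t_k)$, $\hat v_2 := \lambda v_3+(1-\lambda)v_1$, $\tilde\xi:=D_xv(x_1,t_1^{\beta})$, and use the composite-function convention from the Notation block after Definition \ref{concavity function definition} for $\mathcal C$ and $\mathcal{HC}$ (the $s$-slot at the middle is filled by the linear interpolation $\hat v_2$, not by $v(x_2,t_2)$). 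Taking negative parts and applying Theorem \ref{approximate concavity parabolic with boundary conditions} then yields the claim.

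The first key step is to decompose $\Btilde$ at the frozen gradient as a harmonic-linear part plus a remainder: $\Btilde(x,s,\tilde\xi,t) = h(s) + g(x,s,t)$ with $h(s) := \tfrac{1-\alpha}{\alpha}|\tilde\xi|^{2}/s$ and $g(x,s,t) := \alpha\,\tilde g(x,s,t)/s^{2}$. Multiplying by $v_k^2$ gives the clean identity $v_k^{2}Q_k = \tfrac{1-\alpha}{\alpha}|\tilde\xi|^{2}v_k + \alpha\,\bar g_k$ where $Q_k := \Btilde(x_k,v_k,\tilde\xi,t_k)$ and $\bar g_k := \tilde g(x_k,v_k,t_k)$. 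The main technical step is then Cauchy--Schwarz in Engel form with weights $w_1=1-\lambda$, $w_3=\lambda$:
\[ \hat v_2^{\,2} \ =\ \Bigl(\textstyle\sum_i w_iv_i\Bigr)^{\!2} \ \leq\ \Bigl(\textstyle\sum_i w_iv_i^{2}Q_i\Bigr)\Bigl(\textstyle\sum_i \tfrac{w_i}{Q_i}\Bigr) \ =\ \bigl[(1-\lambda)v_1^{2}Q_1+\lambda v_3^{2}Q_3\bigr]\cdot \frac{1}{H_\lambda(Q_1,Q_3)}, \]
with $H_\lambda(a,b) := ab/(\lambda a+(1-\lambda)b)$. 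Substituting the identity above and rearranging, the $|\tilde\xi|^2$ contribution collapses to exactly $h(\hat v_2)$, yielding
\[ H_\lambda(Q_1,Q_3) \ \leq\ h(\hat v_2) \ +\ \frac{\alpha}{\hat v_2^{\,2}}\bigl[(1-\lambda)\bar g_1+\lambda\bar g_3\bigr]. \]

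Plugging this into $\mathcal{HC}_{\Btilde} = h(\hat v_2) + g(x_2,\hat v_2,t_2) - H_\lambda(Q_1,Q_3)$, the $h(\hat v_2)$ terms cancel and what remains is $(\alpha/\hat v_2^{\,2})\bigl[\tilde g(x_2,\hat v_2,t_2)-\lambda\bar g_3-(1-\lambda)\bar g_1\bigr]$, which is precisely $(\alpha/\hat v_2^{\,2})\mathcal{C}_{\tilde g(\cdot, u^\alpha(\cdot,\star^\beta), \star)}$ under the composite-function convention. The main delicate point is that one must use the convention in which the middle value of the composed argument is the \emph{linear} interpolation $\hat v_2$ and not $v(x_2,t_2)$; were it the latter, one would need monotonicity of $s\mapsto\tilde g(x,s,t)$ (which does hold in the applications of Theorems \ref{thm_main_weighted_le} and \ref{thm_sum_powers}, but is not assumed in the corollary). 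With the convention in place, the Cauchy--Schwarz step is clean and the conclusion follows by taking negative parts and applying Theorem \ref{approximate concavity parabolic with boundary conditions}.
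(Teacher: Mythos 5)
Your argument is, at its core, the paper's own proof re-derived: the paper sets $g(x,s,t):=s^{2}\Btilde(x,s,\tilde\xi,t)$ and invokes Proposition \ref{prop_prel_gxj} to get $\mathcal{HC}_{\Btilde(\cdot,v(\cdot,\star),\tilde\xi,\star)}\geq \mathcal{C}_{g(\cdot,v(\cdot,\star),\star)}/(\lambda v_3+(1-\lambda)v_1)^{2}$, and then observes that the term $\frac{1-\alpha}{\alpha}|\tilde\xi|^{2}s$, being linear in the interpolated slot, drops out of $\mathcal{C}_g$, so $\mathcal{C}_g=\alpha\,\mathcal{C}_{\tilde g}$. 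Your decomposition $\Btilde=h+g$, the cancellation of $h(\hat v_2)$, and your (correct) insistence that the middle $s$-slot be the linear interpolation $\hat v_2$ are exactly this; the only difference is that you re-prove the content of Proposition \ref{prop_prel_gxj} via weighted Cauchy--Schwarz instead of citing its sum-of-squares identity.

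There is, however, one step you pass over: the Engel-form inequality $\hat v_2^{2}\le\bigl(\sum_i w_i v_i^{2}Q_i\bigr)\bigl(\sum_i w_i/Q_i\bigr)$ requires $Q_1,Q_3>0$, whereas what the harmonic concavity maximum principle actually provides at the minimizing point (see the Claim in the proof of Theorem \ref{Concavity perturbative maximum principle for parabolic equations}, which is what makes $\mathcal{HC}_{\Btilde}$ well defined per Definition \ref{concavity function definition}) is only the dichotomy $\lambda Q_1+(1-\lambda)Q_3>0$ or $Q_1=Q_3=0$; you neither justify strict individual positivity nor treat the second branch. The gap is fillable under the hypotheses of Theorem \ref{approximate concavity parabolic with boundary conditions}: $(H2)$ together with assumption (\textit{iii}) gives $b\ge 0$ for all times, and then the strict decrease of $s\mapsto s^{\alpha-1}b(x,s,t)$ on all of $(0,+\infty)$ forces $b>0$, hence $Q_i>0$ (and the branch $Q_1=Q_3=0$ cannot occur) --- but this needs to be said. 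Alternatively, and this is precisely what the paper's Proposition \ref{prop_prel_gxj} buys, its exact identity only requires positivity of the convex combination and handles the degenerate branch separately, so citing it (or reproducing its identity instead of Cauchy--Schwarz) removes the issue with no extra argument.
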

\begin{proof}
	Since Theorem \ref{approximate concavity parabolic with boundary conditions} holds, let us give a lower bound for the $\mathcal{HC}^-_{\Btilde}$. 
	Set 
	$$g(x,s, t):=s^2\Btilde(x,s,\tilde{\xi},t)= \frac{1-\alpha}{\alpha}|\tilde{\xi}|^2s+\alpha s^{\frac{3\alpha-1}{\alpha}}b(x,s^{1/\alpha}, t^{\beta})$$ where $\tilde{\xi}=
	D_x u^{\alpha}(x_1, t_1^{\beta})$, and moreover $v(x,t) =  u^{\alpha}(x,t^{\beta})$. 
	 By Proposition \ref{prop_prel_gxj} we have
\begin{align*}
	\mathcal{HC}&_{\Btilde(\cdot,v(\cdot,\star),\tilde{\xi}, \star)}(x_1,x_3,t_1,t_3,\lambda)\geq\frac{\mathcal{C}_{g(\cdot,v(\cdot,\star), \star)}(x_1,x_3,t_1,t_3,\lambda)}{(\lambda v_3 	+(1-\lambda)v_1 )^2}.
\end{align*}
Since $\mathcal{C}_g=\alpha\mathcal{C}_{\tilde{g}(\cdot, v(\cdot, \star), \star)}$, we obtain
\[ 	\mathcal{HC}_{\Btilde(\cdot,v(\cdot,\star),\tilde{\xi}, \star)}(x_1,x_3,t_1,t_3,\lambda)\geq \alpha\frac{\mathcal{C}_{\tilde{g}(\cdot, v(\cdot, \star), \star) }(x_1,x_3,t_1,t_3,\lambda)}{(\lambda v_3 +(1-\lambda)v_1 )^2} \]
and thus the claim.
\end{proof}

As a consequence of Corollary \ref{corollario con concavità} we obtain the following result.
\begin{theorem}\label{concavity parabolic with boundary conditions}
In the assumptions of Theorem \ref{approximate concavity parabolic with boundary conditions}, assume moreover
\begin{itemize}
		\item[iv)] $(x,s,t) \mapsto s^{\frac{3\alpha-1}{\alpha}}b(x,s^{\frac{1}{\alpha}},t^{\beta}) $ is a concave function. 
\end{itemize}
Then $u^\alpha(\cdot, \star^{\beta})$ is a concave function in $\Omegabar\times [0,+\infty]$.
\end{theorem}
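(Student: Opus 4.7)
The plan is to derive the theorem directly from the quantitative estimate in Corollary \ref{corollario con concavità}, by showing that the concavity assumption (\textit{iv}) makes the right-hand side there nonnegative.

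First, I would argue by contradiction: suppose that $v:=u^{\alpha}(\cdot,\star^{\beta})$ is not concave on $\Omegabar\times[0,+\infty]$. Then, by the convention preceding Proposition \ref{minimum not on the boundary infinity}, $\mathcal{C}_v$ attains a strictly negative global minimum at some point $(x_1,x_3,t_1,t_3,\lambda)$ with $\lambda\in(0,1)$. In particular the denominator appearing in the estimate of Corollary \ref{corollario con concavità} is strictly positive, so the corollary yields
\[
0 > \min \mathcal{C}_v \;\geq\; -\frac{\alpha}{\sigma}\,\frac{\mathcal{C}^-_{\tilde{g}(\cdot,v(\cdot,\star),\star)}(x_1,x_3,t_1,t_3,\lambda)}{\bigl(\lambda v(x_3,t_3)+(1-\lambda)v(x_1,t_1)\bigr)^2},
\]
where $\tilde{g}(x,s,t)=s^{\frac{3\alpha-1}{\alpha}}b(x,s^{1/\alpha},t^{\beta})$. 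It is therefore enough to prove that $\mathcal{C}_{\tilde{g}(\cdot,v(\cdot,\star),\star)}(x_1,x_3,t_1,t_3,\lambda)\geq 0$, as then the right-hand side above is nonnegative, contradicting the assumed strict negativity of the left-hand side.

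The verification of this last inequality is the one genuinely new ingredient, and by design it is immediate from hypothesis (\textit{iv}). Indeed, since $\tilde{g}$ is jointly concave in $(x,s,t)$, setting $s_i:=v(x_i,t_i)$ one has
\[
\tilde{g}\bigl(x_2,\lambda s_3+(1-\lambda)s_1,t_2\bigr) \;\geq\; \lambda\,\tilde{g}(x_3,s_3,t_3) + (1-\lambda)\,\tilde{g}(x_1,s_1,t_1),
\]
which is exactly the statement $\mathcal{C}_{\tilde{g}(\cdot,v(\cdot,\star),\star)}(x_1,x_3,t_1,t_3,\lambda)\geq 0$ in the notational convention fixed before Proposition \ref{alpha concavity property}.

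The possible trouble spots are the degenerate cases admitted by the convention. If $t_1=t_3=+\infty$, one invokes the stability parabolic condition to pass the concavity inequality for $\tilde{g}$ to the limit (using continuity of $\tilde{g}(\cdot,\cdot,t)$ as $t\to+\infty$, guaranteed by hypothesis \textit{ii}) of Theorem \ref{approximate concavity parabolic with boundary conditions}). The case of boundary points $x_i\in\partial\Omega$ and the case $t_i=0$ are excluded at the level of the corollary by Proposition \ref{minimum not on the boundary infinity}, so they require no further work. The hard part, if any, was already done upstream: the extraction of the weight $s^{\frac{3\alpha-1}{\alpha}}$ inside $\tilde{g}$ via the harmonic-to-standard concavity manipulation in the proof of Corollary \ref{corollario con concavità}, together with the monotonicity $u_t\geq 0$ and the ruling out of boundary minima established in Theorem \ref{approximate concavity parabolic with boundary conditions}.
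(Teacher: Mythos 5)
Your proposal is correct and follows essentially the same route as the paper, which presents this theorem precisely as an immediate consequence of Corollary \ref{corollario con concavità}: hypothesis (\textit{iv}) is joint concavity of $\tilde{g}$, so $\mathcal{C}_{\tilde{g}(\cdot,v(\cdot,\star),\star)}\geq 0$ at the minimum point (the middle $s$-argument being exactly $\lambda v_3+(1-\lambda)v_1$), hence $\mathcal{C}^-_{\tilde{g}}=0$ and the minimum of $\mathcal{C}_{u^{\alpha}(\cdot,\star^{\beta})}$ is nonnegative. Your treatment of the case $t_1=t_3=+\infty$ via the stability condition is also consistent with the paper's convention.
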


%%%%%%%%%%%%%%%%%%%%%%%%%%%%%%%%%%%%%%%%%%%%%%%%%%%%
\subsubsection{Applications}

Let $\Omega$ be a convex bounded domain of $\R^n$ and suppose $u$ is a function such that \eqref{regularity conditions} and \eqref{boundary and initial conditions} hold together with \eqref{eq_general_bxu} in the case $b(x,u,t)=a(x,t) u^q$, namely 
\begin{equation}\label{equazione a(x)u^gamma}
	u_t-\Delta u=a(x,t)u^q \quad\text{in }\Omega\times (0,+\infty),
\end{equation}
where $q\in[0,1)$, $a\colon\Omega \times (0,+\infty)\to \R$  such that $a(\cdot,t)$ measurable for all $t\in(0,+\infty)$. 

\smallskip

Let us start from some exact results. For the sake of simplicity, we focus on $a(x,t)=a(x) t^{\gamma}$, but more general behaviours in time may be considered.
\begin{proof}[Proof of Theorem \ref{thm_main_weighted_le}]
Let us verify hypothesis of Theorem \ref{concavity parabolic with boundary conditions} with 
$$\alpha= \frac{\theta(1-q)}{1+2\theta +\beta \gamma \theta}<\frac{2(1-q)}{2\beta(1+\gamma)+(2-\beta)(1-q)}<1-q$$ and $b(x,s, t)=a(x) t^{\gamma} s^q$. Notice $s \mapsto s^{\alpha-1} b(x,s,t) = a(x) t^{\gamma} s^{\alpha-(1-q)}$
is strictly decreasing for all $x\in \Omega$ and $t>0$.
Moreover $s \mapsto s^{\frac{3\alpha -1}{\alpha}}b(x,s^{1/\alpha},t^{\beta})=a(x) t^{\gamma \beta} s^{3-\frac{1-q}{\alpha}} $
is concave as a product since $\frac{1}{\theta}+\beta \gamma + 3-\frac{1-q}{\alpha}=1$, by the assumptions we have $3-\frac{1-q}{\alpha}\geq 0$ and thus the claim in light of Proposition \ref{alpha concavity property} (\textit{iv}). 
To get the parabolic condition, we argue as in%
\footnote{Notice that the solution of the stationary solution of $-\Delta u = a(x) u^q$ is $\frac{\theta(1-q)}{1+2\theta}$-concave, where $\frac{\theta(1-q)}{1+2\theta}\geq \frac{\theta(1-q)}{1+2\theta +\beta \gamma \theta}$.} Remark \ref{rem_troncamento} thanks to Remark \ref{stability parabolic condition u^gamma}, while we see that \hyperlink{H1}{\( (H1) \)} is satisfied as shown in Remark \ref{remark comparison principle}. Then we can apply Theorem \ref{concavity parabolic with boundary conditions} and get the thesis.
Moreover if $a$ is a constant, then $a$ is $\theta$-concave for all $\theta\geq 1$, thus the claim by Proposition \ref{alpha concavity property} (\textit{ii}). 
Finally, we see that, if $\beta=\frac{1}{\gamma}\leq 2$ and $a$ is constant, we can repeat the previous arguments with $\alpha=\frac{1-q}{2+\beta \gamma }$ and obtain the claim.
\end{proof}

\begin{proof}[Proof of Theorem \ref{thm_sum_powers}]
	Let us verify that $b(x, s)=a(x) s^p+s^q$ satisfies the hypothesis of Theorem \ref{concavity parabolic with boundary conditions} with $\alpha=\frac{1-q}{2}$. 
	Since $s^{\frac{1-q}{2}-1}(a(x) s^p+s^q)=a(x) s^{\frac{-1-q+2p}{2}}+s^{\frac{q-1}{2}}$, it is strictly decreasing by the hypothesis on $p$. 
	Moreover the stability parabolic condition is guaranteed by Remark \ref{stability parabolic condition u^gamma} and $b(x, s)=a(x) s^p+s^q$ satisfies \hyperlink{H1}{\( (H1) \)} 
	as shown in Remark \ref{remark comparison principle}.
	Regarding the concavity we observe that $s \mapsto a(x) s^{\frac{1-3q+2p}{1-q}} +s$ is concave by the assumptions. 
	Thus we have the claim.
\end{proof}

\begin{proposition}\label{prop_kenn88}
 	Let $\Omega$ satisfy \eqref{eq_ipost_domain}, and $u$ be a function such that \eqref{regularity conditions} and \eqref{boundary and initial conditions} hold.
Suppose
	\begin{equation*}
			u_t-\Delta u= (1-u)^p \quad\emph{in }\Omega\times (0,+\infty)
	\end{equation*}
	for some $p\in (0,1)$. 
	Then $u(\cdot, \star^{2})$ is $\frac{1}{2}$-concave in $\Omegabar \times [0,+\infty)$.
\end{proposition}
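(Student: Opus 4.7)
The plan is to apply Theorem~\ref{concavity parabolic with boundary conditions} with $\alpha = \tfrac{1}{2}$, $\beta = 2$, and nonlinearity $b(x,s,t) = (1-s)^p$, so that $v(x,t) := u^{1/2}(x,t^2)$ becomes concave on $\Omegabar \times [0,+\infty]$, i.e.\ $u(\cdot,\star^2)$ is $\tfrac{1}{2}$-concave. Comparison with the ODE $\dot U = (1-U)^p$, $U(0)=0$, whose solution $U(t) = 1 - (1 - (1-p)t)_+^{1/(1-p)}$ stays strictly below $1$ on compact intervals, combined with the strong maximum principle, yields $0 < u < 1$ in $\Omega \times (0,+\infty)$. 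Hence for $T > 0$ small and $M := \|u\|_{L^\infty(\Omegabar \times [0,T])} < 1$, assumption \hyperlink{H1}{(H1)} holds with $q = \gamma = 0$ and $k = (1-M)^p > 0$, while \hyperlink{H2}{(H2)} is immediate since $s \mapsto (1-s)^p$ is decreasing.

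Next I verify conditions $(i)$--$(iv)$ of Theorem~\ref{concavity parabolic with boundary conditions} for $\alpha = 1/2$: condition $(i)$ holds since $s \mapsto s^{-1/2}(1-s)^p$ is strictly decreasing on $(0,1)$ as a product of positive decreasing factors; condition $(ii)$ follows from Remark~\ref{stability parabolic condition u^gamma} applied to (the odd extension of) $f(s) = (1-s)^p \chi_{(0,1)}(s)$, noting that the stationary problem admits a unique positive solution; condition $(iii)$ is trivial as $b$ does not depend on $t$; for condition $(iv)$, the function $g(s) := s^{3-1/\alpha}(1-s^{1/\alpha})^p\big|_{\alpha=1/2} = s(1-s^2)^p$ satisfies
\[
g''(s) = -2ps(1-s^2)^{p-2}\bigl[3-(1+2p)s^2\bigr] \leq 0 \qquad \text{for } s \in (0,1),
\]
since $p \in (0,1)$ forces $1+2p < 3$, so $g$ is concave on the range of $v$.

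The borderline aspect is that Proposition~\ref{minimum not on the boundary} requires the strict inequality $\alpha < \tfrac{2(1-q)}{2\beta(1+\gamma)+(2-\beta)(1-q)} = \tfrac{1}{2}$, so $\alpha = \tfrac{1}{2}$ cannot be used directly. I bypass this by an approximation: for each small $\varepsilon>0$, apply Theorem~\ref{concavity parabolic with boundary conditions} with $\alpha_\varepsilon := \tfrac{1}{2} - \varepsilon$. Conditions $(i)$--$(iii)$ and \hyperlink{H1}{(H1)}, \hyperlink{H2}{(H2)} are unchanged, and condition $(iv)$ on $g_{\alpha_\varepsilon}(s) = s^{3-1/\alpha_\varepsilon}(1-s^{1/\alpha_\varepsilon})^p$ persists by continuity in $\alpha_\varepsilon$ from the strict inequality $g''_{1/2}(s)<0$ above, combined with the a priori bound $u \leq M < 1$ keeping the range of $v$ compactly inside $[0,1)$. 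This yields $u^{\alpha_\varepsilon}(\cdot,\star^2)$ concave for every $\varepsilon > 0$, so $u(\cdot,\star^2)$ is $\alpha_\varepsilon$-concave for every $\alpha_\varepsilon < 1/2$; taking the supremum in $\alpha_\varepsilon$ and invoking Proposition~\ref{alpha concavity property}(\textit{ii})--(\textit{i}), $u(\cdot,\star^2)$ is $\alpha^{*}$-concave with $\alpha^{*} \geq 1/2$, and in particular $\tfrac{1}{2}$-concave. The main obstacle is precisely this uniform persistence of condition $(iv)$ under the perturbation $\alpha \to 1/2$; it reduces to a continuity check for $g''_{\alpha_\varepsilon}$ on a compact subset of $(0,1)$, exploiting the strict sign of $g''_{1/2}$ away from $s=1$ and the strict separation $u < 1$.
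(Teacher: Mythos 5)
Your overall strategy coincides with the paper's: verify \hyperlink{H1}{$(H1)$} (with $q=\gamma=0$, using $M<1$ for small $T$), \hyperlink{H2}{$(H2)$} (since $(1-s)^p$ is nonincreasing), the stability condition via Remark \ref{stability parabolic condition u^gamma}, strict monotonicity of $s^{\alpha-1}(1-s)^p$, and concavity of $s^{\frac{3\alpha-1}{\alpha}}(1-s^{1/\alpha})^p$, and then invoke Theorem \ref{concavity parabolic with boundary conditions} with $\beta=2$. Where you genuinely differ is in handling the endpoint: you correctly observe that Proposition \ref{minimum not on the boundary} requires the \emph{strict} inequality $\alpha<\tfrac{2(1-q)}{2\beta(1+\gamma)+(2-\beta)(1-q)}=\tfrac12$, whereas the paper simply asserts that $\alpha=\tfrac12$ ``satisfies the restriction''; your approximation $\alpha_\eps=\tfrac12-\eps$ followed by Proposition \ref{alpha concavity property}(\textit{ii}) (or a pointwise passage to the limit in the concavity inequality, using $0\le u\le 1$) is a sound and welcome repair of this borderline point.

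There is, however, a flaw in how you justify condition (\textit{iv}) for $\alpha_\eps$. The ODE comparison does not give what you claim: the solution of $\dot U=(1-U)^p$, $U(0)=0$, reaches $1$ at the finite time $t^*=\tfrac{1}{1-p}$, so it yields no strict separation $u\le M<1$ uniformly on $\Omega\times(0,+\infty)$; indeed, since $u(\cdot,t)$ increases to the stationary solution and $-\Delta w=(1-w)^p$ may develop a dead core ($w\equiv1$ on an interior region) for large $\Omega$, one can have $\sup_{\Omega\times(0,+\infty)}u=1$, so the range of $v$ need not stay compactly inside $[0,1)$ and the ``continuity of $g''_{\alpha_\eps}$ on a compact subset away from $s=1$'' argument breaks down. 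Fortunately this detour is unnecessary: for $g_\alpha(s)=s^{3-m}(1-s^m)^p$ with $m=1/\alpha_\eps\in(2,3]$ one computes
\begin{equation*}
g_\alpha''(s)=(3-m)(2-m)s^{1-m}(1-s^m)^{p}-mp\big(3-m\big)s\,(1-s^m)^{p-1}-mp(2)\,s\,(1-s^m)^{p-1}+m^{2}p(p-1)s^{m+1}(1-s^m)^{p-2},
\end{equation*}
in which every term is nonpositive on $(0,1)$ (since $3-m\in[0,1)$ and $p\in(0,1)$), so condition (\textit{iv}) holds on the whole relevant range $s\in(0,1)$ for every $\alpha_\eps\in[\tfrac13,\tfrac12]$ — exactly the statement the paper uses — with no perturbation or uniform bound on $u$ needed. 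With that substitution (and keeping the correct observation that \hyperlink{H1}{$(H1)$} only requires $\|u\|_{L^\infty(\Omega\times[0,T])}<1$ for some small $T$, which follows from $u(\cdot,0)=0$ and continuity), your proof is complete and in fact slightly more careful than the paper's at the $\alpha=\tfrac12$ endpoint.
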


\begin{proof}
Observe that $b(s)=(1-s)^p$ satisfies \hyperlink{H2}{\( (H2) \)} (being nonincreasing), and it satisfies \hyperlink{H1}{\( (H1) \)} with $q=0$ for $M<1$.
To conclude it is sufficient to observe that $s^{\alpha-1} b(s)$ is strictly decreasing for each $\alpha \in (0,1)$, while $s^{\frac{3\alpha-1}{\alpha}}b(s^{\frac{1}{\alpha}})$ is concave for each $\alpha \in [\frac{1}{3}, \frac{1}{2}]$. 
Noticed that $\alpha=\frac{1}{2} $, satisfies the restriction of Proposition \ref{minimum not on the boundary}, then the claim holds by Theorem \ref{concavity parabolic with boundary conditions}.
\end{proof}

\begin{remark}
In \cite{kenningtonparabolic} the author shows that, for $\beta=1$, the solution $u$ is such that $\varphi_p(u)$ is concave, where $\varphi_p(s) := \int_0^s \frac{1}{\sqrt{1-(1-r)^{p+1}}
} dr$. 
Set $\psi(s):=\sqrt{s}$ the transformation found in Proposition \ref{prop_kenn88}, by the fact that $\psi \circ \varphi_p^{-1}$ is (strictly) concave, we see that, generally, the information given by $\varphi_p$ is better. 
This fact is due to the choice of a transformation not necessarily of power type, but tailed on the nonlinearity, that is, up to constants, $\varphi_p(s)=\int_0^t \frac{1}{\sqrt{F(r)}} dr$, where $F(s)=\int_0^t f(r) dr$, $f(s)=(1-s)^p$. 
This choice is coherent with the approach in \cite{AAGS2023,BMSplaplacian, GaSq24}. 
In the present paper, we decided not to pursue this approach in order to focus on the difficulties given by the parabolic framework and avoid technicalities. On the other hand, similar arguments could be developed, and we leave the details to the interested reader.
\end{remark}

Let us now move to some perturbative result. For the sake of simplicity, we consider $a=a(x)$ locally bounded (see also Remark \ref{rem_time_dependence}), and $\beta=1$. 

 Let $(x_1,x_3,t_1,t_3,\lambda)\in \Omega\times \Omega \times (0,+\infty]\times (0,+\infty]\times (0,1)$ be a global interior minimum of $\mathcal{C}_{u^{\frac{1-q}{2}}}$, then by \eqref{eq_def_xi} in Theorem \ref{Concavity perturbative maximum principle for parabolic equations}
 we have $\xi:=D_xu(x_1,t_1)=D_xu(x_2,t_2)=D_xu(x_3,t_3)$.
 We define, for any $\rho>0$ small enough, 
\begin{equation}\label{definition of nu,M,m}
 \mathfrak{m}_{\rho} :=\frac{2}{1-q} \inf_{x \in \overline{\Omega_{\rho}}}f^{\xi}(x) ,\quad \mathfrak{M}_{\rho} :=\frac{2}{1-q} \sup_{x \in \overline{\Omega_{\rho}}}f^{\xi}(x), 
 \end{equation}
where 
\begin{equation}\label{eq_def_fp}
f^p(x):=\dfrac{1+q}{1-q}\abs{p}^2+\dfrac{1-q}{2}a(x), \quad \hbox{for $p \in \R^n$ and $x \in \Omega$}.
\end{equation}
Notice that
\begin{equation}\label{eq_propr_Mrho_mrho}
 \mathfrak{M}_{\rho} - \mathfrak{m}_{\rho} = \sup _{\overline{\Omega_{\rho}} } a - \inf_{\overline{\Omega_{\rho}}} a, \quad \mathfrak{m}_{\rho} \geq \inf_{\overline{\Omega_{\rho}}} a.
 \end{equation}
	
\begin{proposition}[Oscillation estimate]\label{approximate concavity parabolic application a(x)u^gamma}
	Let $\Omega$ satisfy \eqref{eq_ipost_domain}, and $u$ be a function such that \eqref{regularity conditions}, \eqref{boundary and initial conditions} and \eqref{equazione a(x)u^gamma} hold. 
	Suppose moreover $a\colon\Omega \to \R$ measurable, locally bounded and $a\geq m>0$.
 Then $u$ is increasing in time and there exists $\rho>0$ such that 
 \begin{equation*}
 \min_{\Omegabar\times \Omegabar\times[0,+\infty]\times [0,+\infty]\times[0,1]}\mathcal{C}_{u^{\frac{1-q}{2}}}\geq 
 - \frac{\norm{u(\cdot, \infty)}^{\frac{1-q}{2}}_{L^{\infty}(\Omegabar)}}{\mathfrak{m}_{\rho} } \left(\inf_{\overline{\Omega_\rho}\times \overline{\Omega_\rho} \times [0,1] } \mathcal{C}_{a}-\frac{ \mathfrak{M}_{\rho} }{ \mathfrak{m}_{\rho} }\varepsilon \right)^{-}.
 \end{equation*}
where $\varepsilon:= \mathfrak{M}_{\rho} - \mathfrak{m}_{\rho} \geq 0$ 
and $ \mathfrak{M}_{\rho} , \mathfrak{m}_{\rho} $ are defined in \eqref{definition of nu,M,m}. 
In particular
$$\min_{\Omegabar\times \Omegabar\times[0,+\infty]\times [0,+\infty]\times[0,1]}\mathcal{C}_{u^{\frac{1-q}{2}}} \geq 
-\norm{u(\cdot, \infty)}^{\frac{1-q}{2}}_{L^{\infty}(\overline{\Omega_{\rho}})} \left(2+ \frac{ \eps }{\inf_{\overline{\Omega_{\rho}}} a} \right) \frac{\eps}{\inf_{\overline{\Omega_{\rho}}} a } .$$
\end{proposition}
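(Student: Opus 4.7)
The plan is to apply Theorem \ref{approximate concavity parabolic with boundary conditions} to $v := u^\alpha$ with exponents $\alpha = \tfrac{1-q}{2}$ and $\beta = 1$. Hypotheses \hyperlink{H1}{\( (H1) \)} and \hyperlink{H2}{\( (H2) \)} are met thanks to $a \geq m > 0$ and Remark \ref{remark comparison principle}, while the parabolic stability condition is provided by Remark \ref{stability parabolic condition u^gamma}. The nondecreasing character $u_t \geq 0$ then comes from Corollary \ref{corol_monotonicity}, giving in particular $v \leq \norm{u(\cdot, \infty)}_{L^{\infty}(\Omega)}^{\alpha}$. By Proposition \ref{minimum not on the boundary infinity}, a negative global minimum of $\mathcal{C}_v$ is attained at an interior point $(x_1, x_3, t_1, t_3, \lambda)$ with $x_1, x_3 \in \Omega$; convexity of $\Omega$ and of the distance function keep $x_2$ inside $\Omega$ as well, so that $\rho := \min\{d(x_1, \partial \Omega), d(x_3, \partial \Omega)\} > 0$ yields $x_1, x_2, x_3 \in \overline{\Omega_\rho}$.

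At this minimizer, Theorem \ref{Concavity perturbative maximum principle for parabolic equations} gives a common spatial gradient $\tilde\xi$, and the specific choice $\alpha = \tfrac{1-q}{2}$ is exactly the one for which the expression \eqref{defn psi} of $\Btilde(x, s, \tilde\xi)$ collapses into the rational form $F(x)/s$ with $F(x) := f^{\tilde\xi}(x)$ as in \eqref{eq_def_fp}; correspondingly, the constant $\sigma$ from \eqref{strictly decreasing eqt} equals $F(x_2)/\bar v^2$, where $\bar v := \lambda v_3 + (1-\lambda) v_1$. A direct algebraic splitting gives
\[
\mathcal{HC}_{\Btilde} \;=\; -\frac{F(x_2)\, \mathcal{C}_v}{v_2\, \bar v} \;+\; \frac{\mathcal{N}}{\bar v\, Q},
\qquad Q := \lambda F_1 v_3 + (1-\lambda) F_3 v_1,
\]
where $\mathcal{N} := F(x_2)\, Q - F_1 F_3\, \bar v$. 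Expanding $F_2 - F_i = \mathcal{C}_F \pm (\text{weight})(F_3 - F_1)$ one further rewrites
\[
\mathcal{N} \;=\; \mathcal{C}_F(x_1, x_3, \lambda)\, Q \;+\; \lambda(1-\lambda)\,(F_3 - F_1)\,(F_3 v_1 - F_1 v_3).
\]
Plugging this identity into the inequality $\sigma \mathcal{C}_v \geq \mathcal{HC}_{\Btilde}$ provided by Theorem \ref{approximate concavity parabolic with boundary conditions} and isolating $\mathcal{C}_v$ yields
\[
\mathcal{C}_v \;\geq\; \frac{v_2\, \bar v\, \mathcal{N}}{F(x_2)\, Q\, (v_2 + \bar v)}.
\]

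To close the argument, bound $\mathcal{N}$ from below by combining $\mathcal{C}_F = \tfrac{1-q}{2}\mathcal{C}_a \geq \tfrac{1-q}{2} \inf_{\overline{\Omega_\rho}\times \overline{\Omega_\rho} \times [0,1]} \mathcal{C}_a$ on the main piece, with the bounds $|F_3 - F_1| \leq \tfrac{1-q}{2}\eps$ and $|F_3 v_1 - F_1 v_3| \leq F_3 v_1 + F_1 v_3 \leq (1-q)\, \mathfrak{M}_\rho\, \norm{v}_\infty$ on the cross piece. The coercivity estimates $F(x_2) \geq \tfrac{1-q}{2}\mathfrak{m}_\rho$ and $Q \geq \tfrac{1-q}{2}\mathfrak{m}_\rho\, \bar v$, together with $v_2 \bar v / (v_2 + \bar v) \leq \bar v \leq \norm{u(\cdot, \infty)}_{L^\infty(\Omega)}^{\alpha}$, reassemble into the first stated inequality. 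The ``in particular'' statement then follows from $(\inf \mathcal{C}_a)^- \leq \osc(a) = \eps$, $\mathfrak{M}_\rho/\mathfrak{m}_\rho \leq 1 + \eps/\mathfrak{m}_\rho$ and $\mathfrak{m}_\rho \geq \inf a$, all encoded in \eqref{eq_propr_Mrho_mrho}. The main technical delicacy lies precisely in the cross term $\lambda(1-\lambda)(F_3 - F_1)(F_3 v_1 - F_1 v_3)$: lacking a $\mathcal{C}_a$ factor, it is the source of the additional $(\mathfrak{M}_\rho/\mathfrak{m}_\rho)\eps$ correction inside the negative part of the main bound, and it reflects the fact that $\mathcal{HC}_{\Btilde}$ is genuinely a harmonic concavity (not a plain concavity) of $a$ mixed with the unknown at different spacetime points.
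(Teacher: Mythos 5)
Your setup coincides with the paper's: same choice $\alpha=\tfrac{1-q}{2}$, $\beta=1$, same verification of \hyperlink{H1}{\((H1)\)}--\hyperlink{H2}{\((H2)\)} and of the stability condition, same exclusion of boundary minima and same $\sigma$, and your identity $\mathcal{N}=\mathcal{C}_F\,Q+\lambda(1-\lambda)(F_3-F_1)(F_3v_1-F_1v_3)$ is a correct and legitimate alternative to the paper's harmonic-mean manipulation. The gap is in the step where you combine it with the theorem. With the paper's convention (which is the one appearing in Theorem \ref{approximate concavity parabolic with boundary conditions}), the middle point of $\mathcal{HC}_{\Btilde(\cdot,v(\cdot,\star),\tilde\xi,\star)}$ is evaluated at the convex combination $\bar v=\lambda v_3+(1-\lambda)v_1$, so that $\mathcal{HC}_{\Btilde}=\frac{F_2}{\bar v}-\frac{F_1F_3}{Q}=\frac{\mathcal{N}}{\bar v Q}$ \emph{exactly}: there is no $-\frac{F_2\mathcal{C}_v}{v_2\bar v}$ term. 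That extra term belongs to the variant in which the middle slot carries the actual value $v_2=v(x_2,t_2)$; but the theorem does not give you $\sigma\mathcal{C}_v\geq \frac{F_2}{v_2}-\frac{F_1F_3}{Q}$ (at a negative minimum this quantity is \emph{larger} than $\frac{\mathcal{N}}{\bar v Q}$, so the inequality does not follow, and indeed the bound you extract, $\mathcal{C}_v\geq \frac{v_2\bar v\,\mathcal{N}}{F_2Q(v_2+\bar v)}$, is strictly stronger than what the PDE information at the minimum yields: rearranging it against the interior-minimum inequality $\frac{F_2}{v_2}\leq\frac{F_1F_3}{Q}$ would require $v_2\geq\bar v$, contradicting $\mathcal{C}_v<0$). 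The correct consequence of the theorem is simply $\mathcal{C}_v\geq\frac{\bar v\,\mathcal{N}}{F_2\,Q}$, and this is all you need.

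The second problem is in the assembly of the cross piece. Bounding $|F_3v_1-F_1v_3|\leq(1-q)\mathfrak{M}_\rho\norm{v}_\infty$ against $Q\geq\tfrac{1-q}{2}\mathfrak{m}_\rho\bar v$ leaves the uncontrolled ratio $\norm{v}_\infty/\bar v$ inside the bracket; after absorbing it through the prefactor you only reach an estimate of the form $-\frac{\norm{v}_\infty}{\mathfrak{m}_\rho}\big((\inf\mathcal{C}_a)^-+c\,\eps\big)$, i.e.\ a sum of separate error terms, which is strictly weaker than the stated $-\frac{\norm{v}_\infty}{\mathfrak{m}_\rho}\big(\inf\mathcal{C}_a-\frac{\mathfrak{M}_\rho}{\mathfrak{m}_\rho}\eps\big)^-$ (in particular it misses the exact-concavity conclusion when $\inf\mathcal{C}_a>\frac{\mathfrak{M}_\rho}{\mathfrak{m}_\rho}\eps>0$). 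The fix is to keep the $v$'s coupled: $\lambda(1-\lambda)(F_3v_1+F_1v_3)\leq\tfrac{1-q}{2}\mathfrak{M}_\rho\big(\lambda v_3+(1-\lambda)v_1\big)$, so that $\bar v$ cancels against $Q$ and one gets $\frac{\mathcal{N}}{Q}\geq\tfrac{1-q}{2}\big(\mathcal{C}_a-\frac{\mathfrak{M}_\rho}{\mathfrak{m}_\rho}\eps\big)$, with the $\eps$-error inside the same bracket as $\mathcal{C}_a$ before taking negative parts. This is exactly the role played in the paper's proof by the comparison of $\frac{F_1F_3\bar v}{Q}$ with the harmonic mean $\frac{F_1F_3}{\lambda F_1+(1-\lambda)F_3}$, which keeps the whole estimate in the form $\frac{1-q}{2\bar v}\big(\mathcal{C}_a-\frac{\mathfrak{M}_\rho}{\mathfrak{m}_\rho}\eps\big)$ and then applies $\mathcal{C}_v\geq-\frac{1}{\sigma}\mathcal{HC}^-$. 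With these two corrections your decomposition does reproduce the stated inequality, and the ``in particular'' part then follows as you indicate from \eqref{eq_propr_Mrho_mrho}.
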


\begin{proof}
	We want to apply Theorem \ref{approximate concavity parabolic with boundary conditions} with $\alpha=	\frac{1-q}{2}
	$ and $b(x,s)=a(x)s^q$. Since $a$ is bounded from below, $b(x,s)\geq m s^q$. Moreover $s^{\alpha-1}b(x,s)=s^{\frac{1-q}{2}-1}a(x)s^q=a(x)s^{\frac{q-1}{2}}$ is strictly decreasing function of $s$, the parabolic condition on $b$ is satisfied by Remark \ref{stability parabolic condition u^gamma} and \hyperlink{H1}{\( (H1) \)} is satisfied as shown in Remark \ref{remark comparison principle}. 
	Thus we can apply Theorem \ref{approximate concavity parabolic with boundary conditions}. 
	Notice that $\Btilde$ defined in \eqref{defn psi} takes the form
	\[\Btilde(x,s,p)=\dfrac{f^p(x)}{s}\]
	where $f^p$ is given in \eqref{eq_def_fp}.
	Let $(x_1,x_3,t_1,t_3,\lambda)\in \Omegabar\times \Omegabar\times [0,+\infty]\times [0,+\infty]\times [0,1]$ be a global minimum for $\mathcal{C}_{v}$. 
	By Proposition \ref{minimum not on the boundary}, we know that $(x_1,x_3,t_1,t_3,\lambda)\in \Omega\times \Omega\times (0,+\infty]\times (0,+\infty]\times [0,1]$. 
	If the minimum is positive, the statement holds. 
	Suppose $\mathcal{C}_{v}$ admits a negative minimum at $(x_1,x_3,t_1,t_3,\lambda)\in \Omega\times \Omega\times (0,+\infty)\times (0,+\infty)\times (0,1)$, then we need to estimate of the harmonic concavity function. 
	Define 
	\[\rho:=\min \big\{d(x_1,\partial\Omega),d(x_3,\partial \Omega)\big\}>0. \]
	Then by \eqref{definition of nu,M,m}, it holds (recall \eqref{eq_hc_grt_c})
	\begin{align*}
		\mathcal{HC}&_{\mathscr{B}(\cdot,v(\cdot,\star),\xi)}(x_1,x_3,t_1,t_3,\lambda)\\
		= & \dfrac{f^{\xi}_2}{\lambda v_3 +(1-\lambda)v_1}-\dfrac{f^{\xi}_1}{v_1}\dfrac{f^{\xi}_3}{ v_3 }\dfrac{v_1 v_3 }{\lambda f^{\xi}_1 v_3 +(1-\lambda)f^{\xi}_3v_1}\\
		=& \dfrac{1}{\lambda v_3 +(1-\lambda)v_1}\left( f^{\xi}_2-f^{\xi}_1f^{\xi}_3\dfrac{\lambda v_3 +(1-\lambda)v_1}{\lambda f^{\xi}_1 v_3 +(1-\lambda)f^{\xi}_3v_1}\right)\\
		\geq & \dfrac{1}{\lambda v_3 +(1-\lambda)v_1} \left(\mathcal{HC}_{f^{\xi}}(x_1,x_3,t_1,t_3,\lambda)+f^{\xi}_1f^{\xi}_3\left(\frac{1}{\lambda f^{\xi}_1+(1-\lambda)f^{\xi}_3}-\frac{2}{(1-q) \mathfrak{m}_{\rho} } \right)\right) \\
				\geq & \dfrac{1}{\lambda v_3 +(1-\lambda)v_1} \left(\mathcal{HC}_{f^{\xi}}(x_1,x_3,t_1,t_3,\lambda)+\frac{1-q}{2} \mathfrak{M}_{\rho} ^2\left(\frac{1}{ \mathfrak{M}_{\rho} }-\frac{1}{ \mathfrak{m}_{\rho} } \right)\right) \\
			\geq & \dfrac{1}{\lambda v_3 +(1-\lambda)v_1} \left(\mathcal{C}_{f^{\xi}}(x_1,x_3,t_1,t_3,\lambda)-\frac{1-q}{2}\frac{ \mathfrak{M}_{\rho} }{ \mathfrak{m}_{\rho} }( \mathfrak{M}_{\rho} - \mathfrak{m}_{\rho} )\right)\\
			= & \frac{1-q}{2}\dfrac{1}{\lambda v_3 +(1-\lambda)v_1} \left(\mathcal{C}_{a}(x_1,x_3,t_1,t_3,\lambda)-\frac{ \mathfrak{M}_{\rho} }{ \mathfrak{m}_{\rho} }\varepsilon\right).
	\end{align*}
	In the case as $t_1=+\infty$ or $t_3=+\infty$, the previous inequality holds by the same arguments for $\mathcal{HC}_{\mathscr{B}(\cdot,v(\cdot,\star),\xi)}(x_1,x_3,+\infty,+\infty,\lambda)$. 
	Thus it holds
	\[ \mathcal{HC}_{\mathscr{B}(\cdot,v(\cdot,\star),\xi)}(x_1,x_3,+\infty, +\infty,\lambda) 
	\geq 
	\frac{1-q}{2} \frac{1}{\lambda v_3 +(1-\lambda)v_1} \left(\mathcal{C}_{a}(x_1,x_3,\lambda)-\frac{ \mathfrak{M}_{\rho} }{ \mathfrak{m}_{\rho} }\varepsilon\right).\]
	Let us write explicitly the constant $\sigma$ of \eqref{strictly decreasing eqt} recalling that $ v_2 < \lambda v_3 +(1-\lambda)v_1$:
	\begin{align*}
		\sup_{z\in [ v_2 ,\lambda v_3 +(1-\lambda)v_1]} \partial_s\mathscr{B}(x_2,z,D_x v_2 ) & \leq f^{D_x v_2 }(x_2) 	\sup_{z\in [ v_2 ,\lambda v_3 +(1-\lambda)v_1]}\bigg(-\frac{1}{z^2}\bigg)
		\\&
		\leq -\frac{1-q}{2}\frac{ \mathfrak{m}_{\rho} }{(\lambda v_3 +(1-\lambda)v_1)^2}
	\end{align*}
	which leads to $\sigma:=\frac{1-q}{2}\frac{ \mathfrak{m}_{\rho} }{(\lambda v_3 +(1-\lambda)v_1)^2}$. 
	Finally by Theorem \ref{approximate concavity parabolic with boundary conditions}, it holds
$$\min_{\Omegabar\times \Omegabar\times[0,+\infty]\times [0,+\infty]\times[0,1]}\mathcal{C}_{u^{\frac{1-q}{2}}}\geq 
\min	\left\{0,\frac{\lambda v_3 +(1-\lambda)v_1}{ \mathfrak{m}_{\rho} } \left(\mathcal{C}_{a}(x_1,x_3,\lambda)-\frac{ \mathfrak{M}_{\rho} }{ \mathfrak{m}_{\rho} }\varepsilon\right)\right\}$$
	which implies the claim.
We conclude by observing that $\mathcal{C}_{a} \geq \inf_{\Omegabar_\rho} a - \sup_{\Omegabar_\rho} a$ on $\Omegabar_{\rho}$, \eqref{eq_propr_Mrho_mrho} and the trivial identity $\frac{ \mathfrak{M}_{\rho} }{ \mathfrak{m}_{\rho}} = \frac{ \mathfrak{M}_{\rho} - \mathfrak{m}_{\rho} }{ \mathfrak{m}_{\rho}} +1$.
\end{proof}

\begin{remark}\label{rem_rough_estimate_osca}
We highlight that, in the previous proof, we exploited $\inf_{\Omega} a>0$. 
On the other hand, in regard of the conclusion, we observe that if $a\to 0$ on $\partial \Omega$ the final expression is still well defined; we refer to Section \ref{sec_torsion} for some generalization for the torsion problem in this direction.

Assumed $a$ bounded and set $M:= \sup_{\Omega} a,\ m:= \inf_{\Omega} a$, the final statement in Proposition \ref{approximate concavity parabolic application a(x)u^gamma} implies 
$$\min_{\Omegabar\times \Omegabar\times[0,+\infty]\times [0,+\infty]\times[0,1]}\mathcal{C}_{u^{\frac{1-q}{2}}} \geq 
- \norm{u(\cdot, \infty)}^{\frac{1-q}{2}}_\infty \left(2+ \frac{ \osc(a) }{m} \right) \frac{\osc(a)}{m}$$
where we recall $\osc(a)=M-m$.
Finally, if $a\in C^1(\Omega)$, it is possible to express $\varepsilon$ in terms of the gradient of $a$. Indeed for some $\bar{x},\tilde{x}\in\overline{ \Omega_\rho}$ and $\bar{z}$ on the segment $[\bar{x},\tilde{x}]$ lying in $\overline{\Omega_\rho}$, it holds
	\begin{align*}
		\eps = \mathfrak{M}_{\rho} - \mathfrak{m}_{\rho} &= \nabla a (\bar{z}) \cdot (\bar{x}-\tilde{x}) 
		 \leq \norm{\nabla a}_{\infty} \diam (\Omega).
	\end{align*}
\end{remark}

As a consequence of Proposition \ref{ulam}, Proposition \ref{approximate concavity parabolic application a(x)u^gamma} and Remark \ref{rem_rough_estimate_osca} we obtain the following result.
\begin{corollary}[Hyers-Ulam approximation]\label{corollario ulam}
Under the assumptions of Proposition \ref{approximate concavity parabolic application a(x)u^gamma}, there exist a $C=C\big(\norm{u(\cdot, \infty)}_{\infty}, \inf_{\Omega} a, \osc(a), \gamma, n \big)>0$ (with a linear dependence with respect to $\osc(a)$) and a concave function $w:\Omega\times(0,+\infty)\to \R$ such that
\[ \norm{u^{\frac{1-q}{2}}-w}_{L^\infty(\Omega\times(0,+\infty))}\leq C \osc(a).\]
\end{corollary}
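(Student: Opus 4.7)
The plan is to combine the quantitative $\delta$-concavity estimate from Proposition \ref{approximate concavity parabolic application a(x)u^gamma} (in the explicit form given at the end of Remark \ref{rem_rough_estimate_osca}) with the Hyers--Ulam stability result recalled in Proposition \ref{ulam}. The argument is essentially a one-shot composition of these two statements, with the bookkeeping on constants being the only thing to track.

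Concretely, I would first set $v := u^{\frac{1-q}{2}}$ and invoke the rough bound at the end of Remark \ref{rem_rough_estimate_osca}, which yields
\[ \mc{C}_v(x_1,x_3,t_1,t_3,\lambda) \geq -\delta \quad \text{on } \Omegabar\times\Omegabar\times[0,+\infty]\times[0,+\infty]\times[0,1], \]
where, setting $m := \inf_\Omega a > 0$,
\[ \delta := \norm{u(\cdot,\infty)}_\infty^{\frac{1-q}{2}} \left(2 + \frac{\osc(a)}{m}\right) \frac{\osc(a)}{m}. \]
In particular $v$ is $\delta$-concave on the convex set $K := \Omega \times (0,+\infty)$, which lies in the finite-dimensional space $\R^{n+1}$. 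Proposition \ref{ulam} then produces a concave function $w\colon K \to \R$ satisfying
\[ \norm{v - w}_{L^\infty(K)} \leq k_{n+1}\,\delta. \]

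The last step is pure bookkeeping: I rewrite $\delta = C\cdot \osc(a)$ with
\[ C := k_{n+1}\,\norm{u(\cdot,\infty)}_\infty^{\frac{1-q}{2}}\,\frac{1}{m}\left(2 + \frac{\osc(a)}{m}\right), \]
so that $C$ depends only on $\norm{u(\cdot,\infty)}_\infty$, $m$, $\osc(a)$, $q$ and (through $k_{n+1}$) on $n$. Note that $C$ stays bounded as $\osc(a)\to 0$, which is the precise sense in which the dependence on $\osc(a)$ is linear.

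No step presents a serious obstacle. The only delicate point worth noting is the application of Hyers--Ulam on the unbounded cylinder $K$: this is legitimate because Proposition \ref{ulam} only requires convexity of $K$ and finite dimension of the ambient space, both of which hold. Moreover the $\delta$-concavity provided by Remark \ref{rem_rough_estimate_osca} is valid uniformly up to and including the stationary limit $t = +\infty$ and up to $\partial\Omega$ (where $v$ vanishes), so no compactness or boundary-behaviour issues arise when feeding it into the Hyers--Ulam machinery.
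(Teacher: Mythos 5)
Your argument is correct and is exactly the paper's route: the corollary is stated there as a direct consequence of Proposition \ref{approximate concavity parabolic application a(x)u^gamma} together with the rough bound in Remark \ref{rem_rough_estimate_osca} and the Hyers--Ulam stability of Proposition \ref{ulam}, applied on the convex set $\Omega\times(0,+\infty)\subset\R^{n+1}$. Your bookkeeping of the constant (affine in $\osc(a)$, hence the claimed linear dependence) and your remark that Proposition \ref{ulam} needs only convexity and finite dimension, not boundedness, match the intended proof.
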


\begin{proposition}[$\theta$-concavity estimate]
\label{prop_theta_concav_estim}
Let $\Omega$ satisfy \eqref{eq_ipost_domain}, $\theta \geq 1$,
and $u$ be a function such that \eqref{regularity conditions}, \eqref{boundary and initial conditions} and \eqref{equazione a(x)u^gamma} hold. 
Suppose moreover $a\colon\Omega \to \R$ measurable, $0< m\leq a\leq M$ and $m^\theta\geq M^{\theta}/2$. 
Let $(x_1,x_3,t_1,t_3,\lambda)\in \Omegabar\times \Omegabar\times [0,+\infty]\times [0,+\infty]\times [0,1]$ be a global minimum for $\mathcal{C}_{u^{\frac{\theta(1-q)}{1+2\theta}}}$. 
Then there exists $\rho>0$ such that
\[	\min_{\Omegabar\times \Omegabar\times[0,+\infty]\times [0,+\infty]\times[0,1]}\mathcal{C}_{u^{\frac{\theta(1-q)}{1+2\theta}}}
\geq
-\frac{2\theta }{1+2\theta}\frac{||u(\cdot, \infty)||_{L^{\infty}(\Omega)}^{\frac{(\theta-1)(1-q)}{1+2\theta}}}{ \mathfrak{m}_{\rho} }(\mathcal{C}^-_{a^{\theta}})^{1/\theta}(x_1,x_3,\lambda).\]
\end{proposition}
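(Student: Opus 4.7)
The plan is to repeat the strategy of Proposition \ref{approximate concavity parabolic application a(x)u^gamma}, now with the exponent $\alpha := \theta(1-q)/(1+2\theta)$ in place of $(1-q)/2$, and to invoke at the critical step a quantitative product-concavity lemma (the Lemma \ref{concavità prodotto} referenced in the paper), which is exactly where the technical assumption $m^\theta \geq M^\theta/2$ enters.

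Setting $v := u^\alpha$, my first step is to verify the hypotheses of Theorem \ref{approximate concavity parabolic with boundary conditions} with $b(x,s) = a(x)s^q$, $\beta = 1$: since $\alpha < 1-q$, the map $s \mapsto s^{\alpha-1} b(x,s) = a(x) s^{\alpha+q-1}$ is strictly decreasing; the admissibility bound of Proposition \ref{minimum not on the boundary} (with $\gamma = 0$, $\beta = 1$) reduces to $\alpha < 2(1-q)/(3-q)$, which is automatic for $\theta \geq 1$; \hyperlink{H1}{\( (H1) \)} and \hyperlink{H2}{\( (H2) \)} hold as in Remark \ref{remark comparison principle}, and the stability parabolic condition follows from Remarks \ref{rem_troncamento} and \ref{stability parabolic condition u^gamma}. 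In particular $u_t, v_t \geq 0$. If $\mathcal{C}_v$ attains a negative minimum, then by Proposition \ref{minimum not on the boundary infinity} it is located at some $(x_1,x_3,t_1,t_3,\lambda) \in \Omega \times \Omega \times (0,+\infty] \times (0,+\infty] \times (0,1)$, and Theorem \ref{Concavity perturbative maximum principle for parabolic equations} yields a common gradient $\xi := D_x v(x_i, t_i)$ at which the function $\mathscr{B}$ from \eqref{defn psi} takes the form
\begin{equation*}
\mathscr{B}(x,s,\xi) = \frac{1-\alpha}{\alpha}\frac{|\xi|^2}{s} + \alpha\, a(x)\, s^{-(\theta+1)/\theta}.
\end{equation*}
Differentiating in $s$ and using $a \geq m$ on $\overline{\Omega_\rho}$, $s \leq \lambda v_3 + (1-\lambda) v_1$, gives a lower bound on the strict-decay constant $\sigma$ of \eqref{strictly decreasing eqt} of the form $\sigma \geq c_{\theta,q}\, \mathfrak{m}_\rho/(\lambda v_3 + (1-\lambda) v_1)^2$, in the spirit of the analogous calculation in Proposition \ref{approximate concavity parabolic application a(x)u^gamma}.

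The key step is the quantitative estimate of $\mathcal{HC}_{\mathscr{B}(\cdot,v,\xi)}$. I would pass through Corollary \ref{corollario con concavità}, which reduces the problem to bounding $\mathcal{C}^-$ of the product $\tilde{g}(x,s) := s^{(3\alpha-1)/\alpha}\, b(x, s^{1/\alpha}) = a(x) s^{(\theta-1)/\theta}$, where the exponent follows from the identity $3 - (1-q)/\alpha = (\theta-1)/\theta$. Applying Lemma \ref{concavità prodotto} to this product -- exactly the step requiring $m^\theta \geq M^\theta/2$ -- would produce an estimate of the form
\begin{equation*}
\mathcal{C}^-_{\tilde{g}(\cdot, v(\cdot,\star))}(x_1,x_3,t_1,t_3,\lambda) \leq \|v^{(\theta-1)/\theta}\|_{L^\infty(\Omega\times(0,+\infty))}\, \bigl(\mathcal{C}^-_{a^\theta}(x_1,x_3,\lambda)\bigr)^{1/\theta}.
\end{equation*}
Since $v = u^\alpha$ and $\alpha(\theta-1)/\theta = (\theta-1)(1-q)/(1+2\theta)$, the $L^\infty$ factor equals $\|u(\cdot,\infty)\|_{L^\infty(\Omega)}^{(\theta-1)(1-q)/(1+2\theta)}$; collecting the constants $\alpha/\sigma$ together with the denominator $(\lambda v_3 + (1-\lambda)v_1)^2$ from Corollary \ref{corollario con concavità} produces the claimed prefactor $2\theta/(1+2\theta) = 2\alpha/(1-q)$ and the factor $1/\mathfrak{m}_\rho$.

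I expect the main obstacle to be precisely the product estimate of Lemma \ref{concavità prodotto}: because $v$ is not known to be concave (that is the very object under investigation), the concavity of the product $a \cdot v^{(\theta-1)/\theta}$ cannot be obtained from concavity of both factors separately, and the smallness condition $m^\theta \geq M^\theta/2$ is what compensates, allowing the $\theta$-concavity of $a$ alone to transfer to the product up to a controlled error; the remaining algebraic bookkeeping is analogous to that in the proof of Proposition \ref{approximate concavity parabolic application a(x)u^gamma}.
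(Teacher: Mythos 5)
Your argument is correct and coincides with the paper's own proof: both verify the hypotheses of Theorem \ref{approximate concavity parabolic with boundary conditions} with $\alpha=\frac{\theta(1-q)}{1+2\theta}$, pass through Corollary \ref{corollario con concavità}, bound $\mathcal{C}^-_{\tilde g(\cdot,\,v(\cdot,\star))}$ with $\tilde g(x,s)=a(x)\,s^{(\theta-1)/\theta}$ via Proposition \ref{concavità prodotto}, and take $\sigma=\frac{1-q}{2}\,\mathfrak{m}_\rho/(\lambda v_3+(1-\lambda)v_1)^2$ exactly as in Proposition \ref{approximate concavity parabolic application a(x)u^gamma}, which yields the stated prefactor $\frac{2\theta}{1+2\theta}\frac{1}{\mathfrak{m}_\rho}$ together with $\|u(\cdot,\infty)\|_{L^\infty(\Omega)}^{\frac{(\theta-1)(1-q)}{1+2\theta}}$. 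Only your closing heuristic is slightly off: in the product lemma the hypothesis $m^\theta\geq M^{\theta}/2$ is required because the factor $a$ is only approximately $\theta$-concave (so one must use alternative (i) for it), while the factor $s\mapsto s^{(\theta-1)/\theta}$ satisfies alternative (ii) exactly since its $\frac{\theta}{\theta-1}$-th power is linear, and the unknown concavity of $v$ never enters this step because, by the paper's notational convention, $\tilde g$ is evaluated at $\lambda v_3+(1-\lambda)v_1$ rather than at $v_2$.
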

\begin{proof}
As in Proposition \ref{approximate concavity parabolic application a(x)u^gamma}, the hypothesis imply Corollary \ref{corollario con concavità}. 
Being $b(x,s)=a(x)s^q$, we need to evaluate
 $\mathcal{C}_{\tilde{g}(\cdot, v(\cdot, \star))}$
 with $\tilde{g}(x,s):= a(x) s^{\frac{3\alpha-1+q}{\alpha}}$ and $\alpha=\frac{\theta(1-q)}{1+2\theta}$.
 We see that $s\mapsto s^{\frac{3\alpha-1+q}{\alpha}}$ is $\left(1-\frac{1}{\theta}\right)^{-1}$-concave.
 If $\theta>1$, applying Proposition \ref{concavità prodotto} it holds 
 \begin{align*}
	\mathcal{C}_{\tilde{g}(\cdot, v(\cdot, \star))} (x_1,x_3,\lambda)\geq
	& -(\mathcal{C}^-_{a^{\theta}})^{1/\theta}(x_1,x_3,\lambda) \big(\lambda v^{1-1/\theta}_3+(1-\lambda) v^{1-1/\theta}_1\big).
\end{align*}
Instead if $\theta=1$, the previous inequality is obvious. So by Corollary \ref{corollario con concavità}
\begin{align*}
 \mathcal{C}_{u^\alpha}(x_1,x_3,t_1,t_3, \lambda)
 \geq&-\frac{\alpha}{\sigma}\frac{\mathcal{C}_{\tilde{g}(\cdot, v(\cdot, \star))}^-
(x_1,x_3,t_1,t_3,\lambda)}{(\lambda v_2+(1-\lambda)v_1)^2}
 \\\geq& -\frac{\theta(1-q)}{1+2\theta}\frac{1}{\sigma}\frac{(\lambda v^{1-1/\theta}_3+(1-\lambda) v^{1-1/\theta}_1)}{(\lambda v_3+(1-\lambda)v_1)^2}(\mathcal{C}^-_{a^{\theta}})^{1/\theta}(x_1,x_3,\lambda).
\end{align*}
Define $ \rho:=\min \left\{d(x_1,\partial\Omega),d(x_3,\partial \Omega)\right\}>0.$
Then as in Proposition \ref{approximate concavity parabolic application a(x)u^gamma}, we can take $\sigma:=\frac{1-q}{2}\frac{ \mathfrak{m}_{\rho} }{(\lambda v_3+(1-\lambda)v_1)^2}$ and this leads to
\begin{align*}
 \mathcal{C}_{u^\alpha}(x_1,x_3,t_1,t_3, \lambda)\geq& -\frac{2\theta}{1+2\theta}\frac{(\lambda v^{1-1/\theta}_3+(1-\lambda) v^{1-1/\theta}_1)}{ \mathfrak{m}_{\rho} }(\mathcal{C}^-_{a^{\theta}})^{1/\theta}(x_1,x_3,\lambda)\\ \geq & -\frac{2\theta}{1+2\theta}\frac{||u||_\infty^{\frac{(\theta-1)(1-q)}{1+2\theta}}}{ \mathfrak{m}_{\rho} }(\mathcal{C}^-_{a^{\theta}})^{1/\theta}(x_1,x_3,\lambda).\qedhere
\end{align*}
\end{proof}

 \begin{proof}[Proof of Theorem \ref{thm_approximate concavity parabolic application a(x)u^gamma}]
 It is a consequence of Proposition \ref{prop_theta_concav_estim} and Remark \ref{rem_rough_estimate_osca}.
 \end{proof}

\begin{remark}\label{rem_time_dependence}
The above perturbative results can be extended also to time-dependent nonlinearities $a(x,t)$. 
If, for instance, $a(x,t)$ is bounded and $a(x,t)\to \bar{a}(x)$ as $t\to +\infty$, it makes sense to investigate the parabolic stability condition for $b$. 
Moreover, if $a>0$ in $\Omega \times (0,+\infty)$, one can consider the infimum over $\Omega_{\rho} \times [\rho, +\infty)$, where $\rho:=\min \big\{d(x_1,\partial\Omega),d(x_3,\partial \Omega), t_1, t_3\big\}>0$.

If instead $a(x,t) \sim a(x) t^{\gamma}$, then one may consider a truncation argument (see Remark \ref{rem_troncamento}) so that $a$ becomes bounded. 
In such a case, anyway, the perturbative estimate will depend on $T$.
We omit the the details, and leave them to the interested reader. 
\end{remark}

\begin{remark}[Comments on $u(\cdot, 0) \neq 0$]
\label{rem_u0_nonzero}
	In the whole Subsection \ref{subsec_harmon_boundar} we considered only zero initial conditions $u(\cdot, 0)=u_0=0$, in accordance to \cite{IsSa13, kenningtonparabolic, IsSa16, ILS20}. 
	Many of our arguments, anyway, could be adapted also to nonzero initial conditions. 
	First, assume, in place of \hyperlink{H1}{\( (H1) \)}, the following slightly stronger assumption: 
	\begin{itemize}
		\item[]\hypertarget{H1*}\((H1^*)\) 
	There exist $T>0$, $q\in [0,1)$ and $\gamma \in [0,1]$ such that for all $M>0$ there exists $k=k(M,T)>0$ verifying
		$$b(x,s, t)\geq k t^{\gamma} s^q \quad \hbox{for all $(x,s,t)\in \Omega\times(0,M]\times (0,T]$}.$$
\end{itemize}
As in \hyperlink{H2}{\( (H2) \)}, $\norm{u}_{\infty}$ takes the place of $M$. 
Notice that that $f(s)=(1-s)^p$ (studied in Proposition \ref{prop_kenn88}) satisfies \hyperlink{H1}{\( (H1) \)} but not \hyperlink{H1*}{\( (H1^*) \)}.

Assuming
	\begin{itemize}
		\item $u_0 \in C(\Omegabar)$, $u_0=0$ on $\partial\Omega$,%
		\footnote{We highlight that the requirement \eqref{eq_cond_dOmega} is related to the application of the comparison principles, see Proposition \ref{comparison principle dickstein}. 
		By requiring more regularity on $b$ (see \hyperlink{H2*}{\( (H2^*) \)}), this condition can be dropped (see Proposition \ref{comparison principle dickstein_2}).}
		\begin{equation}\label{eq_cond_dOmega}
			u_0\geq \delta d_{\Omega} \quad \hbox{for some $\delta>0$},
		\end{equation} 
		$u_0$ is $\alpha$-concave for some (suitable) $\alpha$, and 
		$$u(\cdot, t)\geq u_0 \quad \hbox{for each $t>0$}$$
		(for instance, when \hyperlink{H3}{\( (H3) \)} holds, we can require $u_0$ to be a subsolution of the stationary problem, see Corollary \ref{corol_monotonicity}),%
		\footnote{The set of functions satisfying these conditions is nonempty. 
		For example, consider $\Omega = B_1$, $b(x,s,t)=a(x)\in L^\infty(\Omega)$ with $a\geq m >0$. Let $u_0(x)=\delta d_{\Omega}(x) =\delta(1-|x|)$, $\delta$ small enough, on $B_1\setminus B_{\frac{1}{2}}$ and extended smoothly and in a concave way to $B_{\frac{1}{2}}$. Then $-\Delta u_0=\delta \frac{N-1}{|x|}\leq a(x)$ in $B_1 \setminus B_{\frac{1}{2}}$ and we can require also $-\Delta u_0\leq a(x)$ in $B_{\frac{1}{2}}$. 
		Thus $u_0$ satisfies all the assumptions ($\alpha=1$). Less trivial examples (such as perturbations of the present examples) hold as well.}
	\end{itemize}
	all the arguments still hold, except for the ones in Lemma \ref{minimum not on the boundary}, case $t_1=0$, $t_3 \in (0,+\infty)$ and $x_1, x_3 \in \Omega$. 
	As a matter of facts, in this case we have
	$$\limsup_{\tau \to 0} v_t(\tau) \geq \limsup_{\tau \to 0} \frac{v(x,\tau)-v(x,0)}{\tau},$$
	with $v(x,0)=u_0^{\alpha}(x)>0$, and we are not able to show that this limit explodes to $+\infty$, no matter who $\alpha$ is. More in details, by assuming for simplicity $b(x,s,t)= a(x) s^q$, $q \in [0,1)$, $a(x) \geq m>0$, then an improvement of Proposition \ref{comparison principle dickstein} leads to 
	$$u(x,t) \geq v(x,t) + e^{Lt} S(t) (u(x,0)-v(x,0)),$$
	for any supersolution $u$ and any subsolution $v$; here $L= \frac{q m}{\norm{u}_{\infty}^{1-q}}\geq 0$ and $S(t)$ is the heat semigroup. 
	Exploiting this improvement in the proof of Lemma \ref{lem_behav_boundar_omega} (\textit{i}), we obtain 
	$$u(x,t) \geq C e^{-\lambda_1 t} t^{\frac{1}{1-q}}\varphi_1(x) + e^{Lt} S(t) u_0$$
	with $C=((1-q) m)^{\frac{1}{1-q}}$.
	As a consequence, straightforward computations imply, if $\beta=1$ and $u_0 \in C^2(\Omegabar)$, 
	$$\limsup_{\tau\to 0} v_t(x,\tau) \geq \alpha u_0^{\alpha-1}(x) \Delta u_0(x)+ \alpha L u_0^{\alpha}(x) + C u_0^{\alpha-1}(x) \varphi_1(x) \limsup_{\tau \to 0} \tau^{\frac{1}{1-q}-1}, $$
	where the right hand side corresponds to $\alpha u_0^{\alpha-1}(x) h(x)$ with
	$$h := \Delta u_0+ \frac{m}{\alpha} \varphi_1 \; \hbox{ if $q=0$}, \quad h :=\Delta u_0 + L u_0 \, \hbox{ if $q \in (0,1)$}.$$
	As abovementioned, these are finite quantities (positive under suitable assumptions on $u_0$), and thus do not allow to conclude in Lemma \ref{minimum not on the boundary}. 
	
	With similar computations we see that, if $\beta<1$, then we actually obtain $\limsup_{\tau \to 0} v_t(\tau) = +\infty$: with such an information at disposal, we can conclude Lemma \ref{minimum not on the boundary} and apply it to obtain (arguing as in Theorem \ref{approximate concavity parabolic with boundary conditions} and Corollary \ref{corollario con concavità})
		$$\min_{\Omegabar\times \Omegabar\times[0,+\infty]\times [0,+\infty]\times[0,1]}\mathcal{C}_{u^\alpha(\cdot, \star^{\beta})} \geq -\frac{\alpha}{\sigma}\frac{\mathcal{C}^-_{\tilde{g}(\cdot, u^{\alpha}(\cdot,\star^{\beta}) ) }(x_{1},x_{3},t_{1},t_{3},\lambda)}{(\lambda u^\alpha(x_{3},t_{3}^{\beta})
		+(1-\lambda)u^\alpha(x_{1},t_{1}^{\beta})
		)^2} - \xi^{n+1}\mc{HC}_{\star^{1-\beta}}(t_{1},t_{3},\lambda),$$
	where $\tilde{g}(x,s):=a(x) s^{\frac{3\alpha-1+q}{\alpha}} $. 
		While the first term, as in the case $u_0=0$ (see the proof of Theorem \ref{thm_main_weighted_le}), can be positively estimated under suitable assumptions on $\alpha, q$ and $a(x)$, we see that for any $\beta<1$ the second term remains negative (notice that $t_1$ or $t_3$ may go to zero as $\beta \to 1^-$). 
	Thus we are not able to deduce any exact concavity in such a case.
	
	Overall, we observe that conservation of concavity for nontrivial initial data is a delicate issue: in the case of porous medium equations, 
	indeed, it has been proved that $\frac{1}{2}$-concavity is preserved \cite{lee2008parabolic}, while there are counterexamples in the case of $\alpha$-concavity, $\alpha<\frac{1}{2}$, \cite{ChauWeinkove}; this is in contrast with the general spirit of the present paper, where information on some $\alpha$ implies information also on smaller exponents. 
We leave thus $u(\cdot, 0)\neq 0$ as an open problem.
\end{remark}

%%%%%%%%%%%%%%%%%%%%%%%%%%%%%%%%%%%%%%%%%%%%%%%%%%%%%%%%%%%%%%%%%%%%%%%%
%%%%%%%%%%%%%%%%%%%%%%%%%%%%%%%%%%%%%%%%%%%%%%%%%%%%%%%%%%%%%%%%%%%%%%%%
\section{Further results on the weighted torsion problem}
\label{sec_torsion}

Let us deal with the case $q=0$ in \eqref{equazione a(x)u^gamma}.
First, we can find more explicit constant and comparison function in Corollary \ref{corollario ulam}; moreover, we have more freedom in the choice of the sign of $a$.

\begin{proposition}
	Let $\Omega$ satisfy \eqref{eq_ipost_domain}, and $u,v$ be some functions such that \eqref{regularity conditions} and \eqref{boundary and initial conditions} hold. Suppose
	\begin{equation*}
		u_t-\Delta u=a(x, t) \quad \emph{in }\Omega\times (0,+\infty), \qquad 		v_t-\Delta v=K\quad\emph{in }\Omega\times (0,+\infty)
	\end{equation*}
	where $K>0$, $a\colon\Omega \times \R \to \R$ and there exist $m,M\in\R$ such that $m\leq a\leq M.$ 
	Then $v$ is $\frac{1}{2}$-concave and
	\[ \norm{u^{1/2}-v^{1/2}}_\infty\leq e^{R}\sqrt{\max\left\{|m-K|,|M-K|\right\}}, \]
	where $R$ is such that $\Omega\subseteq B(0,R)$.
\end{proposition}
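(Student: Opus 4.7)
The plan is to proceed in three short steps: first establish the $\tfrac{1}{2}$-concavity of $v$, then produce a simple stationary barrier to control $\|u-v\|_\infty$, and finally use an elementary inequality to pass from the $L^\infty$ distance between $u$ and $v$ to the one between $u^{1/2}$ and $v^{1/2}$.

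For the first point, I would apply Theorem \ref{thm_main_weighted_le} with $q=0$, $\gamma=0$, $\beta=1$, and constant weight $a\equiv K>0$. Since a positive constant is $\theta$-concave for every $\theta$, the exponent provided by the theorem reduces to $\tfrac{1-q}{2+\beta\gamma}=\tfrac{1}{2}$, and hence $v$ is $\tfrac{1}{2}$-concave on $\Omegabar\times[0,+\infty)$.

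For the $L^\infty$ bound on $u-v$, set $w:=u-v$. Then $w$ solves
\[
w_t-\Delta w = a(x,t)-K \quad\text{in } \Omega\times(0,+\infty),
\]
with $w=0$ on $\partial\Omega\times[0,+\infty)$ and $w(\cdot,0)=0$, and $|a-K|\leq C:=\max\{|m-K|,|M-K|\}$. After choosing coordinates so that $\Omega\subseteq B(0,R)$, I would introduce the time-independent barrier
\[
\Psi(x):=C\,e^R\bigl(e^R-e^{x^1}\bigr),
\]
which satisfies $0\leq \Psi\leq C\,e^{2R}$ on $\Omega$, and $-\Delta \Psi=C\,e^R e^{x^1}\geq C\,e^R\cdot e^{-R}=C$ since $x^1\geq -R$ on $\Omega$. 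Then $\Psi\pm w$ satisfies
\[
(\Psi\pm w)_t-\Delta(\Psi\pm w)=-\Delta \Psi\pm(a-K)\geq C-C=0,
\]
and is nonnegative on $\partial\Omega\times[0,+\infty)\cup(\Omegabar\times\{0\})$ because $w=0$ and $\Psi\geq 0$ there. The parabolic maximum principle (cf.\ the comparison principles in Appendix \ref{sec_max_priniciples}) thus yields $|w|\leq \Psi\leq C\,e^{2R}$ on $\Omegabar\times[0,+\infty)$.

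For the final step, I would use the elementary inequality
\[
\bigl|\sqrt{\alpha}-\sqrt{\beta}\bigr|^2\leq |\alpha-\beta|\qquad\text{for all }\alpha,\beta\geq 0,
\]
which follows by observing that, assuming $\alpha\geq \beta\geq 0$, $(\sqrt{\alpha}-\sqrt{\beta})^2=\alpha+\beta-2\sqrt{\alpha\beta}\leq \alpha-\beta$ is equivalent to $\sqrt{\alpha\beta}\geq\beta$. Applied pointwise to $u(x,t)$ and $v(x,t)$ (both nonnegative by \eqref{boundary and initial conditions} and by the maximum principle applied to $v$) and combined with the previous barrier estimate, this gives
\[
\bigl\|u^{1/2}-v^{1/2}\bigr\|_\infty\leq \sqrt{\|u-v\|_\infty}\leq \sqrt{C\,e^{2R}}=e^R\sqrt{\max\{|m-K|,|M-K|\}},
\]
which is the claimed bound. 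The only potentially delicate point is the barrier construction, but the exponential $e^R-e^{x^1}$ is exactly tailored so that the resulting constant $C\,e^{2R}$ matches, after taking a square root, the target $e^R\sqrt{C}$ in the statement.
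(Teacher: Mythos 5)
Your proof is correct and follows essentially the same route as the paper: the identical exponential barrier (you simply fold the constant $C$ into $\Psi$), the same parabolic maximum-principle comparison on the difference $w=u-v$, the same elementary inequality $\|u^{1/2}-v^{1/2}\|_\infty\leq\|u-v\|_\infty^{1/2}$, and the $\tfrac12$-concavity of $v$ from one of the paper's general results (you invoke Theorem \ref{thm_main_weighted_le} with $q=\gamma=0$, the paper cites Proposition \ref{approximate concavity parabolic application a(x)u^gamma}; both apply). The only cosmetic remark is that the bound $|w|\leq\Psi$ rests on the classical parabolic maximum principle (the paper cites Lieberman) rather than the nonlinear comparison principles of Appendix \ref{sec_max_priniciples}, but the argument is unaffected.
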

\begin{proof}
	Let us subtract the two equations obtaining that $w:=u-v$ satisfies the problem 
	\begin{equation*}
		\begin{cases}
			w_t-\Delta w=a(x, t)-K &\text{in }\Omega\times (0,+\infty),\\
			w=0 & \text{on }\partial \Omega\times (0,+\infty),\\ 
			w=0& \text{on } \Omega \times \{0\}. \\
		\end{cases}
	\end{equation*}
	Set $\psi:= e^{2R}-e^{x_1+R}$. 
	Then
	\[ \psi_t - \Delta \psi =e^{x_1+R}\geq 1 \quad \text{in }\Omega \times (0,+\infty).\]
	Set $f(x,t):= a(x, t)-K$, hence we have $(\partial_t-\Delta)(w-\psi\sup_{\Omega\times(0,+\infty)} f^+)\leq0$ in $\Omega\times (0,+\infty)$ and $w-\psi \sup_{\Omega\times(0,+\infty)} f^+\leq 0$ on the parabolic boundary (see \eqref{eq_def_parabol_boundary}).
	Then by the maximum principle \cite[Corollary 2.5]{lieberman1996}, it holds $$\sup\limits_{\Omega\times (0,+\infty)}(w-\psi\sup_\Omega f^+)\leq 0,$$ 
	so
	$$\sup_{\Omega\times (0,+\infty)}w\leq e^{2R}\sup_{\Omega\times(0,+\infty)} |f|.$$
	In a similar way we obtain
	$$\inf_{\Omega\times (0,+\infty)}w\geq -e^{2R}\sup_{\Omega\times(0,+\infty)} |f|.$$
	Then it holds
	\begin{equation*}
		\norm{u^{1/2}-v^{1/2}}_\infty\leq	\norm{u-v}^{1/2}_\infty \leq e^{R}\sqrt{\sup_{\Omega\times(0,+\infty)} |f|} =e^R\sqrt{\max\left\{|m-K|,|M-K|\right\}}. 
	\end{equation*}
	Finally, $v$ is $\frac{1}{2}$-concave by Proposition \ref{approximate concavity parabolic application a(x)u^gamma}.
\end{proof}

When dealing with the torsion problem ($q=0$), more general classes of concave weights $a(x)$ can be considered, in particular by allowing $a(x)=0$ on $\partial \Omega$. In particular, instead of \hyperlink{H1}{\((H1) \)} we can consider the following condition
\begin{itemize}
\item[]\hypertarget{H1'}\((H1')\) 
There exists $T>0$, $k>0$, $\omega \geq 0$ and $\gamma \in [0,1)$,
		such that, 	
		$$b(x,t)\geq k d_{\Omega}^\omega(x) t^{\gamma} \quad \hbox{for all $(x,t)\in \Omega\times (0,T]$}.$$
\end{itemize}

\begin{remark}\label{rem_concav_distance}
The presence of $d_{\Omega}$ in assumption \hyperlink{H1'}{\((H1') \)} is due to the following fact: if $a(x)$ is a nonnegative concave function, then it is positive in $\Omega$ (but not necessarily in $\Omegabar$), and there exists $C>0$ such that
$$a(x) \geq C d_{\Omega}(x) \quad \hbox{in $\Omegabar$}.$$
\end{remark}

The fact that, differently from \hyperlink{H1}{\((H1) \)}, we focus on $q=0$ in \hyperlink{H1'}{\((H1') \)}, allows us to compare our solutions, time-scaled by $\beta=2$, with the one of the equation $ U_t -\Delta U + U = k d_{\Omega}^{\omega}(x) t^{\gamma} $ for which a representation formula holds. 
Thus combining \cite[Lemma 3]{IsSa13} and Proposition \ref{comparison principle dickstein}, we obtain an alternative version of Lemma \ref{lem_behav_boundar_omega}.

\begin{lemma}\label{lem_behav_IshSal}
Let $n\geq 2$, $\Omega$ be bounded, convex and smooth, and $u$ be a function such that \eqref{regularity conditions}, \eqref{eq_general_bxu} and \eqref{boundary and initial conditions} hold.
Suppose $b\colon\Omega\times(0,+\infty)\to (0,+\infty)$ satisfies \hyperlink{H1'}{\( (H1') \)}, and let $T, \gamma, \omega$ given therein. 
Then
\begin{itemize}
\item[(i)] there exists $C>0$ such that
$$u(x,t) \geq C t^{\frac{2+2\gamma+\omega}{2}} \quad \hbox{in $\Omega \times (0,T)$};$$
\item[(ii)] for any $x_0 \in \partial \Omega$ there exists $\delta>0$ such that 
$$u(x_0+t\nu,t^2)\geq \delta t^{2+2\gamma+\omega} \quad \hbox{for $t \in (0,T)$ small},$$
where $\nu$ is the interior normal to $\partial \Omega$ in $x_0$.
\end{itemize}
\end{lemma}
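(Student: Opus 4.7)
The strategy is to compare $u$ from below with an explicitly tractable linear problem and then invoke the asymptotic heat-kernel estimates provided by \cite[Lemma~3]{IsSa13}.

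\emph{Step 1 (subsolution via comparison).} By \hyperlink{H1'}{$(H1')$} we have $b(x,t) \geq k\, d_\Omega(x)^\omega\, t^\gamma$ in $\Omega \times (0,T]$. Let $U$ be the unique (nonnegative) solution of the linear auxiliary Cauchy-Dirichlet problem
\begin{equation*}
\begin{cases}
U_t - \Delta U + U = k\, d_\Omega^\omega(x)\, t^\gamma & \text{in } \Omega \times (0,T], \\
U = 0 & \text{on } \partial \Omega \times [0,T], \\
U(\cdot, 0) = 0 & \text{on } \Omegabar.
\end{cases}
\end{equation*}
The parabolic maximum principle gives $U \geq 0$, so $U_t - \Delta U = k\, d_\Omega^\omega(x) t^\gamma - U \leq k\, d_\Omega^\omega(x) t^\gamma \leq b(x,t)$. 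Since $u$ and $U$ share (zero) boundary and initial data, Proposition~\ref{comparison principle dickstein} yields $u \geq U$ in $\Omegabar \times [0,T]$.

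\emph{Step 2 (lower bound on $U$).} Using Duhamel's formula with the Dirichlet heat semigroup $S(\tau) := e^{\tau \Delta_D}$ on $\Omega$, one writes
\begin{equation*}
U(x,t) = k \int_0^t e^{-(t-s)}\, s^\gamma \bigl(S(t-s) d_\Omega^\omega\bigr)(x)\, ds.
\end{equation*}
By \cite[Lemma~3]{IsSa13}, the Dirichlet heat semigroup on the smooth convex $\Omega$ enjoys a lower bound of the order $\tau^{\omega/2}$ on $S(\tau) d_\Omega^\omega$ at the parabolic scale. Substituting and computing the time integral $\int_0^t s^\gamma (t-s)^{\omega/2}\, ds$, which is of order $t^{1+\gamma+\omega/2}$, one obtains $U(x,t) \geq C\, t^{(2+2\gamma+\omega)/2}$. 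Combined with Step~1 this proves (i).

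\emph{Step 3 (boundary version).} For (ii), specialize $x = x_0 + t\nu$, which has $d_\Omega(x_0+t\nu) = t$ for $t$ small by smoothness of $\partial \Omega$. Evaluating the same Duhamel representation at the point $(x_0 + t\nu,\, t^2)$ and invoking the boundary-sensitive version of \cite[Lemma~3]{IsSa13} (which tracks the Hopf-type vanishing $\propto d_\Omega$ of the heat kernel near $\partial \Omega$), we obtain $U(x_0 + t\nu, t^2) \geq \delta\, t^{2+2\gamma+\omega}$ for $t$ small; combining with $u \geq U$ yields (ii).

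\emph{Expected obstacle.} All the genuine analytic content is packaged in the cited \cite[Lemma~3]{IsSa13}, which furnishes quantitative heat-kernel lower bounds on a smooth convex domain, both in the interior (giving the $\tau^{\omega/2}$ factor and hence the improved exponent $(2+2\gamma+\omega)/2$ over the eigenfunction-based bound of Lemma~\ref{lem_behav_boundar_omega}) and near the boundary with the precise vanishing rate that yields (ii). The minor technical device on our side is the insertion of the zeroth-order term $+U$ in the auxiliary equation: this guarantees the comparison inequality $U_t - \Delta U \leq b$ without reabsorbing $U$ into the source, and the resulting damping factor $e^{-(t-s)} \leq 1$ does not degrade the asymptotic order in either regime.
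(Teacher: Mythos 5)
Your proposal is correct and takes essentially the same approach as the paper: the paper's (sketched) proof likewise compares $u$, via Proposition \ref{comparison principle dickstein}, with the solution $U$ of the auxiliary problem $U_t-\Delta U+U=k\,d_{\Omega}^{\omega}(x)\,t^{\gamma}$ with zero data, for which a representation formula holds, and then reads off both the interior and the boundary lower bounds from \cite[Lemma 3]{IsSa13}. Your Duhamel computation and the remark on the harmless $+U$ damping term are exactly the intended fleshing-out of that sketch.
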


Thanks to this result, we are able to show the counterpart of Proposition \ref{minimum not on the boundary} in the case $\beta=2$, that is

\begin{proposition}
In the setting of Proposition \ref{minimum not on the boundary}, 
suppose $b$ satisfies \hyperlink{H1'}{\( (H1') \)}, and let $\alpha \in (0, \frac{1}{2+2\gamma+\omega})$.
Then $\mathcal{C}_{u^\alpha(\cdot, \star^{2})}$ cannot achieve any negative minimum at $(x_1,x_3,t_1,t_3,\lambda)\in \Omegabar \times \Omegabar \times [0,+\infty]\times [0,+\infty]\times (0,1) $ such that one $t_1,t_3\in \left\{0\right\}$, or one $x_1,x_2,x_3\in \partial \Omega$.
\end{proposition}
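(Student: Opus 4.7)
The plan is to retrace the proof of Proposition \ref{minimum not on the boundary}, with $q=0$ and $\beta=2$ fixed, simply replacing every invocation of Lemma \ref{lem_behav_boundar_omega} by the sharper boundary/initial estimates of Lemma \ref{lem_behav_IshSal}; this way the presence of the distance-type weight $d_\Omega^\omega$ in \hyperlink{H1'}{\( (H1') \)} is correctly absorbed into the exponent threshold.

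Set $v(x,t):=u^\alpha(x,t^2)$ and argue by contradiction, assuming $\mc{C}_v$ has a negative minimum at $(x_1,x_3,t_1,t_3,\lambda)$ with at least one of $t_1,t_3$ equal to zero or one of $x_1,x_2,x_3$ on $\partial\Omega$. The easy cases are identical to those in Proposition \ref{minimum not on the boundary}: if $t_1,t_3\in[0,+\infty)$ with both $x_1,x_3\in\partial\Omega$, then $\mc{C}_v(x_1,x_3,t_1,t_3,\lambda)=v(x_2,t_2)\geq 0$; if $t_1=t_3=0$ and $x_1,x_3\in\Omegabar$, then $\mc{C}_v=0$; both contradict negativity of the minimum. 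For $t_1,t_3\in(0,+\infty)$, $x_1\in\partial\Omega$, $x_3\in\Omega$ (or the analogous case with $t_3=0$), the parabolic Hopf Lemma \ref{Hopf lemma in parabolic case} and the interior positivity of $u$ yield $D_{\bar\nu_1}\mc{C}_v\to-\infty$ along the interior normal $\bar\nu_1=(\tilde\nu_1,2t_1\nu_1^{n+1})$ as one approaches $(x_1,t_1)$, again contradicting minimality.

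The genuinely new cases are those where $t_1=0$, where the threshold on $\alpha$ enters. For $t_1=0$, $t_3\in(0,+\infty)$ and $x_1,x_3\in\Omega$, Lemma \ref{lem_behav_IshSal}(i) gives
\[
v(x_1,\tau)=u^{\alpha}(x_1,\tau^2)\geq C^\alpha\,\tau^{\alpha(2+2\gamma+\omega)},
\]
so that, since $v(x_1,0)=0$, the mean value theorem produces $\xi_\tau\in(0,\tau)$ with
\[
v_t(x_1,\xi_\tau)=\frac{v(x_1,\tau)}{\tau}\geq C^\alpha\,\tau^{\alpha(2+2\gamma+\omega)-1}\longrightarrow+\infty\quad\text{as }\tau\to 0^+,
\]
because the exponent is negative by the hypothesis $\alpha<\frac{1}{2+2\gamma+\omega}$. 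Consequently $D_{t_1}\mc{C}_v\to-\infty$ along a sequence approaching the candidate minimum, which is incompatible with $(x_1,x_3,0,t_3,\lambda)$ being a minimum point. The remaining case $t_1=0$, $x_1\in\partial\Omega$, $x_3\in\Omega$, $t_3\in(0,+\infty)$ is handled by Lemma \ref{lem_behav_IshSal}(ii): writing points on the inner normal ray as $x_1+t\nu$ we get
\[
\frac{v(x_1+t\nu,t)}{t}=\frac{u^\alpha(x_1+t\nu,t^2)}{t}\geq \delta^\alpha\,t^{\alpha(2+2\gamma+\omega)-1}\to+\infty,
\]
so the oblique derivative $(\nu,1)\cdot D_{(x_1,t_1)}\mc{C}_v$ diverges to $-\infty$ near $(x_1,0)$, excluding the minimum once more.

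The main obstacle is to line up correctly the exponents coming from the two different regimes — interior (Lemma \ref{lem_behav_IshSal}(i), controlling only behaviour near $t=0$) and corner (Lemma \ref{lem_behav_IshSal}(ii), coupling $t\to 0^+$ with $x\to\partial\Omega$) — so that one single condition on $\alpha$ kills both boundary contributions. With $\beta=2$ the two thresholds coincide and equal $\frac{1}{2+2\gamma+\omega}$, which is exactly the assumption; no further adjustment is needed, and all other steps are formally identical to those of Proposition \ref{minimum not on the boundary}.
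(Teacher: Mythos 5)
Your proposal is correct and follows essentially the same route as the paper: the paper's proof also simply retraces Proposition \ref{minimum not on the boundary} with $\beta=2$, invoking Lemma \ref{lem_behav_IshSal} (\textit{i}) for the case $t_1=0$, $x_1,x_3\in\Omega$ and Lemma \ref{lem_behav_IshSal} (\textit{ii}) for the case $t_1=0$, $x_1\in\partial\Omega$, with exactly the exponent threshold $\alpha<\frac{1}{2+2\gamma+\omega}$ making both derivative blow-ups work. Your mean-value-theorem phrasing of the time-derivative divergence and your treatment of the remaining boundary cases match the paper's argument.
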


\begin{proof}
The proof proceed similarly to the one of Proposition \ref{minimum not on the boundary}. 
In particular if $t_1=0$, $t_3\in (0,+\infty)$, $x_1\in \Omega$ and $x_3\in \Omega $,  by Lemma \ref{lem_behav_IshSal} (\textit{i}),
	$$\lim_{\tau \to 0} v_t(x_1,\tau)\geq C^{\alpha} \lim_{\tau \to 0} \frac{1}{\tau^{1-(2+2\gamma +\omega)\alpha 	}} = +\infty$$
	by the assumption $\alpha < \frac{1}{2+2\gamma+\omega}$ which implies the contradiction.
If $t_1=0$, $t_3\in (0,+\infty)$, $x_1\in \partial \Omega$, $x_3 \in \Omega$, by Lemma \ref{lem_behav_IshSal} (\textit{ii})
	$$\frac{u^\alpha(x_1 +t\nu , t^{2})}{t}\geq \delta^\alpha t^{\alpha(2+2\gamma+\omega)-1}\quad \hbox{for $t>0$ small},$$
which positively diverges as $t\to 0^+$ again by assumptions and thus again a contradiction.
\end{proof}

With this tool, we can prove a refinement of Theorem \ref{thm_main_weighted_le} when $q=0$.

\begin{proposition}[Weighted torsion]
\label{prop_weight_tors}
	Let $\gamma \in [0,\frac{1}{2}]$, $\Omega$ satisfy \eqref{eq_ipost_domain}, and $u$ be a function such that \eqref{regularity conditions}, \eqref{boundary and initial conditions} and
	$$	u_t-\Delta u= c(t) a(x) \quad\text{in }\Omega\times (0,+\infty).$$
	We assume $c$ nondecreasing, $\frac{1}{\gamma}$-concave, and
$$c(t) \geq t^{\gamma}.$$
If $\gamma<\frac{1}{2}$ and $a\geq 0$ is $\theta$-concave for some $\theta\geq \frac{1}{1-2\gamma}$, then $u(\cdot, \star^2)$ is $\frac{\theta}{2\theta+2\theta \gamma +1}$-concave. 
If $a$ is constant, then $u(\cdot, \star^{2})$ is $\frac{1}{2+2 \gamma }$-concave.
\end{proposition}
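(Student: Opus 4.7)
The plan is to apply Theorem \ref{concavity parabolic with boundary conditions} with $b(x,s,t) := c(t)\,a(x)$ (independent of $s$), $\beta = 2$, and $\alpha = \frac{\theta}{2\theta + 2\theta\gamma + 1}$ in the non-constant case (respectively $\alpha = \frac{1}{2+2\gamma}$ when $a$ is constant). This mirrors the proof of Theorem \ref{thm_main_weighted_le} in the case $q = 0$, but now using the refined boundary analysis (condition \hyperlink{H1'}{\((H1')\)} and Lemma \ref{lem_behav_IshSal}) developed in this section, which permits $a$ to vanish on $\partial\Omega$.

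Condition \textit{(i)} of Theorem \ref{concavity parabolic with boundary conditions} is immediate since $s \mapsto a(x)\,c(t)\,s^{\alpha-1}$ is strictly decreasing for $\alpha<1$; \hyperlink{H2}{\((H2)\)} is trivial as $b$ has no $s$-dependence. Assumption \hyperlink{H1'}{\((H1')\)} holds with $\omega = 1/\theta$: from $c(t)\geq t^\gamma$ and, extending Remark \ref{rem_concav_distance} to $\theta$-concave nonnegative weights, the bound $a(x) \geq C\,d_\Omega(x)^{1/\theta}$, one obtains $b(x,t) \geq k\,t^\gamma\,d_\Omega(x)^{1/\theta}$. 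If $c$ is unbounded, the parabolic stability condition is recovered via the truncation $c_T(t) := c(\min(t,T))$ of Remark \ref{rem_troncamento}, applying the theorem on $[0,T]$ and letting $T\to+\infty$ through the closure of $\alpha$-concavity. The admissibility threshold $\alpha < \frac{1}{2+2\gamma+\omega}$ in the refined boundary proposition coincides with our $\alpha$; the endpoint case is reached via a limit argument $\alpha' \to \alpha^-$.

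The main step is verifying condition \textit{(iv)}: the function
\[
\tilde g(x,s,t) \;=\; s^{\,1 - 1/\theta - 2\gamma}\, c(t^2)\, a(x)
\]
must be concave on $\Omega \times (0,\infty) \times (0,\infty)$. One applies Proposition \ref{alpha concavity property}\textit{(iv)} iteratively to the three factors: $a(x)$ is $\theta$-concave; $s^{1-1/\theta-2\gamma}$ is $(1-1/\theta-2\gamma)^{-1}$-concave (the exponent lies in $[0,1]$ by $\theta \geq 1/(1-2\gamma)$); and $c(t^2)$ should be $\frac{1}{2\gamma}$-concave. The reciprocal exponents sum to $\frac{1}{\theta} + (1 - \frac{1}{\theta} - 2\gamma) + 2\gamma = 1$, yielding concavity of $\tilde g$.

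The hard part is precisely the $\frac{1}{2\gamma}$-concavity of $c(t^2)$: setting $g := c^{1/\gamma}$ (concave, nondecreasing, with $g(t) \geq t$ from $c(t) \geq t^\gamma$), one must show that $h(t) := g(t^2)^{1/2}$ is concave. I would handle this by a direct computation of $h''$, exploiting the consequence $g(s)\geq s\,g'(s)$ of concavity through the origin (which is compatible with the initial data $u(\cdot,0) = 0$ and with the natural normalisation $c(0)=0$), together with $g''\leq 0$, to show that the two competing terms arising in $h''$ balance with the right sign. Once \textit{(iv)} is verified, Theorem \ref{concavity parabolic with boundary conditions} yields the stated $\alpha$-concavity of $u(\cdot,\star^2)$. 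The constant-$a$ case follows analogously by letting $\theta \to +\infty$, which removes the restriction $\gamma<1/2$, drops the $a$-factor from $\tilde g$, and gives $\alpha = \frac{1}{2+2\gamma}$ on the full range $\gamma \in [0,1/2]$.
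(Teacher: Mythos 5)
Your overall route is the same as the paper's: reduce to $a\ge C\,d_\Omega^{1/\theta}$ (the $\theta$-concave version of Remark \ref{rem_concav_distance}), truncate $c$ as in Remark \ref{rem_troncamento} to secure the parabolic stability condition, check \hyperlink{H1'}{\((H1')\)} with $\omega=1/\theta$ and \hyperlink{H2}{\((H2)\)}, and apply Theorem \ref{concavity parabolic with boundary conditions} with $\beta=2$ and $\alpha=\frac{\theta}{2\theta+2\theta\gamma+1}$, verifying condition \textit{(iv)} through the product rule of Proposition \ref{alpha concavity property}\textit{(iv)} with exponents $3-\frac1\alpha$, $2\gamma$, $\frac1\theta$ whose reciprocals sum to one; your limiting argument $\alpha'\nearrow\alpha$ at the endpoint of the admissible range, and the constant-$a$ case, are also consistent with the paper (which takes the endpoint value without comment).

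The genuine gap is exactly at the step you call the hard part. Writing $g:=c^{1/\gamma}$ and $h(t):=\sqrt{g(t^2)}$, the inequality $h''\le 0$ amounts to $\bigl(g'(s)+2s\,g''(s)\bigr)g(s)\le s\,g'(s)^2$ at $s=t^2$, equivalently to $s\mapsto s\,g'(s)^2/g(s)$ being nonincreasing. But concavity of $g$ with $g(0)\ge 0$ gives $g(s)\ge s\,g'(s)$, hence $g'(s)\,g(s)\ge s\,g'(s)^2$: this is the \emph{opposite} of what you need from the first term, and the deficit would have to be absorbed by $-2s\,g''(s)\,g(s)$, which can vanish. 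Concretely, $c(t)=(1+t)^\gamma$ satisfies every hypothesis of the statement ($c$ nondecreasing, $\frac1\gamma$-concave, $c(t)\ge t^\gamma$), yet $c(t^2)^{1/(2\gamma)}=\sqrt{1+t^2}$ is strictly convex; the normalisation $c(0)=0$ you invoke is not among the hypotheses, and even imposing it the piecewise example $c(t)=\bigl(\min(2t,1+t)\bigr)^\gamma$ fails for $t^2>1$. So the "two competing terms" do not balance as claimed, and your verification of \textit{(iv)} is incomplete unless one assumes outright that $t\mapsto c(t^2)$ is $\frac1{2\gamma}$-concave (true in the model case $c(t)=t^\gamma$). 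For comparison, the paper's proof does not attempt your computation: it verifies \textit{(iv)} directly by the exponent bookkeeping of Proposition \ref{alpha concavity property}\textit{(iv)}, in effect reading the assumption on $c$ as supplying the $\frac1{2\gamma}$-concavity of the time factor; the one step you isolated for a genuine proof is precisely where your argument, as written, does not close.
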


\begin{proof}
By Remark \ref{rem_concav_distance} we may assume $a \geq C d_{\Omega}^{\omega}$ for $\omega=\frac{1}{\theta}$. By Remark \ref{rem_troncamento} we can restrict to study
$$u_t - \Delta u = c_T(t) a(x) \quad \hbox{in $\Omega \times (0,+\infty)$}$$
where $T$ is fixed and $c_T(t)=c(t)$ for $t \in [0,T]$ and $c_T(t)=c(T)$ for $t\leq T$. 
Clearly $b(x,t):=c_T(t) a(x)$ satisfies \hyperlink{H1'}{\((H1') \)} and \hyperlink{H2}{\((H2) \)}, as well as the stability parabolic condition thanks again to Remark \ref{rem_troncamento}.
To apply Theorem \ref{concavity parabolic with boundary conditions} with $\alpha \in (0, \frac{1}{2+2\gamma+\omega}) \subset (0,1)$, we observe that $s \mapsto s^{\alpha-1} b(x,t)$ is strictly decreasing and moreover $(x,s,t) \mapsto s^{\frac{3\alpha-1}{\alpha}}b(x,t^2) = s^{\frac{3\alpha-1}{\alpha}} c(t^2) a(x) $ is concave if $\frac{3\alpha-1}{\alpha} \geq 0$ and $\frac{3\alpha-1}{\alpha} +2 \gamma + \frac{1}{\theta} = 1$ by Proposition \ref{alpha concavity property} (\textit{iv}). 
Both conditions imply that the optimal $\alpha$ is given by $\alpha=\frac{\theta}{2\theta+2 \theta \gamma +1}$. Noticed that the solution of the stationary problem $-\Delta u = a(x)$ is $\frac{\theta}{2\theta+1}$-concave, and $\frac{\theta}{2\theta+2 \theta \gamma +1} \leq \frac{\theta}{2\theta+1}$, then 
we obtain $u$ is $\frac{\theta}{2\theta+2\theta \gamma +1}$-concave.
\end{proof}

\appendix

%%%%%%%%%%%%%%%%%%%%%%%%%%%%%%%%%%%%%%%%%%%%%%%%%%%%%%%%%%%%%%%%%%%%%%%%
%%%%%%%%%%%%%%%%%%%%%%%%%%%%%%%%%%%%%%%%%%%%%%%%%%%%%%%%%%%%%%%%%%%%%%%%
\section{Comparison principles}
\label{sec_max_priniciples}

We state now a comparison principle, which is the nonautonomous counterpart of \cite[Corollary 2.3]{Dickstein} (see also \cite{CazenaveDicksteinEscobedo}). 
We recall that $w=w(x,t)$ is a subsolution (resp. supersolution) of 
\begin{equation} \label{eq_sub_super_sol}
		\begin{cases}
			w_t-\Delta w = b(x,w,t)&\hbox{in }\Omega\times(0,T),\\
			w=0 &\hbox{on } \partial \Omega\times(0,T),\\
			w=u_0&\hbox{on } \Omega \times \{0\},
		\end{cases}
	\end{equation}
if the above three equalities are substituted by $\leq$ (resp. $\geq$).

\begin{proposition}[Comparison and uniqueness, I]
\label{comparison principle dickstein} 
Let $\Omega \subset \R^n$ be a bounded open domain with the interior sphere property. 
Let $T>0$ and $b\colon\Omega\times(0,+\infty) \times (0,T)\to [0,+\infty)$ satisfying \hyperlink{H2}{\( (H2) \)} and assume $b(x,\cdot,t)$ continuous for all $(x,t)\in \Omega\times(0,T)$.
	Let $u_0\in C(\Omegabar)$ with $u_0\geq \delta d_\Omega$ for some $\delta>0$ or $u_0\equiv 0$. 
	Let $u,v\in C^2_x(\Omega)\cap C^1_t((0,T])\cap C(\Omegabar\times[0,T])$ and $(u-v)^+\in H^1(\Omega)$.
	Assume $u$ is a positive supersolution and $v$ is a subsolution of \eqref{eq_sub_super_sol}.
Then $u(\cdot,t)\geq v(\cdot,t)$ for all $0\leq t\leq T$. 
In particular, there exists at most one positive solution of \eqref{eq_sub_super_sol}.
\end{proposition}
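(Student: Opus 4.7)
My plan is to show that $w:=(v-u)^+$ vanishes identically on $\overline\Omega\times[0,T]$; the uniqueness statement will then follow by applying the comparison twice (swapping the roles of the two positive solutions). Subtracting the super/subsolution inequalities gives $(v-u)_t-\Delta(v-u)\le b(x,v,t)-b(x,u,t)$, and on the set $\{v>u\}$ the one-sided Lipschitz condition \hyperlink{H2}{\((H2)\)} applied with $M:=\max\{\|u\|_{L^\infty},\|v\|_{L^\infty}\}$, $r=u$, $s=v$ gives the pointwise bound $b(x,v,t)-b(x,u,t)\le (L/u)(v-u)$. Hence $w$ is a weak subsolution of
\[
w_t-\Delta w \le \frac{L}{u}\,w \quad\text{in } \Omega\times(0,T), \qquad w=0 \text{ on the parabolic boundary},
\]
with the regularity assumption making $w(\cdot,t)\in H^1_0(\Omega)$ an admissible test function.

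Multiplying by $w$ and integrating by parts produces the energy estimate
\[
\tfrac12\tfrac{d}{dt}\|w(t)\|_{L^2}^2+\|\nabla w(t)\|_{L^2}^2 \le L\int_\Omega \frac{w^2}{u}\,dx.
\]
To tame the singular weight $1/u$ I will establish a boundary lower bound of the form $u(x,t)\ge c(t)\,d_\Omega(x)$: in the case $u_0\ge \delta d_\Omega$, comparing $u$ with the heat semigroup applied to $\delta d_\Omega$ (via $u_t-\Delta u\ge 0$) yields such a bound with $c(t)=c>0$ uniform on $[0,T]$; in the case $u_0\equiv 0$, the parabolic Hopf Lemma~\ref{Hopf lemma in parabolic case} applied to the positive supersolution $u$ (using the interior sphere assumption) supplies it on each cylinder $\overline\Omega\times[\epsilon,T]$ with $c$ depending on $\epsilon$. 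Combining with the Hardy-type interpolation $\int_\Omega w^2/d_\Omega \le C\|w\|_{L^2}\|\nabla w\|_{L^2}$ and Young's inequality to absorb half of the Dirichlet term, I obtain $\tfrac{d}{dt}\|w(t)\|_{L^2}^2 \le K(t)\,\|w(t)\|_{L^2}^2$, and Gronwall's lemma on $[\epsilon,T]$ gives $\|w(t)\|_{L^2}^2\le \|w(\epsilon)\|_{L^2}^2\,\exp\bigl(\int_\epsilon^t K(s)\,ds\bigr)$.

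In the case $u_0\ge\delta d_\Omega$ the Gronwall factor is bounded on $[0,T]$ and $w(\cdot,0)=(v_0-u_0)^+=0$ immediately forces $w\equiv 0$. The main obstacle is the case $u_0\equiv 0$, in which $c(t)\to 0$ as $t\to 0^+$ and $K(t)$ may blow up, so the naive integral over $(0,T]$ could be infinite. The plan is to balance $\|w(\epsilon)\|_{L^2}\to 0$ against $\int_\epsilon^t K(s)\,ds\to+\infty$ quantitatively: uniform continuity of $u,v$ up to $t=0$ (both vanishing there) already gives $\|w(\epsilon)\|_{L^\infty}\to 0$, and a refined Hopf-type estimate $u(x,t)\ge c_0\,t\,d_\Omega(x)$ for small $t$, derived from the heat kernel comparison built into the supersolution inequality, controls the blow-up of $K(s)$ by a polynomial rate $s^{-2}$ whose exponential is tame against the $L^\infty$-vanishing of $w(\epsilon)$. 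A backup route, should the direct quantitative bound be insufficient, is an approximation procedure in which $u_0\equiv 0$ is replaced by $\eta\varphi_1$ with $\eta\downarrow 0$, reducing to the first case and then passing to the limit using stability of sub/supersolutions; this is where I expect the most technical work.
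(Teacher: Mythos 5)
Your treatment of the case $u_0\geq\delta d_\Omega$ is essentially the paper's argument (test with $(v-u)^+$, use \hyperlink{H2}{\((H2)\)}, split $1/u\leq 1/(\gamma d_\Omega)$ via Hardy's inequality, Gronwall), so that part is fine. The genuine gap is in the case $u_0\equiv 0$, and it is twofold. First, the ``refined Hopf-type estimate'' $u(x,t)\geq c_0\,t\,d_\Omega(x)$ does not follow from the supersolution inequality: the only structural information available here is $u_t-\Delta u\geq 0$ (the proposition assumes \hyperlink{H2}{\((H2)\)} and $b\geq 0$, not \hyperlink{H1}{\((H1)\)}), and a positive caloric-type supersolution with zero initial trace can vanish to all orders as $t\to 0^+$ (e.g. $e^{-1/t}\varphi_1(x)$ satisfies $u_t-\Delta u\geq 0$), so no polynomial rate $c(t)\gtrsim t$ can be extracted. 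Second, even granting $c(t)\simeq t$, your own bookkeeping gives $K(s)\simeq s^{-2}$, hence a Gronwall factor $\exp\big(\int_\eps^T K\big)\simeq e^{C/\eps}$; this cannot be ``balanced'' by $\|w(\eps)\|_{L^\infty}\to 0$, because uniform continuity at $t=0$ gives no rate, and even a polynomial rate $\|w(\eps)\|\lesssim \eps^\alpha$ is annihilated by $e^{C/\eps}$. So the main route for $u_0\equiv 0$ fails quantitatively, and the backup (replacing $u_0=0$ by $\eta\varphi_1$ and invoking ``stability of sub/supersolutions'') is not available: it requires constructing auxiliary solutions (existence is not assumed) and a continuous-dependence statement that is essentially the comparison principle being proved.

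The paper avoids any rate near $t=0$ by a time-shift: for $r>0$ one has $u(\cdot,t)\geq S(t-r)u(\cdot,r)$ and $u(\cdot,r)>0$, hence $u(\cdot,r)\geq\delta(r)d_\Omega$ with some (possibly degenerate as $r\to0$) $\delta(r)>0$; one then applies the first case to the forward-shifted supersolution $u(\cdot,\star+r)$ against $v$, whose initial values are ordered since $v(\cdot,0)\leq u_0=0<u(\cdot,r)$, and finally lets the shift $r$ tend to $0$ using continuity of $u$ in time. If you want to repair your proof, this shifting argument (rather than a quantitative Gronwall balance or an initial-data approximation) is the missing idea; note also that comparing the shifted $u$ with the unshifted $v$ requires controlling $b(x,\cdot,s)$ versus $b(x,\cdot,s+r)$, which is where a monotonicity-in-time assumption such as \hyperlink{H3}{\((H3)\)} (or an appeal to the uniformity in $t$ of the constant in \hyperlink{H2}{\((H2)\)}) enters.
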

\begin{proof}
Consider first the case $u_0\geq \delta d_\Omega$.
	Based on Hardy's inequality, it holds 
	\begin{equation}\label{hardy's inequality}
		\int_{\Omega}\dfrac{\varphi^2}{d^2_\Omega}\leq C\int_\Omega \abs{\nabla \varphi}^2,
	\end{equation}
	for all $\varphi\in H^1_0(\Omega)$. Being $b$ nonnegative, we have $u_t-\Delta u\geq 0$ and thus by comparison and the semigroup representation of the heat equation we obtain $u(\cdot,t)\geq S(t)u(\cdot,0)$, $S(t)=e^{t \Delta}$. 
 In particular there exists $\gamma>0$ such that $u(\cdot,t)\geq \gamma d_\Omega$ for all $t\in[0,T]$. 
 Taking $t\in (0,T)$, testing the inequalities satisfied by $u$ and $v$ with $(v(\cdot,t)-u(\cdot,t))^+$ and taking the difference we obtain
	\begin{align*}
		\dfrac{1}{2}\dfrac{d}{dt}\int_{\Omega}\abs{\big(v(x,t)-u(x,t)\big)^{+}}^2dx& 
		+ \int_{\Omega}\abs{D_x (v(x,t)-u(x,t))^+}^2dx
		\\ \leq& \int_{\left\{v(\cdot,t)>u(\cdot,t)\right\}}\big(b(x,v, t)-b(x,u, t))(v(x,t)-u(x,t)\big)^+dx\\
		\leq& L_M \int_{\left\{v(\cdot,t)>u(\cdot,t)\right\}} \dfrac{\abs{\big(v(x,t)-u(x,t)\big)^{+}}^2}{u(x,t)}dx,
	\end{align*}
	with $M:=\max \left\{||u||_{L^\infty((0,T)\times\Omega)},||v||_{L^\infty((0,T)\times\Omega)}\right\}$. 
	Since 
	\[\dfrac{1}{u(\cdot,t)}\leq \dfrac{1}{\gamma d_\Omega}\leq \dfrac{\varepsilon}{d^2_\Omega}+C(\varepsilon) \quad \text{for all }\varepsilon>0,\]
	we deduce that 
\begin{align*}
	\dfrac{1}{2}\dfrac{d}{dt}\int_{\Omega}\abs{(v(x,t)-u(x,t))^{+}}^2dx&+\int_{\Omega}\abs{D_x(v(x,t)-u(x,t))^+}^2dx\\\leq& \varepsilon \int_{\Omega}\dfrac{\abs{(v(x,t)-u(x,t))^{+}}^2}{d^2_\Omega(x)}dx+C(\varepsilon)\int_\Omega \abs{(v(x,t)-u(x,t))^{+}}^2dx.
\end{align*}
	Applying \eqref{hardy's inequality} and choosing $\varepsilon>0$ sufficiently small, we then obtain 
	\[ \dfrac{1}{2}\dfrac{d}{dt}\int_\Omega \abs{(v(x,t)-u(x,t))^{+}}^2dx\leq C\int_\Omega\abs{(v(x,t)-u(x,t))^{+}}^2dx, \]
	from which the result follows by Gronwall inequality
	$$\int_\Omega \abs{(v(x,t)-u(x,t))^{+}}^2dx \leq e^{2C} \int_\Omega\abs{(v(x,0)-u(x,0))^{+}}^2dx.$$
	
	Now let us deal with the case $u_0\equiv 0$. 
	Since $u_t-\Delta u\geq 0$, by \cite[Remark 6.1]{CazenaveDicksteinEscobedo} (see also Lemma \ref{Hopf lemma in parabolic case}) we have that $u(\cdot,t)\geq S(t-r)u(\cdot,r)$ for all $0< r < t \leq T$ and hence, being $u(\cdot,r)>0$, we obtain 
	 $u(\cdot,t)\geq \delta(t)d_\Omega$ for all $t\in (0,T)$ with $\delta(t)>0$. 
	 Let us fix $t \in (0,T)$. By the previous arguments, applied with initial time $t$ to $u(\cdot, \star + t)$ and $v$, we conclude that $u(\cdot,s+t)\geq v(\cdot,t)$ for all $0<s<T-t$. 
	 The result follows by  letting $s$ tend to $0$. 
\end{proof}

By requiring a more restrictive regularity of $b(x,\cdot,t)$, we show now that the request $u \geq \delta d_{\Omega}$ can be dropped in Proposition \ref{comparison principle dickstein}; notice anyway that such regularity is satisfied by $b(x,s,t)=a(x) s^q$ only for $q \in [\frac{1}{2},1]$.
We refer also to \cite[Section A.1]{ILS20} and references therein for other comparison principles which involve nonincreasing $b(x,\cdot,t)$. 
\begin{proposition}[Comparison and uniqueness, II]
\label{comparison principle dickstein_2} 
Let $\Omega \subset \R^n$ be a bounded open domain with the interior sphere property. Let $T>0$ and $b\colon\Omega\times(0,+\infty) \times (0,T)\to [0,+\infty)$ satisfying \hyperlink{H2*}{\( (H2^*) \)} and $b(x,\cdot,t)$ continuous for all $(x,t)\in \Omega\times(0,T)$.
	Let $u_0\in C(\Omegabar)$. Let $u,v\in C^2_x(\Omega)\cap C^1_t((0,T])\cap C(\Omegabar\times[0,T])$ and $(u-v)^+\in H^1(\Omega)$.
	Assume $u$ is a positive supersolution and $v$ is a subsolution of \eqref{eq_sub_super_sol}.
Then $u(\cdot,t)\geq v(\cdot,t)$ for all $0\leq t\leq T$. 
In particular, there exists at most one positive solution of \eqref{eq_sub_super_sol}.
\end{proposition}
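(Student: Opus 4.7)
My plan is to prove Proposition~\ref{comparison principle dickstein_2} by the same energy method that underpinned Proposition~\ref{comparison principle dickstein}, replacing the Hardy-type absorption (which was forced by the singular factor $1/r$ in \hyperlink{H2}{$(H2)$}) with a Sobolev-type absorption naturally suited to the Hölder-like condition \hyperlink{H2*}{$(H2^*)$}. First, I would verify that $w := (v-u)^+$ lies in $H^1_0(\Omega)$ for each $t$: indeed, $v \leq 0 \leq u$ on $\partial\Omega$ (from the sub/supersolution hypotheses), so $(v-u)^+$ vanishes there, while membership in $H^1$ follows from the regularity of $u,v$ together with the structural hypothesis in the statement. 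Subtracting the differential inequalities defining $u$ (supersolution) and $v$ (subsolution), testing against $w$, and integrating by parts (the boundary contribution vanishes), I obtain
$$
\frac{1}{2}\frac{d}{dt}\norm{w(\cdot,t)}_{L^2}^2 + \norm{\nabla w(\cdot,t)}_{L^2}^2
\leq \int_{\{v>u\}}\!\big(b(x,v,t)-b(x,u,t)\big)\,w\,dx
\leq L\int_\Omega w^{\omega+1}\,dx,
$$
where the last bound comes from \hyperlink{H2*}{$(H2^*)$} applied with $M := \max\{\norm{u}_{L^\infty}, \norm{v}_{L^\infty}\}$.

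Next, rather than invoking Hardy's inequality (as in Proposition~\ref{comparison principle dickstein}), I would estimate $\int_\Omega w^{\omega+1}\,dx$ by combining Sobolev's embedding $H^1_0(\Omega) \hookrightarrow L^{2^*}(\Omega)$ with Hölder/Gagliardo--Nirenberg interpolation. Since $\omega+1 \in [3/2, 2]$ lies between $1$ and $2^*$, interpolation together with Sobolev yields an estimate of the form $\int_\Omega w^{\omega+1}\,dx \leq C\,\norm{w}_{L^2}^{a}\,\norm{\nabla w}_{L^2}^{b}$ for suitable nonnegative exponents $a, b$ depending only on $\omega$ and $n$, with $a+b = \omega+1$. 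Applying Young's inequality then produces, for any $\varepsilon > 0$, the absorption estimate
$$
L\int_\Omega w^{\omega+1}\,dx \leq \varepsilon\,\norm{\nabla w}_{L^2}^2 + C_\varepsilon\,\norm{w}_{L^2}^2,
$$
where $C_\varepsilon$ depends only on $\omega$, $L$, $|\Omega|$, $n$, and the $L^\infty$-bounds of $u, v$.

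Choosing $\varepsilon$ small enough to absorb the gradient term into the left-hand side of the energy inequality yields the linear Gronwall-type inequality $\frac{d}{dt}\norm{w(\cdot,t)}_{L^2}^2 \leq C\,\norm{w(\cdot,t)}_{L^2}^2$, which, combined with the vanishing initial datum $w(\cdot, 0) \equiv 0$ (since $u$ and $v$ share the initial condition $u_0$), forces $w \equiv 0$, i.e., $v \leq u$ throughout $[0,T]$. Uniqueness of positive solutions then follows by applying this comparison symmetrically, each positive solution playing in turn the role of super- and subsolution. The main technical obstacle, I anticipate, is calibrating the interpolation exponents $a, b$ so that the Young step genuinely delivers a \emph{linear} dependence on $\norm{w}_{L^2}^2$: it is precisely the threshold $\omega \geq \frac{1}{2}$ in \hyperlink{H2*}{$(H2^*)$} that makes this absorption feasible. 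A notable advantage over Proposition~\ref{comparison principle dickstein} is that no pointwise lower bound $u \geq \delta d_\Omega$ is needed, so accordingly no hypothesis on $u_0$ beyond continuity is required.
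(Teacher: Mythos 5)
Your overall strategy (energy estimate for $w=(v-u)^+$, Sobolev-type machinery in place of Hardy, then Gronwall) is the same as the paper's, whose proof is only a sketch: it follows the proof of Proposition \ref{comparison principle dickstein}, replacing Hardy by the Hölder splitting $\int_\Omega w^{\omega}\,w\,dx\le\norm{w}_{L^{2\omega}}^{\omega}\norm{w}_{L^2}$ together with the chain $D^{1,2}(\Omega)\hookrightarrow L^{2^*}(\Omega)\hookrightarrow L^{2\omega}(\Omega)$; there the hypothesis $\omega\ge\tfrac12$ enters only through $2\omega\ge1$, i.e. to make $L^{2\omega}(\Omega)$ a genuine Lebesgue space. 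However, the pivotal inequality you assert is false for $\omega<1$. Any interpolation bound $\int_\Omega w^{\omega+1}\,dx\le C\norm{w}_{L^2}^{a}\norm{\nabla w}_{L^2}^{b}$ must have $a+b\le\omega+1<2$ (scale $w\mapsto\delta w$, $\delta\to0^+$), and Young's inequality applied to such a product yields $\varepsilon\norm{\nabla w}_{L^2}^{2}+C_\varepsilon\norm{w}_{L^2}^{2a/(2-b)}$, where $2a/(2-b)=2$ forces $a+b=2$, i.e. $\omega=1$. More directly, the claimed absorption $L\int_\Omega w^{\omega+1}\,dx\le\varepsilon\norm{\nabla w}_{L^2}^2+C_\varepsilon\norm{w}_{L^2}^2$, with $C_\varepsilon$ depending only on $\omega,L,|\Omega|,n$ and the $L^\infty$-bounds, cannot hold: take $w=\delta\varphi$ with $\varphi\in C^\infty_c(\Omega)$ fixed and $\delta\to0^+$; the left-hand side is of order $\delta^{1+\omega}$ while the right-hand side is of order $\delta^{2}$. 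So the ``calibration of $a,b$'' you flag as a technical obstacle is in fact an impossibility on the whole range $\omega\in[\tfrac12,1)$, which is exactly the range where this proposition goes beyond the classical Lipschitz case; the threshold $\omega\ge\tfrac12$ does not play the role you attribute to it.

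Because the quadratic absorption fails, the concluding step collapses: what your estimates actually deliver is a sublinear differential inequality $\frac{d}{dt}\norm{w(\cdot,t)}_{L^2}^2\le C\norm{w(\cdot,t)}_{L^2}^{2\sigma}$ with $\sigma=\frac{a}{2-b}<1$ (or $\sigma=\frac{\omega+1}{2}<1$ if one only uses boundedness of $\Omega$), and such an inequality with zero initial datum does not force $w\equiv0$: the model problem $y'=y^{\sigma}$, $y(0)=0$, $\sigma\in(0,1)$, has the nontrivial solution $y(t)=\big((1-\sigma)t\big)^{1/(1-\sigma)}$, so no linear Gronwall conclusion is available. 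As written, your argument proves the comparison only in the case $\omega=1$. The parts that do match the paper are correct ($w(\cdot,t)\in H^1_0(\Omega)$, the energy identity, $w(\cdot,0)=0$, and the symmetric deduction of uniqueness); what is missing is a genuine mechanism for handling $\int_\Omega w^{1+\omega}\,dx$ when $\omega<1$, for which the paper's splitting keeps the gradient factor, $\int_\Omega w^{\omega}w\,dx\le C\norm{\nabla w}_{L^2}^{\omega}\norm{w}_{L^2}$, rather than the quadratic bound you claim.
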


\begin{proof}
The proof follows the lines of the one in Proposition \ref{comparison principle dickstein}, with no need of the Hardy inequality, but by exploiting (recall $2\omega \geq 1$) $D^{1,2}(\Omega) \hookrightarrow L^{2^*}(\Omega) \hookrightarrow L^{2\omega}(\Omega)$ and Hölder inequality.
%, where we highlight that $ L^{2\omega}(\Omega)$ is well defined as Lebesgue space by the assumption $2\omega \geq 1$.
\end{proof}

We state now a condition for monotonicity of solutions (see also \cite[Proposition 6.12]{CazenaveDicksteinEscobedo} and \cite[Lemma 4.1, pag 199]{pao}).
\begin{corollary}[Monotonicity]
\label{corol_monotonicity}
In the assumptions of Proposition \ref{comparison principle dickstein} or Proposition \ref{comparison principle dickstein_2}, assume in addition \hyperlink{H3}{\( (H3) \)}.
Let $u$ be a positive solution to 
	\begin{equation*}
		\begin{cases}
			u_t-\Delta u = b(x,u,t)&\hbox{in } \Omega\times(0,+\infty),\\
			u=0 &\hbox{on } \partial \Omega\times(0,+\infty),\\
			u=u_0&\hbox{on } \Omega \times \{0\}.
		\end{cases}
	\end{equation*}
	Assume moreover that 
\begin{equation}\label{eq_u_grter_u0}
u(\cdot, t) \geq u_0 \quad \hbox{for each $t>0$}.
\end{equation}
Then $u_t \geq 0$ in $\Omega\times (0,+\infty)$. 
A condition to ensure \eqref{eq_u_grter_u0} is that $u_0$ is a subsolution of the stationary problem
	\begin{equation*}
	\begin{cases}
		-\Delta w = b(x,w,0)&\hbox{in } \Omega,\\
		w=0 &\hbox{on } \partial \Omega.\\
	\end{cases}
\end{equation*}
\end{corollary}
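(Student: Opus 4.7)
The plan is to apply the comparison principle (Proposition \ref{comparison principle dickstein} or \ref{comparison principle dickstein_2}) to a time-shifted version of $u$. Fix $h>0$ and set $v(x,t):= u(x, t+h)$. By direct computation,
\[
v_t - \Delta v = b(x, v, t+h) \geq b(x, v, t) \quad \text{in } \Omega \times (0,+\infty),
\]
where the inequality uses \hyperlink{H3}{\( (H3) \)}. Hence $v$ is a positive supersolution of the same Cauchy-Dirichlet problem solved by $u$, with initial datum $v(\cdot, 0) = u(\cdot, h)$. Thanks to assumption \eqref{eq_u_grter_u0}, we have $v(\cdot, 0) \geq u_0 = u(\cdot, 0)$. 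Moreover, $v$ and $u$ share the zero Dirichlet boundary condition and the regularity required in the hypothesis of the comparison principle. Applying it to the pair $(v,u)$ yields $u(\cdot, t+h) \geq u(\cdot, t)$ for every $t \geq 0$ and $h > 0$. Dividing by $h$ and passing to the limit, this gives $u_t \geq 0$ in $\Omega \times (0, +\infty)$.

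For the sufficient condition, we show that if $u_0$ is a subsolution of the stationary problem $-\Delta w = b(x,w,0)$ (with $u_0=0$ on $\partial\Omega$), then viewing $u_0$ as a time-independent function $\bar u_0(x,t) := u_0(x)$, it becomes a subsolution of the evolutionary problem: indeed, $(\bar u_0)_t - \Delta \bar u_0 = -\Delta u_0 \leq b(x, u_0, 0) \leq b(x, u_0, t)$ for all $t>0$, again by \hyperlink{H3}{\( (H3) \)}, while the initial/boundary data coincide with those of $u$. Applying the comparison principle to the solution $u$ (positive supersolution) against $\bar u_0$ (subsolution) gives $u(\cdot, t) \geq u_0$ for every $t > 0$, i.e. \eqref{eq_u_grter_u0}.

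The only delicate point is making sure that each application of the comparison principle is legitimate: both functions being compared must satisfy the regularity and positivity requirements of Proposition \ref{comparison principle dickstein} or \ref{comparison principle dickstein_2}, and in the first step the shifted supersolution $v$ inherits these from $u$ by construction, while in the second step the stationary comparison function $\bar u_0$ satisfies them by the standing hypotheses on $u_0$ (with the role of ``positive'' being played by $u$ itself). No further computation is required.
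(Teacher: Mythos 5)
Your proposal is correct and follows essentially the same route as the paper: a time shift $u(\cdot,\star+h)$ turned into a supersolution via \hyperlink{H3}{\((H3)\)} and assumption \eqref{eq_u_grter_u0}, compared with $u$ through Proposition \ref{comparison principle dickstein} (or \ref{comparison principle dickstein_2}), and for the last statement the stationary subsolution $u_0$ viewed as a parabolic subsolution again thanks to \hyperlink{H3}{\((H3)\)}. The only cosmetic difference is that you spell out the limit $h\to 0$ step, which the paper leaves implicit.
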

\begin{proof}
Fix $\tau>0$ and set $\tilde{u}(x,t):=u(x,t+\tau)$, then $\tilde{u}$ satisfies 
	\[	\begin{cases}
		\tilde{u}_t -\Delta \tilde{u} =b(x,\tilde{u},t+\tau) \geq b(x,\tilde{u}, t) &\text{in }\Omega\times (0,+\infty),\\
		\tilde{u}=0 & \text{on }\partial \Omega\times (0,+\infty),\\ 
		\tilde{u} \geq u_0 & \text{on } \Omega \times \{0\}.
	\end{cases}\]	
	Since \hyperlink{H2}{\( (H2) \)} holds, by Proposition \ref{comparison principle dickstein} or \ref{comparison principle dickstein_2} we have $\tilde{u}(x,t)\geq u(x,t)$ for all $(x,t)\in\Omega\times(0,+\infty)$, that is $u(x,t+\tau)\geq u(x,t)$, which is the claim.
To see the last statement, it is sufficient to apply again Proposition \ref{comparison principle dickstein}  or \ref{comparison principle dickstein_2} with $v=u_0$, after having observed that $b(x,s,0)\leq b(x,s,t)$ thanks to \hyperlink{H3}{\( (H3) \)}.
\end{proof}

To state the parabolic version of the Hopf lemma, we recall some usual notation for the domain and the boundary. 
Let $\Omega$ be a bounded open set in $\R^n$, $T>0$ and define the \emph{parabolic boundary} as 
\begin{equation}\label{eq_def_parabol_boundary}
\partial_p\Omega_T:=\big(\Omegabar\times[0,T]\big)\setminus\big( \Omega\times(0,T]\big).
\end{equation}
We observe that, if $\Omega$ satisfies the interior sphere condition, then every point $(\bar{x},\bar{t})\in \partial_p \Omega_T$ satisfies the \emph{spherical cap} condition, that is there exists an open ball $B_r(x_0,t_0)$, $x_0\ne \bar{x}$, such that $(\bar{x},\bar{t})\in \partial B_r(x_0, t_0)$ and 
$$C_r(\bar{x},\bar{t}):= B_r(x_0, t_0)\cap \left\{t<\bar{t}\right\}\subset \Omega\times (0,T].$$ 
We recall thus the Hopf lemma for parabolic equations, see e.g. \cite[Theorem 3, page 170]{ProtterWeitenberg1984}.

\begin{lemma}[Parabolic Hopf lemma]
\label{Hopf lemma in parabolic case} 	
Let $\Omega$ be an open bounded set in $\R^n$ satisfying the interior sphere condition, $T>0$ and $u\in C^2_x(\Omega)\cap C^1_t((0,T])\cap C(\Omegabar\times[0,T])$.
	Suppose further
	\[ u_t+Lu\geq 0 \quad \text{in }\Omega\times(0,T]\]
	where \[Lu:=-\sum\limits_{i,j=1}^{n}a_{ij} D^2_{ij}u+\sum\limits_{i=1}^{n}b^iD_{i}u\] 
	with $a_{ij}, b_i \in L^{\infty}(\Omega\times(0,T])\cap C(\Omega\times(0,T])$. 
	Moreover, suppose that there exists a point $(\bar{x},\bar{t})\in \partial_p\Omega_T$ such that
	\[ u(\bar{x},\bar{t})<u(x,t)\quad \forall (x,t)\in C_r(\bar{x},\bar{t}). \]
	Let $\mathbf{e}\in \R^{n+1}$ be a direction such that $(\bar{x},\bar{t})+s\mathbf{e}\in \overline{C_r(\overline{x},\overline{t})}$ for some $s>0$, and assume that $\partial_{\mathbf{e}} u (\bar{x},\bar{t})$ exists. 
	Then $\partial_{\mathbf{e}} u (\bar{x},\bar{t})>0$.
\end{lemma}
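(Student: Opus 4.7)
My plan is to apply the classical barrier method, adapting the spatial Hopf argument to the parabolic spherical cap $C_r(\bar{x},\bar{t})$. First I would reduce to $u(\bar{x},\bar{t})=0$: since $L$ contains no zeroth order term, subtracting the constant $u(\bar{x},\bar{t})$ preserves the differential inequality, while the strict inequality in the hypothesis becomes $u>0$ on $C_r(\bar{x},\bar{t})$. I would also record that any admissible $\mathbf{e}$ is strictly inward to $B_r(x_0,t_0)$ at $(\bar{x},\bar{t})$: the condition $(\bar{x},\bar{t})+s\mathbf{e}\in\overline{C_r}\subset\overline{B_r(x_0,t_0)}$ for some $s>0$ expands, via $|(\bar{x},\bar{t})+s\mathbf{e}-(x_0,t_0)|^2\leq r^2$, to
\[ 2s\,\mathbf{e}\cdot\bigl((\bar{x}-x_0),(\bar{t}-t_0)\bigr)+s^2|\mathbf{e}|^2\leq 0, \]
forcing $\mathbf{e}\cdot((\bar{x}-x_0),(\bar{t}-t_0))<0$.

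Next I would construct the standard barrier. Setting $\psi(x,t):=|x-x_0|^2+(t-t_0)^2$, define
\[ h(x,t):=e^{-\alpha\psi(x,t)}-e^{-\alpha r^2} \]
with $\alpha>0$ to be chosen. Then $h=0$ on $\partial B_r(x_0,t_0)$, $h>0$ inside, and a direct computation gives
\[ h_t+Lh=e^{-\alpha\psi}\Big[-2\alpha(t-t_0)-4\alpha^2\sum_{i,j}a_{ij}(x-x_0)^i(x-x_0)^j+2\alpha\sum_i a_{ii}-2\alpha\sum_i b^i(x-x_0)^i\Big]. \]
Since $\bar{x}\neq x_0$, I would fix $\delta\in(0,|\bar{x}-x_0|/2)$ and work on the open subregion
\[ \mathcal{A}:=C_r(\bar{x},\bar{t})\cap\bigl\{(x,t):|x-x_0|>\delta,\ \psi(x,t)<r^2\bigr\}, \]
which has $(\bar{x},\bar{t})$ on its closure and on which $|x-x_0|\geq\delta$ uniformly. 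The uniform spatial ellipticity implicit in $L$, together with continuity of $a_{ij},b^i$ on the compact set $\overline{\mathcal{A}}\setminus\{(\bar{x},\bar{t})\}\subset\Omega\times(0,T]$, yields $\sum a_{ij}(x-x_0)^i(x-x_0)^j\geq\lambda\delta^2$ on $\mathcal{A}$, so the $-4\alpha^2$ term dominates for $\alpha$ large enough and produces $h_t+Lh\leq 0$ throughout $\mathcal{A}$.

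I would then compare $w:=u-\varepsilon h$ via the weak parabolic maximum principle. The parabolic boundary $\partial_p\mathcal{A}$ splits into the piece lying in $\partial B_r(x_0,t_0)$, where $h=0$ and thus $w=u\geq 0$, and a compact remaining piece that stays at positive distance from $(\bar{x},\bar{t})$, where $u\geq\eta>0$ by strict positivity and continuity while $\sup_{\mathcal{A}}h<\infty$; choosing $\varepsilon\in(0,\eta/\sup_{\mathcal{A}}h)$ ensures $w\geq 0$ on the whole $\partial_p\mathcal{A}$. Since $w_t+Lw\geq 0$ in $\mathcal{A}$ and $L$ has no zeroth order term, the weak parabolic maximum principle gives $w\geq 0$ on $\overline{\mathcal{A}}$. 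As $w(\bar{x},\bar{t})=0$ and $(\bar{x},\bar{t})+s\mathbf{e}\in\overline{\mathcal{A}}$ for all sufficiently small $s>0$,
\[ \partial_{\mathbf{e}}u(\bar{x},\bar{t})\geq\varepsilon\,\partial_{\mathbf{e}}h(\bar{x},\bar{t}) = -2\alpha\varepsilon\,e^{-\alpha r^2}\,\mathbf{e}\cdot\bigl((\bar{x}-x_0),(\bar{t}-t_0)\bigr)>0 \]
by the inward condition established in the first paragraph.

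The main obstacle will lie in the second step: engineering the annular domain $\mathcal{A}$ so that the spatial distance $|x-x_0|$ stays uniformly bounded away from zero, thereby allowing spatial ellipticity alone to dominate the sign-indefinite parabolic term $-2\alpha(t-t_0)$ together with the trace and drift terms in $h_t+Lh$. The hypothesis $\bar{x}\neq x_0$ built into the spherical cap condition is precisely what enables this separation, and it is simultaneously what forces $\partial_{\mathbf{e}}h(\bar{x},\bar{t})$ to be strictly positive for the admissible directions in the concluding step.
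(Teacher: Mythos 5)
The paper offers no proof of this lemma at all — it simply cites Protter--Weinberger (Theorem 3, p.\ 170) — so your barrier argument is, in spirit, the standard proof behind that citation; but as written it has a genuine gap at the top time level $t=\bar t$. Your comparison domain is $\mathcal{A}=C_r\cap\{|x-x_0|>\delta\}$, and its parabolic boundary contains, besides the spherical part $\{\psi=r^2\}$ where $h=0$, the lateral cylinder $\{|x-x_0|=\delta\}\cap\{\psi\le r^2\}\cap\{t\le\bar t\}$. This cylinder reaches the level $t=\bar t$: since $(\bar t-t_0)^2=r^2-|\bar x-x_0|^2<r^2-\delta^2$, the set $\{|x-x_0|=\delta,\ t=\bar t,\ \psi<r^2\}$ is nonempty, and its points are genuine parabolic-boundary points of $\mathcal{A}$ (no lower half-neighbourhood of them lies in $\mathcal{A}$), so the weak maximum principle does require $w\ge 0$ there. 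But the hypothesis gives the strict inequality $u>u(\bar x,\bar t)$ only on the open lower cap $\{t<\bar t\}$; at $t=\bar t$ continuity yields only $u\ge u(\bar x,\bar t)$, while $h>0$ at those points. Hence your claim that on the ``compact remaining piece'' one has $u\ge\eta>0$ ``by strict positivity and continuity'' is unjustified, and no choice of $\varepsilon$ makes $w=u-\varepsilon h\ge0$ on all of $\partial_p\mathcal{A}$ follow from the stated hypotheses. A second instance of the same problem occurs at the bottom rim, where the cylinder meets the sphere $\{\psi=r^2\}$ at time $t_0-\sqrt{r^2-\delta^2}<\bar t$: there $u$ may vanish, so the uniform $\eta$ on the whole cylindrical piece fails as well, even though at the exact rim points $h=0$ saves the inequality pointwise.

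The gap is fixable, but it needs ingredients you did not supply. First upgrade the strict inequality to the top slice: every point $(x,\bar t)$ with $\psi(x,\bar t)<r^2$ lies in $\Omega\times(0,T]$, and if $u$ equalled $u(\bar x,\bar t)$ there, the strong minimum principle applied in a small lower cylinder contained in the cap (where $u>u(\bar x,\bar t)$) would force $u\equiv u(\bar x,\bar t)$ at earlier times, a contradiction. Then avoid the rim, e.g.\ by replacing $B_r(x_0,t_0)$ with a smaller ball internally tangent at $(\bar x,\bar t)$, so that the inner cylindrical piece of the new comparison domain is a compact subset of the original open ball with $t\le\bar t$, where the upgraded strict inequality yields the uniform $\eta$; the admissible directions $\mathbf{e}$ still enter the smaller ball, so the final derivative estimate survives. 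This is exactly where the cap version stated here is more delicate than the textbook version the paper cites, in which strict inequality is assumed in the whole ball. Two smaller points: the lemma as stated lists only boundedness and continuity of $a_{ij}$, so the uniform (or locally uniform) parabolicity you call ``implicit'' must be added as a hypothesis — it is genuinely needed for $h_t+Lh\le0$ away from the axis $\{x=x_0\}$ — and your assertion that $\overline{\mathcal{A}}\setminus\{(\bar x,\bar t)\}$ is compact is incorrect, though harmless once uniform ellipticity is assumed.
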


%%%%%%%%%%%%%%%%%%%%%%%%%%%%%%%%%%%%%%%%%%%%%%%%%%%%%%%%%%%%%%%%%%%%%%%%
%%%%%%%%%%%%%%%%%%%%%%%%%%%%%%%%%%%%%%%%%%%%%%%%%%%%%%%%%%%%%%%%%%%%%%%%
\section{Properties of concavity functions}
\label{sec_prop_conc_fun}

 \begin{proposition}[Concavity of the product] \label{concavità prodotto}
Consider $\alpha,\beta \in (0,+\infty)$ with $\alpha^{-1}+\beta^{-1}=1$, $K$ a convex set and $f,g\colon K \to \R$. 
Suppose on $f$ one of the following:
\begin{enumerate}[label=\textit{\roman*})]
	\item $m_1\leq f\leq M_1$ for some $m_1^{\alpha}\geq \frac{1}{2} M_1^{\alpha}>0$,
	\item $\mathcal{C}^-_{f^{\alpha}}=0$, 
\end{enumerate}
and on $g$ one of the following:
\begin{enumerate}[label=\textit{\roman*})]
	\item $m_2\leq g\leq M_2$ for some $m_2^{\beta}\geq \frac{1}{2} M_2^{\beta}>0$,
	\item $\mathcal{C}^-_{g^{\beta}}=0$. 
	\end{enumerate}
 Then, for each $x_1,x_3 \in K$ and $\lambda \in [0,1]$, it holds
\begin{align*}
	\mathcal{C}_{fg}(x_1,x_3,\lambda)\geq&
	 -(\mathcal{C}^-_{f^{\alpha}})^{1/\alpha}(x_1,x_3,\lambda) (\lambda g_3+(1-\lambda) g_1)-(\mathcal{C}^-_{g^{\beta}})^{1/\beta}(x_1,x_3,\lambda) (\lambda f_3+(1-\lambda) f_1)+\\&+(\mathcal{C}^-_{f^{\alpha}})^{1/\alpha}(x_1,x_3,\lambda)(\mathcal{C}^-_{g^{\beta}})^{1/\beta}(x_1,x_3,\lambda).
\end{align*}
 \end{proposition}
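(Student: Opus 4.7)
The idea is to absorb the ``defect of concavity'' of $f^\alpha$ and $g^\beta$, as measured by $\mathcal{C}^-_{f^\alpha}$ and $\mathcal{C}^-_{g^\beta}$, into shifted factors on which Hölder's inequality for the conjugate exponents $(\alpha,\beta)$ (note that $\alpha^{-1}+\beta^{-1}=1$ forces $\alpha,\beta>1$) can be applied. Setting
$$a:=(\mathcal{C}^-_{f^\alpha})^{1/\alpha}(x_1,x_3,\lambda), \qquad b:=(\mathcal{C}^-_{g^\beta})^{1/\beta}(x_1,x_3,\lambda),$$
a direct algebraic expansion shows that the claimed inequality for $\mathcal{C}_{fg}$ is equivalent to
$$f_2 g_2 \geq \lambda(f_3-a)(g_3-b) + (1-\lambda)(f_1-a)(g_1-b).$$

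The first step is to verify that $0\leq a\leq\min(f_1,f_3)$ and $0\leq b\leq\min(g_1,g_3)$, so that all subsequent quantities are nonnegative. In case \textit{(ii)} this is trivial since $a=0$ (resp.\ $b=0$). In case \textit{(i)} one exploits the crude oscillation bound $\mathcal{C}^-_{f^\alpha}\leq M_1^\alpha-m_1^\alpha$, and the technical hypothesis $M_1^\alpha\leq 2m_1^\alpha$ to deduce $a^\alpha\leq m_1^\alpha$, i.e.\ $a\leq m_1\leq f_i$; the same argument applies to $g$. This is precisely the point where the constant $1/2$ is used, and I expect it to be the most delicate part of the proof.

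Then I would invoke the elementary inequality $(x-y)^\alpha\leq x^\alpha-y^\alpha$, valid for $x\geq y\geq 0$ and $\alpha\geq 1$ (a consequence of $(1+t)^\alpha\geq 1+t^\alpha$ for $t\geq 0$), applied with $x=f_i$, $y=a$. Together with $a^\alpha=\mathcal{C}^-_{f^\alpha}$ and the very definition of $\mathcal{C}^-$, this yields
$$\lambda(f_3-a)^\alpha+(1-\lambda)(f_1-a)^\alpha\leq \lambda f_3^\alpha+(1-\lambda)f_1^\alpha-\mathcal{C}^-_{f^\alpha}\leq f_2^\alpha,$$
and symmetrically $\lambda(g_3-b)^\beta+(1-\lambda)(g_1-b)^\beta\leq g_2^\beta$.

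The plan concludes by applying discrete Hölder with conjugate exponents $(\alpha,\beta)$ to the pairs $\bigl(\lambda^{1/\alpha}(f_3-a),(1-\lambda)^{1/\alpha}(f_1-a)\bigr)$ and $\bigl(\lambda^{1/\beta}(g_3-b),(1-\lambda)^{1/\beta}(g_1-b)\bigr)$; using $\alpha^{-1}+\beta^{-1}=1$ to recombine the weights gives
$$\lambda(f_3-a)(g_3-b)+(1-\lambda)(f_1-a)(g_1-b) \leq f_2^{\phantom\alpha}\cdot g_2^{\phantom\beta}= f_2 g_2,$$
which is the reformulated inequality. Taking the $\alpha$-th and $\beta$-th roots in the preceding step is what allows Hölder to produce $f_2 g_2$ on the right-hand side, and the nonnegativity check carried out at the beginning guarantees that none of these operations reverses an inequality.
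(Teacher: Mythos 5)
Your proposal is correct, and its skeleton coincides with the paper's: the same algebraic reduction of the claimed estimate to $f_2g_2\geq \lambda(f_3-a)(g_3-b)+(1-\lambda)(f_1-a)(g_1-b)$ with $a=(\mathcal{C}^-_{f^{\alpha}})^{1/\alpha}$, $b=(\mathcal{C}^-_{g^{\beta}})^{1/\beta}$, and the same preliminary check $a\leq m_1\leq f_i$, $b\leq m_2\leq g_i$ via the crude oscillation bound, which is exactly where the paper also spends the hypothesis $m^{\alpha}\geq \tfrac12 M^{\alpha}$ (the paper phrases it through the negative minimum of $\mathcal{C}_{f^{\alpha}}$, but it is the same estimate). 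Where you genuinely diverge is the central inequality: the paper writes $f_2g_2$ as the limit as $p\to 0$ of weighted $p$-means, lowers each entry by the concavity defect, applies Kennington's reverse Minkowski inequality for $p<1$, passes to the limit to recover weighted geometric means, and finally splits off the defects via $(A-B)^{\mu}\geq A^{\mu}-B^{\mu}$, $\mu\in(0,1]$; you instead use the conjugate form of that same superadditivity, $(f_i-a)^{\alpha}\leq f_i^{\alpha}-a^{\alpha}$, to get $\lambda(f_3-a)^{\alpha}+(1-\lambda)(f_1-a)^{\alpha}\leq f_2^{\alpha}$ (and its analogue for $g$), and conclude with the two-term weighted discrete H\"older inequality. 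Your route buys a shorter, entirely elementary argument — no limiting procedure and no external reverse-Minkowski lemma — while the paper's power-mean formulation is the one that generalizes naturally to Kennington-type statements about $\gamma$-concavity of products; both yield exactly the same final estimate, including the positive correction term $ab$. The only implicit point (shared with the paper) is the nonnegativity of $f$ and $g$, needed for $(f_2^{\alpha})^{1/\alpha}=f_2$ and for H\"older, which is anyway built into the meaning of $f^{\alpha}$, $g^{\beta}$ in the statement.
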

 
 \begin{proof}
 Suppose $\alpha,\beta\in(1,+\infty)$ and set $\mu:= \frac{1}{\alpha}$. 
 Assume for a moment that
\begin{equation}\label{eq_dim_buonpositura}
f^{1/\mu}_i-\mathcal{C}^-_{f^{1/\mu}}(x_1,x_3,\lambda)\geq 0, \quad g_i^{1/(1-\mu)}-\mathcal{C}^-_{g^{1/(1-\mu)}}(x_1,x_3,\lambda) \geq0 \qquad \hbox{for $i\in{1,3}$}.
\end{equation}
 Then
	\begin{align*}
	f_2 g_2=&\left(f_2^{1/\mu}\right)^\mu \left(g_2^{1/(1-\mu)}\right)^{1-\mu}
	= \lim_{p\to 0}\left(\mu f_2^{p/\mu} + (1-\mu)g_2^{p/(1-\mu)}\right)^{1/p}\\
	\geq& \lim_{p\to 0} \bigg(\mu \left(\lambda f_3^{1/\mu}+(1-\lambda)f_1^{1/\mu}-\mathcal{C}^-_{f^{1/\mu}}(x_1,x_3,\lambda)\right)^{p}+
	\\& + (1-\mu)\left(\lambda g_3^{1/(1-\mu)}+(1-\lambda)g_1^{1/(1-\mu)}-\mathcal{C}^-_{g^{1/(1-\mu)}}(x_1,x_3,\lambda)\right)^{p}\bigg)^{1/p}\\
	\geq& \lim_{p\to 0} \lambda \left(\mu (f_3^{1/\mu}-\mathcal{C}^-_{f^{1/\mu}}(x_1,x_3,\lambda))^p+(1-\mu)\left(g_3^{1/(1-\mu)}-\mathcal{C}^-_{g^{1/(1-\mu)}}(x_1,x_3,\lambda)\right)^p \right)^{1/p}+ \\
	 &+(1-\lambda) \left(\mu (f_1^{1/\mu}-\mathcal{C}^-_{f^{1/\mu}}(x_1,x_3,\lambda))^p+(1-\mu)\left(g_1^{1/(1-\mu)}-\mathcal{C}^-_{g^{1/(1-\mu)}}(x_1,x_3,\lambda)\right)^p\right)^{1/p}\\
	=& \lambda\left(f_3^{1/\mu}-\mathcal{C}^-_{f^{1/\mu}}(x_1,x_3,\lambda)\right)^\mu \left(g_3^{1/(1-\mu)}-\mathcal{C}^-_{g^{1/(1-\mu)}}(x_1,x_3,\lambda)\right)^{1-\mu}+
	\\&+(1-\lambda)\left(f_1^{1/\mu}-\mathcal{C}^-_{f^{1/\mu}}(x_1,x_3,\lambda)\right)^\mu \left(g_1^{1/(1-\mu)}-\mathcal{C}^-_{g^{1/(1-\mu)}}(x_1,x_3,\lambda)\right)^{1-\mu},
\end{align*}
where we used the inequality \cite[Property 7]{kennington}
\[ (1-\lambda)\big(f_1^p+g_1^p\big)^{1/p}+\lambda \big(f_3^p+g_3^p\big)^{1/p}\leq \Big(\big((1-\lambda)f_1+\lambda f_3\big)^p+\big((1-\lambda)g_1+\lambda g_3\big)^p\Big)^{1/p}. \]
		Since $\mu\in (0,1]$, then for all $A,B\geq0$ it holds $(A-B)^\mu\geq A^\mu-B^\mu$, and hence
\begin{align*}
	f_2g_2 \geq& \lambda \big(f_3-(\mathcal{C}^-_{f^{1/\mu}})^\mu(x_1,x_3,\lambda)\big)\big(g_3-(\mathcal{C}^-_{g^{1/(1-\mu)}})^{1-\mu}(x_1,x_3,\lambda)\big)+\\ &+(1-\lambda)\big(f_1-(\mathcal{C}^-_{f^{1/\mu}})^\mu(x_1,x_3,\lambda)\big)\big(g_1-(\mathcal{C}^-_{g^{1/(1-\mu)}})^{1-\mu}\big) (x_1,x_3,\lambda)
\end{align*}
which implies
 the claim, observed that $\frac{1}{\mu}=\alpha$ and $\frac{1}{1-\mu}=\beta$. 
We are left to verify \eqref{eq_dim_buonpositura}.
If $\mathcal{C}^-_{f^{1/\mu}}(x_1,x_3,\lambda)=0$, then the inequality trivially holds true. Otherwise consider $(y_1,y_3,\lambda)\in \overline{K}\times \overline{K}\times [0,1]$ the negative minimum of $\mathcal{C}_{f^{1/\mu}}$. By hypothesis on $M,m$: 
\begin{align*}
f^{1/\mu}_1\geq m_1^{1/\mu}\geq M_1^{1/\mu}-m_1^{1/\mu}\geq -\mathcal{C}_{f^{1/\mu}}(y_1,y_3,\lambda)=\mathcal{C}^-_{f^{1/\mu}}(y_1,y_3,\lambda)\geq \mathcal{C}^-_{f^{1/\mu}}(x_1,x_3,\lambda).
\end{align*}
Similar computations hold for $g$. 
This concludes the proof.
 \end{proof}

We deal now with the boundary case $\alpha=\infty$ and $\beta = 1$.

\begin{proposition}\label{prop_extram_cases}
Let $K$ be a convex set and $f,g\colon K \to \R$. Then
$$\mc{C}_{fg}(x_1, x_3, \lambda) \geq \mc{C}_g (x_1, x_3, \lambda) f_2 - \osc(f) (\lambda |g_3| + (1-\lambda) |g_1|).$$
In particular, if $f$ is nonnegative and $g$ is concave and bounded, then 
$$\mc{C}_{fg}(x_1,x_3,\lambda) \geq - \norm{g}_{\infty} \osc(a).$$
\end{proposition}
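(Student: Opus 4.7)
The plan is to reduce the inequality to a direct algebraic manipulation. First I would expand, using the notations $f_i := f(x_i)$, $g_i := g(x_i)$ and $x_2 = \lambda x_3 + (1-\lambda) x_1$:
\begin{align*}
\mc{C}_{fg}(x_1,x_3,\lambda) - \mc{C}_g(x_1,x_3,\lambda)\, f_2
&= f_2 g_2 - \lambda f_3 g_3 - (1-\lambda) f_1 g_1 \\
&\quad - f_2 \bigl( g_2 - \lambda g_3 - (1-\lambda) g_1 \bigr) \\
&= \lambda g_3 (f_2 - f_3) + (1-\lambda) g_1 (f_2 - f_1).
\end{align*}
So the first inequality reduces to showing
\[
\lambda g_3 (f_2 - f_3) + (1-\lambda) g_1 (f_2 - f_1) \geq -\osc(f) \bigl(\lambda |g_3| + (1-\lambda)|g_1|\bigr).
\]

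Next, since $x_1, x_2, x_3 \in K$ we trivially have $|f_2 - f_3| \leq \osc(f)$ and $|f_2 - f_1| \leq \osc(f)$. Applying the bound $z \, w \geq -|z|\,|w|$ termwise, we obtain
\[
\lambda g_3 (f_2 - f_3) \geq -\lambda |g_3|\,\osc(f), \qquad
(1-\lambda) g_1 (f_2 - f_1) \geq -(1-\lambda)|g_1|\,\osc(f),
\]
and summing yields the first claim.

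For the ``in particular'' part, the concavity of $g$ gives $\mc{C}_g(x_1,x_3,\lambda) \geq 0$, and $f_2 \geq 0$ by assumption, so $\mc{C}_g(x_1,x_3,\lambda)\, f_2 \geq 0$. Moreover $|g_i| \leq \norm{g}_\infty$ and $\lambda + (1-\lambda) = 1$, hence the error term is bounded by $\osc(f)\, \norm{g}_\infty$. The main (minor) point to be careful about is that the statement's final display appears to contain a typographical $\osc(a)$ in place of $\osc(f)$; the proof naturally yields $\osc(f)$.
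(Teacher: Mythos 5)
Your proposal is correct and follows essentially the same route as the paper: the paper's proof consists exactly of the identity $\mc{C}_{fg}=\mc{C}_g\,f_2+\lambda g_3(f_2-f_3)+(1-\lambda)g_1(f_2-f_1)$, from which both claims follow by the elementary bounds you spell out. Your observation that the final display should read $\osc(f)$ rather than $\osc(a)$ is also accurate (the $a$ is a slip coming from the intended application where $f$ plays the role of the weight $a$).
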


\begin{proof}
 We have
\begin{align*}
\mc{C}_{fg}(x_1,x_3,\lambda) &= f_2 g_2 - \lambda f_3 g_3 - (1-\lambda) f_1g _1 \\
& = \mc{C}_g(x_1, x_3, \lambda) f_2 + \lambda g_3 (f_2-f_3) + (1-\lambda) g_1 (f_2-f_1)
\end{align*}
and thus the two claims.
\end{proof}

\begin{proposition}\label{prop_prel_gxj}
	Let $K\subseteq \R^n$, $n\geq 1$, be a convex set. Let $g\colon K\to \R$ 
	and $j\in \left\{1,\dots,n\right\}$, and consider $f\colon x \in K \mapsto \frac{g(x)}{(x^j)^{2}} \in \R$. Then for all $x,y\in K$ and $\lambda\in[0,1]$ it holds
	\[ 	\mathcal{HC}_{f}(x_1,x_3,\lambda)\geq \frac{\mathcal{C}_g(x_1,x_3,\lambda)}{(x_2^j)^2} .\]
\end{proposition}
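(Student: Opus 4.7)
The plan is to reduce the inequality to a simple completion-of-squares identity. First I would fix notation by setting $a := x_1^j$, $b := x_3^j$, $c := x_2^j = \lambda b + (1-\lambda) a$, and $f_i := f(x_i)$, $g_i := g(x_i) = (x_i^j)^2 f_i$ for $i \in \{1,2,3\}$; the quantities $f_i$ and $\mathcal{HC}_f$ are well-defined only when $x_i^j \neq 0$, which we tacitly assume.

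Next I would rewrite the right-hand side of the claim as
\[ \frac{\mathcal C_g(x_1,x_3,\lambda)}{(x_2^j)^2} \;=\; \frac{c^2 f_2 - \lambda b^2 f_3 - (1-\lambda) a^2 f_1}{c^2} \;=\; f_2 - \frac{\lambda b^2 f_3 + (1-\lambda) a^2 f_1}{c^2}. \]
In the main case $\lambda f_1 + (1-\lambda) f_3 > 0$, unfolding the definition of $\mathcal{HC}_f$ reduces the proposition to
\[ \frac{\lambda b^2 f_3 + (1-\lambda) a^2 f_1}{c^2} \;\geq\; \frac{f_1 f_3}{\lambda f_1 + (1-\lambda) f_3}. \]

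The key (and essentially only) step is then the algebraic identity
\[ \bigl(\lambda b^2 f_3 + (1-\lambda) a^2 f_1\bigr)\bigl(\lambda f_1 + (1-\lambda) f_3\bigr) - c^2 f_1 f_3 \;=\; \lambda(1-\lambda)\bigl(b f_3 - a f_1\bigr)^2, \]
which I would verify by direct expansion, using $c^2 = \lambda^2 b^2 + 2\lambda(1-\lambda)a b + (1-\lambda)^2 a^2$. Since the right-hand side is manifestly nonnegative and the factor $c^2\bigl(\lambda f_1 + (1-\lambda) f_3\bigr)$ on the left is strictly positive by assumption, dividing gives the displayed inequality; subtracting from $f_2$ concludes the main case.

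Finally I would address the degenerate branch $f_1 = f_3 = 0$ separately: here $g_1 = g_3 = 0$, so by definition $\mathcal{HC}_f(x_1,x_3,\lambda) = f_2$ and $\mathcal C_g(x_1,x_3,\lambda)/(x_2^j)^2 = c^2 f_2 / c^2 = f_2$, giving equality. There is essentially no difficult step in the argument—the whole proof rests on the one squares identity, which is a Cauchy–Schwarz-flavoured computation (namely, applying the Cauchy–Schwarz inequality to the pair $x\mapsto x^j$ and the constant $1$ in the weighted inner product with weights $\lambda f_1 \delta_{x_1}+(1-\lambda)f_3\delta_{x_3}$, after rearrangement). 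The only minor care needed is to separate the two admissible regimes in the definition of $\mathcal{HC}_f$.
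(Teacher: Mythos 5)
Your proof is correct and follows essentially the same route as the paper: the same case split dictated by the two branches in the definition of $\mathcal{HC}_f$, and the same completion-of-squares identity (the paper writes it in terms of $g_i$ and $\zeta_i=x_i^j$, you in terms of $f_i$, $a$, $b$; the two coincide under the substitution $g_i=\zeta_i^2 f_i$). The tacit assumption $x^j\neq 0$ on $K$ is likewise implicit in the paper's statement, so nothing is missing.
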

\begin{proof}
	For clarity, let us set $\zeta:=x^j$. 
	If $f_1=f_3=0$, then $g_1=g_3=0$ and thus
	$$\mathcal{HC}_{f}(x_1,x_3,\lambda) = f_2 = \frac{g_2}{\zeta_2^2} = \frac{\mathcal{C}_g(x_1,x_3,\lambda)}{\zeta_2^2}.$$
	Assume now $\lambda f_1+(1-\lambda) f_3>0$. Then we have
	\begin{align*}
		\mathcal{HC}_{f}(x_1,x_3,\lambda)=&
		=\frac{g_2}{\zeta^2_2}-\frac{g_1g_3}{\lambda g_1 \zeta_3^2+(1-\lambda)g_3 \zeta_1^2}\\=&
		\frac{\lambda g_3+(1-\lambda)g_1 + \mathcal{C}_g(x_1,x_3,\lambda)}{\zeta^2_2}-\frac{g_1g_3}{\lambda g_1 \zeta_3^2+(1-\lambda)g_3 \zeta_1^2}\\=& \lambda(1-\lambda)\frac{(g_1 \zeta_3-g_3 \zeta_1)^2}{\zeta_2^2(\lambda g_1 \zeta_3^2+(1-\lambda)g_3 \zeta_1^2)}+\frac{\mathcal{C}_g(x_1,x_3,\lambda)}{\zeta_2^2}\geq \frac{\mathcal{C}_g(x_1,x_3,\lambda)}{\zeta_2^2}.\qedhere
	\end{align*}
\end{proof}

\begin{proposition}\label{proposition f-k and harmonic concavity function}
	Let $K$ be a convex set and $f,g :K\to \R $ with $g>0$. 
	Then 
	\begin{equation}\label{eq: stime su HC f-g}
		\mathcal{HC}_{f-g}\geq \mathcal{HC}_f - \mathcal{HC}_g,
	\end{equation}
where the inequality is meant on $\Dom(\mc{HC}_{f-g})$. % \cap \Dom(\mc{HC}_f) $.%\cap \Dom(\mc{HC}_g)$.
	In particular, if $\mathcal{HC}_g \leq 0$ (e.g., if $g$ is constant or $g(t)=t^{\gamma}$ with $\gamma \in [-1,0]$), then
		$$\mathcal{HC}_{f-g}\geq \mathcal{HC}_f.$$
\end{proposition}
		\begin{proof}
	Set for simplicity $h:=f-g$.
	Let $x_1,x_3\in \Omega$ and $\lambda \in [0,1]$. 
		Suppose $h_1=h_3=0$, then $f_1=g_1$ and $f_3=g_3$, which imply $\lambda f_1+(1-\lambda)f_3 =\lambda g_1+(1-\lambda)g_3 >0 $. 
		Moreover
		$$		\mathcal{HC}_f(x_1,x_3,\lambda)- \mathcal{HC}_g(x_1,x_3,\lambda)= g_2-f_2 = h_2 = \mc{HC}_h(x_1,x_3,\lambda).$$
		Otherwise, if $\lambda h_1+(1-\lambda) h_3>0$, then $\lambda f_1+(1-\lambda) f_3>\lambda g_1 + (1-\lambda) g_3 >0$ by the positivity of $g$.
		Moreover a straightforward computation implies
		\begin{align*}
	\MoveEqLeft	\mc{HC}_h(x_1,x_3,\lambda) -\mc{HC}_f(x_1,x_3,\lambda) +\mc{HC}_g(x_1,x_3,\lambda) =& \\
		&= \frac{\lambda(1-\lambda)(f_1 g_3 - f_3 g_1)^2}{\big(\lambda f_1 + (1-\lambda)f_3 \big) \big(\lambda g_1 + (1-\lambda) g_3\big) \big( \lambda (f_1-g_1) + (1-\lambda) (f_3-g_3)\big)} \geq 0. 	\tag*{\qedhere}
		\end{align*}
	\end{proof}
%\tr{\begin{remark}
%		The inequality \eqref{eq: stime su HC f-g} is meant to hold whenever $\mathcal{HC}_{f-g}$ is well defined. 
%\end{remark}}
\medskip

\printbibliography[title={References}]

\end{document}